\newcommand{\eps}{\varepsilon}
\newtheorem{theorem}{Theorem}  %[chapter]
\newtheorem{lemma}[theorem]{Lemma}
\newtheorem{corollary}[theorem]{Corollary}
\newtheorem{proposition}[theorem]{Proposition}
\newtheorem{question}[theorem]{Question}
\def\COMMENT#1{}
\def\TASK#1{}
\numberwithin{theorem}{section}
\numberwithin{equation}{section}
\newdimen\margin   % needed for macros \textdisplay & \ltextdisplay
\def\textno#1&#2\par{%
   \margin=\hsize
   \advance\margin by -4\parindent
          \setbox1=\hbox{\sl#1}%
   \ifdim\wd1 < \margin
      $$\box1\eqno#2$$%
   \else
      \bigbreak
      \hbox to \hsize{\indent$\vcenter{\advance\hsize by -3\parindent
      \it\noindent#1}\hfil#2$}%
      \bigbreak
   \fi}
\def\noproof{{\unskip\nobreak\hfill\penalty50\hskip2em\hbox{}\nobreak\hfill%
       $\square$\parfillskip=0pt\finalhyphendemerits=0\par}\goodbreak}
\def\endproof{\noproof\bigskip}
\title[Forbidding induced even cycles in a graph: typical structure and counting]{Forbidding induced even cycles in a graph:\\ typical structure and counting}
\author{ Jaehoon Kim, Daniela K\"uhn, Deryk Osthus, Timothy Townsend }
\thanks{The research leading to these results was partially supported by the European Research Council
under the European Union's Seventh Framework Programme (FP/2007--2013) / ERC Grant
Agreements no. 258345 (D.~K\"uhn) and 306349 (J.~Kim and D.~Osthus).}
\begin{document}

\begin{abstract}
We determine, for all $k\geq 6$, the typical structure of graphs that do not contain an induced $2k$-cycle. This verifies a conjecture of Balogh and Butterfield. Surprisingly, the typical structure of such graphs is richer than that encountered in related results. The approach we take also yields an approximate result on the typical structure of graphs without an induced $8$-cycle or without an induced $10$-cycle.

\end{abstract}

\date{\today}

\maketitle

\section{Introduction}

\subsection{Background}

The enumeration and description of the typical structure of graphs with given side constraints has become a successful and popular area at the interface of probabilistic, enumerative, and extremal combinatorics (see e.g.~\cite{BolSurv} for a survey of such work). For example, a by now classical result of Erd\H{o}s, Kleitman and Rothschild~\cite{EKR} shows that almost all triangle-free graphs are bipartite (given a fixed graph $H$, a graph is called $H$-\emph{free} if it does not contain $H$ as a not necessarily induced subgraph). This result was generalised to $K_k$-free graphs by Kolaitis, Pr\"omel and Rothschild~\cite{KPR}. There are now many precise results on the number and typical structure of $H$-free graphs and more generally graphs, hypergraphs and other combinatorial structures with a given (anti-)monotone property.

Given a fixed graph $H$, a graph is called \emph{induced}-$H$-\emph{free} if it does not contain $H$ as an induced subgraph. Associated counting and structural questions are equally natural as in the non-induced case, but seem harder to solve. Thus much less is known about the typical structure and number of induced-$H$-free graphs than that of $H$-free graphs, though considerable work has been done in this area (see, e.g.~\cite{ABBM, BaBu, KMRS, PrSt2, PrSt3, PrSt4}). In particular, Pr\"omel and Steger~\cite{PrSt4} obtained an asymptotic counting result for the number of induced-$H$-free graphs on $n$ vertices, showing that the logarithm of this number is essentially determined by the so-called colouring number of $H$. This was generalised to arbitrary hereditary properties independently by Alekseev~\cite{Alek} as well as Bollob\'as and Thomason~\cite{BoTh}. Recent exciting developments in~\cite{BMS, SaTh} have opened up the opportunity to replace counting results by more precise results which identify the typical asymptotic structure.

In this paper we determine the typical structure of induced-$C_{2k}$-free graphs (from which the corresponding asymptotic counting result follows immediately). The key difficulty we encounter is that the typical structure turns out to be more complex than encountered in previous results on forbidden induced subgraphs. This requires new ideas and a more intricate analysis when `excluding' classes of graphs which might be candidates for typical induced-$C_{2k}$-free graphs.

\subsection{Graphs with forbidden induced cycles}

Given a class of graphs $\mathcal{A}$, we let $\mathcal{A}_n$ denote the set of all graphs in $\mathcal{A}$ that have precisely $n$ vertices, and we say that \emph{almost all graphs in} $\mathcal{A}$ \emph{have property} $\mathcal{B}$ if
$$\lim\limits_{ n\to \infty }\frac{|\{G\in \mathcal{A}_n: G \text{ has property } \mathcal{B}\}|}{|\mathcal{A}_n|} = 1.$$
Given graphs $H_1,\dots, H_m$, we say \emph{$G$ can be covered by $H_1,\dots, H_m$} if $V(G)$ admits a partition $A_1\cup \dots \cup A_m=V(G)$ such that $G[A_i]$ is isomorphic to $H_i$ for every $i\in \{1,\dots, m\}$.

Pr\"omel and Steger proved in~\cite{PrSt2} that almost all induced-$C_4$-free graphs can be covered by a clique and an independent set, and in~\cite{PrSt1} characterised the structure of almost all induced-$C_5$-free graphs too. More recently, Balogh and Butterfield~\cite{BaBu} determined the typical structure of induced-$H$-free graphs for a wide class of graphs $H$. In particular they proved that almost all induced-$C_7$-free graphs can be covered by either three cliques or two cliques and an independent set, and that for $k\geq 4$ almost all induced-$C_{2k+1}$-free graphs can be covered by $k$ cliques. They also conjectured that for $k\geq 6$ almost all induced-$C_{2k}$-free graphs can be covered by $k-2$ cliques and a graph whose complement is a disjoint union of stars and triangles. Our main result completely verifies this conjecture.

\begin{theorem}\label{main theorem}
For $k\geq 6$, almost all induced-$C_{2k}$-free graphs can be covered by $k-2$ cliques and a graph whose complement is a disjoint union of stars and triangles.
\end{theorem}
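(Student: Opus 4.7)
The plan is to combine the hypergraph container method with a delicate stability and counting argument tailored to the cycle $C_{2k}$. First, by applying a container / Alekseev--Bollob\'as--Thomason style approximate-structure theorem to the family $\mathcal{A}$ of induced-$C_{2k}$-free graphs, one shows that almost every $G \in \mathcal{A}_n$ admits a partition $V(G) = V_1 \cup \dots \cup V_{k-1}$ with each $G[V_i]$ at edit distance $o(n^2)$ from a clique; the number of parts $k-1$ agrees with the Alekseev/Bollob\'as--Thomason colouring number of $C_{2k}$, implicit in the extremal construction. This already pins down $\log|\mathcal{A}_n|$ up to $o(n^2)$, but the main work is to upgrade this to the exact structure claimed.

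For $i \le k-2$ I would then prove that in a typical $G \in \mathcal{A}_n$ no $V_i$ contains a non-edge. The idea: if $\{u,v\}$ is a non-edge in such a $V_i$ then, exploiting that for typical $G$ the cross-edges look quasirandom relative to the approximate partition, one can embed an induced $C_{2k}$ through $u$, $v$, one vertex in each of the other cliques, and a short path inside $G[V_{k-1}]$; an entropy bound then shows that induced-$C_{2k}$-free graphs with non-trivially many non-edges inside $V_1, \dots, V_{k-2}$ are dominated in count by those matching the target structure. The \emph{main obstacle}, and the genuinely new part of the argument, is characterising the local structure on $V_{k-1}$. Any induced $C_{2k}$ in $G$ must place at least $2k - 2(k-2) = 4$ of its vertices into $V_{k-1}$ (since each clique hosts at most two vertices of a triangle-free graph), and these vertices necessarily induce a vertex-disjoint union of paths in $G[V_{k-1}]$; equivalently $\overline{G[V_{k-1}]}$ must contain the corresponding path/matching as an induced subgraph. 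I would verify directly that if $\overline{G[V_{k-1}]}$ is a disjoint union of stars and triangles no such configuration is present, establishing that the target structure is compatible with induced-$C_{2k}$-freeness; conversely, a case analysis of small induced subgraphs of $\overline{G[V_{k-1}]}$ should show that any other configuration either produces an induced $C_{2k}$ together with the cliques and typical cross-edges, or is overwhelmingly dominated in the count by legitimate complement structures. In effect the argument reduces to identifying ``disjoint unions of stars and triangles'' as the precise class of permissible complement structures via a finite forbidden-induced-subgraph check.

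A final counting step compares the lower bound on $|\mathcal{A}_n|$ coming from the construction in the statement (each choice of cross-edges between the parts yields a distinct induced-$C_{2k}$-free graph) against the upper bounds on exceptional graphs obtained above. I expect the case analysis on $V_{k-1}$ to be the main technical difficulty: Balogh--Butterfield's earlier results on odd cycles force each part of the typical partition to be a clique or an independent set, whereas here one must pin down a strictly richer ``near-clique'' class, which is precisely the novelty flagged in the introduction.
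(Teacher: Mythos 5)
Your high-level skeleton matches the paper's: containers plus a stability/removal argument give that almost every induced-$C_{2k}$-free graph is within $o(n^2)$ edits of the target structure (this is the paper's Lemma~\ref{lem: rough structure}), the easy direction that a disjoint union of stars and triangles in the complement of the special class excludes the needed linear forests (and hence induced $C_{2k}$'s) is correct, and the final step is a comparison against the count of graphs with the exact structure. However, there is a genuine gap at exactly the point you flag as the main difficulty, and your proposed fix does not work as stated. The margin you must beat is tiny: for a fixed partition the number of exact ``templates'' is only $2^{t_{k-1}(n)}\cdot 2^{\Theta(n\log n)}$, so every family of exceptional graphs has to be shown to be smaller than this by a multiplicative factor tending to $0$ (and, after summing over the roughly $(k-1)^n$ partitions, by a factor like $2^{-n^{c}}$). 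Entropy bounds at precision $2^{o(n^2)}$, which is all the approximate-structure step gives you, cannot distinguish the target class from nearby competitors such as graphs whose special class has complement a disjoint union of stars, triangles and a few slightly larger components: such graphs need not contain any induced $C_{2k}$ (the obstruction only appears when the cross-neighbourhoods are quasirandom, and atypical or ``light'' vertices genuinely occur), so they cannot be excluded by a finite forbidden-induced-subgraph check; they must be out-counted. Likewise, ``if $\{u,v\}$ is a non-edge in $V_i$, embed an induced $C_{2k}$'' fails for individual graphs, since $u$ or $v$ may have degenerate neighbourhoods, and one also cannot simply discard all graphs containing such vertices by a crude union bound at this precision.

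The paper's solution to precisely this problem occupies Sections~\ref{sec: main proof}--\ref{sec: final calculation}: a statement relative to a fixed ordered partition (Lemma~\ref{induction conclusion}) proved by induction on $n$, where graphs are built by deleting a few vertices and invoking the induction hypothesis (with control on how the optimal partition can change, Corollaries~\ref{new partition} and~\ref{new partition 2}); a classification of near-template graphs into $F^1_Q$, $F^2_Q$, $F^3_Q$ according to configurations, light vertices and medium-degree vertices; a Ramsey-plus-tournament argument (Lemma~\ref{size of A^i_G}) showing the set of vertices with both large degree and large non-degree inside their own class is bounded; ``skeleton''/compatible-forest arguments that convert each bad configuration into exponentially many independent edge restrictions (giving factors like $(1-2^{-O(k)})^{\Omega(n)}$, Lemmas~\ref{F^1} and~\ref{lem: A_3}); and a further split of the near-exact graphs into $\mathcal{A}_1,\dots,\mathcal{A}_4$ with Chernoff and direct counting estimates. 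In addition, deriving the theorem requires a lower bound on the total number of templates that absorbs the sum over all partitions (Lemma~\ref{the number of templates}, including the near-uniqueness of the compatible partition), a point your sketch does not address. So while your outline identifies the correct strategy and the correct extremal characterisation, the decisive counting mechanism --- the induction and the configuration-by-configuration exponential savings --- is missing, and without it the argument does not close.
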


Theorem~\ref{main theorem} together with the discussed results in~\cite{BaBu, EKR, PrSt1, PrSt2} implies that the typical structure of induced-$C_k$-free graphs is determined for every $k\in \mathbb{N}$ apart from $k\in \{6,8,10\}$. For the cases $k=8$ and $k=10$ the methods we use to prove Theorem~\ref{main theorem} allow us to also prove an approximate result on the typical structure of induced-$C_k$-free graphs. In order to state this result we require the following definitions.

Given $\eta>0$ and graphs $G$ and $G'$ on the same vertex set, we say $G'$ is \emph{$\eta$-close to $G$} if $G'$ can be made into $G$ by changing (i.e. adding or deleting) at most $\eta |G|^2$ edges. We say a graph $G$ is a {\em sun} if either $G$ consists of a single vertex or $V(G)$ can be partitioned into sets $A,B$ such that $E(G)=\{uv : |\{u,v\}\cap B|\leq 1\}$. We call $A$ the {\em body} of the sun and $B$ the {\em side} of the sun. Note that all stars and cliques (including triangles) are suns, and that we consider a single vertex to be both a star of order one and a clique of order one.

\begin{theorem}\label{secondary theorem}~
\begin{enumerate}[{\rm (i)}]
\item For every $\eta>0$, almost all induced $C_{10}$-free graphs are $\eta$-close to graphs that can be covered by three cliques and a graph whose complement is a disjoint union of cliques.
\item For every $\eta>0$, almost all induced $C_{8}$-free graphs are $\eta$-close to graphs that can be covered by two cliques and a graph whose complement is a disjoint union of suns.
\end{enumerate}
\end{theorem}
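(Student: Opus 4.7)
The plan is to adapt the proof scheme for Theorem~\ref{main theorem} and to stop at the intermediate stage where only $\eta$-closeness is available, rather than pushing through to the exact classification. One first applies the hypergraph container method for induced-$C_{2k}$-free graphs (as in~\cite{BMS, SaTh}) to reduce the counting question to analysing a bounded family of templates. Each template encodes, for a partition $V=V_1\cup\dots\cup V_t$ with $t$ bounded, which parts are required to be cliques, which are required to be independent, and which bipartite pairs are essentially full, essentially empty, or free; every induced-$C_{2k}$-free $G$ lies in the container of some such template. A colouring-number analysis of $C_{2k}$ identifies the templates that dominate the count for $k\in\{4,5\}$: they have $k-2$ clique parts $Q_1,\dots,Q_{k-2}$ together with one distinguished part $R$, with all between-part pairs free.

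Given this dominant partition, the next step is to show that after deleting at most $\eta n^2$ edges of $G$, the $Q_i$'s are genuine cliques and the only remaining constraint is that no induced $C_{2k}$ survives. Since at most two consecutive vertices of an induced $C_{2k}$ can lie in a single clique part (else the clique structure creates a chord), a short case analysis of how the $2k$ cycle-vertices can be distributed between $R$ and $\bigcup_i Q_i$ yields a finite list of forbidden induced subgraphs on $R$. For $k=5$ this list forces $\overline{R}$ to be a disjoint union of cliques, i.e.\ $R$ is complete multipartite. For $k=4$, the shorter cycle allows more ways of re-entering $R$ from the clique parts, the forbidden list is strictly more permissive, and the resulting characterisation is that $\overline{R}$ is a disjoint union of suns — the body of each sun corresponds to a subset of $R$ that can be threaded through a single clique part, while the side corresponds to the "independent" vertices seen via non-edges. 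This richer structure matches the extra freedom available when the cycle length is small.

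The main obstacle I expect is the precise identification and verification of the forbidden configurations on $R$ for $k\in\{4,5\}$. For $k\ge 6$, the cycle is long enough that the constraints force the rigid stars-and-triangles structure; for smaller $k$, one must carefully argue that the sun (respectively disjoint-cliques) characterisation is tight — no smaller class of graphs is actually forced, yet no larger class is admissible. A secondary technical difficulty is bookkeeping for the $\eta$-loss: the errors introduced by the container reduction, by locating the $(k-2)$-clique partition, and by cleaning $R$ must sum to at most $\eta n^2$, which is handled by taking the container parameters and the regularity partition sufficiently fine before applying the structural analysis.
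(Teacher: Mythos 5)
You follow the same overall route as the paper: Theorem~\ref{secondary theorem} is deduced immediately from Lemma~\ref{lem: rough structure}, which is proved by exactly the container--stability--removal scheme you outline, with your ``threading the cycle through the partition'' case analysis appearing as Proposition~\ref{prop: C_2k in small multigraphs} and the sun/clique characterisations as Proposition~\ref{prop: k-good tetradehron free}. The genuine gap is in how you extract the structure of the distinguished class $R$. You write that after cleaning, ``the only remaining constraint is that no induced $C_{2k}$ survives'' and that the case analysis ``yields a finite list of forbidden induced subgraphs on $R$''. For an individual graph $G$ this is false: whether a bad $4$-vertex configuration inside $R$ extends to an induced $C_{2k}$ depends on the crossing edges, which are unconstrained. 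Indeed, one may place an arbitrary induced-$C_{2k}$-free graph on $R$ and join all classes completely; the result is induced-$C_{2k}$-free (three classes would give a triangle, two classes give a vertex of degree bigger than $2$), yet $R$ lies in no fixed finite forbidden-induced-subgraph class. So $C_{2k}$-freeness of $G$ alone forbids nothing inside $R$; the constraint must be imposed at the level of the container, whose crossing pairs are doubly coloured: there each ``$k$-good tetrahedron'' in $R$, together with one internal (red) edge from each of the $k-2$ clique classes, completes to a copy of $C_{2k}$, so many such tetrahedrons contradict the container property of having at most $\varepsilon n^{2k}$ copies of $C_{2k}$ --- and even this requires first showing that all but at most one class carries $\Omega(n^2)$ internal red edges (Claim~1 in the proof of Lemma~\ref{lem: specialised removal lemma}), and then the induced removal lemma (Theorem~\ref{thm: induced removal lemma}) to convert ``few bad tetrahedrons'' into ``$O(\gamma n^2)$ edge changes remove them all''. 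Relatedly, containers do not come pre-equipped with the clique/free-pair template structure you describe; it has to be extracted from each container via the multigraph removal lemma and Erd\H{o}s--Simonovits stability (Lemmas~\ref{lem: multigraph removal lemma} and~\ref{lem: specialised stability}). Without this supersaturation-plus-removal mechanism, which your sketch leaves implicit, the step fails.

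A smaller inaccuracy: for $k=5$ the case analysis does not force $\overline{R}$ to be a disjoint union of cliques. Any $C_{10}$ threaded through three clique classes must use at least one edge of $G$ among its four $R$-vertices, so stars in $\overline{R}$ create no copies at all; what is forced is a disjoint union of stars and cliques (Proposition~\ref{prop: k-good tetradehron free}(ii)), i.e.\ the paper's $5$-template. Your version of part (i) survives only because stars contribute $O(n)$ complement edges, which are absorbed by the $\eta n^2$ allowance; as a claim about what the forbidden configurations force it is wrong, and it would matter for any exact (rather than $\eta$-approximate) statement.
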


We remark that in Theorems~\ref{main theorem} and~\ref{secondary theorem} we get exponential bounds on the proportion of induced-$C_{2k}$-free graphs that do not satisfy the relevant structural description. Our proofs also show that the $k-2$ cliques in the covering have size close to $n/(k-1)$ in Theorem~\ref{main theorem}, with analogous bounds in Theorem~\ref{secondary theorem}. Theorem~\ref{main theorem} also strengthens a result by Kang, McDiarmid, Reed and Scott~\cite{KMRS} showing that almost all induced-$C_{2k}$-free graphs have a linear sized homogeneous set. (Their results were motivated by the Erd\H{o}s-Hajnal conjecture, and actually apply to a large class of forbidden graphs $H$.)

It would of course be interesting to determine the typical structure of induced-$C_6$-free graphs.

\begin{question}
What is the typical structure of induced-$C_6$-free graphs?
\end{question}

It seems likely that almost all induced-$C_6$-free graphs can be covered by one clique and one cograph, where a cograph is a graph not containing an induced copy of $P_4$. Another natural question is that of the typical structure of induced-$H$-free graphs of a given density. In particular, an intriguing question is whether their typical structure exhibits a non-trivial `phase transition' as found for triangle-free graphs~\cite{OPT} and more generally $K_r$-free graphs~\cite{BMSW}.

\subsection{Overview of the paper}

A key tool in our proofs is the recent hypergraph container approach, which was developed independently by Balogh, Morris and Samotij~\cite{BMS}, and Saxton and Thomason~\cite{SaTh}. Briefly, their result states that under suitable conditions on a uniform hypergraph $G$, there is a small collection $\mathcal{C}$ of small subsets (known as containers) of $V(G)$ such that every independent set of vertices in $G$ is a subset of some element of $\mathcal{C}$. The precise statement of the application used here is deferred until Section~\ref{sec: rough structure}.

Given a graph $G$ and a set $A\subseteq V(G)$, we denote by $G[A]$ the graph induced on $G$ by $A$, and we denote the complement of $G$ by $\overline{G}$. For $k\in \mathbb{N}$ and a set $V$ of vertices we define an \emph{ordered $k$-partition of $V$} to be a $k$-partition of $V$ such that one partition class is labelled and the rest are unlabelled. If $Q$ is an ordered $k$-partition with labelled class $Q_0$ and unlabelled classes $Q_1,\dots, Q_{k-1}$ then we write $Q=(Q_0,\{Q_1,\dots, Q_{k-1}\})$.

For $k\geq 4$, we say that a graph $G$ is a {\em $k$-template} if $V(G)$ has an ordered $(k-1)$-partition $Q=(Q_0 , \{Q_1 , \dots, Q_{k-2} \})$ such that $G[Q_i]$ is a clique for all $i\in [k-2]$ and one of the following holds.
\begin{itemize}
\item $k=4$ and $\overline{G}[Q_0]$ is a disjoint union of suns.
\item $k=5$ and $\overline{G}[Q_0]$ is a disjoint union of stars and cliques.
\item $k\geq 6$ and $\overline{G}[Q_0]$ is a disjoint union of stars and triangles.
\end{itemize}
Clearly every $k$-template is induced-$C_{2k}$-free. If $V(G)$ has such an ordered $(k-1)$-partition $Q$, we say that \emph{$G$ is a $k$-template on $Q$}, or \emph{$G$ has ordered $(k-1)$-partition $Q$}. If $Q'$ is the (unordered) $(k-1)$-partition with the same partition classes as $Q$, we may also say that \emph{$G$ is a $k$-template on $Q'$}. Thus Theorem~\ref{main theorem} can be reformulated as:
$$\textit{`For $k\geq 6$, almost all induced $C_{2k}$-free graphs are $k$-templates.'}$$
Theorem~\ref{secondary theorem} can be similarly reformulated in terms of $4$- and $5$-templates. As mentioned earlier, the main difficulty in proving Theorem~\ref{main theorem} (compared to related results) is that typically $G[Q_0]$ is close to, but not quite, a complete graph. This makes it very difficult to rule out other similar classes of graphs as typical structures. To overcome this we use tools such as Ramsey's theorem to classify the graphs according to the neighbourhoods of certain vertices.

More precisely, our approach to proving our main result is as follows. Firstly, in Section~\ref{sec: rough structure} we use the hypergraph containers result discussed above to show that almost all induced-$C_{2k}$-free graphs are close to being a $k$-template, for every $k\geq 4$ (see Lemma~\ref{lem: rough structure}). Note that Lemma~\ref{lem: rough structure} immediately implies Theorem~\ref{secondary theorem}.

In Section~\ref{sec: number of templates} we prove upper and lower bounds on the number of $k$-templates on $n$ vertices (see Lemmas~\ref{size of template} and~\ref{the number of templates}). In Section~\ref{sec: set-up} we prove some preliminary results about graphs that are close to being a $k$-template.

In Section~\ref{sec: main proof} we state a key result which is a version of Theorem~\ref{main theorem} with respect to a given ordered $(k-1)$-partition (see Lemma~\ref{induction conclusion}) and use it together with Lemma~\ref{the number of templates} to derive Theorem~\ref{main theorem}. The remainder of the paper is devoted to proving Lemma~\ref{induction conclusion} via an inductive argument, which we introduce at the end of Section~\ref{sec: main proof}. This argument involves partitioning the class of graphs considered in Lemma~\ref{induction conclusion} into three `bad' classes of graphs, and in each of Sections~\ref{sec: estimate 1}, \ref{sec: estimate 2} and~\ref{sec: estimate 3} we use Lemma~\ref{size of template} and the results in Section~\ref{sec: set-up} to prove an upper bound on the number of graphs in a different one of these classes (see Lemmas~\ref{F^1}, \ref{F^2_Q} and~\ref{F^3}). In particular, Lemmas~\ref{F^1} and~\ref{F^2_Q} already show that almost all induced-$C_{2k}$-free graphs are `extremely close' to being $k$-templates (see Proposition~\ref{beta}). Finally in Section~\ref{sec: final calculation} we use Lemmas~\ref{lem: rough structure}, \ref{F^1}, \ref{F^2_Q} and~\ref{F^3} to complete the inductive argument set up in Section~\ref{sec: main proof} and so prove Lemma~\ref{induction conclusion}. Before starting on any of this however, we lay out some notation and set out some useful tools in Section~\ref{sec: notation}, below.

\section{Notation and tools}\label{sec: notation}

Given a graph $G$, a vertex $x\in V(G)$, and an ordered $(k-1)$-partition $Q=(Q_0,\{Q_1,\dots,Q_{k-2}\})$ of $V(G)$, we let $N(x), \overline{N}(x)$ denote the set of neighbours and non-neighbours of $x$ in $G$, respectively. We also let $N_{Q_i}(x), \overline{N}_{Q_i}(x)$ denote the set of neighbours of $x$ in $Q_i$ and non-neighbours of $x$ in $Q_i$, respectively. We sometimes use the notation $d^i_{G,Q}(x)=|N_{Q_i}(x)|$ and $\overline{d}^i_{G,Q}(x)=|\overline{N}_{Q_i}(x)|$ when we want to emphasise which graph we are working with. For a set $A$ of vertices in $G$, we define
$$N(A):= \bigcap_{v\in A} N(v), \hspace{0.6cm} \overline{N}(A):= \bigcap_{v\in A} \overline{N}(v),$$
$$N_{Q_i}(A):= \bigcap_{v\in A} N_{Q_i}(v), \hspace{0.6cm} \overline{N}_{Q_i}(A):= \bigcap_{v\in A} \overline{N}_{Q_i}(v).$$
If it generates no ambiguity, we may write $N_i(x), \overline{N}_{i}(x), N_i(A), \overline{N}_i(A)$ for $N_{Q_i}(x), \overline{N}_{Q_i}(x), N_{Q_i}(A),$ and $\overline{N}_{Q_i}(A)$ respectively.
Given $A,B\subseteq V(G)$, we define
$$N^*(A,B) := N(A)\cap \overline{N}(B), \hspace{0.6cm} N_i^*(A,B) := N_i(A)\cap \overline{N}_i(B).$$
In the case when $A$ and $B$ both have size one, containing vertices $a,b$ respectively, we may write $N^*(a,b)$ for $N^*(A,B)$ and $N_i^*(a,b)$ for $N_i^*(A,B)$.

We say that a partition of vertices is \emph{balanced} if the sizes of any two partition classes differ by at most one. Given a $(k-1)$-partition $Q$ of $[n]$ with partition classes $Q_0,\dots,Q_{k-2}$, and a graph
$G = (V,E)$ on vertex set $[n]$, and an edge or non-edge $e = uv$ with $u \in Q_i$ and $v \in Q_j$, we call $e$ {\em crossing} if $i\neq j$ and {\em internal} if $i = j$.

We denote a path on $m$ vertices by $P_m$. Given a path $P=p_1\dots p_m$ and a sequence $A_1,\dots, A_m$ of sets of vertices, we say that $P$ \emph{has type} $A_1,\dots, A_m$ if $p_{\ell}\in A_{\ell}$ for every $\ell\in [m]$. We call a graph a \emph{linear forest} if it is a forest such that all components are paths or isolated vertices.

Given $\ell, t\in \mathbb{N}$ we let $R_{\ell}(t)$ denote the $\ell$-colour Ramsey number for monochromatic $t$-cliques, i.e.~$R_{\ell}(t)$ is the smallest $N\in \mathbb{N}$ such that every $\ell$-colouring of the edges of $K_{N}$ yields a monochromatic copy of $K_t$.

We define
$$n_k:= \left\lceil \frac{n}{k-1} \right\rceil.$$

In a number of our proofs we shall use the following Chernoff bound.
\begin{lemma}[Chernoff bound]\label{Chernoff Bounds}
Let $X$ have binomial distribution and let $0<a\leq \mathbb{E}[X]$. Then
\begin{enumerate}[{\rm (i)}]
\item $P(X>\mathbb{E}[X]+a)\leq \exp \left( -\frac{a^2}{4\mathbb{E}[X]} \right)$.
\item $P(X<\mathbb{E}[X]-a)\leq \exp \left( -\frac{a^2}{2\mathbb{E}[X]} \right)$.
\end{enumerate}
\end{lemma}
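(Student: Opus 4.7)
The plan is the standard exponential moment (Chernoff) method. Write $X = X_1 + \dots + X_n$ as a sum of independent Bernoulli$(p)$ variables, where $\mu := \mathbb{E}[X] = np$. For any $t > 0$, Markov's inequality applied to $e^{tX}$, together with independence of the $X_i$ and the elementary bound $1+y \leq e^y$, gives
\[
P(X > \mu + a) \leq e^{-t(\mu+a)}\mathbb{E}[e^{tX}] = e^{-t(\mu+a)} \bigl(1 + p(e^t-1)\bigr)^n \leq \exp\bigl(\mu(e^t-1) - t(\mu+a)\bigr).
\]
Both parts of the lemma then reduce to choosing $t$ and verifying a quadratic lower bound on the exponent.

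For (i), I would substitute the near-optimal $t = \ln(1 + a/\mu) > 0$, which rewrites the bound as $\exp(-\mu\phi(a/\mu))$ with $\phi(x) := (1+x)\ln(1+x) - x$. Since $\phi(0) = \phi'(0) = 0$ and $\phi''(x) = 1/(1+x) \geq 1/2$ on $[0,1]$, Taylor's theorem yields $\phi(x) \geq x^2/4$ on $[0,1]$; because $0 < a/\mu \leq 1$, this produces exactly $\exp(-a^2/(4\mu))$. For (ii), the analogous argument applied to $e^{-tX}$ (with $t > 0$) and the choice $t = -\ln(1 - a/\mu)$ gives an exponent of $-\mu\psi(a/\mu)$, where $\psi(x) := x + (1-x)\ln(1-x)$. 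Here $\psi(0) = \psi'(0) = 0$ and $\psi''(x) = 1/(1-x) \geq 1$ on $[0,1)$, so Taylor gives the sharper $\psi(x) \geq x^2/2$, producing the denominator $2$ in (ii) rather than $4$.

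There is essentially no obstacle here: this is a textbook Chernoff bound, and the only mildly delicate points are keeping track of signs when switching to $e^{-tX}$ in (ii), and the degenerate case $a = \mu$ in (ii), where $P(X < 0) = 0$ makes the bound trivial.
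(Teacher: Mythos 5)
Your proof is correct: the exponential-moment argument with $t=\ln(1+a/\mu)$ (resp.\ $t=-\ln(1-a/\mu)$) and the quadratic bounds $\phi(x)\geq x^2/4$ on $[0,1]$ and $\psi(x)\geq x^2/2$ on $[0,1)$ yields exactly the stated constants, and the condition $0<a\leq \mathbb{E}[X]$ together with your remark on the degenerate case $a=\mathbb{E}[X]$ in (ii) covers the full range. The paper treats this Chernoff bound as a standard tool and gives no proof of its own, so there is nothing to compare against; your argument is the standard one and is complete.
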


Whenever this does not affect the argument, we assume all large numbers to be integers, so that we may sometimes omit floors and ceilings for the sake of clarity. In some proofs, given $a,b\in \mathbb{R}$ with $0<a,b<1$, we will use the notation $a\ll b$ to mean that we can find an increasing function $g$ for which all of the conditions in the proof are satisfied whenever $a\leq g(b)$. Throughout we write $\log x$ to mean $\log_2 x$.

We define $\xi(p):= - 3 p(\log p )/2$. The following bounds will prove useful to us. For $n\geq 1$ and $3\log n /n\leq p \leq 10^{-11}$,
\begin{equation}\label{entropy bound}
\binom{n}{\leq pn}:= \sum\limits_{i=0}^{\left\lfloor pn \right\rfloor} \binom{n}{i} \leq pn \left( \frac{en}{pn} \right)^{pn}\leq 2^{\xi(p)n},
\end{equation}
and
\begin{equation}\label{entropy bound 1.5}
\xi(p)\leq \frac{3}{2}p \left(\frac{1}{p} \right)^{1/8}\leq p^{3/4}.
\end{equation}

\section{Approximate structure of typical induced-$C_{2k}$-free graphs}\label{sec: rough structure}

The main result of this section is Lemma~\ref{lem: rough structure}, which approximately determines the typical structure of induced-$C_{2k}$-free graphs. As mentioned earlier, we make use of a `container theorem' which reduces the proof of Lemma~\ref{lem: rough structure} to an extremal problem involving induced-$C_{2k}$-free graphs. More precisely, the argument is structured as follows.

We first introduce a number of tools (see Subsection~\ref{subsec: rough 1}): a `Containers' theorem (Theorem~\ref{thm: containers}), a Stability theorem (Theorem~\ref{thm: basic stability}), and two Removal Lemmas (Theorem~\ref{thm: induced removal lemma}, Lemma~\ref{lem: multigraph removal lemma}). In Subsection~\ref{subsec: rough 2} we use Theorem~\ref{thm: basic stability} to derive a Stability result involving induced-$C_{2k}$-free graphs (Lemma~\ref{lem: specialised stability}). Similarly we use Theorem~\ref{thm: induced removal lemma} to derive another specialised version of the Removal Lemma (Lemma~\ref{lem: specialised removal lemma}). In Subsection~\ref{subsec: rough 3} we use Theorem~\ref{thm: containers} together with Lemmas~\ref{lem: multigraph removal lemma},~\ref{lem: specialised stability} and~\ref{lem: specialised removal lemma} to determine the approximate structure of typical induced-$C_{2k}$-free graphs.

We denote the number of (labelled) induced-$C_{2k}$-free graphs on $n$ vertices by $F(n,k)$.

\begin{lemma}\label{lem: rough structure}
Let $k\geq 4$. For every $\eta >0$ there exists $\varepsilon >0$ such that the following holds for all sufficiently large $n$. All but at most $F(n,k) 2^{-\varepsilon n^2}$ induced-$C_{2k}$-free graphs on $n$ vertices can be made into a $k$-template by changing at most $\eta n^2$ edges.
\end{lemma}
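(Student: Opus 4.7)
My plan is to combine the hypergraph container theorem with the specialised stability and removal results just introduced. I would encode induced-$C_{2k}$-free graphs on $[n]$ as independent sets in an auxiliary uniform hypergraph $\mathcal{H}$ with vertex set $\binom{[n]}{2}\times\{0,1\}$, where $(uv,1)$ represents ``$uv$ is an edge'' and $(uv,0)$ represents ``$uv$ is a non-edge'', and whose hyperedges are the $\binom{2k}{2}$-element pattern-sets that describe induced $C_{2k}$'s on $2k$-subsets of $[n]$. Every induced-$C_{2k}$-free graph on $[n]$ then yields a maximum independent set in $\mathcal{H}$. After verifying the standard co-degree conditions on $\mathcal{H}$, applying Theorem~\ref{thm: containers} would give a family $\mathcal{C}$ of containers with $|\mathcal{C}|\le 2^{\varepsilon n^2/2}$ such that every induced-$C_{2k}$-free graph is contained in some $C\in\mathcal{C}$, and each container spans at most $\beta n^{2k}$ hyperedges for a small $\beta=\beta(\eta)>0$. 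A container $C$ may be viewed as a graph $G_C$ on $[n]$ together with a small set $U_C$ of ``undecided'' pairs; every induced-$C_{2k}$-free graph in $C$ agrees with $G_C$ outside $U_C$.

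Since $C$ spans few hyperedges of $\mathcal{H}$, the pair $(G_C,\overline{G_C})$ contains few $2$-coloured induced copies of $C_{2k}$, and so Lemma~\ref{lem: multigraph removal lemma} combined with the specialised removal lemma (Lemma~\ref{lem: specialised removal lemma}) produces a genuinely induced-$C_{2k}$-free graph $G_C^{\ast}$ within $\eta n^2/4$ edges of $G_C$. I would then split $\mathcal{C}$ into \emph{good} containers (for which $G_C^{\ast}$ can be made into a $k$-template by changing at most $\eta n^2/4$ edges) and \emph{bad} containers. For good $C$, every induced-$C_{2k}$-free $G\in C$ differs from $G_C^{\ast}$ in at most $|U_C|+\eta n^2/4\le\eta n^2$ edges, so $G$ is $\eta$-close to a $k$-template, which is exactly the conclusion of the lemma. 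For bad $C$, the specialised stability result (Lemma~\ref{lem: specialised stability}) forces $G_C^{\ast}$ to be structurally far from every $k$-template, which should translate into a bound of the form
$$\#\{\text{induced-}C_{2k}\text{-free graphs in }C\}\;\le\;2^{(1-1/(k-1))\binom{n}{2}-\gamma n^2}$$
for some $\gamma=\gamma(\eta)>0$. Summing over the at most $2^{\varepsilon n^2/2}$ bad containers, and using the lower bound $F(n,k)\ge 2^{(1-1/(k-1))\binom{n}{2}-o(n^2)}$ (which follows from the lower bound on the number of $k$-templates established in Lemma~\ref{the number of templates}, or alternatively from Alekseev--Bollob\'as--Thomason), the total number of induced-$C_{2k}$-free graphs lying in bad containers is at most $F(n,k)\cdot 2^{-\varepsilon n^2}$ whenever $\varepsilon\ll\gamma$.

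The main obstacle I anticipate is coordinating these three ingredients---container encoding, removal step on the $2$-coloured structure, and stability---so that the parameters $\varepsilon,\beta,\eta,\gamma$ fit together consistently. In particular, translating ``$C$ spans few hyperedges'' into ``$G_C$ can be made induced-$C_{2k}$-free by few edge modifications'' requires care with the induced/coloured variant of the removal lemma, which is precisely why Lemma~\ref{lem: multigraph removal lemma} is likely used as a bridge; likewise, the specialised stability statement must be quantitatively strong enough that ``far from every $k$-template'' costs a definite multiplicative factor of $2^{\gamma n^2}$ in the number of induced-$C_{2k}$-free extensions within a bad container.
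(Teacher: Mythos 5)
There is a genuine gap, and it sits at the heart of your argument: you assume that each container $C$ can be viewed as a graph $G_C$ together with a \emph{small} set $U_C$ of undecided pairs, and you then conclude that every induced-$C_{2k}$-free graph in a good container lies within $|U_C|+\eta n^2/4\leq \eta n^2$ edge-changes of the single graph $G_C^{\ast}$. Theorem~\ref{thm: containers} gives no such bound on the undecided pairs, and in fact it cannot: since $F(n,k)\geq 2^{t_{k-1}(n)}$ while the number of containers is only $2^{o(n^2)}$, the containers that capture almost all induced-$C_{2k}$-free graphs must have $|G_R\cap G_B|$ of order $t_{k-1}(n)=\Theta(n^2)$, i.e.\ a quadratic number of undecided pairs. (Indeed, if every container had $|U_C|=o(n^2)$, each would contain at most $2^{|U_C|}=2^{o(n^2)}$ graphs and the containers could not cover $F(n,k)$ at all.) So the graphs inside a typical container are \emph{not} close in edit distance to any single graph; they are only close to being $k$-templates, with the freedom concentrated on the crossing pairs of one fixed $(k-1)$-partition. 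Relatedly, your treatment of bad containers is inverted: Lemma~\ref{lem: specialised stability} does not ``force $G_C^{\ast}$ to be far from every $k$-template''; rather, it says that a $C_{2k}$-free complete $2$-coloured multigraph with \emph{many} undecided pairs has its undecided pairs forming an approximate $T_{k-1}(n)$, and as written your proposal never establishes the claimed bound $2^{(1-1/(k-1))\binom{n}{2}-\gamma n^2}$ on the number of graphs in a bad container.

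The repair is the dichotomy the paper actually uses, based on $|G_R\cap G_B|$ rather than on closeness to $G_C^{\ast}$. Containers with $|G_R\cap G_B|< t_{k-1}(n)-\varepsilon' n^2$ each contain at most $2^{t_{k-1}(n)-\varepsilon' n^2}$ graphs, and summing over the $2^{o(n^2)}$ containers and comparing with $F(n,k)\geq 2^{t_{k-1}(n)}$ yields the exceptional set of size $F(n,k)2^{-\varepsilon n^2}$. For the remaining containers, one applies Lemma~\ref{lem: multigraph removal lemma}, then Lemma~\ref{lem: specialised stability} to see that the undecided pairs are within $\gamma n^2$ of $E(T_{k-1}(n))$, which pins down a $(k-1)$-partition $Q$, and then Lemma~\ref{lem: specialised removal lemma} on that partition to conclude that the \emph{within-class} decided pairs of the container are within $\beta n^2$ of a $k$-template $T$ on $Q$. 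Every induced-$C_{2k}$-free graph in such a container agrees with the container on decided pairs, so it can be turned into a $k$-template by at most $\eta n^2$ changes \emph{inside the classes}, the crossing pairs being irrelevant to the definition of a $k$-template. This last observation is precisely what your ``single reference graph $G_C^{\ast}$'' formulation loses.
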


Note that Lemma~\ref{lem: rough structure} immediately implies Theorem~\ref{secondary theorem}.

\subsection{Tools: containers, stability and removal lemmas}\label{subsec: rough 1}

The key tool in this section is Theorem~\ref{thm: containers}, which is an application of the more general theory of Hypergraph Containers developed in~\cite{BMS,SaTh}. We use the formulation of Theorem~1.5 in~\cite{SaTh}. We require the following definitions in order to state it.

A \emph{$2$-coloured multigraph} $G$ on vertex set $[N]$ is a pair of edge sets $G_R, G_B\subseteq [N]^{(2)}$, which we call the red and blue edge sets respectively. If $H$ is a fixed graph on vertex set $[h]$, a copy of $H$ in $G$ is an injection $f:[h]\rightarrow[N]$ such that for every edge $uv$ of $H$, $f(u)f(v)\in G_R$, and for every non-edge $u'v'$ of $H$, $f(u')f(v')\in G_B$. We write $H\subseteq G$ if $G$ contains a copy of $H$, and we say that $G$ is \emph{$H$-free} if there are no copies of $H$ in $G$. We say that $G$ is \emph{complete} if $G_R \cup G_B = [N]^{(2)}$. We denote by $G^B$ the graph on vertex set $[N]$ and edge set $G_B$.

\begin{theorem}\label{thm: containers}
Let $H$ be a fixed graph with $h:= |V(H)|$. For every $\eps>0$, there exists $c>0$ such that for all sufficiently large $N$, there exists a collection $\mathcal{C}$ of complete $2$-coloured multigraphs on vertex set $[N]$ with the following properties.
\begin{enumerate}[{\rm (a)}]
\item For every graph $I$ on $[N]$ that contains no induced copy of $H$, there exists $G\in \mathcal{C}$ such that $I \subseteq G$.
\item Every $G\in \mathcal{C}$ contains at most $\eps N^h$ copies of $H$.
\item $\log |\mathcal{C}| \leq c N^{2 - (h-2)/(\binom{h}{2}-1)} \log N$.
\end{enumerate}
\end{theorem}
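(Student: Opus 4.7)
The plan is to realise this statement as an instance of the hypergraph container theorem from~\cite{SaTh} (specifically Theorem~1.5 there, to whose output this statement is already tailored). First I would construct an auxiliary $\binom{h}{2}$-uniform hypergraph $\mathcal{H}$ with vertex set $V(\mathcal{H}):=[N]^{(2)}\times\{R,B\}$, viewed as the set of all ``coloured pairs'' on $[N]$. For each injection $f:V(H)\to[N]$, include in $\mathcal{H}$ the hyperedge
$$\bigl\{(f(u)f(v),R):uv\in E(H)\bigr\}\;\cup\;\bigl\{(f(u)f(v),B):uv\notin E(H)\bigr\}.$$
Each hyperedge has exactly $\binom{h}{2}$ vertices and encodes a potential induced copy of $H$. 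Any induced-$H$-free graph $I$ on $[N]$ yields the subset $S_I:=\{(e,R):e\in E(I)\}\cup\{(e,B):e\notin E(I)\}\subseteq V(\mathcal{H})$, and $S_I$ is independent in $\mathcal{H}$ precisely because $I$ has no induced copy of $H$.

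Next I would verify the co-degree conditions: for $2\le j\le\binom{h}{2}$, the number of hyperedges of $\mathcal{H}$ containing any fixed $j$-subset of $V(\mathcal{H})$ is at most $O(N^{h-s})$, where $s$ is the number of distinct vertices of $[N]$ spanned by those $j$ coloured pairs, and $s\ge(1+\sqrt{1+8j})/2$ whenever the subset is contained in some hyperedge. This co-degree profile is exactly the type that~\cite[Theorem~1.5]{SaTh} is calibrated for, and feeding it in produces a family $\mathcal{C}^{\ast}$ of subsets of $V(\mathcal{H})$ such that every independent set in $\mathcal{H}$ lies in some $T\in\mathcal{C}^{\ast}$, each $T$ spans at most $\eps N^h$ hyperedges of $\mathcal{H}$, and $\log|\mathcal{C}^{\ast}|\le cN^{2-(h-2)/(\binom{h}{2}-1)}\log N$ for a constant $c=c(\eps,H)$.

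For each container $T\in\mathcal{C}^{\ast}$, define a $2$-coloured multigraph $G(T)$ on $[N]$ by $G(T)_R:=\{e:(e,R)\in T\}$ and $G(T)_B:=\{e:(e,B)\in T\}$, and then set $\mathcal{C}:=\{G(T):T\in\mathcal{C}^{\ast},\ G(T)\text{ is complete}\}$. Since $S_I$ already contains a coloured representative for every pair, any container $T\supseteq S_I$ does too, so $G(T)$ is complete and belongs to $\mathcal{C}$; moreover $I\subseteq G(T)$ by construction, giving~(a). A copy of $H$ in $G(T)$ is literally a hyperedge of $\mathcal{H}$ lying inside $T$, so~(b) is the ``few-edges'' conclusion of the container theorem. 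Finally $|\mathcal{C}|\le|\mathcal{C}^{\ast}|$, which gives~(c).

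The only real obstacle is matching the generic container output to the precise exponent $2-(h-2)/(\binom{h}{2}-1)$: one has to check that the $j$-degree estimates above are sharp enough to saturate the container inequality in~\cite{SaTh}, and to carry the $\log N$ factor through their iterative construction. Everything else is bookkeeping once $\mathcal{H}$ is set up, since the content of the theorem is, in essence, a direct translation of the Saxton--Thomason container theorem into the language of $2$-coloured multigraphs.
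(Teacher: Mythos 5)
Your outline is sound, but note that the paper does not prove this statement at all: it is quoted verbatim as (the formulation of) Theorem~1.5 of Saxton and Thomason, and used as a black box. What you have written is essentially a reconstruction of how that quoted result is itself derived from the general hypergraph container theorem: the auxiliary $\binom{h}{2}$-uniform hypergraph on the coloured pairs $[N]^{(2)}\times\{R,B\}$, the observation that an induced-$H$-free graph $I$ gives an independent set $S_I$, the co-degree computation (a $j$-set inside a hyperedge spans $s$ vertices with $\binom{s}{2}\geq j$ and has degree $O(N^{h-s})$, so the binding case $j=\binom{h}{2}$, $s=h$ forces $\tau\approx N^{-(h-2)/(\binom{h}{2}-1)}$ and hence $\log|\mathcal{C}|\lesssim \tau N^2\log N$), and the translation of containers back into complete $2$-coloured multigraphs. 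This is the standard and correct route, and your handling of completeness (discarding containers that contain no $S_I$) and of properties (a)--(c) is fine, up to routine adjustments such as absorbing the at most $h!$ injections per hyperedge into the choice of $\eps$. One small slip: the result you should be invoking as the engine is the general container theorem of \cite{SaTh} (their main theorem for uniform hypergraphs), not their Theorem~1.5 --- Theorem~1.5 \emph{is} the statement being proved here, so as written your citation is circular, though the substance of the argument you describe is the intended one.
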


Another tool that we will use is the following classical Stability theorem of Erd\H{o}s and Simonovits (see e.g.~\cite{Erd1, Erd2, Sim}). By $T_k(n)$ we denote the \emph{Tur\'an graph}, the largest complete $k$-partite graph on $n$ vertices, and we define $t_{k}(n):= e(T_k(n))$. Given a family $\mathcal{H}$ of fixed graphs, we say a graph $G$ is \emph{$\mathcal{H}$-free} if $G$ does not contain any $H\in \mathcal{H}$ as a (not necessarily induced) subgraph, and we say $G$ is \emph{induced-$\mathcal{H}$-free} if $G$ does not contain any $H\in \mathcal{H}$ as an induced subgraph.

\begin{theorem}\label{thm: basic stability}
Let $\mathcal{H}=\{H_1,\dots, H_{\ell}\}$ be a family of fixed graphs, and let $k:= \min_{1\leq i\leq \ell} \chi (H_i)$. For every $\delta>0$ there exists $\varepsilon>0$ such that the following holds for all sufficiently large $n$. If a graph $G$ on $n$ vertices is $\mathcal{H}$-free and $e(G)\geq t_{k-1}(n)-\varepsilon n^2$, then $G$ can be obtained from $T_{k-1}(n)$ by changing at most $\delta n^2$ edges.
\end{theorem}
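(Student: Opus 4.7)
The approach is to combine Szemer\'edi's regularity lemma with the standard embedding (counting) lemma to reduce the problem to the classical Tur\'an-type stability statement for $K_k$-free graphs. First, fix $H_0 \in \mathcal{H}$ with $\chi(H_0)=k$; since $G$ is $\mathcal{H}$-free it is in particular $H_0$-free, so it is enough to prove the statement for the single forbidden graph $H_0$. Choose auxiliary parameters $d_0 \ll \varepsilon_0 \ll \delta$ and apply the regularity lemma to $G$ to obtain an equipartition $V_0, V_1, \dots, V_s$ with $|V_0|\le \varepsilon_0 n$, $|V_i|=m$ for $i\ge 1$, and at most $\varepsilon_0\binom{s}{2}$ non-regular pairs. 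Form the reduced graph $R$ on $[s]$ by placing $ij\in E(R)$ precisely when $(V_i,V_j)$ is $\varepsilon_0$-regular with density at least $d_0$.

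The crucial step is to show that $R$ is $K_k$-free. Suppose for contradiction that $R$ contained a $K_k$, say on the clusters $V_1,\dots,V_k$. Since $\chi(H_0)=k$, the graph $H_0$ embeds into some balanced blow-up of $K_k$ with parts of bounded size depending only on $h := |V(H_0)|$. The standard embedding lemma, applied to the $k$-partite subgraph of $G$ on $V_1\cup\cdots\cup V_k$ (whose pairs are $\varepsilon_0$-regular of density at least $d_0$), then produces a copy of $H_0$ inside $G$, contradicting $H_0$-freeness; here $m$ must be large enough relative to $h$ and $d_0$, which is arranged by taking $n$ large and $s$ bounded by the regularity lemma.

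Now count edges. The edges of $G$ that do not lie between clusters joined in $R$ number at most $(\varepsilon_0 + d_0)n^2 + O(n^2/s)$, once one accounts for edges incident to $V_0$, edges inside a single cluster, edges in irregular pairs, and edges in pairs of density below $d_0$. Combined with $e(G)\ge t_{k-1}(n)-\varepsilon n^2$, this gives $e(R)\cdot m^2 \ge t_{k-1}(n) - (\varepsilon + \varepsilon_0 + d_0)n^2 - O(n^2/s)$, which after dividing by $m^2$ yields $e(R)\ge t_{k-1}(s)-\varepsilon_1 s^2$ for a small $\varepsilon_1$ controllable by $\varepsilon, \varepsilon_0, d_0$. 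Apply the stability statement for the single forbidden clique $K_k$ (which can be proved independently by a direct minimum-degree/vertex-removal argument and does not require the present theorem): this yields a partition of $[s]$ into $k-1$ parts such that $R$ differs from $T_{k-1}(s)$ on this partition by at most $\eta s^2$ edges, for $\eta$ small. Pulling this partition back to clusters groups $V(G)\setminus V_0$ into $k-1$ classes of near-equal size, and then editing $G$ so that it coincides with the Tur\'an graph on the resulting $(k-1)$-partition of $V(G)$ changes at most $O(\eta n^2 + (\varepsilon_0+d_0)n^2 + n^2/s + \varepsilon_0 n^2)$ edges, which is at most $\delta n^2$ provided the parameters were chosen in the order $1/s \ll d_0 \ll \varepsilon_0, \eta \ll \varepsilon \ll \delta$.

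The main obstacle is bookkeeping: orchestrating the hierarchy $1/s \ll d_0 \ll \varepsilon_0, \eta \ll \varepsilon \ll \delta$ so that every error term shrinks in the right order, and verifying the hypotheses of the embedding lemma can be met uniformly. The substantive combinatorial input, namely the clique-free stability step for $R$, is classical and independent of the present statement, so it is not itself the binding difficulty; likewise the regularity-lemma setup is routine. The principal care is in combining these ingredients with cleanly controlled constants so that the reduction from the general family $\mathcal{H}$ to the single minimum-chromatic-number member $H_0$ does not lose anything.
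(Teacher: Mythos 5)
This statement is not proved in the paper at all: it is quoted as the classical Erd\H{o}s--Simonovits stability theorem, with references to the original literature, and used as a black box. So the only question is whether your regularity-lemma proof is sound, and it essentially is: reducing the family $\mathcal{H}$ to a single member $H_0$ of minimum chromatic number is valid (being $\mathcal{H}$-free implies being $H_0$-free and the conclusion only involves $k=\chi(H_0)$), the reduced graph $R$ is $K_k$-free by the embedding lemma applied to a blow-up of $K_k$ containing $H_0$, the edge count transfers to $e(R)\geq t_{k-1}(s)-\varepsilon_1 s^2$, and clique stability for $R$ pulls back to a near-balanced $(k-1)$-partition of $V(G)$. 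This is the standard modern route, different from (and if anything cleaner than) the original pre-regularity arguments the paper cites.

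One step is stated too loosely and would fail as written: your final bound on the number of edge changes, $O(\eta n^2+(\varepsilon_0+d_0)n^2+n^2/s+\varepsilon_0 n^2)$, only accounts for edges of $G$ that must be \emph{deleted} (edges inside clusters, in irregular or sparse pairs, incident to $V_0$, or between clusters in the same group). It does not bound the edges that must be \emph{added}: between two clusters in different groups the pair may be regular with density only $d_0$, so up to $(1-d_0)m^2$ cross pairs per cluster pair could be missing, and nothing in the listed error terms controls this. The fix is routine but must be said: having bounded the internal edges of $G$ by the small quantity above, use the hypothesis $e(G)\geq t_{k-1}(n)-\varepsilon n^2$ once more together with $\sum_{i<j}|W_i||W_j|\leq t_{k-1}(n)$ to conclude that the number of missing cross-group pairs is at most $\varepsilon n^2$ plus the same small quantity; then note the group sizes differ from $n/(k-1)$ by at most $O(\varepsilon_0 n + n/s)$, so moving that many vertices makes the partition exactly that of $T_{k-1}(n)$ at a further cost of $O((\varepsilon_0+1/s)n^2)$ changes. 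With that addition (and the hierarchy you already impose), the argument is complete.
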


The final tools that we introduce in this subsection are the following two Removal Lemmas. The first is an extension of the Induced Removal Lemma to families of forbidden graphs, and is due to Alon and Shapira~\cite{AlSh}. The original statement of this theorem also applies to infinite families of forbidden graphs, but the version for finite families is sufficient for our purposes. The second is a version of the Removal Lemma applicable to complete $2$-coloured multigraphs. The proof is similar to that of the standard Removal Lemma, so we omit it here; for details see~\cite{TownsendPhD}. For two sets $A,B$, we denote their symmetric difference by $A\triangle B$. For $2$-coloured multigraphs $G, G'$ on the same vertex set we define their distance by $\text{dist}(G, G'):= |G_R\triangle G'_R| + |G_B\triangle G'_B|$.

\begin{theorem}\cite{AlSh}\label{thm: induced removal lemma}
For every finite family of fixed graphs $\mathcal{H}$ and every $\delta >0$, there exists $\varepsilon>0$ such that the following holds for all sufficiently large $n$. If a graph $G$ on $n$ vertices contains at most $\varepsilon n^h$ induced copies of each graph $H\in \mathcal{H}$ on $h$ vertices, then $G$ can be made induced-$\mathcal{H}$-free by changing at most $\delta n^2$ edges.
\end{theorem}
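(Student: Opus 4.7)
The plan is to deduce this from the induced removal lemma for a single forbidden graph (Alon, Fischer, Krivelevich, Szegedy), which in turn rests on a strong (iterated) version of Szemer\'edi's regularity lemma. First I would apply the strong regularity lemma to $G$ to obtain a pair of vertex partitions $\mathcal{P}$ and $\mathcal{Q}$, where $\mathcal{Q}$ refines $\mathcal{P}$, $\mathcal{Q}$ is $\varepsilon_2$-regular (with $\varepsilon_2$ chosen to be astronomically small compared with $1/|\mathcal{P}|$), and for most pairs $(V_i,V_j)$ of clusters in $\mathcal{P}$ almost all of the refined pairs $(U_k,U_\ell)$ of $\mathcal{Q}$ with $U_k\subseteq V_i$, $U_\ell\subseteq V_j$ have density close to $d(V_i,V_j)$. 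The parameters are chosen in terms of $\delta$ and the largest order $h_{\max}$ of a graph in $\mathcal{H}$.

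Next I would construct the modified graph $G'$ by performing the standard clean-up with respect to $\mathcal{P}$: delete all internal edges of each cluster, delete all edges inside $\mathcal{P}$-irregular pairs, delete all edges inside pairs with density below some threshold $\gamma$, and complete the bipartite graph to a complete bipartite graph between pairs with density above $1-\gamma$. By choosing $\gamma$ and the irregularity level appropriately, the total number of edges altered is at most $\delta n^2$, so it suffices to show that $G'$ is induced-$\mathcal{H}$-free.

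Suppose for a contradiction that $G'$ contains an induced copy of some $H\in\mathcal{H}$ with $|V(H)|=h$. This induced copy yields an $h$-tuple of clusters $V_{i_1},\dots,V_{i_h}$ of $\mathcal{P}$ (not necessarily distinct, but after a standard averaging argument we may assume they are) such that, for each edge $ab$ of $H$, $(V_{i_a},V_{i_b})$ is a pair of density at least $1-\gamma$, and for each non-edge $a'b'$ of $H$, $(V_{i_{a'}},V_{i_{b'}})$ is a pair of density at most $\gamma$. I would then invoke the induced counting/embedding lemma afforded by the refinement $\mathcal{Q}$: picking one refined cluster $U_{k_a}\subseteq V_{i_a}$ for each $a\in V(H)$ so that all $\binom{h}{2}$ refined pairs inherit the correct density to within $\varepsilon_2$ and are $\varepsilon_2$-regular, one can embed the vertices of $H$ one at a time into the $U_{k_a}$ while maintaining $\Omega(|U_{k_a}|)$ candidate vertices at each step, with the neighbourhood (respectively non-neighbourhood) restrictions imposed by both edges and non-edges of $H$ simultaneously controlled by regularity. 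This produces $\Omega(n^h)$ induced copies of $H$ in $G'$, and since $G'$ and $G$ differ in at most $\delta n^2$ edges, also $\Omega(n^h)$ induced copies of $H$ in $G$ (up to an $O(\delta n^h)$ loss that is absorbed by taking $\varepsilon$ sufficiently small relative to $\delta$), contradicting the hypothesis. To pass from a single $H$ to a finite family $\mathcal{H}$, I would apply the argument once for all $H\in\mathcal{H}$ simultaneously, with $\varepsilon$ taken to be the minimum over $H\in\mathcal{H}$ of the constant supplied by the single-graph version (after rescaling $\delta$ by $|\mathcal{H}|$ so the edge changes aggregate to at most $\delta n^2$).

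The main obstacle, and the whole reason the strong regularity lemma is needed rather than Szemer\'edi's original one, is the simultaneous control of edges and non-edges in the induced embedding step. A plain regular partition is not sufficient: a pair of density $\gamma$ need not have large induced matchings missing, and standard counting lemmas bound only subgraph counts, not induced subgraph counts. It is precisely the property that most refined pairs of $\mathcal{Q}$ inherit the density of their parent $\mathcal{P}$-pair that lets one insist on both adjacency and non-adjacency patterns at the refined scale, thereby lifting a single induced copy in the reduced graph to $\Omega(n^h)$ induced copies in $G$. Handling this step cleanly, together with the averaging over the possibly repeated clusters $V_{i_a}$, is where most of the work lies.
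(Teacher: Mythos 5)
The paper does not prove this statement at all: Theorem~\ref{thm: induced removal lemma} is imported as a black box from Alon and Shapira~\cite{AlSh}, so the only question is whether your reconstruction of that regularity argument is sound, and it has two genuine gaps. First, your clean-up is decided by the coarse densities of $\mathcal{P}$ and deletes all internal edges unconditionally, and this already fails: take $G=K_n$ and $H=P_3$. Your $G'$ is a complete multipartite graph, which contains induced copies of $P_3$ (two vertices inside one cluster plus one vertex outside), while $G$ contains none, so the intended contradiction cannot be reached. The ``standard averaging argument'' letting you assume the $h$ clusters are distinct is not available here, because you have only a single induced copy in $G'$ rather than a positive proportion of copies. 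In the actual Alon--Fischer--Krivelevich--Szegedy/Alon--Shapira proof, the whole point of the refinement $\mathcal{Q}$ is that one fixes a representative refined cluster $W_i\subseteq V_i$ for each $i$ and makes every modification decision --- between clusters \emph{and} inside clusters --- according to the densities spanned by the $W_i$ in $G$; this is precisely what guarantees that any induced copy surviving in $G'$ can be re-embedded. Relatedly, your dichotomy ``edges of $H$ only in pairs of density at least $1-\gamma$, non-edges only in pairs of density at most $\gamma$'' is false, since pairs of intermediate density are left untouched and can host both edges and non-edges of the copy (though intermediate-density regular pairs do not hurt the embedding itself).

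Second, the final transfer ``$\Omega(n^h)$ induced copies in $G'$, hence $\Omega(n^h)$ in $G$ up to an $O(\delta n^h)$ loss absorbed by choosing $\varepsilon$ small'' is quantitatively wrong: the implicit constant produced by the embedding lemma depends on $\varepsilon_2$ and on $|\mathcal{P}|$, which is tower-type in $1/\delta$, so it is far smaller than $\delta$, and in any case the copies in $G'$ may use adjacencies that the clean-up created or destroyed, so they need not correspond to copies in $G$. The correct argument never counts copies in $G'$: the single induced copy in $G'$ is used only to read off a pattern of representative clusters $W_{i_1},\dots,W_{i_h}$ whose pairwise densities \emph{in the original $G$} are compatible with the edges and non-edges of $H$, and one then embeds $H$ directly into $G$ inside these $W$'s, producing at least $c(\delta,h)\,n^h$ induced copies of $H$ in $G$ itself; taking $\varepsilon<c(\delta,h)$ gives the contradiction. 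With these repairs --- decisions driven by the refined representatives, including the within-cluster case, and embedding into $G$ rather than $G'$ --- your outline becomes the standard proof, and your reduction of the finite-family case to one simultaneous clean-up is fine.
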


\begin{lemma}\label{lem: multigraph removal lemma}
For every fixed graph $H$ on $h$ vertices, and every $\delta>0$, there exists $\varepsilon>0$ such that the following holds for all sufficiently large $n$. If a complete $2$-coloured multigraph $G$ on vertex set $[n]$ contains at most $\varepsilon n^h$ copies of $H$, then there exists a complete $2$-coloured multigraph $G'$ on vertex set $[n]$ such that $G'$ is $H$-free and ${\rm dist}(G, G')\leq \delta n^2$.
\end{lemma}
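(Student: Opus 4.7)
The plan is to mirror the standard proof of the graph removal lemma in the two-coloured multigraph setting. We first apply a two-coloured Szemer\'edi regularity lemma to $G$, obtaining a partition $[n] = V_0 \cup V_1 \cup \dots \cup V_M$ with $|V_0| \leq \eta' n$, $|V_1| = \dots = |V_M|$, and with at most $\eta' M^2$ pairs $(V_i, V_j)$ (for $i,j \geq 1$) failing to be simultaneously $\eta'$-regular in both $G_R$ and $G_B$. Such a partition is obtained, for instance, by applying the usual regularity lemma to $G_R$ and then refining to regularise $G_B$ as well. Fix constants with $1/M \ll \eta' \ll \alpha \ll \delta$; the value of $\eps$ will be chosen last.

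Next we form the reduced two-coloured multigraph $\widetilde{G}$ on vertex set $[M]$, declaring $ij$ red (respectively blue) iff $i \neq j$, the pair $(V_i, V_j)$ is $\eta'$-regular in $G_R$ (resp.~$G_B$), and $d_R(i,j) \geq \alpha$ (resp.~$d_B(i,j) \geq \alpha$). The two-coloured counting lemma---proved by the standard sequential embedding argument, using the regularity of each pair in the colour dictated by the corresponding $H$-edge or $H$-non-edge---then shows that every copy of $H$ in $\widetilde{G}$, i.e.~every map $f\colon V(H) \to [M]$ sending $H$-edges to red edges and $H$-non-edges to blue edges of $\widetilde{G}$, lifts to at least $c(\alpha, \eta', h)(n/M)^h$ copies of $H$ in $G$. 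Taking $\eps$ sufficiently small in terms of $M, \alpha, \eta', h$ therefore forces $\widetilde{G}$ to be $H$-free.

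We now construct $G'$ by surgery on each pair $(V_i, V_j)$ with $i,j \geq 1$, $i \neq j$. If the pair is regular and dense in both colours, leave $G$ unchanged there; if it is regular in both but $d_R(i,j) < \alpha$ (which by completeness of $G$ forces $d_B(i,j) > 1 - \alpha$), convert the pair to blue-only, deleting all red edges on it and adding any thereby uncovered vertex pair to $G'_B$; symmetrically when $d_B(i,j) < \alpha$; and on each \emph{exceptional} pair (one involving $V_0$, within a single part, or irregular in at least one colour), default to blue-only. A routine accounting gives $\mathrm{dist}(G, G') = O((\eta' + 1/M + \alpha) n^2)$. Any copy of $H$ in $G'$ that uses no exceptional pair reduces to a copy of $H$ in $\widetilde{G}$, contradicting the previous paragraph; the remaining copies all use at least one exceptional pair. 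Since there are only $O((\eta' + 1/M) n^2)$ exceptional vertex pairs, the number of such surviving copies is at most $O((\eta' + 1/M) n^h)$. A standard greedy edge-flipping cleanup---repeatedly picking an edge lying in the most remaining copies, flipping its colour while patching completeness---eliminates them at a further cost of at most $O((\eta' + 1/M) n^2)$ edge changes, so that $\mathrm{dist}(G, G') < \delta n^2$ overall.

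The main obstacle is the bookkeeping in the surgery and cleanup steps: one must preserve completeness of $G'$ at every stage, keep the cumulative edge change below $\delta n^2$, and ensure the counting lemma applies in the correct colour to every pair used by a potential copy of $H$ in $G'$. Using a \emph{simultaneous} (rather than sequential) two-coloured regularity partition is essential, since otherwise a pair could be regular in one colour but not the other, obstructing the sequential embedding argument underlying the counting lemma.
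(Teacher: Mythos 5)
The paper itself does not prove this lemma (it is deferred to~\cite{TownsendPhD} with the remark that the argument is ``similar to that of the standard Removal Lemma''), so your proposal has to stand on its own, and it has a genuine gap at the final cleanup step. Everything up to the reduction ``a copy of $H$ in $G'$ avoiding exceptional pairs yields a copy of $H$ in the reduced multigraph $\widetilde{G}$'' is fine, but at that point you are still left with up to $O((\eta'+1/M)n^h)$ copies of $H$ meeting an exceptional pair, and the greedy edge-flipping argument does not remove them at cost $O((\eta'+1/M)n^2)$. First, flipping the colour of a pair can create new copies of $H$ (a red pair turned blue can complete many configurations using it as an $H$-non-edge), so the number of remaining copies is not monotone and the process need not stay within the claimed budget. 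Second, even if no new copies were ever created, the greedy bound is too weak: with $m$ copies left, the best pair lies in only about $mh^2/n^2$ of them, so driving $cn^h$ copies to zero costs on the order of $n^2\log n$ flips, not $O(cn^2)$. More fundamentally, ``few copies, all meeting a small set of pairs, can be destroyed by few colour changes'' is essentially the statement being proved, so invoking it as a ``standard cleanup'' is circular.

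The underlying obstruction is that in a \emph{complete} $2$-coloured multigraph you can never make a pair unusable: whatever colour an internal pair, a pair meeting $V_0$, or an irregular pair receives, it can still host an $H$-edge (if red) or an $H$-non-edge (if blue); so unless $H$ is complete or empty, copies of $H$ through exceptional pairs cannot be excluded by surgery on a plain regularity partition. This is exactly the difficulty that separates the induced removal lemma from the ordinary one, and it is why your plain two-coloured regularity partition does not suffice. The correct route is the one used for induced/coloured removal lemmas: apply a strengthened (iterated, Alon--Fischer--Krivelevich--Szegedy-type) regularity lemma simultaneously to $G_R$ and $G_B$, and recolour each pair of parts according to densities measured on a much finer partition, so that \emph{any} copy of $H$ in the cleaned multigraph --- including copies with two vertices in the same part or in formerly irregular pairs --- forces at least $\varepsilon n^h$ copies of $H$ in $G$; the cleaned multigraph is then $H$-free outright and no residual cleanup is needed. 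Alternatively, one can deduce the lemma from the known removal lemma for edge-coloured complete graphs (viewing the three states red-only, blue-only, both as colours). As written, your final step does not close the argument.
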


\subsection{Stability and removal lemmas for even cycles}\label{subsec: rough 2}

Suppose $H$ is a complete $2$-coloured multigraph on $m$ vertices with $H_R\cap H_B=\emptyset$. If $m=3$ and $|H_R|\leq 1$ we call $H$ a \emph{mostly blue triangle}. For $k\in \{4,5,6\}$, if $m=4$ and $|H_R|\geq 6-k$ and $H^B$ contains a copy of $P_4$ then we call $H$ a \emph{$k$-good tetrahedron}. The following technical proposition will be useful in proving Lemmas~\ref{lem: specialised stability} and~\ref{lem: specialised removal lemma}.

\begin{proposition}\label{prop: C_2k in small multigraphs}
Let $k\geq 4$ and let $G$ be a complete $2$-coloured multigraph on $2k$ vertices. If $G$ satisfies one of the following properties then $G$ contains a copy\COMMENT{Note that I do want to say copy, instead of induced copy, here and in similar instances throughout this section, since $G$ is a $2$-coloured multigraph, and this useage is consistent with the definition of $G$ containing a fixed graph that was given at the beginning of the section.} of $C_{2k}$. Below, $r_i$ always denotes a red edge.
\begin{enumerate}[{\rm (E1)}]
\item $G_R\triangle G_B$ is a set of at most $k$ disjoint (red or blue) edges.
\item $G_R\triangle G_B$ is the edge set of two disjoint copies of a blue $K_k$.
\item $G_R\triangle G_B$ is the edge set of a union of disjoint graphs $K_3^1, K_3^2, r_1,\dots, r_{k-3}$, where each $K_3^i$ is a mostly blue triangle\COMMENT{Note that (E3) is not sufficient to imply that $G$ contains a $C_{2k}$ in the case $k=3$ (in particular, there is no covering permutation if exactly one of the mostly blue triangles contains a red edge). This creates big problems when trying to emulate the proof of Lemma~\ref{lem: specialised removal lemma} for $k=3$. So extending the rough structure section to the $k=3$ case may be more effort than it's worth. - Tim}.
\item $G_R\triangle G_B$ is the edge set of a union of disjoint graphs $K_4^1, r_1,\dots, r_{k-2}$, where $K_4^1$ is a $4$-good tetrahedron.
\item $k\geq 5$ and $G_R\triangle G_B$ is the edge set of a union of disjoint graphs $K_4^1, r_1,\dots, r_{k-2}$, where $K_4^1$ is a $5$-good tetrahedron.
\item $k\geq 6$ and $G_R\triangle G_B$ is the edge set of a union of disjoint graphs $K_4^1, r_1,\dots, r_{k-2}$, where $K_4^1$ is a $6$-good tetrahedron.
\end{enumerate}
\end{proposition}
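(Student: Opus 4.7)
The plan is to verify each of the six cases (E1)--(E6) by constructing an explicit cyclic ordering of the vertex set of $G$. The key observation is that a copy of $C_{2k}$ in $G$ corresponds precisely to a cyclic ordering $v_1, \ldots, v_{2k}$ such that each consecutive pair lies in $G_R$ and each non-consecutive pair lies in $G_B$. Since edges in $G_R \cap G_B$ are unconstrained, the task reduces to placing the edges of $G_R \triangle G_B$: rigid red edges (those in $G_R \setminus G_B$) must sit at consecutive positions on the cycle, while rigid blue edges (those in $G_B \setminus G_R$) must sit at cyclic distance at least $2$, i.e.\ be chords.

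The two easiest cases are (E1) and (E2). For (E1), the matching $G_R \triangle G_B$ has at most $k$ edges, so on a cycle of length $2k$ one can place each rigid red edge as a pair of consecutive positions and each rigid blue edge as a long chord (at distance $k$, say); the rigid edges are vertex-disjoint, so no conflict arises. For (E2), interleave the two blue cliques as $a_1, b_1, a_2, b_2, \ldots, a_k, b_k$: every cycle edge crosses the bipartition (flexible, so red), and every within-clique chord is rigid blue as required.

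For (E3), each mostly blue triangle contributes three vertices. If a triangle has a rigid red edge $uv$, include $uv$ and $vw$ (for the third vertex $w$) as consecutive cycle edges, so the remaining edge $uw$ is a chord and is automatically blue; if the triangle is entirely blue, place its three vertices at pairwise non-consecutive positions. The $k-3$ rigid red matching edges fill the remaining $2k-6$ positions in consecutive pairs, and a simple alternation between triangle vertices and matching pairs produces a valid arrangement.

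Cases (E4)--(E6) are the most delicate. Label the tetrahedron vertices $a, b, c, d$ with blue $P_4$ on $ab, bc, cd$; each of the remaining edges $ac, ad, bd$ is either rigid red or rigid blue, subject to the $|H_R|$-bound. I would analyse each colouring of $\{ac, ad, bd\}$ separately: when all three are rigid red, the cycle contains the path $c, a, d, b$ (so $ab, bc, cd$ become chords, automatically blue); when only $ac, bd$ are red, the cycle contains the two disjoint edges $ac$ and $bd$ separated by one of the rigid red matching pairs; the remaining subcases are similar. The $k-2$ rigid red matching edges always fill the leftover $2k-4$ positions in consecutive pairs. The main obstacle is (E6) with $|H_R|=0$, where all six tetrahedron edges are rigid blue and the four tetrahedron vertices must occupy pairwise non-consecutive positions; here the hypothesis $k \geq 6$ provides the $k-2 \geq 4$ matching pairs needed to serve as separators between the four singletons. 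Once the explicit cyclic ordering is written down in each subcase, verifying that all rigid red edges are consecutive and all rigid blue edges are chords is a routine check.
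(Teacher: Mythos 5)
Your framing is the same as the paper's: a copy of $C_{2k}$ is a cyclic ordering in which every edge of $G_R\setminus G_B$ sits on the cycle, every edge of $G_B\setminus G_R$ is a chord, and edges of $G_R\cap G_B$ are unconstrained; the paper formalises exactly this via ``covering permutations'' and then lists explicit orderings case by case, just as you propose. Your treatments of (E2) and (E4)--(E6) agree with the paper's permutations, including the key point that $k\geq 6$ supplies the $k-2\geq 4$ separating matching pairs in (E6) (the same count, $k-2\geq 3$, is what makes (E5) work for $k\geq 5$).

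There is, however, a concrete error in your case (E3). In a mostly blue triangle all three edges lie in $G_R\triangle G_B$, so if $uv$ is the unique red edge then \emph{both} $vw$ and $uw$ are rigid blue. Your placement puts $u,v,w$ at three consecutive positions, so $vw$ becomes a cycle edge and would have to lie in $G_R$ --- a contradiction; you only checked that $uw$ ends up as a chord and overlooked $vw$. The correct placement (as in the paper's permutations $(4,1,2,5,3,6)$ and $(1,2,4,3,5,6)$) keeps $u,v$ consecutive but separates $w$ from both of them, using a vertex of the other triangle or of a red matching edge as a buffer. A smaller slip of the same kind occurs in (E1): placing every rigid blue edge antipodally can clash with the requirement that the red edges be consecutive --- for $k=4$ with three blue edges and one red edge, any three antipodal pairs leave only an antipodal (hence non-consecutive) pair of positions for the red edge --- so ``no conflict arises'' needs an argument; the paper instead interleaves the blue endpoints as in $(1,3,\dots,2b-1,2,4,\dots,2b)$, with the case $b=1$ handled separately. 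Both defects are local and easily repaired, but as written the orderings you describe in these two cases are not valid.
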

\begin{proof}
Let $V(G)=\{v_1,\dots, v_{2k}\}$. Let $C=c_1\dots c_{2k}$ be a $2k$-cycle. Note that if there exists a permutation $\sigma$ of $[2k]$ such that for every edge $c_ic_j\in E(C)$ we have $v_{\sigma(i)}v_{\sigma(j)}\in G_R$ and such that for every non-edge $c_{i'}c_{j'}\notin E(C)$ we have $v_{\sigma(i')}v_{\sigma(j')}\in G_B$, then $v_{\sigma(1)}\dots v_{\sigma(2k)}$ is a copy of $C_{2k}$ in $G$. We call such a permutation $\sigma$ a \emph{covering permutation from $C$ to $G$}.
For ease of reading, we will write a permutation $\sigma$ on $[2k]$ using the notation $\sigma = (\sigma(1), \dots, \sigma(2k))$. If $\sigma$ restricted to $\{m, m+1, \dots, 2k\}$ is the identity permutation, we may simply write $\sigma=\{\sigma(1),\dots, \sigma(m-1)\}$ instead. So for example if $\sigma=(1,3,4,2)$ is a covering permutation from $C$ to $G$, then $v_1v_3v_4v_2v_5 \dots v_{2k}$ is a copy of $C_{2k}$ in $G$.

We now show that each of the properties (E1),$\dots$,(E6) imply that there exists a covering permutation from $C$ to $G$, and hence that $G$ contains a copy of $C_{2k}$.

\begin{enumerate}[{\rm (E1)}]
\item There exists $b,r\in \mathbb{N}\cup \{0\}$ with $b+r\leq k$ such that, by relabelling vertices if necessary, $G_B \backslash G_R =\{v_1v_2, \dots, v_{2b-1}v_{2b}\}$ and $G_R \backslash G_B=\{v_{2b+1}v_{2b+2}, \dots, v_{2(b+r) -1}v_{ 2(b+r)}\}$. Depending on the value of $b$ we find the following covering permutations $\sigma$ from $C$ to $G$, as required.
\begin{itemize}
\item If $b=0$ then $\sigma$ is the identity permutation.
\item If $b=1$ then $\sigma=(1,3,4,2)$.
\item If $b\geq 2$ then $\sigma=(1,3,\dots, 2b-1, 2,4, \dots, 2b).$
\end{itemize}

\item Let $\{v_1,\dots, v_k\}, \{v_{k+1},\dots, v_{2k}\}$ be the respective vertex sets of the two copies of a blue $K_k$ in $G_R\triangle G_B$. Then $\sigma=(1,k+1,2,k+2,\dots,k,2k)$ is a covering permutation from $C$ to $G$, as required.

\item Let $V(K_3^1)=\{v_1,v_2,v_3\}, V(K_3^2)=\{v_4,v_5,v_6\}$ and $V(r_i)=\{v_{2i+5}, v_{2i+6}\}$ for every $i\in [k-3]$. Depending on the colour of the edges in $K_3^1, K_3^2$ we find the following covering permutations $\sigma$ from $C$ to $G$, as required.
\begin{itemize}
\item If $K_3^1, K_3^2$ both contain no red edges, then $\sigma = (1,4,2,5,3,6)$.
\item If $K_3^1$ contains exactly one red edge $v_1v_2$ and $K_3^2$ contains no red edges, then $\sigma = (4,1,2,5,3,6)$.
\item If $K_3^1$ contains exactly one red edge $v_1v_2$ and $K_3^2$ contains exactly one red edge $v_5v_6$, then $\sigma = (1,2,4,3,5,6)$.
\end{itemize}

\item Let $V(K_4^1)=\{v_1,v_2,v_3,v_4\}$ and $V(r_i)=\{v_{2i+3}, v_{2i+4}\}$ for every $i\in [k-2]$. Depending on the configuration of red edges in $K_4^1$ we find the following covering permutations $\sigma$ from $C$ to $G$, as required.
\begin{itemize}
\item If $K_4^1$ contains exactly three red edges $v_1v_2, v_2v_3, v_3v_4$, then $\sigma$ is the identity permutation.
\item If $K_4^1$ contains exactly two red edges $v_1v_2, v_2v_3$, then $\sigma = (1,2,3,5,6,4)$.
\item If $K_4^1$ contains exactly two red edges $v_1v_2, v_3v_4$, then $\sigma = (1,2,5,6,3,4)$.
\end{itemize}

\item We may assume that $K_4^1$ contains exactly one red edge, since that is the only case not covered by (E4). Let $V(K_4^1)=\{v_1,v_2,v_3,v_4\}$ and $V(r_i)=\{v_{2i+3}, v_{2i+4}\}$ for every $i\in [k-2]$, and let $v_1v_2$ be the red edge in $K_4^1$. Then $\sigma = (1,2,5,6,3,7,8,4)$ is a covering permutation from $C$ to $G$, as required.

\item We may assume that $K_4^1$ contains no red edges, since that is the only case not covered by (E5). Let $V(K_4^1)=\{v_1,v_2,v_3,v_4\}$ and $V(r_i)=\{v_{2i+3}, v_{2i+4}\}$ for every $i\in [k-2]$. Then $\sigma = (1,5,6,2,7,8,3,9,10,4)$ is a covering permutation from $C$ to $G$, as required.
\end{enumerate}
\end{proof}

We now use Theorem~\ref{thm: basic stability} and Proposition~\ref{prop: C_2k in small multigraphs} to prove the following more specialised Stability result involving $C_{2k}$-free $2$-coloured multigraphs.

\begin{lemma}\label{lem: specialised stability}
Let $k\geq 4$. For every $\delta>0$ there exists $\varepsilon>0$ such that the following holds for all sufficiently large $n$. If a complete $2$-coloured multigraph $G$ on vertex set $[n]$ is $C_{2k}$-free and $|G_R\cap G_B|\geq t_{k-1}(n)-\varepsilon n^2$, then the graph $([n], G_R\cap G_B)$ can be obtained from $T_{k-1}(n)$ by changing at most $\delta n^2$ edges.
\end{lemma}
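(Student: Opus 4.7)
The plan is to apply Theorem~\ref{thm: basic stability} to the graph $H := ([n], G_R \cap G_B)$ with the singleton forbidden family $\mathcal{H} = \{F\}$, where $F$ is the complete $k$-partite graph $K_{2,2,\dots,2}$ (equivalently, the Tur\'an graph $T_k(2k)$, i.e.\ $K_{2k}$ with a perfect matching removed). Since $\chi(F) = k$ and the hypothesis gives $e(H) \geq t_{k-1}(n) - \varepsilon n^2$, Theorem~\ref{thm: basic stability} will directly deliver the desired conclusion once I verify that $H$ is $F$-free.

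The crux of the argument is therefore to show, using only the $C_{2k}$-freeness of $G$, that $H$ does not contain $F$. I would argue by contradiction: suppose $H$ contains a copy of $F$ on $2k$ vertices $\{a_1,b_1,\dots,a_k,b_k\}$, with pairs $\{a_i,b_i\}$ forming the $k$ colour classes. Restricting $G$ to this set of $2k$ vertices yields a complete $2$-coloured multigraph $G'$ in which every between-pair edge lies in $G_R \cap G_B$; consequently, $G'_R \triangle G'_B$ is contained in the within-pair set $\{a_ib_i : i \in [k]\}$, which is a disjoint collection of at most $k$ (red or blue) edges. This is precisely case (E1) of Proposition~\ref{prop: C_2k in small multigraphs}, which supplies a copy of $C_{2k}$ in $G' \subseteq G$, contradicting the $C_{2k}$-freeness assumption.

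Given this, the proof is a routine quantitative calibration: given $\delta > 0$, choose $\varepsilon > 0$ to be the constant returned by Theorem~\ref{thm: basic stability} applied to $\{F\}$; then for all sufficiently large $n$, the $F$-freeness of $H$ together with $e(H) \geq t_{k-1}(n) - \varepsilon n^2$ yields that $H$ can be obtained from $T_{k-1}(n)$ by changing at most $\delta n^2$ edges.

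The main conceptual obstacle is choosing $F$ correctly: its chromatic number must be exactly $k$, so that stability delivers closeness to $T_{k-1}(n)$ rather than some coarser Tur\'an graph; yet its presence as a subgraph of $H$ must force a copy of $C_{2k}$ in $G$ irrespective of how the remaining pairs of $G$ are coloured red or blue. The graph $K_{2,2,\dots,2}$ is tailor-made for this role because its complement in $K_{2k}$ is a perfect matching, which is exactly the ``at most $k$ disjoint (red or blue) edges'' defect structure handled by case (E1). Any smaller complete multipartite target gives the wrong Tur\'an threshold, while any target whose complement is a more complicated graph would not be covered by (E1) alone and would require invoking further cases of Proposition~\ref{prop: C_2k in small multigraphs}.
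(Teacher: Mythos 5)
Your proposal is correct and follows essentially the same route as the paper: the paper also deduces from Proposition~\ref{prop: C_2k in small multigraphs}(E1) that $([n],G_R\cap G_B)$ must be $T_k(2k)$-free (your $K_{2,\dots,2}$ observation, phrased contrapositively), and then applies Theorem~\ref{thm: basic stability} with $\chi(T_k(2k))=k$ to conclude.
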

\begin{proof}
Choose $n_0\in \mathbb{N}$ and $\varepsilon>0$ such that $1/n_0 \ll \varepsilon \ll \delta$. Let $n\geq n_0$. Since $G$ is $C_{2k}$-free, we know by Proposition~\ref{prop: C_2k in small multigraphs} that no $2k$ vertices of $G$ induce on $G$ a $2$-coloured multigraph $G'$ that satisfies (E1). So, since $G$ is complete, the graph $([n], G_R\cap G_B)$ must be $T_k(2k)$-free. Note that $\chi(T_k(2k))=k$. By Theorem~\ref{thm: basic stability}, this together with the fact that $|G_R\cap G_B|\geq t_{k-1}(n)-\varepsilon n^2$ implies that the graph $([n], G_R\cap G_B)$ can be obtained from $T_{k-1}(n)$ by changing at most $\delta n^2$ edges.
\end{proof}

The following proposition characterises the structure of graphs without $k$-good tetrahedrons. It will be useful in proving Lemma~\ref{lem: specialised removal lemma}. The proof is fairly straightforward so we give only a sketch of it here.

\begin{proposition}\label{prop: k-good tetradehron free}
Let $G$ be a $2$-coloured multigraph with $G_R\cap G_B=\emptyset$.
\begin{enumerate}[{\rm (i)}]
\item If $G$ does not contain a $6$-good tetrahedron then $G^B$ is a disjoint union of stars and triangles\COMMENT{Full Proof: Suppose a $2$-coloured multigraph $G$ is $6$-good tetrahedron-free. Then $G^B$ is $P_4$-free, by definition. So all paths in $G^B$ have length at most $2$. Let $H$ be a component of $G^B$ that is not a triangle. Then $H$ is a tree, since any $4$-cycle or longer in $H$ would contain a $P_4$ (as would a triangle with a pendant edge). Suppose for a contradiction that $H$ has $2$ vertices $u,v$ both of degree at least $2$. Find a path $P=\{u,u_1,\dots, u_m,v\}$ in $H$ (of length at least $1$). Note that if the path has length $1$ then $u=u_m$ and $v=u_{1}$. Note that since $H$ is a tree, $N(u)\backslash \{u_1\}\cap P=\emptyset$ and $N(v)\backslash \{u_m\}\cap P=\emptyset$. So since $|N(u)|, |N(v)|\geq 2$, there exist vertices $u', v'$ such that $u'\in N(u)\backslash P$ and $v'\in N(v)\backslash P$. Since $H$ is a tree, $u'\ne v'$, and $u'Pv'$ is a path of length at least $3$, which contradicts the fact that $H$ is $P_4$-free. So $H$ has at most one vertex of degree at least $2$, and so $H$ must be a star. So $G^B$ is a disjoint collection of stars and triangles, as required.}.
\item If $G$ does not contain a $5$-good tetrahedron then $G^B$ is a disjoint union of stars and cliques\COMMENT{Full Proof: Suppose a $2$-coloured multigraph $G$ is $5$-good tetrahedron-free. Then the vertices of every copy of $P_4$ in $G^B$ induce a $K_4$, by definition. Let $H$ be a component of $G^B$ that is not a star or triangle. Since $P_4$-free graphs consist only of a disjoint union of stars and triangles, $H$ must contain a $P_4$, and hence a $K_4$. We now prove that $H$ is a clique by induction on $n=|H|$, for all $n\geq 4$. For the base case, $n=4$ so $H=K_4$ and we are done. Otherwise, suppose $n> 4$. Choose a $K_4$ in $H$ and choose $1$ vertex $v$ not in this $K_4$, and note that $H-v$ is not a triangle or a star (since it contains a $K_4$). So by the inductive hypothesis, $H-v$ is a clique. Suppose for a contradiction that there is a vertex $u$ in $H-v$ such that $uv\notin E(H)$. Then $v$ and $u$ together with a vertex in $H-v$ adjacent to $v$, and any other vertex in $H-v$ forms a $P_4$ but not a $K_4$. This is a contradiction, and hence $H$ is a clique. This completes the induction, and hence the proof.}.
\item If $G$ does not contain a $4$-good tetrahedron then $G^B$ is a disjoint union of suns\COMMENT{Full Proof: Suppose a $2$-coloured multigraph $G$ is $4$-good tetrahedron-free. Then the vertices of every copy of $P_4$ in $G^B$ induce a $K_4$ or a $K_4^-$, by definition. Let $H$ be a component of $G^B$ that is not a star or clique. Since all graphs in which the vertices of every $P_4$ induce a $K_4$ consist only of a disjoint union of stars and cliques, $H$ must contain a $P_4$ that does not induce a $K_4$, and hence $H$ must contain an induced $K_4^-$. We now prove that $H$ is a sun by induction on $n=|H|$, for all $n\geq 4$. For the base case, $n=4$ so $H=K_4^-$ and we are done. Otherwise, suppose $n> 4$. Choose an induced $K_4^-$ in $H$ and choose $1$ vertex $v$ not in this $K_4^-$, and note that $H-v$ is not a star or a clique (since it contains an induced $K_4^-$). So by the inductive hypothesis, $H-v$ is a sun that is not a clique. Let $A$ be the sun's body, and $B$ it's side. If $v$ is adjacent to every vertex in $H-v$, then $H$ is a sun with $v$ in it's body, and we are done, so assume not. Suppose for a contradiction that there exists $v_0\in A$ and $v_1\in B$ with $v_0v\notin E(H)$ and $v_1v\in E(H)$. Since $H-v$ is not a clique, $|B|\geq 2$, and so there exists a vertex $v_2\in B$ such that $v_2 \ne v_1$. Then $vv_1v_0v_2$ is a $P_4$, but $v,v_1,v_0,v_2$ induce at least two red edges $vv_0,v_1v_2$, which is a contradiction. So at least one of the following hold: $N(v)\supseteq A$, $\overline{N(v)}\supseteq B$.\\
\noindent {\bf Case 1:} \emph{$N(v)\supseteq A$}\\
\noindent In this case, we claim that there cannot exist a vertex $v_1\in B$ with $v_1v\in E(H)$. Indeed, otherwise there also exists a vertex $v_2\in B$ with $v_2\ne v_1$ and $v_2v\notin E(H)$ (since $v$ is not adjacent to every vertex in $H-v$). But then for any vertex $u\in A$, we have that $vv_1uv_2$ is a $P_4$, but $v,v_1,u,v_2$ induce at least two red edges $vv_2, v_1v_2$, which is a contradiction. This proves the claim, so in this case we have that $N(v)=A$, and so $H$ is a sun with $v$ in it's side.\\
\noindent {\bf Case 2:} \emph{$\overline{N(v)}\supseteq B$}\\
\noindent In this case, we claim that there cannot exist a vertex $v_0\in A$ with $v_0v\notin E(H)$. Indeed, otherwise there also exists a vertex $v_3\in A$ with $v_3\ne v_0$ and $v_3v\in E(H)$ (since $v$ is adjacent to at least one vertex of $H$, since $H$ is a component). But then for any vertex $w\in B$, we have that $vv_3wv_0$ is a $P_4$, but $v,v_3,w,v_0$ induce at least two red edges $vv_0, vw$, which is a contradiction. This proves the claim, so in this case we have that $N(v)=A$, and so $H$ is a sun with $v$ in it's side.\\
This completes the induction, and hence the proof.}.
\end{enumerate}
\end{proposition}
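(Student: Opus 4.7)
All three parts share the same structure: the absence of a $k$-good tetrahedron translates into a local constraint on every $P_4$ appearing as a subgraph of $G^B$, and the global structure of $G^B$ is then derived from this local constraint. I would treat the three parts in order, since each uses the one before.

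For (i), a $6$-good tetrahedron imposes no lower bound on red edges, so the hypothesis says $G^B$ contains no $P_4$ as a subgraph. I would argue directly: any cycle of length $\geq 4$ contains a $P_4$, so the only cycles in $G^B$ are triangles; and a triangle together with any additional adjacent vertex already yields a $P_4$, so a component containing a triangle must equal that triangle. The remaining components are trees of diameter at most $2$, i.e.\ stars.

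For (ii), a $5$-good tetrahedron requires at least one red edge, so the local constraint becomes: every $P_4$ in $G^B$ has its four vertices inducing $K_4$ in $G^B$. To show each component $H$ of $G^B$ is a star, triangle, or clique, I may assume by (i) that $H$ contains a $P_4$, and hence a $K_4$ by the constraint. Let $K$ be a maximum clique in $H$, so $|K|\geq 4$. If $V(H)\neq K$, connectedness of $H$ yields an edge $vu$ with $v\in V(H)\setminus K$ and $u\in K$; if $v$ were non-adjacent to some $w\in K$, then choosing $x\in K\setminus\{u,w\}$ gives a $P_4$ $vuwx$ whose vertex set does not induce $K_4$ (since $vw$ is missing), a contradiction. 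Hence $v$ is adjacent to all of $K$, contradicting the maximality of $K$.

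For (iii), a $4$-good tetrahedron requires at least two red edges, so the local constraint becomes: every $P_4$ in $G^B$ has its four vertices inducing $K_4$ or $K_4$ minus one edge. I would proceed by induction on $|V(H)|$. If $H$ is not a star or clique, part (ii) gives a $P_4$ in $H$ whose vertex set induces exactly $K_4$ minus one edge; I would choose a vertex $v\in V(H)$ outside this $4$-set (arranged so that $H-v$ remains connected and still contains the same configuration) and apply induction to obtain a sun structure on $H-v$ with body $A$ and side $B$. The goal is then to show $v$ is either adjacent to all of $V(H-v)$ (so $v$ extends the body) or adjacent to exactly $A$ (so $v$ extends the side). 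A case analysis rules out mixed behaviour: for instance, if $v$ has both a non-neighbour $v_0\in A$ and a neighbour $v_1\in B$, then picking any $v_2\in B\setminus\{v_1\}$ (which exists because $H-v$ is not a clique) yields a $P_4$ $vv_1v_0v_2$ whose vertex set contains two non-edges ($vv_0$ and $v_1v_2$), contradicting the local constraint; the remaining subcases are handled by analogous choices of four vertices.

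The main obstacle is the case analysis in (iii), because the sun structure $A\cup B$ is not vertex-transitive: the roles of body and side are asymmetric, so ruling out each irregular configuration of $v$'s neighbourhood requires selecting the right four vertices from $\{v\}\cup A\cup B$ and verifying that their induced subgraph contains two missing edges. Each individual check is routine, but covering every possibility demands careful bookkeeping; a minor additional subtlety is arranging the initial choice of $v$ so that $H-v$ is both connected and still witnesses the $K_4$-minus-an-edge configuration delivered by part (ii).
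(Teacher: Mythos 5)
Your proposal is correct and follows essentially the same route as the paper: translate the absence of $k$-good tetrahedra into the constraint that every $P_4$ in $G^B$ spans a $K_4$ (resp.\ a $K_4$ or $K_4^-$), deduce (i) directly, and handle (ii) and (iii) componentwise, with (iii) proved by the same add-one-vertex induction onto a sun and the same four-vertex configurations (e.g.\ $vv_1v_0v_2$ with the two non-edges $vv_0$, $v_1v_2$). Your maximum-clique argument in (ii) is only a cosmetic variant of the paper's induction, and the connectivity point you flag in (iii) (that $H-v$ stays connected) is easily dispatched --- a cut vertex outside the chosen $K_4^-$ would itself yield a $P_4$ spanning two non-edges --- a detail the paper's own proof likewise leaves implicit.
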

\begin{proof}
(i) follows immediately from the fact that if $G$ is $6$-good tetrahedron-free then $G^B$ does not contain a $P_4$.

To see (ii), note that if $G$ is $5$-good tetrahedron-free and $P$ is a copy of $P_4$ in $G^B$, then $G^B[V(P)]=K_4$. So every component $H$ of $G^B$ is either a star or a triangle or contains a $K_4$. But in the latter case it is easy to check that $H$ is actually a clique.

It remains to prove (iii). If $G$ is $4$-good tetrahedron-free and $P$ is a copy of $P_4$ in $G^B$, then $G^B[V(P)]$ is either a $K_4$ or a copy of the graph $K_4^-$ obtained from $K_4$ by deleting one edge. So every component $H$ of $G^B$ is either a star or a clique or contains an induced copy of $K_4^-$. Using induction on $|H|$, it is not hard to show that in the latter case $H$ must be a sun.
\end{proof}

We now use Theorem~\ref{thm: induced removal lemma} together with Propositions~\ref{prop: C_2k in small multigraphs} and~\ref{prop: k-good tetradehron free} to prove the following more specialised Removal Lemma involving even cycles.

\begin{lemma}\label{lem: specialised removal lemma}
For every $k\geq 4$ and every $\delta>0$ there exists $\varepsilon>0$ such that the following holds for all sufficiently large $n$. Suppose $G$ is a complete $2$-coloured multigraph on $n$ vertices such that $G_R\cap G_B = E(T_{k-1}(n))$. Let $Q$ be the unique $(k-1)$-partition of the vertices of $G$ such that no partition class induces an edge in $G_R\cap G_B$. Suppose further that $G$ contains at most $\varepsilon n^{2k}$ copies of $C_{2k}$. Then there exists a $k$-template $T=(V(G), E^T)$ on $Q$ such that $|G_R\triangle E^T|\leq \delta n^2$.
\end{lemma}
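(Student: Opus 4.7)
The plan is to combine Lemma~\ref{lem: multigraph removal lemma}, Lemma~\ref{lem: specialised stability}, Proposition~\ref{prop: C_2k in small multigraphs}, and Proposition~\ref{prop: k-good tetradehron free}. First I would apply Lemma~\ref{lem: multigraph removal lemma} with $H = C_{2k}$: for $\varepsilon$ small enough this yields a $C_{2k}$-free complete $2$-coloured multigraph $G'$ on $[n]$ with $\mathrm{dist}(G,G')\leq \eta_1 n^2$ for some $\eta_1\ll\delta$. Since $|G_R\cap G_B|=t_{k-1}(n)$, we have $|G'_R\cap G'_B|\geq t_{k-1}(n)-2\eta_1 n^2$, so Lemma~\ref{lem: specialised stability} shows that $([n],G'_R\cap G'_B)$ is within $\eta_2 n^2$ edges of $T_{k-1}(n)$. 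By uniqueness of the closest Tur\'an partition, the Tur\'an partition of $G'_R\cap G'_B$ differs from $Q$ on at most $\eta_3 n$ vertices, and after reassigning these vertices (costing $\eta_4 n^2$ further edge changes) I may assume $G'_R\cap G'_B = E(T_{k-1}(n))$ on $Q$ exactly, at the price of reintroducing at most $\eta_5 n^{2k}$ copies of $C_{2k}$.

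Next I would analyse the blue subgraph on each part using Proposition~\ref{prop: C_2k in small multigraphs}. Every internal edge now lies in exactly one of $G'_R,G'_B$, and I claim that at most one part of $Q$ can contain a $P_3$ in its internal blue subgraph $G'_B[Q_i]$. Indeed, two vertex-disjoint such $P_3$'s in different parts give two disjoint mostly blue triangles, and these extend to a $2k$-vertex set satisfying configuration (E3) in $\Omega(n^{2(k-3)})$ ways by picking $k-3$ disjoint internal red edges from the remaining parts, which would produce more than $\eta_5 n^{2k}$ copies of $C_{2k}$. (The edge case where too few internal red edges exist to perform this extension is handled in parallel using configuration (E2) applied to two internally blue-dense parts, which generate many internal blue $K_k$'s by supersaturation and hence again too many $C_{2k}$'s.) Call the exceptional part $Q_0$; then $G'_B[Q_i]$ is a matching of size at most $n/2$ for each $i\neq 0$.

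For $Q_0$, since the other parts are now nearly complete in internal red, I can always find $k-2$ vertex-disjoint internal red edges avoiding any prescribed four vertices of $Q_0$. Hence any $k$-good tetrahedron in $G'[Q_0]$ generates $\Omega(n^{2(k-2)})$ copies of $C_{2k}$ by (E6) for $k\geq 6$, (E5) for $k=5$, or (E4) for $k=4$, so $G'[Q_0]$ contains at most $\eta_5 n^4$ many $k$-good tetrahedra. The four-vertex multigraph configurations forming a $k$-good tetrahedron correspond to a finite family of induced subgraphs in the graph $G'_B[Q_0]$, so Theorem~\ref{thm: induced removal lemma} applied to this family allows me to remove every $k$-good tetrahedron by modifying at most $\eta_6 n^2$ edges inside $Q_0$. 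Proposition~\ref{prop: k-good tetradehron free} then yields the required disjoint union of stars and triangles (respectively stars and cliques, suns) in $G'_B[Q_0]$.

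Finally, I would assemble the template $T$ on $Q$ whose edges are all crossing pairs of $Q$, all internal pairs of $Q_1,\dots,Q_{k-2}$, and the non-edges of the stars-and-triangles structure identified in $Q_0$. Totalling the edge changes incurred gives
\[
|G_R \triangle E^T| \leq \eta_1 n^2 + \eta_4 n^2 + \sum_{i\neq 0}|G'_B[Q_i]| + \eta_6 n^2 + O(n) \leq \delta n^2
\]
provided the $\eta_i$ are chosen small enough in terms of $\delta$. The hardest part will be to make the partition alignment in the first two steps compatible with the supersaturation counts in the later steps: I must verify that the two triangles or the tetrahedron together with the $k-3$ (respectively $k-2$) extending red edges can always be chosen so that none of their $2k$ endpoints fall within the $O(\eta_1 n)$ exceptional vertices where $G'$ still deviates from the ideal Tur\'an structure. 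Since $k$ is fixed and $n$ large a greedy argument suffices, but the bookkeeping in the supersaturation counts is the most delicate part of the proof.
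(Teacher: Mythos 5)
Your overall route is essentially the paper's: rule out two classes each carrying many bad blue structures via configurations (E2)/(E3) of Proposition~\ref{prop: C_2k in small multigraphs}, then handle the single exceptional class via $k$-good tetrahedra, Theorem~\ref{thm: induced removal lemma} and Proposition~\ref{prop: k-good tetradehron free}. One remark before the main point: your opening block (applying Lemma~\ref{lem: multigraph removal lemma}, Lemma~\ref{lem: specialised stability}, and then realigning the Tur\'an partition with $Q$) is unnecessary here, since the hypothesis already gives $G_R\cap G_B=E(T_{k-1}(n))$ exactly with $Q$ its partition; that reduction belongs to the proof of Lemma~\ref{lem: rough structure}, where the present lemma is applied. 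This detour is harmless but re-derives the hypothesis with worse constants, and the ``uniqueness of the closest Tur\'an partition'' alignment would itself need an argument.

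The genuine gap is your claim that at most one part of $Q$ can contain a blue $P_3$, i.e.\ that $G'_B[Q_i]$ is a matching for every $i\neq 0$. Your justification counts only $\Omega(n^{2(k-3)})=\Omega(n^{2k-6})$ sets satisfying (E3) arising from a \emph{single} pair of disjoint mostly blue triangles, and $n^{2k-6}$ does not exceed $\eta_5 n^{2k}$ --- it is smaller by a factor of order $n^{6}$, so no contradiction with the $\varepsilon n^{2k}$ hypothesis follows. Indeed the claim is false: start from the multigraph with all internal pairs red and all crossing pairs in both colours (which contains no copy of $C_{2k}$ at all) and recolour one blue path of length two inside each of two classes; this creates only $O(n^{2k-2})$ copies of $C_{2k}$, far below $\varepsilon n^{2k}$, yet two classes contain blue $P_3$'s. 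What can be proved (the paper's Claim~1) is that at most one class contains at least $cn^3$ mostly blue triangles, with $c$ a suitable power of $\varepsilon$; for the remaining classes one then needs an extra cleaning step --- Theorem~\ref{thm: induced removal lemma} applied to $G^B[Q_i]$ (or a convexity estimate on blue degrees) --- to conclude they have $o(n^2)$ blue internal edges, which is all your final accounting actually requires. This repair propagates: your tetrahedron supersaturation needs every class other than $Q_0$ to have $\Omega(n^2)$ internal \emph{red} edges, which you currently extract from the unjustified matching claim (``nearly complete in internal red''); in the paper this is supplied as a necessary first step by the blue-$K_k$ argument via (E2) together with Tur\'an's theorem, which you relegate to a parenthetical edge case.
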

\begin{proof}
We first prove the lemma in the case $k\geq 6$. Choose $n_0\in \mathbb{N}$ and $\varepsilon, \gamma>0$ such that $1/n_0\ll \varepsilon \ll \gamma \ll \delta, 1/k$. Let $n\geq n_0$ and let $Q=(Q_1, \dots, Q_{k-1})$. Let $c:= \varepsilon^{1/3}$.

We claim that for no two distinct $i,j\in [k-1]$ do $G[Q_i]$ and $G[Q_j]$ both contain at least $c n^k$ copies of a blue $K_k$. Indeed, if they do then there are at least $c^2 n^{2k} > \varepsilon n^{2k}$ sets of $2k$ vertices that each induce on $G$ a $2$-coloured multigraph $G'$ that satisfies (E2). By Proposition~\ref{prop: C_2k in small multigraphs} each such $G'$ contains a copy of $C_{2k}$. This contradicts the assumption that $G$ contains at most $\varepsilon n^{2k}$ copies of $C_{2k}$, which proves the claim.

Thus there exists $J\subseteq [k-1]$ with $|J|\leq 1$ such that for all $i\in [k-1]$ with $i\notin J$, $G[Q_i]$ contains fewer than $c n^k$ copies of a blue $K_k$. Together with Theorem~\ref{thm: induced removal lemma} (applied to $G^B[Q_i]$) this implies that $G[Q_i]$ can be made free of blue cliques of size $k$ by changing the colour of at most $\gamma n^2$ edges inside $Q_i$. So by Tur\'an's Theorem, for all $i\in [k-1]$ with $i\notin J$, $G[Q_i]$ must have at least
$$(k-1)\binom{n/(k-1)^2}{2}-2\gamma n^2 \geq \frac{n^2}{4(k-1)^3}$$
red edges.

\vspace{0.3cm}
\noindent {\bf Claim 1:} \emph{There is at most one index $i\in [k-1]$ such that $G[Q_i]$ contains at least $c n^3$ mostly blue triangles. Moreover, if there is such an index $i$ then $J\subseteq \{i\}$, and if there is no such index then $J=\emptyset$.}

\noindent Indeed, suppose for a contradiction that there exist distinct $i,j\in [k-1]$ such that $Q_i, Q_j$ both contain at least $c n^3$ mostly blue triangles. Note that any class that contains at least $c n^k$ copies of a blue $K_k$ must contain at least $c n^3$ mostly blue triangles. So we may assume that $J\subseteq \{i,j\}$. Thus for every index $\ell \ne i,j$, $G[Q_{\ell}]$ contains at least $n^2/(4(k-1)^3)$ red edges. Thus there are at least $2\varepsilon n^{2k}$ sets of $2k$ vertices that each induce on $G$ a $2$-coloured multigraph $G'$ that satisfies (E3). (To see this, note that to choose such a set of $2k$ vertices we may choose, for both indices $i,j$, the vertices of any one of the at least $c n^3$ mostly blue triangles in $G[Q_i], G[Q_j]$ respectively, and then choose, for each index $\ell \ne i,j$, any one of the at least $n^2/(4(k-1)^3)$ red edges in $Q_{\ell}$.) By Proposition~\ref{prop: C_2k in small multigraphs} each such $G'$ contains a copy of $C_{2k}$. This contradicts the assumption that $G$ contains at most $\varepsilon n^{2k}$ copies of $C_{2k}$, which proves the claim.
\vspace{0.3cm}

Let $J'$ consist of the index $j_0\in [k-1]$ such that $G[Q_{j_0}]$ contains at least $c n^3$ mostly blue triangles, if such an index exists. Otherwise let $J':= \emptyset$. Thus $J\subseteq J'$. For all $i\in [k-1]$ with $i\notin J'$, Claim~1 together with Theorem~\ref{thm: induced removal lemma} (applied to $G^B[Q_i]$) implies that $G[Q_i]$ can be made free of mostly blue triangles by changing the colour of at most $\gamma n^2$ edges inside $Q_i$. This implies that the blue edges inside $Q_i$ after such a change form a matching. Hence $G[Q_i]$ contains at most $2\gamma n^2$ blue edges.

If $J'=\emptyset$ then $G[Q_i]$ contains at most $2\gamma n^2$ blue edges for all $i\in [k-1]$, and hence $|G_B\backslash G_R|\leq \delta n^2$ (since $\gamma \ll \delta, 1/k$). In this case we are done by setting $T$ to be $K_n$. Otherwise, $J'=\{j_0\}$ and it suffices to show that the blue edges in $G[Q_{j_0}]$ can be made into the edge set of a disjoint collection of stars and triangles by changing the colour of at most $\gamma n^2$ edges inside $Q_{j_0}$, since then we are done by setting $T$ to be $K_n$ minus this disjoint collection of stars and triangles.

\vspace{0.3cm}
\noindent {\bf Claim 2(a):} \emph{$G[Q_{j_0}]$ contains fewer than $c n^4$ $6$-good tetrahedrons.}

\noindent Indeed, otherwise there are at least $\varepsilon^{1/2} n^{2k}$ sets of $2k$ vertices that each induce on $G$ a $2$-coloured multigraph $G'$ that satisfies (E6). (To see this, note that to choose such a set of $2k$ vertices we may first choose the vertices of any one of the at least $c n^4$ $6$-good tetrahedrons, and then choose, for each other class $Q_i$, any one of the at least $n^2/(4(k-1)^3)$ red edges in $Q_i$.) By Proposition~\ref{prop: C_2k in small multigraphs} each such $G'$ contains a copy of $C_{2k}$. This contradicts the assumption that $G$ contains at most $\varepsilon n^{2k}$ copies of $C_{2k}$, which proves the claim.
\vspace{0.3cm}

Claim~2(a) together with Theorem~\ref{thm: induced removal lemma} (applied to $G^B[Q_{j_0}]$) implies that $G[Q_{j_0}]$ can be made free of $6$-good tetrahedrons by changing the colour of at most $\gamma n^2$ edges inside $Q_{j_0}$. Proposition~\ref{prop: k-good tetradehron free}{\rm (i)} implies that after such a change, all blue edges inside $Q_{j_0}$ form a disjoint collection of stars and triangles, as required. This completes the proof in the case $k\geq 6$.

For the case $k=5$, the proof is almost identical to the case $k\geq 6$, except that instead of Claim~2(a) we prove the following weaker claim, which follows in a similar way.

\vspace{0.3cm}
\noindent {\bf Claim 2(b):} \emph{$G[Q_{j_0}]$ contains fewer than $c n^4$ $5$-good tetrahedrons.}
\vspace{0.3cm}

Claim~2(b) together with Theorem~\ref{thm: induced removal lemma} (applied to $G^B[Q_{j_0}]$) implies that $G[Q_{j_0}]$ can be made free of $5$-good tetrahedrons by changing the colour of at most $\gamma n^2$ edges inside $Q_{j_0}$. Proposition~\ref{prop: k-good tetradehron free}{\rm (ii)} implies that after such a change, all blue edges inside $Q_{j_0}$ form a disjoint collection of stars and cliques. We are now done by setting $T$ to be $K_n$ minus this disjoint collection of stars and cliques.

For the case $k=4$, the proof is again almost identical to the case $k\geq 6$, except that instead of Claim~2(a) we prove the following even weaker claim, which follows in a similar way.

\vspace{0.3cm}
\noindent {\bf Claim 2(c):} \emph{$G[Q_{j_0}]$ contains fewer than $c n^4$ $4$-good tetrahedrons.}
\vspace{0.3cm}

Claim~2(c) together with Theorem~\ref{thm: induced removal lemma} (applied to $G^B[Q_{j_0}]$) implies that $G[Q_{j_0}]$ can be made free of $4$-good tetrahedrons by changing the colour of at most $\gamma n^2$ edges inside $Q_{j_0}$. Proposition~\ref{prop: k-good tetradehron free}{\rm (iii)} implies that after such a change, all blue edges inside $Q_{j_0}$ form a disjoint collection of suns. We are now done by setting $T$ to be $K_n$ minus this disjoint collection of suns.
\end{proof}

\subsection{Approximate structure of typical induced $C_{2k}$-free graphs}\label{subsec: rough 3}

We are now in a position to prove the main result of this section.

\removelastskip\penalty55\medskip\noindent{\bf Proof of Lemma~\ref{lem: rough structure}.}
Choose $n_0\in \mathbb{N}$ and $\varepsilon, \delta, \gamma, \beta>0$ such that $1/n_0\ll \varepsilon \ll \delta \ll \gamma \ll \beta \ll \eta, 1/k$. Let $\varepsilon':= 2\varepsilon$ and $n\geq n_0$. First we claim that $F(n,k)\geq 2^{t_{k-1}(n)}$. To see this, first note that any graph $G$ that contains $\overline{T_{k-1}(n)}$ is induced-$C_{2k}$-free (since for any set of $2k$ vertices on $G$, $3$ of them must form a triangle). Moreover, there are precisely $2^{t_{k-1}(n)}$ such graphs for any given labelling of the vertices, which proves the claim.

By Theorem~\ref{thm: containers} (with $C_{2k}, n$ and $\varepsilon'$ taking the roles of $H, N$ and $\varepsilon$ respectively) there is a collection $\mathcal{C}$ of complete $2$-coloured multigraphs on vertex set $[n]$ satisfying properties (a)--(c). In particular, by (a), every induced-$C_{2k}$-free graph on vertex set $[n]$ is contained in some $G\in \mathcal{C}$. Let $\mathcal{C}_1$ be the family of all those $G\in \mathcal{C}$ for which $|G_R\cap G_B|\geq t_{k-1}(n) - \varepsilon' n^2$. Then the number of (labelled) induced-$C_{2k}$-free graphs not contained in some $G\in \mathcal{C}_1$ is at most
\[
|\mathcal{C}| \, 2^{t_{k-1}(n) - \varepsilon' n^2} \leq 2^{- \eps n^2} F(n,k),
\]
because $|\mathcal{C}|\leq 2^{n^{2-\varepsilon'}}$, by (c), and $F(n,k)\geq 2^{t_{k-1}(n)}$. We claim that for every $G\in \mathcal{C}_1$ there exists a complete $2$-coloured multigraph $\tilde{G}$ and a $k$-template $T$ on partition $Q=\{Q_0, Q_1,\dots, Q_{k-2}\}$ such that
$$\tilde{G}_R\cap Q_i^{(2)}=E(T[Q_i]) \hspace{0.6cm} \text{and} \hspace{0.6cm} \tilde{G}_R\cap \tilde{G}_B\cap Q_i^{(2)}=\emptyset$$
for every $i\in \{0,1,\dots, k-2\}$, and $\text{dist}(G, \tilde{G})\leq \eta n^2$. (Note that this claim implies that every induced-$C_{2k}$-free graph contained in $G$ can be made into a $k$-template by changing a total of at most $\eta n^2$ edges within the vertex classes $Q_i$.) Indeed, by (b), each $G\in \mathcal{C}_1$ contains at most $\eps' n^{2k}$ copies of $C_{2k}$. Thus by Lemma~\ref{lem: multigraph removal lemma} there exists a complete $2$-coloured multigraph $G'$ on the same vertex set that is $C_{2k}$-free, such that $\text{dist}(G, G')\leq \delta n^2$. Then $|G'_R\cap G'_B|\geq t_{k-1}(n) - (\varepsilon'+\delta) n^2$. Thus by Lemma~\ref{lem: specialised stability} there exists a complete $2$-coloured multigraph $G''$ on the same vertex set, with $G''_R\cap G''_B = E(T_{k-1}(n))$ and such that $\text{dist}(G', G'')\leq \gamma n^2$. Note that $G''$ can contain at most $\gamma n^{2k}$ copies of $C_{2k}$, since $G'$ is $C_{2k}$-free. Let $Q=\{Q_0, Q_1, \dots, Q_{k-2}\}$ be the unique $(k-1)$-partition of $V(G'')$ such that no partition class induces an edge in $G''_R\cap G''_B$. Thus by Lemma~\ref{lem: specialised removal lemma}, there exists a $k$-template $T=(V(G), E^T)$ on $Q$ such that $|G''_R\triangle E^T|\leq \beta n^2$. Define $\tilde{G}$ to be the $2$-coloured multigraph with $\tilde{G}_R\cap \tilde{G}_B = G''_R\cap G''_B$ and $\tilde{G}_R\cap Q_i=E(T[Q_i])$ for every $i\in \{0,1,\dots, k-2\}$. Then $\text{dist}(G, \tilde{G})\leq (\delta+\gamma+\beta)n^2 \leq \eta n^2$, and $\tilde{G}$ satisfies the required properties. This proves the claim and thus the lemma.
\endproof

\section{The number of $k$-templates}\label{sec: number of templates}

For $k\geq 4$ we denote the set of all $k$-templates on $n$ vertices by $T(n,k)$. Let $T_Q(n,k)$ denote the set of all $k$-templates on $n$ vertices for which $Q$ is an ordered $(k-1)$-partition. The aim of this section is to estimate $|T_Q(n,k)|$ and $|T(n,k)|$ (see Lemmas~\ref{size of template} and~\ref{the number of templates} respectively). Before we start with this we need to introduce some more notation. A \emph{$k$-sun} is defined as follows.
\begin{itemize}
\item If $k=4$, a $k$-sun is any sun (as defined in Section~\ref{subsec: rough 3}).
\item If $k=5$, a $k$-sun is a star or a clique.
\item If $k\geq 6$, a $k$-sun is a star or a triangle.
\end{itemize}
Note that the results of this section are only needed for Theorem~\ref{main theorem} (and not Theorem~\ref{secondary theorem}) and so we would only need to consider the case $k\geq 6$. However, including the cases $k=4,5$ makes little difference to the proofs, and are also interesting in their own right, so we work with all $k\geq 4$ throughout this section.

Let $F_k(n)$ denote the set of all $n$-vertex graphs whose complement is a disjoint union of $k$-suns. Define $f_k(n):= |F_k(n)|$. A pair of vertices $x,y$ is called a {\em twin pair} if $N(x)\backslash \{y\} = N(y)\backslash \{x\}$.

The following two lemmas give some estimates of the value of $f_k(n)$. Note that we do not make use of the upper bound in Lemma~\ref{f-estimate-1} anywhere in this paper, but we include it for its intrinsic interest. It would not be difficult to obtain more accurate bounds, though an asymptotic formula would probably require more work.

\begin{lemma}\label{f-estimate-1}
For all $n\in \mathbb{N}$ and $k\geq 4$,
$$2^{n\log n-en\log\log n} \leq f_k(n) \leq 2^{n\log n - n \log\log n + n}.$$
\end{lemma}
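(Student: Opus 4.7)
The two bounds are proved by an explicit construction (lower) and a tight encoding (upper), both reducing to a Stirling calculation at component size $\approx \log n$.

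For the lower bound, I would set $t := \lfloor \log n \rfloor$ and write $n = qt + r$ with $0 \le r < t$. I would then count graphs $G$ on $[n]$ whose complement consists of $q$ vertex-disjoint stars of size $t$ together with $r$ isolated vertices. Since every star is a $k$-sun for all $k\geq 4$, every such $G$ lies in $F_k(n)$, and distinct choices of partition and star-centres give distinct graphs, so
$$
f_k(n) \;\geq\; \frac{n!\,t^{q}}{r!\,q!\,(t!)^{q}}.
$$
A direct application of Stirling's formula to every factorial, combined with $q \sim n/\log n$ and $t \sim \log n$, shows the exponent (to base $2$) of the right-hand side equals $n\log n - n\log\log n - n + o(n)$. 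For $n$ sufficiently large this already exceeds $n\log n - en\log\log n$, since $(e-1)n\log\log n - n\to\infty$; for the remaining small values of $n$ the bound is either vacuous or handled by the trivial estimate $f_k(n)\geq 1$.

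For the upper bound, I would encode each $G\in F_k(n)$ by the component partition of $\overline{G}$ together with per-component structural data. For each component I designate a \emph{representative}: the unique centre when the component is a star of size $\neq 2$, and the minimum-indexed vertex in the component otherwise (covering stars of size $2$, triangles, cliques, and general suns). The component partition is then recorded by an idempotent map $\phi : [n] \to [n]$ sending each vertex to its representative, and the number of such $\phi$ with $c$ fixed points is exactly $\binom{n}{c}\,c^{n-c}$. On top of $\phi$, the per-component structural information is at most a binary marker at each representative (star vs.\ triangle when $k \geq 6$, star vs.\ clique when $k = 5$), and at most a body-vs-side label on every vertex in the case $k=4$; in every case this contributes a factor of at most $2^{n}$. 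Therefore
$$
f_k(n) \;\leq\; 2^n \sum_{c=1}^{n} \binom{n}{c}\,c^{n-c}.
$$
A Stirling optimisation shows the summand is maximised near $c \approx n/\log n$, where it equals $2^{n\log n - n\log\log n - n + o(n)}$. Absorbing the $2^n$ factor and the polynomial loss from the outer sum then yields $f_k(n) \leq 2^{n\log n - n\log\log n + n}$ for all $n$ sufficiently large.

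The main technical step is the Stirling computation in both directions: one needs to track the terms $c\log(en/c)$ and $(n-c)\log c$ (respectively, $q\log t$ and $q\log(t!)$) to linear order in $n$ in order to pin down the $\pm n$ correction to $n\log n - n\log\log n$. Once that is done, the $(e-1)n\log\log n$ gap in the lower bound and the $\sim n$ gap in the upper bound comfortably absorb the $o(n)$ errors and, for the upper bound, the extra $2^n$ factor coming from the structural markers.
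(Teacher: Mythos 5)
Your proposal is correct in substance but takes a genuinely different, more self-contained route than the paper. The paper simply cites the classical asymptotics for the number $P(n)$ of set partitions, namely $2^{n\log n-en\log\log n}\leq P(n)\leq 2^{n\log n-n\log\log n}$, and then gets the lower bound from the injection that turns each partition class into a star in $\overline{G}$, and the upper bound from the encoding ``partition of $[n]$ into sun vertex-sets, times a body-or-side bit per vertex'', giving $f_k(n)\leq P(n)2^n$. You instead avoid quoting the Bell-number asymptotics: for the lower bound you count one explicit family (complements consisting of stars of size $\approx\log n$) via a Stirling computation, and for the upper bound you encode each graph by an idempotent map (partition plus representative) together with at most one bit per vertex, and optimise $\binom{n}{c}c^{n-c}$ over $c$. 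Both encodings are valid, and your approach buys independence from the quoted $P(n)$ bounds at the price of doing the optimisation by hand; the paper's proof is shorter but leans on the cited estimate. Two small caveats: since the paper's convention is $\log=\log_2$, the maximiser of $\binom{n}{c}c^{n-c}$ sits at $c\sim n/\ln n$ rather than $n/\log_2 n$, so the maximal summand is $2^{n\log n-n\log\log n-cn+o(n)}$ with $c=\log_2 e-\log_2\log_2 e\approx 0.91$ rather than exactly $1$; this does not hurt you, as any positive constant leaves room to absorb the $2^n$ factor and the polynomial sum, but the claimed ``$-n+o(n)$'' should be stated as ``$-\Omega(n)$'' or recomputed in base $2$. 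Also, both your argument and the paper's really only establish the inequalities for all sufficiently large $n$ (for instance $f_k(2)=2<2^{2\log 2-2e\log\log 2}=4$, and the quoted $P(n)$ lower bound fails there too), so your remark that small $n$ are ``vacuous or trivial'' is not quite accurate -- but this is a defect the lemma shares as stated, and it is only ever applied for large $n$.
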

\begin{proof}
Let $P(n)$ denote the number of partitions of an $n$ element set. It is well known (see e.g.~\cite{NGdB}) that
$$2^{n\log n-en\log\log n} \leq P(n)\leq 2^{n\log n-n\log\log n}.$$
We will count the number $f_k(n)$ of graphs $G\in F_k(n)$. Note that $f_k(n)\geq P(n)$ follows by considering each partition class as the vertex set of a star in $\overline{G}$. This then immediately yields the lower bound in Lemma~\ref{f-estimate-1}. Now note that if we choose a partition of $[n]$ into the vertex sets of disjoint suns in $\overline{G}$ (for which there are at most $2^{n\log n-n\log\log n}$ choices), and then for every vertex choose whether the vertex will be in the body of its sun or side of its sun (for which there are a total of $2^n$ choices), we can generate every possible graph $G\in F_k(n)$ (note that some such graphs can be generated by multiple different choices). This yields the upper bound in Lemma~\ref{f-estimate-1}.
\end{proof}

\begin{lemma}\label{f-estimate-2} For $k \geq 4$ and $n>s\geq 10^7$,
$$ s^{s/2} \leq \frac{f_k(n)}{f_k(n-s)} \hspace{0.6cm} \text{and} \hspace{0.6cm} \frac{f_k(n)}{f_k(n-1)} \leq n^2.$$
\end{lemma}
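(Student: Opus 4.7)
For the lower bound, I use a disjoint-union construction: given $G_1 \in F_k(n-s)$ on $[n-s]$ and $G_2 \in F_k(s)$ on $\{n-s+1,\dots,n\}$, the graph obtained by adjoining them with every cross-pair as an edge lies in $F_k(n)$, since $\overline{G} = \overline{G_1} \sqcup \overline{G_2}$ is still a disjoint union of $k$-suns. Distinct pairs $(G_1, G_2)$ produce distinct $G$, so $f_k(n) \geq f_k(n-s)\, f_k(s)$, and it suffices to prove $f_k(s) \geq s^{s/2}$ for $s \geq 10^7$. A direct count of graphs in $F_k(s)$ whose complement is a disjoint union of stars on triples of vertices (partition $[s]$ into triples, then choose the centre of each triple to get a $P_3$, which is a valid $k$-sun for every $k \geq 4$) already gives on the order of $s^{2s/3}$ such graphs via Stirling's formula, comfortably exceeding $s^{s/2}$ for $s \geq 10^7$. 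Alternatively, one may apply the standard Bell-number lower bound, in a slightly sharpened form over Lemma~\ref{f-estimate-1}.

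For the upper bound, I construct an injection $\phi : F_k(n) \to F_k(n-1) \times [n]^2$. For each $G \in F_k(n)$, select a canonical vertex $v^*(G) \in [n]$ to delete and a label $\ell(G) \in [n]^2$ encoding how to reinsert $v^*$. The priority rule is: if $\overline G$ has an isolated vertex, let $v^*$ be the smallest such and put $\ell(G) = (v^*, v^*)$ on the diagonal of $[n]^2$; else if $\overline G$ has a degree-$1$ vertex (a star leaf), let $v^*$ be the smallest such leaf, $c$ its unique $\overline G$-neighbour, and put $\ell(G) = (v^*, c)$ off-diagonal; in the remaining exceptional case, discussed below, $v^*$ is chosen from a canonical twin pair. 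Setting $\phi(G) := (G - v^*, \ell(G))$, with $G - v^*$ canonically relabelled to $[n-1]$, decoding is immediate in the first two cases: extract $v^*$ from the first coordinate of $\ell$, and reinsert it as universal in $G$, or with $\overline N_G(v^*) = \{c\}$, according as $\ell$ lies on or off the diagonal. These two non-exceptional cases use the $n$ diagonal and $n(n-1)$ off-diagonal entries of $[n]^2$ injectively, contributing at most $n^2 f_k(n-1)$ graphs in total.

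The main obstacle is the exceptional case, where $\overline G$ has no singleton and no degree-$1$ vertex. For $k \geq 6$ this forces every component of $\overline G$ to be a triangle, so the number of such $G$ is at most the number of perfect triangle-packings of $[n]$, bounded by $n!/(3^{n/3}(n/3)!) \leq (n/e)^{2n/3}$ via Stirling. For $k \in \{4,5\}$ the components must be cliques of size $\geq 3$ or suns with body of size $\geq 2$, which contain abundant twin pairs (two body-vertices, adjacent; or two side-vertices, non-adjacent), and one extends $\phi$ by choosing $v^*$ to be one vertex of a canonical twin pair with $\ell$ encoding the partner together with a one-bit adjacency flag (for instance, via the orientation of the label pair). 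For $k \geq 6$, Lemma~\ref{f-estimate-1} gives $f_k(n-1) \geq 2^{(n-1)\log(n-1) - e(n-1)\log\log(n-1)}$, which dominates $(n/e)^{2n/3}$ for $n \geq 10^7$, so the triangle-packing exceptional graphs fit comfortably; the technical heart is verifying that the twin-pair and triangle-packing encodings yield an overall injection into $[n]^2 \times F_k(n-1)$ disjoint from the earlier cases by careful ordering conventions, giving $f_k(n) \leq n^2 f_k(n-1)$.
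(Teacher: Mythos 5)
Your lower bound is fine and essentially the paper's: supermultiplicativity $f_k(n)\ge f_k(n-s)f_k(s)$ via the join construction, plus a lower bound on $f_k(s)$; your direct count of complements that are perfect $P_3$-packings gives $f_k(s)\ge 2^{\frac23 s\log s-O(s)}\ge s^{s/2}$ comfortably (just add a word about $s\not\equiv 0\pmod 3$, where one or two leftover vertices can be single-vertex suns). The genuine gap is in the upper bound, and it sits exactly where you park it: the ``exceptional case''. The numerical absorption you invoke is false in the stated range: for $k\ge 6$ the number of exceptional graphs is $n!/\bigl(6^{n/3}(n/3)!\bigr)=2^{\frac23 n\log n-\Theta(n)}$, roughly $2^{14n}$ at $n=10^7$, while the bound $2^{(n-1)\log (n-1)-e(n-1)\log\log (n-1)}$ from Lemma~\ref{f-estimate-1} is only about $2^{10.9n}$ there; the asserted domination does not begin until $n$ is around $10^{12}$. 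More importantly, even a correct cardinality comparison with $f_k(n-1)$ would not finish the argument: your cases 1 and 2 already map into all diagonal and all off-diagonal labels, so at the counting level they exhaust the entire budget $n^2f_k(n-1)$, and as written you have only shown $f_k(n)\le n^2f_k(n-1)+\#\{\text{exceptional graphs}\}$. To do better you must actually inject the exceptional graphs into codomain elements that are provably never images of cases 1 and 2 — precisely the ``careful ordering conventions'' you defer. The one concrete device you propose, using the orientation of the label pair as a spare bit, does not work as stated: in case 2 the recorded pair $(v^*,c)$ already occurs in both orientations, since the unique complement-neighbour $c$ of the smallest complement-degree-one vertex may be a star centre with a smaller label than $v^*$.

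All of this machinery is avoidable, and the paper's one-paragraph argument shows how: every graph in $F_k(n)$ contains a twin pair (two leaves or the two vertices of a $K_2$ in a star, two vertices of a triangle or clique, two body or two side vertices of a sun, or two complement-isolated vertices — in particular your exceptional graphs are full of twin pairs), and for a fixed pair $\{i,j\}$ at most $2f_k(n-1)$ graphs of $F_k(n)$ have $i,j$ as twins, because deleting $i$ leaves a graph in $F_k(n-1)$ and all edges at $i$ except possibly $ij$ are forced by $j$. Summing over the at most $\binom{n}{2}$ pairs gives $f_k(n)\le n^2f_k(n-1)$ with no injection, no case distinction and no dependence on $k$. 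Note that your own twin-pair device, which you introduce only for the exceptional components when $k\in\{4,5\}$, applied uniformly to all of $F_k(n)$, is exactly this proof; as it stands, though, your write-up leaves the exceptional case unproved.
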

\begin{proof}
By Lemma \ref{f-estimate-1}, $f_k(n) \geq f_k(s) f_k(n-s) \geq 2^{s\log{s}-es\log\log{s}} f_k(n-s) \geq 2^{s\log s/2} f_k(n-s)$, which gives us the lower bound in the statement of the lemma.

For the upper bound, note that every graph in $F_k(n)$ has a twin pair.
For any twin pair $i,j \in [n]$ the number of graphs in $F_k(n)$ for which $i,j$ are twins is at most $2 f_k(n-1)$, since every such graph can be obtained from a graph in $F_k(n-1)$ on vertex set $[n]\setminus \{i\}$ by adding the vertex $i$ and choosing whether to add the edge $ij$ (note that all other edges incident to $i$ are prescribed, since $i,j$ are twins). Thus
$$f_k(n) \leq \sum_{0<i\leq n-1} \sum_{i<j\leq n} 2f_k(n-1) \leq n^2 f_k(n-1),$$
as required.
\end{proof}

The following proposition can be proved by a simple but tedious calculation, which we omit here\COMMENT{Note that (i) is taken directly from~\cite{KOTZ}. The proof of (ii) is as follows.\\
\noindent{\bf Proof of (ii):} Without loss of generality we assume $T_{k-1}(n-s)$ to be such that its vertex classes are subsets of the corresponding vertex classes of $T_{k-1}(n)$. Let $S:=V(T_{k-1}(n))\backslash V(T_{k-1}(n-s))$. We may assume that $T_{k-1}(n)[S]=T_{k-1}(s)$. Then
$$t_{k-1}(n)=e(T_{k-1}(n))=e(T_{k-1}(n-s))+\left(\sum_{v\in S} d_{T_{k-1}(n)}(v)\right) - e(T_{k-1}(n)[S]),$$
and
\begin{itemize}
\item $e(T_{k-1}(n-s))=t_{k-1}(n-s)$,
\item $e(T_{k-1}(n)[S])=t_{k-1}(s)$,
\item For every $v\in S$, $d_{T_{k-1}(n)}(v)\geq (k-2)n/(k-1)-(k-2)$. Indeed, every vertex class of $T_{k-1}(n)$ has size at least $\lfloor n/k-1 \rfloor\geq n/(k-1) -1$, and $v$ is adjacent to all vertices in every vertex class that is not its own (of which there are $k-2$).
\end{itemize}
Putting all this together yields the result.}.

\begin{proposition}\label{omitted proposition}
Let $k, n\in \mathbb{N}$ with $n\geq k\geq 2$ and let $0<s<n$.
\begin{enumerate}[{\rm (i)}]
\item Suppose $G$ is a $k$-partite graph on $n$ vertices in which some vertex class $A$ satisfies $|A-n/k|\geq s$. Then
$$e(G)\leq t_{k}(n)-s\left( \frac{s}{2}-k\right).$$
\item $t_{k-1}(n)\geq t_{k-1}(n-s) + sn(k-2)/(k-1)-s(k-2)-t_{k-1}(s)$.
\end{enumerate}
\end{proposition}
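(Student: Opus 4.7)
The plan for (i) relies on the fact that a $k$-partite graph $G$ with class sizes $a_1,\dots,a_k$ has at most $\binom{n}{2}-\sum_i\binom{a_i}{2}$ edges (equality for the complete $k$-partite graph), so that writing $t_k(n)=\binom{n}{2}-\sum_i\binom{n_i}{2}$ for the Tur\'an class sizes $n_i\in\{\lfloor n/k\rfloor,\lceil n/k\rceil\}$ gives $t_k(n)-e(G)\geq \sum_i\binom{a_i}{2}-\sum_i\binom{n_i}{2}$. Given $|a_1-n/k|\geq s$, I would first minimize the right hand side over non-negative reals by two applications of convexity of $x\mapsto \binom{x}{2}$: for fixed $a_1$, the sum $\sum_{i\geq 2}\binom{a_i}{2}$ is minimized when $a_2=\cdots=a_k=(n-a_1)/(k-1)$; the resulting one-variable convex function of $a_1$ is then minimized on $\{|a_1-n/k|\geq s\}$ at the endpoints $a_1=n/k\pm s$. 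A short direct calculation shows that this real-valued minimum exceeds $k\binom{n/k}{2}$ by exactly $s^2k/(2(k-1))\geq s^2/2$, while $\sum_i\binom{n_i}{2}$ itself exceeds $k\binom{n/k}{2}$ by at most a quantity linear in $k$ (since each $|n_i-n/k|<1$). Combining these yields $t_k(n)-e(G)\geq s^2/2-O(k)$, and the $-sk$ term in the statement comfortably absorbs the integrality correction.

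For (ii), the plan is to exhibit an $s$-subset $S\subseteq V(T_{k-1}(n))$ whose intersection with each of the $k-1$ Tur\'an classes of $T_{k-1}(n)$ is chosen so that simultaneously $T_{k-1}(n)[S]\cong T_{k-1}(s)$ and $T_{k-1}(n)-S\cong T_{k-1}(n-s)$; a short case analysis on the residues of $n$ and $s$ modulo $k-1$ shows such a choice is always possible. A handshake-type edge decomposition then yields
$$t_{k-1}(n)=t_{k-1}(n-s)+\sum_{v\in S}d_{T_{k-1}(n)}(v)-t_{k-1}(s),$$
and since each vertex $v\in V(T_{k-1}(n))$ is adjacent to every vertex in the $k-2$ classes other than its own, each of which has size at least $\lfloor n/(k-1)\rfloor\geq n/(k-1)-1$, we have $d_{T_{k-1}(n)}(v)\geq (k-2)n/(k-1)-(k-2)$. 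Summing over $v\in S$ then gives the claimed inequality.

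I do not anticipate a serious obstacle: both parts reduce to elementary convexity/counting estimates, with Tur\'an's theorem as the only nontrivial input. The most delicate point is the integrality bookkeeping in (i) --- tracking how much is lost in passing from $n/k$ to the genuine integer Tur\'an class sizes --- but the generous $-sk$ slack makes this routine. In (ii), the only combinatorial check required is confirming that $S$ with the prescribed intersection pattern exists, which is a short case distinction on $n \bmod (k-1)$ and $s\bmod(k-1)$.
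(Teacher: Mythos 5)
Your proposal is correct and in essence coincides with the paper's (omitted) argument: for (ii) the intended proof is exactly your construction, deleting an $s$-set $S$ for which $T_{k-1}(n)-S$ is again a Tur\'an graph (and $e(T_{k-1}(n)[S])\leq t_{k-1}(s)$), then applying the handshake decomposition together with the degree bound $d_{T_{k-1}(n)}(v)\geq (k-2)n/(k-1)-(k-2)$; for (i) the paper just cites an earlier paper, and your convexity computation (relax to reals, equalize the other classes, minimize at $|a_1-n/k|=s$, then bound the rounding loss $\sum_i\binom{n_i}{2}-k\binom{n/k}{2}\leq k/2$) is the standard route and is sound. The only small point is that your final comparison $s^2/2-k/2\geq s^2/2-sk$ requires $s\geq 1/2$, but for smaller $s$ the statement is trivial since $s(s/2-k)<0$ while every $k$-partite graph has at most $t_k(n)$ edges, so this is not a real gap.
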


\begin{lemma}\label{size of template}
Let $k\geq 4$. There exists $n_0\in \mathbb{N}$ such that for every $n\geq n_0$ and every ordered $(k-1)$-partition $Q$ of $[n]$, the number of $k$-templates on $Q$ satisfies
$$|T_Q(n,k)|\leq 2^{6 (\log n)^2}2^{t_{k-1}(n)}f_k\left(n_k \right),$$
where we recall that $n_k:= \left\lceil n/(k-1) \right\rceil$.
\end{lemma}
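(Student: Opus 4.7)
The plan is to count $k$-templates on $Q$ exactly and then estimate. Given the ordered $(k-1)$-partition $Q = (Q_0, \{Q_1, \dots, Q_{k-2}\})$, a $k$-template on $Q$ is determined independently by (i) its restriction to $Q_0$, whose complement must be a disjoint union of $k$-suns, giving $f_k(|Q_0|)$ choices; (ii) its restriction to each $Q_i$ with $i \in [k-2]$, which is forced to be the complete graph on $Q_i$; and (iii) the crossing edges, which are arbitrary and contribute $2^{c(Q)}$ choices, where $c(Q) := \binom{n}{2} - \sum_{i=0}^{k-2}\binom{|Q_i|}{2}$. Hence $|T_Q(n,k)| = f_k(|Q_0|) \cdot 2^{c(Q)}$, and I will bound this by $2^{6(\log n)^2} \cdot 2^{t_{k-1}(n)} \cdot f_k(n_k)$ through a case analysis on $m := |Q_0|$.

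If $m \leq n_k$, the bound is immediate: $f_k$ is monotone (attaching a new universal vertex of $G$, equivalently an isolated vertex of $\overline{G}$, injects $F_k(m)$ into $F_k(m+1)$), so $f_k(m) \leq f_k(n_k)$; and $c(Q) \leq t_{k-1}(n)$ by Tur\'an's theorem. The factor $2^{6(\log n)^2}$ is not even needed in this case.

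Suppose now $m > n_k$ and write $s := m - n_k \geq 1$. Iterating the upper bound of Lemma~\ref{f-estimate-2} yields $f_k(m) \leq n^{2s} f_k(n_k) = 2^{2s\log n} f_k(n_k)$. For the crossing edges, $c(Q)$ is the edge count of the complete $(k-1)$-partite graph on parts $Q_0, Q_1, \dots, Q_{k-2}$, whose class $Q_0$ satisfies $|Q_0| - n/(k-1) \geq s$ (since $n_k \geq n/(k-1)$). When $s \geq 2(k-1)$, Proposition~\ref{omitted proposition}(i) applied with $k-1$ in place of $k$ gives $c(Q) \leq t_{k-1}(n) - s(s/2 - (k-1))$; when $1 \leq s < 2(k-1)$, I simply use the trivial bound $c(Q) \leq t_{k-1}(n)$. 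In the first regime, taking logarithms reduces the goal to
\[
g(s) \;:=\; 2s\log n + s(k-1) - \tfrac{s^2}{2} \;\leq\; 6(\log n)^2,
\]
which holds for all $s \geq 0$ since the quadratic $g$ attains its maximum at $s^\ast = 2\log n + k - 1$ with value $(s^\ast)^2 / 2 = (2\log n + k-1)^2/2 \leq 6(\log n)^2$ once $n \geq n_0(k)$. In the second regime the goal reduces to $2s\log n \leq 6(\log n)^2$, which similarly holds since $s < 2(k-1)$ and $n$ is large.

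The main obstacle is precisely this last case: when $|Q_0|$ is much larger than $n_k$, the factor $f_k(|Q_0|)$ is correspondingly larger than $f_k(n_k)$, and the needed slack has to come from the fact that $c(Q)$ simultaneously drops well below $t_{k-1}(n)$. The quantitative heart of the argument is that Lemma~\ref{f-estimate-2} produces only a $2^{O(s\log n)}$ blow-up in $f_k$, whereas Proposition~\ref{omitted proposition}(i) buys a quadratic-in-$s$ saving of $-s^2/2$ in $c(Q)$. The quadratic term dominates the linear one, so only an $O((\log n)^2)$ residual remains, and this is absorbed cleanly into the factor $2^{6(\log n)^2}$.
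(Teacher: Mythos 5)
Your proposal is correct and takes essentially the same route as the paper: both start from the exact count $f_k(|Q_0|)\,2^{c(Q)}$ and balance the $2^{O(s\log n)}$ growth of $f_k$ coming from Lemma~\ref{f-estimate-2} against the quadratic loss $-s(s/2-(k-1))$ in crossing edges from Proposition~\ref{omitted proposition}(i). The paper packages this trade-off as monotonicity of the function $h(b)$ for $b\geq 3\log n$ whereas you optimize the quadratic $g(s)$ directly, but this is an organizational difference only.
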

\begin{proof}
Denote the classes of $Q$ by $Q_0,Q_1,\dots, Q_{k-2}$ and let $b:= ||Q_0| - \lceil \frac{n}{k-1} \rceil|$. Then by Proposition~\ref{omitted proposition}(i) the number of $k$-templates on this partition is at most

$$f_k(|Q_0|) 2^{\sum_{0\leq i<j\leq k-2} |Q_i||Q_j|} \leq f_k\left(n_k +b\right) 2^{t_{k-1}(n) - b\left(b/2-(k-1)\right)}.$$
Let $h(b):=f_k(n_k +b) 2^{t_{k-1}(n) - b(b/2-(k-1))}$. Then by Lemma \ref{f-estimate-2},
$$\frac{h(b+1)}{h(b)} \leq \left(\frac{n}{k-1}+b+2\right)^2 2^{-\left((2b+1)/2-(k-1)\right)}. $$
Thus $h(b)$ is a decreasing function for $b\geq 3\log n$. This together with Lemma \ref{f-estimate-2} gives us that the number of $k$-templates on $Q$ is at most
\begin{align*}
h(b) &\leq f_k\left(n_k + 3\log n\right) 2^{t_{k-1}(n)} \leq (n^2)^{3\log n} 2^{t_{k-1}(n)} f_k\left(n_k \right)\\
&= 2^{6 (\log n)^2}2^{t_{k-1}(n)}f_k\left(n_k \right),
\end{align*}
as required.
\end{proof}

We call a component of a graph \emph{non-trivial} if it contains at least $2$ vertices. The proof of Lemma~\ref{the number of templates} will make use of the following proposition.

\begin{proposition}\label{prop: two components}
Let $k\geq 4$. There exists $n_0\in \mathbb{N}$ such that the following holds for every $n\geq n_0$. Let $Q$ be a balanced ordered $(k-1)$-partition of $[n]$. The proportion of $k$-templates $G$ on $Q$ that are such that $\overline{G}[Q_0]$ has at most one non-trivial component is at most $2^{-n}$.
\end{proposition}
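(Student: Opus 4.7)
The key observation is that, on a fixed ordered $(k-1)$-partition $Q$, the conditions defining a $k$-template factorise: the edges inside $Q_i$ for $i\in[k-2]$ are forced to form a clique, the edges crossing distinct partition classes are completely unconstrained, and the edges inside $Q_0$ must form a graph in $F_k(|Q_0|)$. Consequently, the total number of $k$-templates on $Q$ equals $f_k(|Q_0|)\cdot 2^{E_{\mathrm{cross}}}$, where $E_{\mathrm{cross}}:=\sum_{0\leq i<j\leq k-2}|Q_i||Q_j|$, and the number of ``bad'' templates (those whose restriction to $Q_0$ has complement with at most one non-trivial component) equals $B(|Q_0|)\cdot 2^{E_{\mathrm{cross}}}$, where $B(m)$ denotes the number of graphs on $m$ labelled vertices whose complement has at most one non-trivial component. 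The proportion to bound is therefore simply $B(|Q_0|)/f_k(|Q_0|)$.

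I would then bound $B(m)$ by enumerating over the size $s$ of the unique non-trivial component of the complement: the value $s=0$ corresponds to $G[Q_0]$ being a complete graph (contributing $1$), and each $s\geq 2$ contributes at most $\binom{m}{s}$ times the number of $k$-suns on $s$ labelled vertices. A brief case analysis shows that the latter quantity is at most $2^s$ for all $k\geq 4$: for $k\geq 5$ a $k$-sun is either a star (at most $s$ choices of centre) or a single triangle/clique (at most one), giving at most $s+1\leq 2^s$; for $k=4$, a connected sun is determined by its body $A\subseteq[s]$ (the canonical set of vertices of full degree) with $A\neq\emptyset$, giving at most $2^s-1$ choices. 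Hence
\[
B(m)\leq 1+\sum_{s=2}^{m}\binom{m}{s}2^s\leq 3^m.
\]

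The lower bound from Lemma~\ref{f-estimate-1} gives $f_k(|Q_0|)\geq 2^{|Q_0|\log|Q_0|-e|Q_0|\log\log|Q_0|}$, so
\[
\frac{B(|Q_0|)}{f_k(|Q_0|)}\leq 2^{|Q_0|\left(\log 3-\log|Q_0|+e\log\log|Q_0|\right)}.
\]
Since $Q$ is balanced, $|Q_0|\geq n/(k-1)-1$, so $n/|Q_0|\leq k$ for $n$ large. For $n$ sufficiently large depending on $k$ we have $\log|Q_0|-e\log\log|Q_0|-\log 3\geq k+1$, which forces the exponent above to be at most $-|Q_0|(k+1)\leq -n$, giving the desired bound of $2^{-n}$. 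The only mildly non-routine step is verifying the $2^s$ bound on the number of $k$-suns of order $s$; the $k=4$ case is the most delicate, because one must first observe that the body--side decomposition is canonically recoverable from the graph, so that distinct choices of body $A$ indeed yield distinct suns.
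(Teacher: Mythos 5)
Your proposal is correct and takes essentially the same route as the paper: both arguments reduce to bounding the number of admissible $G[Q_0]$ by roughly $3^{|Q_0|}$ (the paper by giving each vertex of $Q_0$ three choices -- body, side, or neither, of the unique non-trivial sun -- while you enumerate the vertex set and structure of the single non-trivial $k$-sun) and then compare against the lower bound on $f_k$ from Lemma~\ref{f-estimate-1}, with the crossing edges cancelling in the ratio. The only imprecision is that your $B(m)$ should be defined as the number of graphs in $F_k(m)$ whose complement has at most one non-trivial component (so that the non-trivial component is forced to be a $k$-sun); this restricted quantity is exactly what your enumeration bounds, whereas the class you literally defined is much larger than $3^m$.
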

\begin{proof}
Since $Q$ is balanced, the number of $k$-templates on $Q$ is at least $2^{t_{k-1}(n)}f_k(\lfloor \frac{n}{k-1}\rfloor )$.

We can generate all possible edge sets for $G[Q_0]$ such that $\overline{G}[Q_0]$ has at most one non-trivial component in the following way. Note that for every such $G[Q_0]$, $\overline{G}[Q_0]$ contains at most one disjoint sun $S$ of order at least two. For every vertex in $Q_0$ we choose whether it will belong to the body of $S$, the side of $S$, or neither (for which there are a total of at most $3^n$ choices). Hence the number of $k$-templates $G$ on $Q$ that are such that $\overline{G}[Q_0]$ has at most one non-trivial component is at most $3^{n} 2^{t_{k-1}(n)}$.

Since we have by Lemma~\ref{f-estimate-1} that $f_k(m)\geq 2^{m\log m-em\log\log m}$ for all $m\in \mathbb{N}$, the result follows (with some room to spare).
\end{proof}

The following trivial observation will be useful in the proof of Lemma~\ref{the number of templates}.
\begin{equation}\label{induced four cycle observation}
\textrm{If a graph $G$ is a disjoint union of suns then $G$ contains no induced $4$-cycles.}
\end{equation}

\begin{lemma}\label{the number of templates}
For every $k\geq 4$ there exists $n_0\in \mathbb{N}$ such that the following holds for all $n\geq n_0$, where we recall that $n_k:= \left\lceil n/(k-1) \right\rceil$. The number of $k$-templates on vertex set $[n]$ satisfies
$$|T(n,k)| \geq \frac{(k-1)^n}{2(k-2)! n^k}2^{t_{k-1}(n)}f_k\left(n_k \right).$$

\end{lemma}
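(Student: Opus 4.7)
I would establish the lower bound via a double-counting argument over pairs $(Q, G)$, where $Q = (Q_0, \{Q_1, \dots, Q_{k-2}\})$ is an ordered balanced $(k-1)$-partition of $[n]$ with $|Q_0| = n_k$ and $G$ is a $k$-template on $Q$. By a multinomial count followed by Stirling's approximation, the number $P_n$ of such ordered balanced partitions is at least $c_k(k-1)^n/(n^{(k-2)/2}(k-2)!)$ for some constant $c_k > 0$ depending only on $k$. For each such $Q$, the number of $k$-templates on $Q$ is exactly $f_k(n_k) \cdot 2^{t_{k-1}(n)}$: the subgraph $\overline{G}[Q_0]$ is chosen freely from $F_k(n_k)$, each $G[Q_i]$ for $i \geq 1$ is forced to be a clique, and each of the $t_{k-1}(n)$ crossing pairs is independently an edge or non-edge (a balanced $(k-1)$-partition has exactly $t_{k-1}(n)$ crossing pairs).

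\textbf{Restriction and overcount.} To control the overcount (the number of valid ordered partitions per template), I would invoke Proposition~\ref{prop: two components} to restrict attention to those templates $G$ for which $\overline{G}[Q_0]$ has at least two non-trivial components: at least half of the templates on any given balanced $Q$ satisfy this. The key claim is then that each such ``typical'' template is a template on at most $\mathrm{poly}_k(n)$ ordered balanced partitions $Q'$ with $|Q_0'|=n_k$, where the polynomial has degree at most $(k+2)/2$. Observation~\ref{induced four cycle observation} is central here: $\overline{G}[Q_0]$ is $C_4$-free, and combined with the presence of at least two non-trivial components this severely restricts how vertices of $Q_0$ can be relocated to the clique classes $Q_i'$ in an alternative partition $Q'$. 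Dividing the pair count by this overcount gives
\[
|T(n,k)| \geq \frac{P_n \cdot \tfrac{1}{2} f_k(n_k) \, 2^{t_{k-1}(n)}}{\mathrm{poly}_k(n)} \geq \frac{(k-1)^n f_k(n_k) \, 2^{t_{k-1}(n)}}{2(k-2)! \, n^k}
\]
for sufficiently large $n$, after absorbing the constants into the $n^k$ factor.

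\textbf{Main obstacle.} The hardest step is the polynomial overcount bound. A single template can genuinely admit multiple valid ordered partitions: if $\overline{G}[Q_0]$ contains an isolated edge $\{v_1,v_2\}$ (a component of $\overline{G}[Q_0]$ of size two), one may sometimes swap $v_1$ with a vertex $w$ from some clique $Q_j$ (provided $v_1$ is adjacent in $G$ to all of $Q_j \setminus \{w\}$ and $w$'s non-neighbours in $Q_0 \setminus \{v_1\}$ are exactly $\{v_2\}$) to produce a different valid ordered partition. Ruling out the accumulation of too many such swaps requires exploiting the $C_4$-freeness of $\overline{G}[Q_0]$ together with the presence of at least two non-trivial components, via a case analysis of how the non-trivial components of $\overline{G}[Q_0]$ can be preserved, split, or absorbed across a change of partition; this is the delicate part of the argument.
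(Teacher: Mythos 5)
Your overall architecture is close to the paper's (count templates per balanced ordered partition, control multiplicities, then count balanced partitions via a multinomial coefficient and absorb it using $(k-1)^n\leq n^{k-2}\binom{n}{\lceil n/(k-1)\rceil,\dots}$), but the step you yourself flag as the hardest one fails as stated, and this is a genuine gap. The only restriction you place on the templates being counted is that $\overline{G}[Q_0]$ has at least two non-trivial components; this says nothing about the crossing edges. Consider a template $G$ on a balanced $Q$ in which every crossing pair among the clique classes $Q_1,\dots,Q_{k-2}$ is an edge, while $\overline{G}[Q_0]$ is, say, a disjoint union of two large stars. Then $Q_1\cup\dots\cup Q_{k-2}$ is a clique of size about $(k-2)n/(k-1)$, and \emph{any} balanced re-partition of this set into $k-2$ parts yields another balanced ordered partition with the same labelled class $Q_0$ on which $G$ is again a $k$-template; your two-component condition and the $C_4$-freeness of $\overline{G}[Q_0]$ (Observation~(\ref{induced four cycle observation})) are untouched, because the ambiguity lives entirely among the clique classes. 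Hence the overcount for such $G$ is exponential in $n$, not $\mathrm{poly}_k(n)$, and since your final inequality divides the pair count by the maximum overcount over the restricted class, the resulting bound loses an exponential factor and the lemma does not follow. No case analysis based solely on $C_4$-freeness of $\overline{G}[Q_0]$ plus two non-trivial components can repair this, since the above graphs are genuine counterexamples to the claimed overcount bound; also, your discussion of the obstacle only treats single-vertex swaps between $Q_0$ and a clique class, not the wholesale re-splitting of clique classes.

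What is missing is a quasirandomness condition on the crossing edges, and this is exactly the extra ingredient in the paper's proof: a template on $Q$ is called $Q$-compatible if, besides your condition on $\overline{G}[Q_0]$ (the paper's $(\beta)$), it satisfies $(\alpha)$: every set of at most $2k$ vertices outside a class $Q_i$ has common non-neighbourhood of linear size inside $Q_i$. By a Chernoff bound over the random crossing edges, at least half of the templates on each balanced $Q$ are $Q$-compatible; then $(\alpha)$, $(\beta)$ and $C_4$-freeness are combined (Claims~2 and~3 in the paper's proof) to show that a template compatible with two balanced ordered partitions forces the partitions to coincide, i.e.\ the overcount is exactly one and no division is needed. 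If you add a condition of type $(\alpha)$ to your ``typical'' templates (discarding at most a further constant fraction per partition), your double count goes through; without it, the key claim in your plan is false.
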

\begin{proof}
Choose $n_0$ such that $1/n_0\ll 1/k$, and let $n\geq n_0$. Given a $k$-template $G$ on vertex set $[n]$ and an ordered $(k-1)$-partition $Q=(Q_0,\{Q_1,\dots,Q_{k-2}\})$ of $[n]$, we say that $G$ is $Q$-compatible if $G$ is a $k$-template on $Q$ and the following hold:

\begin{enumerate}
\item[$(\alpha)$] Whenever $\ell\leq 2k$ and $0\leq i\leq k-2$ and $v_1,v_2,\dots, v_{\ell} \in V(G)\setminus Q_i$, we have that
$$|\overline{N}_{Q_i}(\{v_1,v_2,\dots,v_{\ell}\})| \geq \frac{n}{2^{\ell+1}(k-1)}.$$
\item[$(\beta)$] $\overline{G}[Q_0]$ has at least $2$ non-trivial components.
\end{enumerate}

\vspace{0.3cm}
\noindent {\bf Claim 1:} \emph{Given a balanced ordered $(k-1)$-partition $Q=(Q_0,\{Q_1,\dots,Q_{k-2}\})$ of $[n]$, the number of $k$-templates $G$ on vertex set $[n]$ that are $Q$-compatible is at least $2^{t_{k-1}(n)-1}f_k(n_k)/n^2$.}

\noindent Indeed, consider a random graph $G$ where for each potential crossing edge with respect to $Q$ we choose the edge to be present or not, each with probability $1/2$, independently; we let $G[Q_0]$ be one of the $f_k(|Q_0|)$ graphs in $F_k(|Q_0|)$, chosen uniformly at random; and we choose all edges to be present inside $Q_i$ for every $i>0$. So each $k$-template on $Q$ is equally likely to be generated. Note that the number of potential crossing edges with respect to $Q$ is $2^{t_{k-1}(n)}$. This together with Lemma~\ref{f-estimate-2} implies that the number of graphs in the probability space is at least $2^{t_{k-1}(n)}f_k(n_k)/n^2$. By Lemma~\ref{Chernoff Bounds}(ii) and Proposition~\ref{prop: two components} respectively, we have that at least half of all graphs $G$ in the probability space satisfy $(\alpha)$ and $(\beta)$, which proves the claim.
\vspace{0.3cm}

%We now show that for every $k$-template $G$, there is at most one balanced ordered $(k-1)$-partition $Q$ on which $G$ is a $k$-template and with respect to which $G$ satisfies $(\alpha), (\beta)$. To show this, we suppose $G$ has two such partitions $Q=(Q_0,\dots, Q_{k-2})$ and $Q'=(Q'_0,\dots, Q'_{k-2})$, and prove that $Q'=Q$ by thinking of $Q$ as fixed and showing that there is a unique way to assign the vertices of $G$ to vertex classes of $Q'$ (i.e. the way that results in $Q'=Q$). Note that for every $i> 0$, $G[Q_i], G[Q_i']$ are cliques, and $G[Q_0], G[Q_0']$ are graphs not containing any induced copy of graphs in $L_4(k-2)$ and $G$ satisfies $(\alpha),(\beta)$ with respect to both $Q,Q'$.

\vspace{0.3cm}
\noindent {\bf Claim 2:} \emph{Given two balanced ordered $(k-1)$-partitions $Q=(Q_0,\{Q_1,\dots,Q_{k-2}\})$ and $Q'=(Q_0',\{Q_1',\dots,Q'_{k-2}\})$ of $[n]$, and a $k$-template $G$ on $[n]$ that is both $Q$-compatible and $Q'$-compatible, there exist $k$ vertices $u_0,v_0,v_1,\dots, v_{k-2}\in [n]$ that are such that $G[\{u_0,v_0,v_1,\dots, v_{k-2}\}]$ contains exactly one edge $u_0v_0$ and $u_0\in Q_0\cap Q'_0$ and $v_i\in Q_i\cap Q_i'$ for all $i\geq 0$.}

\noindent To show this, we first choose a set of $2k$ vertices $U=\{u_{0,1}, w_{0,1}, u_{0,2}, w_{0,2}, u_1, w_1, \dots, u_{k-2}, w_{k-2}\}$ such that $u_{0,1}, w_{0,1}, u_{0,2}, w_{0,2}\in Q_0$ and $u_i, w_i\in Q_i$ for every $i>0$ and
$$E(G[U])=\{u_{0,1}u_{0,2}, u_{0,2}w_{0,1}, w_{0,1}w_{0,2}, w_{0,2}u_{0,1}, u_1 w_1, \dots, u_{k-2} w_{k-2}\}.$$
This is possible since $G$ satisfies $(\alpha), (\beta)$ with respect to $Q$. Now if there exist distinct $i,j>0$ such that $u_i, w_i, u_j, w_j \in Q_0'$ then $\overline{G}[Q_0']$ contains the induced $4$-cycle $u_i u_j w_i w_j$, which by (\ref{induced four cycle observation}) contradicts the fact that $G$ is a $k$-template on $Q'$. So, by relabelling vertices if necessary, we may assume that $u_2,\dots, u_{k-2}\notin Q_0'$. If $u_{0,1},w_{0,1}\notin Q_0'$ then by the pigeon-hole principle there must exist $i>0$ such that $Q_i'$ contains at least $2$ elements of $\{u_{0,1},w_{0,1}, u_2,\dots, u_{k-2} \}$, contradicting the assumption that $G[Q_i']$ is a clique. So, by relabelling vertices if necessary, we may assume that $u_{0,1}\in Q_0'$, and similarly that $u_{0,2}\in Q_0'$. Now if $u_1, w_1 \in Q_0'$ then $\overline{G}[Q_0']$ contains the induced $4$-cycle $u_{0,1} u_1 u_{0,2} w_1$, which by (\ref{induced four cycle observation}) contradicts the fact that $G$ is a $k$-template on $Q'$. So, by relabelling vertices if necessary, we may assume that $u_1\notin Q_0'$, and thus $u_1,\dots, u_{k-2}\notin Q_0'$. Recall that for all $i>0$, $G[Q_i']$ is a clique, so $Q_i'$ can contain at most one vertex in $\{u_1,\dots, u_{k-2}\}$. Thus we may assume, by relabelling indices if necessary, that $u_{0,1}, u_{0,2}\in Q_0\cap Q_0'$ and $u_i\in Q_i\cap Q_i'$ for every $i>0$. So setting $u_0:= u_{0,1}, v_0:= u_{0,2}$ and $v_i:= u_i$ for all $i>0$ yields the required set of vertices.
\vspace{0.3cm}

\vspace{0.3cm}
\noindent {\bf Claim 3:} \emph{If there exist balanced ordered $(k-1)$-partitions $Q=(Q_0,\{Q_1,\dots,Q_{k-2}\})$ and $Q'=(Q_0',\{Q_1',\dots,Q'_{k-2}\})$ of $[n]$, and a $k$-template $G$ on $[n]$ that is both $Q$-compatible and $Q'$-compatible, then $Q=Q'$.}

\noindent Consider any $k$ vertices $u_0, v_0,\dots, v_{k-2}\in V(G)$ that are such that
$G[\{u_0,v_0,v_1,\dots, v_{k-2}\}]$ contains exactly one edge $u_0v_0$ and $u_0\in Q_0\cap Q'_0$ and $v_i\in Q_i\cap Q_i'$ for all $i\geq 0$. Such vertices exist by Claim~2.
%Define $$\overline{N}_0 := \overline{N}( \{u_0,v_0,\dots, v_{k-2}\} \setminus \{u_0,v_0\}).$$
%Since $\overline{N}_0 $ is the common non-neighbourhood of $\{v_1,\dots, v_{k-2}\}$, $\overline{N}_0 $ must be both a subset of $Q_0$ and a subset of $Q'_0$.
%Note that $|\overline{N}_0 | \geq \frac{n}{2^{k-1}(k-1)} \geq 1$.
For $i>0$ define $$\overline{N}_i  := \overline{N}_{Q_i}( \{u_0,v_0,\dots, v_{k-2}\} \setminus \{v_i\}).$$
$$\overline{N}'_i  := \overline{N}_{Q'_i}( \{u_0,v_0,\dots, v_{k-2}\} \setminus \{v_i\}).$$
%Then $|\overline{N}_i| \geq \frac{n}{2^{k}(k-1)} \geq 3$.
Since both $\overline{N}_i$ and $\overline{N}'_i$ are subsets of the common non-neighbourhood of $\{u_0,v_0,v_1\dots, v_{k-2}\} \setminus \{v_i\}$, neither can intersect $Q_j$ or $Q'_j$ for $j\notin \{0,i\}$. Note that all vertices in $\overline{N}_i$ are adjacent.
Thus $|\overline{N}_i \cap Q'_0| \leq 1$, since otherwise $\overline{G}[Q'_0]$ contains an induced $4$-cycle on $u_0,v_0$ together with $2$ vertices from $\overline{N}_i$, which by (\ref{induced four cycle observation}) contradicts the fact that $G$ is a $k$-template on $Q'$. Similarly, $|\overline{N}'_i \cap Q_0| \leq 1$. Define
$$\overline{N}^{\dagger}_i  :=(\overline{N}_i \cup \overline{N}'_i) \setminus (Q_0\cup Q'_0).$$
Then $\overline{N}^{\dagger}_i \subseteq Q_i\cap Q_i'$.
%and $\overline{N}'_i$ is non-empty since $|\overline{N}'_i| \geq |\overline{N}_i|-1 \geq \frac{n}{2^{k}(k-1)} -1\geq 1$.

Now we consider any vertex $w\in Q_0$. Since $G$ satisfies $(\alpha)$ with respect to $Q$, we have that for every $i> 0$,
\begin{align}\label{eqn: counting templates}
|\overline{N}_{Q'_i}(w)|&\geq |\overline{N}(w)\cap\overline{N}^{\dagger}_i | \geq |\overline{N}(w)\cap\overline{N}_i|-1 \\
&= |\overline{N}_{Q_i}( \{u_0,v_0,\dots, v_{k-2},w\}\setminus \{v_i\})| -1 \geq \frac{n}{2^{k+1}(k-1)} -1 \geq 1.\nonumber
\end{align}
Thus $w$ must belong to $Q'_0$, since $G[Q_i']$ is a clique for every $i> 0$. Hence $Q_0\subseteq Q'_0$. In the same way we can show that $Q'_0\subseteq Q_0$. Thus $Q_0=Q'_0$.

Now we consider any vertex $w\in Q_j$, for $j> 0$. Since $G$ satisfies $(\alpha)$ with respect to $Q$, we have (similarly to (\ref{eqn: counting templates})) that for every $i\neq j$ with $i> 0$,
$$|\overline{N}_{Q'_i}(w)|\geq |\overline{N}(w)\cap \overline{N}^{\dagger}_i | \geq 1.$$
Thus $w\in Q_0'\cup Q_j'$. Together with the fact that $Q_0= Q'_0$ this implies that $w\in Q'_j$. Thus $Q_j\subseteq Q'_j$ for all $j>0$.

Hence $Q=Q'$, which proves the claim.
\vspace{0.3cm}

We now count the number of balanced ordered $(k-1)$-partitions. Since the vertex classes of a balanced ordered $(k-1)$-partition of $[n]$ have sizes $\lceil \frac{n}{k-1} \rceil,\lceil \frac{n-1}{k-1} \rceil, \dots, \lceil \frac{n-k+2}{k-1} \rceil$, the number of such $(k-1)$-partitions is
\begin{equation*}
\frac{1}{(k-2)!}{\binom{n}{\lceil \frac{n}{k-1} \rceil,\lceil \frac{n-1}{k-1} \rceil, \dots, \lceil \frac{n-k+2}{k-1} \rceil}}.
\end{equation*}
This together with Claims~1 and~3 implies that
\begin{equation}\label{eqn: counting templates 2}
|T(n,k)| \geq \frac{1}{2(k-2)!n^2}{\binom{n}{\lceil \frac{n}{k-1} \rceil,\lceil \frac{n-1}{k-1} \rceil, \dots, \lceil \frac{n-k+2}{k-1} \rceil}}2^{t_{k-1}(n)}f_k(n_k).
\end{equation}

Now note that if $a_1+\dots + a_{k-1} = n$, then ${\binom{n}{a_1,a_2,\dots, a_{k-1}}}$ is maximized by taking $a_j := \lceil \frac{n-j+1}{k-1}\rceil $ for every $j$.  This implies that 
$$(k-1)^n = \sum_{a_1+\dots+a_{k-1}=n}{\binom{n}{a_1,a_2,\dots, a_{k-1}} } \leq n^{k-2} {\binom{n}{\lceil \frac{n}{k-1} \rceil,\lceil \frac{n-1}{k-1} \rceil, \dots, \lceil \frac{n-k+2}{k-1} \rceil}},$$
which together with (\ref{eqn: counting templates 2}) implies the result.
\end{proof}

\section{Properties of near-$k$-templates}\label{sec: set-up}

In this section we collect some properties of graphs which are close to being $k$-templates. In particular, when $k\geq 6$, this means we consider graphs $G$ which have a vertex partition such that each vertex class induces on $G$ an almost complete graph. (As in the previous section, we will need the results of this section for our main results only for the case $k\geq 6$, but we prove the results for all $k\geq 4$ since it makes little difference to the proofs.) More formally, given $k\geq 4$, a graph $G$ on vertex set $[n]$, and an ordered $(k-1)$-partition $Q$ of $[n]$ we define
$$h(Q,G):=\sum_{i=0}^{k-2} |E(\overline{G}[Q_i])|.$$
We say $Q$ is an {\em optimal ordered $(k-1)$-partition of $G$} if $h(Q,G)$ is the minimum value $h(Q',G)$ takes over all partitions $Q'$ of $[n]$. Note that if $h(Q,G)=0$ then $G$ is a $k$-template on $Q$, and that the following also holds.
\begin{equation}\label{eqn: near templates}
\textrm{If $k\geq 6$ then every $k$-template $G'$ on $Q$ satisfies $h(Q,G')\leq n$.}
\end{equation}
Note that (\ref{eqn: near templates}) does not hold for $k\in \{4,5\}$. We will require the following definitions in what follows.

\begin{itemize}
\item Recall that $F(n,k)$ denotes the set of all labelled induced-$C_{2k}$-free graphs on vertex set $[n]$.
\item Given $n\in \mathbb{N}$, $k\geq 4$, and $\eta>0$, we define $F(n,k,\eta)\subseteq F(n,k)$ to be the set of all graphs in $F(n,k)$ such that $h(Q,G)\leq \eta n^2$ for some optimal ordered $(k-1)$-partition $Q$ of $G$.
\item Given further an ordered $(k-1)$-partition $Q=(Q_0,\{Q_1,\dots, Q_{k-2}\})$ of $[n]$ we define $F_Q(n,k)\subseteq F(n,k)$ to be the set of all graphs in $F(n,k)$ for which $Q$ is an optimal ordered $(k-1)$-partition
\item Similarly we define $F_Q(n,k,\eta)\subseteq F(n,k,\eta)$ to be the set of all graphs in $F(n,k,\eta)$ for which $Q$ is an optimal ordered $(k-1)$-partition.
\end{itemize}

Recall that, given a graph $G$ on vertex set $[n]$ and an index $i\in \{0,1,\dots, k-2\}$, we let $d^i_{G,Q}(x), \overline{d}^i_{G,Q}(x)$ denote the number of neighbours and non-neighbours of $x$ in $Q_i$, respectively. The following proposition follows immediately\COMMENT{{\bf Proof of Proposition~\ref{optimality}:} Suppose for a contradiction that there exists a vertex $x \in Q_i$ that satisfies $\overline{d}^j_{G,Q}(x) < \overline{d}^i_{G,Q}(x)$. Define an ordered $(k-1)$-partition $Q'=(Q'_0,\{Q'_1,\dots, Q'_{k-2}\})$ by $Q'_j:= Q_j\cup \{v\}, Q'_i:= Q_i\setminus \{x\}$ and $Q'_\ell:=Q_\ell$ for all $\ell\in \{0,1,\dots, k-2\}\backslash\{i,j\}$. Then $h(Q',G) = h(Q,G)+\overline{d}^j_{G,Q}(x)-\overline{d}^i_{G,Q}(x)< h(Q,G)$, which contradicts the optimality of $Q$ and hence completes the proof.} from the definition of optimality.

\begin{proposition}\label{optimality}
Let $k\geq 4$, let $\eta>0$, let $Q=(Q_0,\{Q_1,\dots, Q_{k-2}\})$ be an ordered $(k-1)$-partition of $[n]$, and let $G\in F_Q(n,k,\eta)$. For any two distinct indices $i,j\in \{0,1,\dots, k-2\}$ every vertex $x\in Q_i$ satisfies $\overline{d}^j_{G,Q}(x) \geq \overline{d}^i_{G,Q}(x)$.
\end{proposition}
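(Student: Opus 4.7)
The plan is a direct swap (local optimality) argument. Suppose for contradiction that there exist distinct $i,j\in\{0,1,\dots,k-2\}$ and a vertex $x\in Q_i$ with $\overline{d}^j_{G,Q}(x)<\overline{d}^i_{G,Q}(x)$. I would then produce a new ordered $(k-1)$-partition $Q'$ with strictly smaller $h(\cdot,G)$, contradicting the assumption that $Q\in F_Q(n,k,\eta)$ is an optimal ordered $(k-1)$-partition of $G$.

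Explicitly, define $Q'=(Q_0',\{Q_1',\dots,Q_{k-2}'\})$ by $Q_i':=Q_i\setminus\{x\}$, $Q_j':=Q_j\cup\{x\}$, and $Q_\ell':=Q_\ell$ for all $\ell\notin\{i,j\}$ (keeping the same labelled/unlabelled roles, which is legitimate because the labelled class is $Q_0$, and at least one of $i,j$ is nonzero so the labelling is unambiguous; if $i=0$ then $Q_0'$ is simply the relabelled class $Q_0\setminus\{x\}$). Then only the contributions from the parts $Q_i$ and $Q_j$ to $h$ change, and a direct count gives
\[
h(Q',G)-h(Q,G)=\overline{d}^j_{G,Q}(x)-\overline{d}^i_{G,Q}(x)<0,
\]
which contradicts the optimality of $Q$.

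The only potential obstacle is the boundary case where $Q_i$ becomes empty after removing $x$ (i.e.\ $|Q_i|=1$); in that situation $Q'$ strictly speaking is no longer a $(k-1)$-partition. However, this case is trivial to rule out: if $|Q_i|=1$ then $\overline{d}^i_{G,Q}(x)=0$, so the assumed strict inequality $\overline{d}^j_{G,Q}(x)<\overline{d}^i_{G,Q}(x)=0$ is vacuous. Thus the swap argument applies in all remaining cases, giving the required contradiction and completing the proof.
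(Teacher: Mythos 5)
Your proposal is correct and is essentially the same argument the paper has in mind: move $x$ from $Q_i$ to $Q_j$, note that $h(Q',G)=h(Q,G)+\overline{d}^j_{G,Q}(x)-\overline{d}^i_{G,Q}(x)<h(Q,G)$, and contradict the optimality of $Q$. Your extra remark about the degenerate case $|Q_i|=1$ is a harmless (and slightly more careful) addition.
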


Next we show that for most graphs which are close to being $k$-templates, the bipartite graphs between the partition classes are quasirandom.

Given $k\geq 4$, $\nu=\nu(n)>0$ and an ordered $(k-1)$-partition $Q=(Q_0,\{Q_1,\dots, Q_{k-2}\})$ of $[n]$, we define the following properties that a graph on vertex set $[n]$ may satisfy with respect to $Q$.

\begin{enumerate}
\item[$({\rm F}1)_{\nu}$] If $U_i\subseteq Q_i$ and $U_j\subseteq Q_j$ with $|U_i||U_j|\geq \nu^2 n^2$ for distinct $0\leq i,j\leq k-2$, then $ \frac{1}{4} \leq \frac{|e(U_i,U_j)|}{|U_i||U_j|} \leq \frac{3}{4}$.
\item[$({\rm F}2)_{\nu}$] $||Q_i|-\frac{n}{k-1}|\leq \nu n$ for every $0\leq i\leq k-2$.
\end{enumerate}

Given $\eta, \mu>0$ we define $F_Q(n,k,\eta,\mu)$ to be the set of all graphs in $F_Q(n,k,\eta)$ that satisfy $({\rm F}1)_{\mu}$ and $({\rm F}2)_{\mu}$ with respect to $Q$.

\begin{lemma}\label{pre regular lemma}
Let $n\geq k\geq 4$, let $0<\eta<1$, let $6k/n\leq \nu=\nu(n)\leq 1$, let $6\log n\leq m=m(n)\leq 10^{-11}n^2$, and let $Q=(Q_0,\{Q_1,\dots, Q_{k-2}\})$ be an ordered $(k-1)$-partition of $[n]$. Then the following hold.
\begin{enumerate}[{\rm (i)}]
\item The number of graphs $G$ in $F_Q(n,k,\eta)$ that fail to satisfy $({\rm F}1)_{\nu}$ with respect to $Q$ and that have at most $m$ internal non-edges is at most $2^{t_{k-1}(n) + \xi(m/n^2)n^2} 2^{2n+1} \exp(-\nu^2n^2/32)$.
\item The number of graphs $G$ in $F_Q(n,k,\eta)$ that fail to satisfy $({\rm F}2)_{\nu}$ with respect to $Q$ and that have at most $m$ internal non-edges is at most $2^{t_{k-1}(n) + \xi(m/n^2)n^2} \exp( -\nu^2 n^2/6 )$.
\end{enumerate}
\end{lemma}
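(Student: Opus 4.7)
The plan is to bound both parts by observing that any graph $G\in F_Q(n,k,\eta)$ with at most $m$ internal non-edges is determined by its set of internal non-edges together with its set of crossing edges with respect to $Q$. Since $6\log n\leq m\leq 10^{-11}n^2$, applying~(\ref{entropy bound}) with $n$ replaced by $n^2$ yields at most $\binom{n^2}{\leq m}\leq 2^{\xi(m/n^2)n^2}$ possibilities for the internal non-edges; this already contributes the $2^{\xi(m/n^2)n^2}$ factor in the target bound. Thus in each part it suffices to bound the number of admissible crossing-edge configurations.

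Part~(ii) is the easy case. If $Q$ violates $({\rm F}2)_\nu$ then some class $Q_i$ satisfies $||Q_i|-n/(k-1)|\geq \nu n$, so Proposition~\ref{omitted proposition}(i) (applied with $k-1$ in place of $k$ and $s=\nu n$; the hypothesis $\nu\geq 6k/n$ gives $s/2-(k-1)\geq \nu n/3$) bounds the number of potential crossing edges by $t_{k-1}(n)-\nu^2 n^2/3$. Multiplying by the internal-non-edge factor and converting base-$2$ to base-$e$ using $2\ln 2>1$ (so that $2^{-\nu^2n^2/3}\leq e^{-\nu^2 n^2/6}$) yields the stated bound.

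For part~(i), I would first fix a candidate witness $(i,j,U_i,U_j)$ with $i\neq j$, $U_i\subseteq Q_i$, $U_j\subseteq Q_j$ and $|U_i||U_j|\geq \nu^2 n^2$, and count the graphs whose edges between $U_i$ and $U_j$ have density outside $[1/4,3/4]$. Regarding these edges as a binary string of length $|U_i||U_j|$ and applying Lemma~\ref{Chernoff Bounds} to $\Bin(|U_i||U_j|,1/2)$ with $a=|U_i||U_j|/4$ (summing the two tails) gives at most $2\cdot 2^{|U_i||U_j|}\exp(-\nu^2 n^2/32)$ such configurations; the remaining crossing edges outside $U_i\times U_j$ contribute an extra factor of at most $2^{t_{k-1}(n)-|U_i||U_j|}$, so per 4-tuple the count is at most $2^{t_{k-1}(n)+\xi(m/n^2)n^2+1}\exp(-\nu^2 n^2/32)$. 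Finally, I would union-bound over 4-tuples: there are $\binom{k-1}{2}$ choices of $(i,j)$ and at most $2^{|Q_i|+|Q_j|}\leq 2^n$ choices of $(U_i,U_j)$ per index pair (since $Q_i,Q_j$ are disjoint), and the product $\binom{k-1}{2}\cdot 2^n$ is at most $2^{2n}$ for $n$ large, yielding the claimed overhead $2^{2n+1}$.

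No step is conceptually difficult; the bookkeeping points to watch are (a) checking that $m$ lies in the range demanded by~(\ref{entropy bound}) after rescaling $n\to n^2$, (b) correctly converting between $e^{-\cdot}$ and $2^{-\cdot}$ when matching the stated bounds, and (c) absorbing the combinatorial constant $\binom{k-1}{2}$ from the union bound into the $2^{2n+1}$ overhead once $n$ is large.
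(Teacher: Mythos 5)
Your proposal is correct and follows essentially the same route as the paper: the $2^{\xi(m/n^2)n^2}$ factor for the internal non-edges via~(\ref{entropy bound}), Proposition~\ref{omitted proposition}(i) for part~(ii) with the same $\nu n/2-(k-1)\geq\nu n/3$ and $2\ln 2>1$ conversions, and for part~(i) a Chernoff bound per witness pair $(U_i,U_j)$ combined with a union bound over at most $2^{2n}$ witnesses (the paper phrases this probabilistically via a uniformly random crossing-edge configuration, which is the same counting argument). The only cosmetic difference is your slightly sharper witness count $\binom{k-1}{2}2^n$ versus the paper's $2^{2n}$; both land within the stated bound.
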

\begin{proof}
For both (i) and (ii) we consider constructing such a graph $G$. By (\ref{entropy bound}) there are at most $\binom{n^2}{\leq m} \leq 2^{\xi(m/n^2) n^2}$ choices for the internal edges of $G$.

We first prove (i). For a given choice of internal edges, consider the random graph $H$ where for each possible crossing edge with respect to $Q$ we choose the edge to be present or not, with probability $1/2$, independently. Note that the total number of ways to choose the crossing edges is at most $2^{t_{k-1}(n)}$, and each possible configuration of crossing edges is equally likely. So an upper bound on the number of graphs $G \in F_Q(n,k,\eta)$ that fail to satisfy property $({\rm F}1)_{\nu}$ with respect to $Q$ and that have at most $m$ internal non-edges is
\begin{equation}\label{F intermediate bound}
2^{t_{k-1}(n) + \xi(m/n^2)n^2 }\mathbb{P}(H \text{ fails to satisfy } ({\rm F}1)_{\nu} \text{ with respect to } Q).
\end{equation}
Note that the number of choices for $U_i\subseteq Q_i, U_j\subseteq Q_j$ with $|U_i||U_j|\geq \nu^2 n^2$ is at most $2^{2n}$ and that $\mathbb{E}(e(U_i,U_j)) = |U_i||U_j|/2 \geq \nu^2 n^2/2$. Hence by Lemma~\ref{Chernoff Bounds},
\begin{equation*}
\mathbb{P}(H \text{ fails to satisfy } ({\rm F}1)_{\nu} \text{ with respect to } Q) \leq 2^{2n+1} \exp\left(-\frac{\nu^2 n^2}{32}\right).
\end{equation*}
This together with (\ref{F intermediate bound}) yields the result.

We now prove (ii). If $||Q_i|-\frac{n}{k-1}|>\nu n$ for some $0\leq i\leq k-2$, then by Proposition~\ref{omitted proposition}(i) the number of crossing edges in $G$ is at most
$$ t_{k-1}(n) -  \frac{\nu^2n^2}{3}.$$
We can conclude that the number of $G \in F_Q(n,k,\eta)$ that fail to satisfy $({\rm F}2)_{\nu}$ with respect to $Q$ and that have at most $m$ internal non-edges is at most
\begin{equation*}
2^{\xi(m/n^2)n^2} 2^{t_{k-1}(n) -\frac{\nu^2n^2}{3} }\leq 2^{t_{k-1}(n) + \xi(m/n^2)n^2}\exp\left( -\frac{\nu^2 n^2}{6} \right),
\end{equation*}
as required.
\end{proof}

We will apply the following special case of Lemma~\ref{pre regular lemma} in Section~\ref{sec: final calculation} in the proof of Lemma~\ref{induction conclusion}.

\begin{corollary}\label{regular lemma}
Let $k\geq 4$ and let $0<\eta,\mu<10^{-11}$ be such that $\mu^2 > 24\xi(\eta)$. There exists an integer $n_0 = n_0(\mu,k)$ such that for all $n\geq n_0$ and every ordered $(k-1)$-partition $Q$ of $[n]$,
$$ |F_Q(n,k,\eta)\setminus F_Q(n,k,\eta,\mu)| \leq 2^{t_{k-1}(n) - \frac{\mu^2n^2}{100}}.$$
\end{corollary}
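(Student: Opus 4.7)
The plan is a direct application of Lemma~\ref{pre regular lemma}. First I would set $\nu := \mu$ and $m := \lfloor \eta n^2 \rfloor$; for $n \geq n_0(\mu, k)$ large enough these satisfy that lemma's hypotheses, namely $6k/n \leq \mu \leq 1$ and $6\log n \leq m \leq 10^{-11}n^2$ (the latter because $\eta \leq 10^{-11}$).

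The key observation is that every $G \in F_Q(n,k,\eta)$ has $h(Q,G) \leq \eta n^2 = m$, so $G$ has at most $m$ internal non-edges with respect to $Q$. Moreover, by the definition of $F_Q(n,k,\eta,\mu)$, every graph in $F_Q(n,k,\eta) \setminus F_Q(n,k,\eta,\mu)$ must fail at least one of the properties $({\rm F}1)_\mu$ and $({\rm F}2)_\mu$ with respect to $Q$. Summing the two bounds supplied by Lemma~\ref{pre regular lemma}(i) and (ii) and absorbing the smaller (ii)-term into the larger (i)-term gives
\begin{equation*}
|F_Q(n,k,\eta) \setminus F_Q(n,k,\eta,\mu)| \leq 2^{t_{k-1}(n) + \xi(\eta) n^2 + 2n + 2} \exp\left(-\frac{\mu^2 n^2}{32}\right).
\end{equation*}

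To finish I would rewrite $\exp(-\mu^2 n^2/32)$ in base $2$ as $2^{-\mu^2 n^2/(32\ln 2)}$ and invoke the hypothesis $\mu^2 > 24\xi(\eta)$ to bound $\xi(\eta)n^2 \leq \mu^2 n^2/24$. For $n_0$ chosen large enough in terms of $\mu$, the linear correction $2n+2$ is negligible compared with the $\Theta(\mu^2 n^2)$ gap between the Chernoff saving and $\xi(\eta)n^2$, and the total exponent drops below $t_{k-1}(n) - \mu^2 n^2 / 100$, which is the target bound.

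There is no real obstacle in this argument; the work is essentially bookkeeping once Lemma~\ref{pre regular lemma} is in hand. The only mildly delicate point is that $n_0$ must depend on $\mu$ so that the $O(n)$ correction terms are dominated by the quadratic savings $\Theta(\mu^2 n^2)$ coming out of the Chernoff tail.
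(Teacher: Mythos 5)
Your proof is correct and follows essentially the same route as the paper: the paper likewise applies Lemma~\ref{pre regular lemma} with $\nu=\mu$ and $m=\eta n^2$, notes that any graph in $F_Q(n,k,\eta)\setminus F_Q(n,k,\eta,\mu)$ has at most $\eta n^2$ internal non-edges and fails $({\rm F}1)_{\mu}$ or $({\rm F}2)_{\mu}$, sums the two bounds, and absorbs the error terms using $\mu^2>24\xi(\eta)$. One caveat you share with the paper's own displayed inequality: since the Chernoff saving in base $2$ is $(\log_2 e)\mu^2n^2/32\approx\mu^2n^2/22.2$ and $1/22.2-1/24<1/100$, the final step is not literally forced when $\xi(\eta)$ is close to $\mu^2/24$; it is harmless in the regime $\eta\ll\mu$ in which the corollary is actually applied, where $\xi(\eta)\ll\mu^2$.
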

\begin{proof}
We choose $n_0$ such that $1/n_0 \ll \eta, \mu, 1/k$. Applying Lemma~\ref{pre regular lemma} with $\mu, \eta n^2$ playing the roles of $\nu, m$ respectively yields that
$$ |F_Q(n,k,\eta)\setminus F_Q(n,k,\eta,\mu)| \leq 2^{t_{k-1}(n)+\xi(\eta)n^2}2^{2n+1} \left( e^{- \frac{\mu^2 n^2}{6} }+ e^{- \frac{\mu^2 n^2}{32} } \right) \leq 2^{t_{k-1}(n) - \frac{\mu^2 n^2}{100}},$$
as required.
\end{proof}

The next proposition follows immediately from~\cite[Lemma~2.22]{BaBu}. We will use it to find induced copies of $C_{2k}$. (Usually $T$ will be a suitable induced subgraph of $C_{2k}$ and the $A_i,B_i$ will be the intersection of (non-)neighbourhoods of vertices that we have already embedded.)

\begin{proposition} \label{building}
Let $n_0,k \in \mathbb{N}$ and $\eta,\mu >0$ be chosen such that $k\geq 4$ and $1/n_0\ll \eta \ll \mu \ll 1/k$. Then the following holds for all $n\in \mathbb{N}$ with $n\geq n_0$. Let $Q=(Q_0,\{Q_1,\dots, Q_{k-2}\})$ be an ordered $(k-1)$-partition of $[n]$ and suppose $G\in F_Q(n,k,\eta,\mu)$. Let $I\subseteq \{0,1,\dots, k-2\}$. For every $i\in I$ let $A_i,B_i\subseteq Q_i$ be disjoint with $|A_i|, |B_i| \geq \mu^{1/2} n$. Let $T$ be a $2|I|$-vertex graph with a perfect matching whose edges are $v_iu_i$ for every $i \in I$. Then there exists an injection $f:V(T) \rightarrow V(G)$ such that $f(v_i) \in A_i, f(u_i) \in B_i$ for every $i\in I$, and $f(V(T))$ induces on $G$ a copy of $T$.
\end{proposition}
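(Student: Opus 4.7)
The plan is to embed the matched pairs $\{v_i, u_i\}_{i \in I}$ into the classes $\{Q_i\}_{i \in I}$ one pair at a time via a greedy procedure. Fix an ordering $i_1, \dots, i_r$ of $I$ where $r = |I| \leq k-1$, and suppose inductively that for every $\ell' < \ell$ we have placed $v_{i_{\ell'}}, u_{i_{\ell'}}$ as vertices $a_{i_{\ell'}} \in A_{i_{\ell'}}$, $b_{i_{\ell'}} \in B_{i_{\ell'}}$ so that the partial image induces in $G$ the corresponding partial subgraph of $T$. At step $\ell$, partition the already-embedded vertices $\{a_{i_{\ell'}}, b_{i_{\ell'}} : \ell' < \ell\}$ into the set $X$ of those required (by $T$) to be adjacent to $v_{i_\ell}$, and $Y$ of those required to be non-adjacent; let $X', Y'$ be the analogous sets for $u_{i_\ell}$. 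Define the refined candidate sets
$$A'_{i_\ell} := A_{i_\ell} \cap N^*_{Q_{i_\ell}}(X, Y), \qquad B'_{i_\ell} := B_{i_\ell} \cap N^*_{Q_{i_\ell}}(X', Y').$$
The goal is then to pick $(a_{i_\ell}, b_{i_\ell}) \in A'_{i_\ell} \times B'_{i_\ell}$ with $a_{i_\ell} b_{i_\ell} \in E(G)$ (since $v_{i_\ell} u_{i_\ell}$ is a matching edge of $T$).

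The key claim is that $|A'_{i_\ell}|, |B'_{i_\ell}| \geq \mu^{1/3} n$ throughout. I would enforce this by imposing an additional \emph{typicality} requirement during the greedy selection: each chosen $a_{i_{\ell'}}, b_{i_{\ell'}}$ should have, for every $j \in \{0,\dots,k-2\}$ and every "large" set $S \subseteq Q_j$ that will appear as a subset of a later candidate set, neighbourhood of size in $[|S|/4, 3|S|/4]$ inside $S$. The point is that $({\rm F}1)_\mu$ implies that the set of atypical vertices (with respect to any fixed large set) in any class has size at most $\mu n$: otherwise we would produce two subsets of total density outside $[1/4, 3/4]$ whose product is at least $\mu^2 n^2$, contradicting $({\rm F}1)_\mu$. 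Applying this to the $O(k)$ relevant large sets excludes at most $O(k \mu n) \ll \mu^{1/2} n$ vertices per step. Combined with the typicality constraints on the at most $2(r-1) \leq 2k$ already-embedded vertices (each of which shrinks the candidate set by a multiplicative factor in $[1/4, 3/4]$), we conclude $|A'_{i_\ell}|, |B'_{i_\ell}| \geq (1/4)^{2k} \mu^{1/2} n - O(k\mu n) \geq \mu^{1/3} n$, since $\mu \ll 1/k$.

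Having obtained large refined candidate sets, the final task at step $\ell$ is to realise the matching edge. Since $h(Q,G) \leq \eta n^2$, the graph $G[Q_{i_\ell}]$ has at most $\eta n^2$ non-edges, while $|A'_{i_\ell}| \cdot |B'_{i_\ell}| \geq \mu^{2/3} n^2 \gg \eta n^2$ as $\eta \ll \mu$. Hence there exist many pairs $(a,b) \in A'_{i_\ell} \times B'_{i_\ell}$ with $ab \in E(G)$, and indeed many such pairs satisfying the typicality constraints required by future steps (since these exclude only $O(k\mu n) \cdot n$ pairs). Selecting one such pair as $(a_{i_\ell}, b_{i_\ell})$ completes step $\ell$, and iterating for $\ell = 1, \dots, r$ produces the desired injection $f$. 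The main obstacle is the first step: the quasirandomness $({\rm F}1)_\mu$ controls only aggregate bipartite densities between large sets, not degrees of individual vertices, which forces the somewhat cumbersome typicality bookkeeping above. The parameters cooperate because $r \leq k-1$ is bounded and $\eta \ll \mu \ll 1/k$.
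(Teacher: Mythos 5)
Your argument is essentially sound, but it takes a different route from the paper: the paper does not prove this proposition at all, instead observing that it "follows immediately" from Lemma~2.22 of Balogh and Butterfield~\cite{BaBu}, which is a ready-made embedding lemma for exactly this kind of quasirandom multipartite situation. Your proposal replaces that citation by a self-contained greedy embedding: pairs are placed one class at a time, crossing adjacencies are controlled by $({\rm F}1)_{\mu}$ (the set of vertices atypical with respect to any fixed set of size at least $\mu n$ in another class has size at most $O(\mu n)$, by exactly the density argument you give), and the matching edge inside each class is found because $h(Q,G)\leq \eta n^2$ while the refined candidate sets have product size at least $\mu^{2/3}n^2\gg \eta n^2$. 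The parameter hierarchy $\eta\ll\mu\ll 1/k$ makes all the estimates work as you claim, so what you gain is a proof that is independent of~\cite{BaBu}, at the cost of some bookkeeping the paper simply outsources.

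One point needs tightening. Your typicality condition is phrased with respect to sets that "will appear as a subset of a later candidate set", which is circular: later candidate sets depend on choices not yet made, including the vertex currently being chosen. The standard repair is to maintain, for every not-yet-used index $j\in I$, the current candidate sets (the vertices of $A_j$, resp.\ $B_j$, with correct adjacency to everything embedded so far) and to require each newly embedded vertex to be typical with respect to these current sets only; each refinement then retains at least a quarter of the set, and since there are at most $2(k-2)$ refinements the sets never drop below $\mu n$, so typicality is always available. Within a step the two vertices of a pair should be chosen sequentially rather than simultaneously: first pick $a\in A'_{i_\ell}$ typical for all current candidate sets and with at most $\sqrt{\eta}\,n$ non-neighbours inside $Q_{i_\ell}$ (all but at most $2\sqrt{\eta}\,n$ vertices qualify, by the bound on internal non-edges), then pick $b\in B'_{i_\ell}\cap N(a)$ typical for the sets already refined by $a$. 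This removes the interaction you gloss over when counting excluded pairs, and with this adjustment the proof goes through.
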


Finally we show that if $G$ is close to being a $k$-template then removing a small number of vertices from $G$ does not alter its optimal ordered $(k-1)$-partition very much.

Given $m,n\in \mathbb{N}$ and an ordered $(k-1)$-partition $Q$ of $[n]$, we define $\mathcal{P}(Q,m)$ to be the collection of all ordered $(k-1)$-partitions of $[n]$ that can be obtained from $Q$ by moving at most $m$ vertices between partition classes, and possibly choosing a different partition class to be the labelled one. Then it is easy to see that
\begin{equation}\label{size mathcal{P}}
|\mathcal{P}(Q,m)| \leq k {\binom{n}{m}} k^m \leq k\left(\frac{ekn}{m}\right)^m  \leq 2^{m\log (ek^2 n/m)}.
\end{equation}

Given an ordered $(k-1)$-partition $Q$ of $[n]$ and a set $S\subseteq [n]$, let $Q-S$ denote the ordered $(k-1)$-partition (possibly with some empty classes) obtained from $Q$ by deleting all elements of $S$ from their partition classes.

\begin{lemma} \label{pre new partition}
Let $k\geq 4$, let $0<\eta,\mu\leq 1/k^3$, let $0<\nu=\nu(n)\leq 1/k^3$, and let $0\leq m=m(n)\leq n^2$ with $\nu^2 > 4m/n^2$ for all $n\in \mathbb{N}$. There exists $n_0\in \mathbb{N}$ such that the following holds for all $n\geq n_0$. Let $Q=(Q_0,\{Q_1,\dots, Q_{k-2}\})$ be an ordered $(k-1)$-partition of $[n]$ and let $S\subseteq [n]$ with $|S|\leq n/k^2$. Then for every $G\in F_Q(n,k,\eta,\mu)$ that satisfies $({\rm F}1)_{\nu}$ with respect to $Q$ and that has at most $m$ internal non-edges, every optimal ordered $(k-1)$-partition of $G - S$ is an element of $\mathcal{P}(Q-S,k^4\nu^2 n)$.
\end{lemma}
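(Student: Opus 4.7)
Let $R=(R_0,\{R_1,\dots,R_{k-2}\})$ denote an optimal ordered $(k-1)$-partition of $G-S$, and write $Q':=Q-S$, $A_{ij}:=(Q_i\setminus S)\cap R_j$, and $a_{ij}:=|A_{ij}|$ for $0\le i,j\le k-2$. The plan is to exhibit a permutation $\pi$ of $\{0,\ldots,k-2\}$ such that, after relabelling the classes of $R$ according to $\pi$, at most $k^4\nu^2 n$ vertices of $[n]\setminus S$ lie in a different class than in $Q'$. The starting point is to compare the two partitions via their internal non-edge counts. Expanding both $h(R,G-S)$ and $h(Q',G-S)$ over the common refinement $\{A_{ij}\}$, the singleton contributions $\sum_{i,j}\overline{e}(A_{ij})$ cancel, giving
\[
h(R, G-S) - h(Q', G-S) \;=\; \sum_j \sum_{i<i'} \overline{e}(A_{ij}, A_{i'j}) \;-\; \sum_i \sum_{j<j'} \overline{e}(A_{ij}, A_{ij'}).
\]
Since $Q'$ is an eligible ordered $(k-1)$-partition of $G-S$ and $R$ is optimal, the left side is $\le 0$; since the subtracted sum on the right is trivially at most $h(Q',G-S)\le h(Q,G)\le m$, we obtain $\sum_j\sum_{i<i'}\overline{e}(A_{ij},A_{i'j})\le m$.

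The next step is to use $({\rm F}1)_\nu$: for $i\ne i'$ the sets $A_{ij}\subseteq Q_i$ and $A_{i'j}\subseteq Q_{i'}$ lie in distinct $Q$-classes, so whenever $a_{ij}a_{i'j}\ge \nu^2 n^2$ we have $\overline{e}(A_{ij},A_{i'j})\ge a_{ij}a_{i'j}/4$. Combined with the bound above and the hypothesis $\nu^2 > 4m/n^2$, this gives
\[
\sum_{\substack{(j,\,i<i'):\\ a_{ij}a_{i'j}\,\ge\,\nu^2 n^2}} a_{ij}\,a_{i'j}\;\le\;4m\;<\;\nu^2 n^2, \qquad(\ast)
\]
so products of two large entries in any single column are tightly controlled. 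I would then extract the permutation as follows. From $({\rm F}2)_\mu$, $\mu\le 1/k^3$, and $|S|\le n/k^2$ we have $|Q_i'|\ge n/(2(k-1))$, so $\max_j a_{ij}\ge n/(2(k-1)^2)$ for every $i$. If two distinct rows $i_1\ne i_2$ attained their row-maxima in the same column $j$, then
\[
a_{i_1,j}\cdot a_{i_2,j}\;\ge\;\bigl(n/(2(k-1)^2)\bigr)^2\;>\;\nu^2 n^2
\]
(using $\nu\le 1/k^3$), so this single pair alone would already exceed the right-hand side of $(\ast)$, a contradiction. Hence $i\mapsto\arg\max_j a_{ij}$ is a bijection, and relabelling the classes of $R$ so that this bijection is the identity gives $a_{ii}\ge n/(2(k-1)^2)$ for every $i$. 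Finally, for each off-diagonal pair $(i,j)$ with $i\ne j$, either $a_{ij}\cdot a_{jj}<\nu^2 n^2$ (giving $a_{ij}\le 2(k-1)^2\nu^2 n$), or the pair contributes to $(\ast)$; in the latter case $\sum a_{ij}$ over such bad entries is at most $4m/\min_j a_{jj}\le 2(k-1)^2\nu^2 n$. Summing over the at most $(k-1)(k-2)$ off-diagonal positions yields $\sum_{i\ne j}a_{ij}\le k^4\nu^2 n$ after routine simplification of constants.

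\noindent \textbf{Main obstacle.} The heart of the argument is the injectivity of the row-wise argmax in the third step: without it one cannot align the two partitions by a single permutation, and the diagonal lower bound on $a_{ii}$ fails. This is precisely where the bilinear bound $(\ast)$, the near-balance provided by $({\rm F}2)_\mu$, and the bound on $|S|$ all combine; once the injectivity is in hand, the remaining count is routine bookkeeping.
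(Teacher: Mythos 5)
Your argument is, at its core, the same as the paper's: both rest on (a) the observation that an optimal partition $R$ of $G-S$ has at most $m$ internal non-edges (since $Q-S$ already achieves at most $m$), (b) a pigeonhole bound showing each $Q_i\setminus S$ meets some class of $R$ in a large set, and (c) property $({\rm F}1)_{\nu}$ together with $\nu^2>4m/n^2$ to forbid two large intersections inside a single $R$-class, which yields the permutation and the smallness of all off-diagonal intersections. Two of your steps are redundant rather than wrong: the refinement identity for $h(R,G-S)-h(Q',G-S)$ is not needed, since one has directly $\sum_j\sum_{i<i'}\overline{e}(A_{ij},A_{i'j})\le h(R,G-S)\le h(Q-S,G-S)\le m$; and the ``latter case'' in your final step is vacuous, because a single pair with $a_{ij}a_{i'j}\ge\nu^2 n^2$ already forces more than $m$ internal non-edges with respect to $R$ --- exactly the contradiction you invoke for the injectivity of the row-wise argmax.

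The one point that needs repair is the closing constant bookkeeping. With your diagonal bound $a_{jj}\ge n/(2(k-1)^2)$, each off-diagonal entry is only bounded by $2(k-1)^2\nu^2 n$, and summing over the $(k-1)(k-2)$ off-diagonal positions gives $2(k-1)^3(k-2)\nu^2 n$ (plus your extra bad-entry term), which exceeds the target $k^4\nu^2 n$ once $k\ge 8$ (for $k=8$: $2\cdot 343\cdot 6=4116>4096$), so the asserted ``routine simplification of constants'' does not go through as written. This is a quantitative slip, not a conceptual gap: sharpen the row bound to $|Q_i\setminus S|\ge n/(k-1)-\mu n - n/k^2\ge n/k$ (valid since $\mu\le 1/k^3$), so that the diagonal entries satisfy $a_{ii}\ge n/(k(k-1))\ge n/k^2$; then every off-diagonal entry is less than $k^2\nu^2 n$, and the total number of displaced vertices is at most $(k-1)(k-2)k^2\nu^2 n\le k^4\nu^2 n$, which is exactly how the paper closes the argument.
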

\begin{proof}
Let $G\in F_Q(n,k,\eta, \mu)$ have at most $m$ internal non-edges and satisfy $({\rm F}1)_{\nu}$ with respect to $Q$, and let $Q'=(Q_0',\{Q_1',\dots, Q_{k-2}'\})$ be an optimal ordered $(k-1)$-partition of $G-S$. By optimality of $Q'$ it must be that $G-S$ has at most $m$ internal non-edges with respect to $Q'$.

For every $i \in \{0,1,\dots,k-2\}$, since $|Q_i - S| \geq n/(k-1) - \mu n - n/k^2 \geq n/k$, the pigeon-hole principle implies that there exists $j \in \{0,1,\dots, k-2\}$ such that $|Q_i\cap Q_{j}'|\geq n/k^2$. We define a function $\sigma$ by setting $\sigma(i)$ to be an index in $\{0,1,\dots,k-2\}$ that satisfies $|Q_i\cap Q_{\sigma(i)}'|\geq n/k^2$, for every $i \in \{0,1,\dots,k-2\}$. Suppose for a contradiction that there exists $i'\in \{0,1,\dots, k-2\}$ with $i\ne i'$ such that $|Q_{i'}\cap Q_{\sigma(i)}'| \geq k^2\nu^2 n$. Then since $G$ satisfies $({\rm F}1)_{\nu}$ with respect to $Q$ we have that the number of internal non-edges in $G-S$ with respect to $Q'$ is at least $|Q_{i}\cap Q_{\sigma(i)}'||Q_{i'}\cap Q_{\sigma(i)}'|/4 \geq \nu^2 n^2/4>m$. This contradicts our previous observation that $G-S$ has at most $m$ internal non-edges with respect to $Q'$. Hence $\sigma$ is a permutation on $\{0,1,\dots,k-2\}$. Moreover $|Q_i \cap Q'_{j}| < k^2\nu^2 n$ for all $j\in \{0,1,\dots, k-2\}$ with $j\neq \sigma(i)$.

Let $\mathcal{P}$ be the set of all ordered $(k-1)$-partitions of $[n]\backslash S$ for which such a permutation exists. So by the above we have that for every $G\in F_Q(n,k,\eta)$ that satisfies $({\rm F}1)_{\nu}$ with respect to $Q$ and that has at most $m$ internal non-edges, every optimal ordered $(k-1)$-partition of $G-S$ is an element of $\mathcal{P}$. So it remains to show that $\mathcal{P}\subseteq \mathcal{P}(Q-S,k^4\nu^2 n)$. This follows from the observation that every element of $\mathcal{P}$ can be obtained by starting with the (labelled) $(k-1)$-partition $Q_0\setminus S, Q_1 \setminus S,\dots, Q_{k-2} \setminus S$, applying a permutation of $\{0,1,\dots,k-2\}$ to the partition class labels, then for every ordered pair of partition classes moving at most $k^2\nu^2 n$ elements from the first partition class to the second, and finally unlabelling all but one of the resulting partition classes.
\end{proof}

The following is an immediate corollary of Lemma~\ref{pre new partition}, applied with $\mu, \eta n^2$ playing the roles of $\nu, m$, respectively.

\begin{corollary}\label{new partition}
Let $k\geq 4$ and $0<\eta,\mu< 1/k^3$ with $\mu^2 > 4\eta$. There exists $n_0\in \mathbb{N}$ such that the following holds for all $n\geq n_0$. Let $Q=(Q_0,\{Q_1,\dots, Q_{k-2}\})$ be an ordered $(k-1)$-partition of $[n]$ and let $S\subseteq [n]$ with $|S|\leq n/k^2$. Then for every $G\in F_Q(n,k,\eta,\mu)$, every optimal ordered $(k-1)$-partition of $G - S$ is an element of $\mathcal{P}(Q-S,k^4\mu^2 n)$.
\end{corollary}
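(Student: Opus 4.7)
The plan is to observe that Corollary~\ref{new partition} is obtained by a direct substitution of parameters into Lemma~\ref{pre new partition}. Specifically, I would apply Lemma~\ref{pre new partition} with $\nu:=\mu$ and $m:=\eta n^2$, and then verify that each hypothesis of the lemma is automatic from the hypotheses of the corollary together with the definition of $F_Q(n,k,\eta,\mu)$.

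First I would check the numerical hypotheses. The bound $\nu\leq 1/k^3$ becomes $\mu\leq 1/k^3$, which follows from $\mu<1/k^3$. The bound $m\leq n^2$ becomes $\eta n^2\leq n^2$, which holds since $\eta<1/k^3<1$. The key inequality $\nu^2>4m/n^2$ becomes $\mu^2>4\eta$, which is assumed in the statement of the corollary.

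Next I would verify the structural hypotheses on $G$. By definition, every $G\in F_Q(n,k,\eta,\mu)$ lies in $F_Q(n,k,\eta)$ and satisfies $({\rm F}1)_\mu$ with respect to $Q$, so the first two hypotheses of Lemma~\ref{pre new partition} are met with $\nu=\mu$. For the bound on internal non-edges, note that $G\in F_Q(n,k,\eta)$ means $Q$ is an optimal ordered $(k-1)$-partition with $h(Q,G)\leq\eta n^2$, and by definition $h(Q,G)=\sum_{i=0}^{k-2}|E(\overline{G}[Q_i])|$ is exactly the number of internal non-edges of $G$ with respect to $Q$. Hence $G$ has at most $\eta n^2=m$ internal non-edges, as required.

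With all hypotheses verified, Lemma~\ref{pre new partition} yields that every optimal ordered $(k-1)$-partition of $G-S$ lies in $\mathcal{P}(Q-S,k^4\nu^2 n)=\mathcal{P}(Q-S,k^4\mu^2 n)$, which is precisely the conclusion of Corollary~\ref{new partition}. There is no substantive obstacle here beyond bookkeeping: the work is all contained in Lemma~\ref{pre new partition}, and the corollary merely records the specialisation that will be convenient to quote later (in particular in Section~\ref{sec: final calculation}).
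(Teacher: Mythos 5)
Your proof is correct and is exactly the paper's argument: the paper derives Corollary~\ref{new partition} as an immediate consequence of Lemma~\ref{pre new partition} with $\mu$ and $\eta n^2$ playing the roles of $\nu$ and $m$, which is precisely the specialisation you verify. The hypothesis checks you carry out (in particular that $G\in F_Q(n,k,\eta,\mu)$ satisfies $({\rm F}1)_\mu$ and has at most $\eta n^2$ internal non-edges, and that $\mu^2>4\eta$ gives $\nu^2>4m/n^2$) are the intended, routine bookkeeping.
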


\section{Derivation of Theorem~\ref{main theorem} from the main lemma}\label{sec: main proof}

The following lemma is the key result in our proof of Theorem~\ref{main theorem}. Together with Lemma~\ref{size of template} it implies that, for $k\geq 6$, almost all induced-$C_{2k}$-free graphs $G$ with a given optimal ordered $(k-1)$-partition are $k$-templates\COMMENT{Note that I do mean Lemma~\ref{size of template} and not Lemma~\ref{the number of templates} here, since the sentence is about induced-$C_{2k}$-free graphs \emph{with a given optimal partition}}. Recall that $n_k:=\left\lceil n/(k-1) \right\rceil$, that $f_k(n)$ and $T_Q(n,k)$ were defined at the beginning of Section~\ref{sec: number of templates}, and that $F_Q(n,k)$ was defined at the beginning of Section~\ref{sec: set-up}.

\begin{lemma}\label{induction conclusion}
For every $n,k\in \mathbb{N}$ with $k\geq 6$ there exists $C\in \mathbb{N}$ such that the following holds. For every ordered $(k-1)$-partition $Q$ of $[n]$,
\begin{align*}
|F_Q(n,k)| &\leq |T_Q(n,k)|+ 5C 2^{-n^{\frac{1}{2k^2}}/3}f_k(n_k) 2^{t_{k-1}(n)}.
\end{align*}
\end{lemma}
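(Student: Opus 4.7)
The plan is to prove Lemma~\ref{induction conclusion} by induction on $n$, with the base case handled trivially once $n$ is small enough that the crude bound $|F_Q(n,k)|\leq 2^{\binom{n}{2}}$ is dominated by the error term on the right (this can always be arranged by choosing $C$ large). For the inductive step, fix an ordered $(k-1)$-partition $Q$ of $[n]$ and partition the ``bad'' set $F_Q(n,k)\setminus T_Q(n,k)$ into a bounded number of subclasses, each bounded separately.

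We begin by peeling off the graphs that are far from any $k$-template. Lemma~\ref{lem: rough structure} implies that all but $F(n,k) 2^{-\varepsilon n^2}$ graphs in $F_Q(n,k)$ lie in $F_Q(n,k,\eta)$ for a suitable $\eta$, and Corollary~\ref{regular lemma} further restricts us to the quasirandom subfamily $F_Q(n,k,\eta,\mu)$ at the cost of an additional $2^{t_{k-1}(n)-\mu^2n^2/100}$ graphs; both discarded contributions are easily dominated by $2^{-n^{1/(2k^2)}/3} f_k(n_k) 2^{t_{k-1}(n)}$ for large $n$. The remaining non-template graphs in $F_Q(n,k,\eta,\mu)$ are partitioned into three classes $\mathcal{F}^1,\mathcal{F}^2_Q,\mathcal{F}^3$ according to the type of obstruction to being a template on $Q$, and bounded by Lemmas~\ref{F^1},~\ref{F^2_Q},~\ref{F^3} respectively. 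The underlying mechanism in each of these three lemmas (and thus the step that activates the induction) is to locate a small set $S\subseteq V(G)$ of size roughly $n^{1/(2k^2)}$ witnessing the obstruction, consider the restriction $G-S$ equipped with a re-optimised partition in $\mathcal{P}(Q-S,k^4\mu^2 n)$ via Corollary~\ref{new partition}, apply the inductive hypothesis to bound the number of possibilities for $G-S$, and multiply by the number of extensions of $G-S$ to $G$ (choices for $S$ and its incident edges, bounded via Proposition~\ref{building}-type quasirandom arguments). The ratios $f_k(n)/f_k(n-|S|)$ and $2^{t_{k-1}(n)-t_{k-1}(n-|S|)}$ are controlled via Lemma~\ref{f-estimate-2} and Proposition~\ref{omitted proposition}(ii).

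The main obstacle will be making the bookkeeping self-sustaining: the multiplicative cost of specifying $S$ together with its neighbourhoods must be strictly outweighed by the quasirandom savings from Lemmas~\ref{F^1},~\ref{F^2_Q},~\ref{F^3}, and the sum of the five error contributions (the two ``far from template'' discards plus the three bad classes) must fit inside $5C 2^{-n^{1/(2k^2)}/3} f_k(n_k) 2^{t_{k-1}(n)}$. The factor $5$ in the statement is precisely this sum, and $C$ is chosen large enough to close the induction. A subtle point is that the inductive hypothesis applies to a \emph{fixed} ordered partition of $[n-|S|]$, so the sum over re-optimised partitions $Q'\in \mathcal{P}(Q-S,k^4\mu^2 n)$ must be controlled using the bound~(\ref{size mathcal{P}}); the exponent $1/(2k^2)$ in the error is chosen precisely so that this combinatorial factor is swamped by the savings in each of the three bad-class bounds.
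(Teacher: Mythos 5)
Your proposal takes essentially the same route as the paper's proof: induction on $n$ with the base case absorbed into the choice of $C$, discarding graphs outside $F_Q(n,k,\eta,\mu)$ via Lemma~\ref{lem: rough structure} (together with (\ref{eqn: near templates}) and Lemma~\ref{size of template}) and Corollary~\ref{regular lemma}, bounding the remaining non-templates by Lemmas~\ref{F^1}, \ref{F^2_Q} and~\ref{F^3} (which is exactly where the inductive hypothesis enters), and summing the error terms to obtain $5C\,2^{-n^{1/(2k^2)}/3}f_k(n_k)2^{t_{k-1}(n)}$. Only your gloss on the internals of those three lemmas is slightly off --- the deleted sets there have sizes $1$, $3$, $6$, $2k-2$ or about $n/\log n$ rather than $n^{1/(2k^2)}$, and the exponent $1/(2k^2)$ comes from the star-size threshold in the $\mathcal{A}_2$ estimate rather than from beating the partition-count factor --- but this does not affect the assembly argument you describe.
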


Lemma~\ref{induction conclusion} will be proved in the remaining sections of this paper. We will now use it to derive Theorem~\ref{main theorem}.

\removelastskip\penalty55\medskip\noindent{\bf Proof of Theorem~\ref{main theorem}.}
Let $n_0\in \mathbb{N}$ be as in Lemma~\ref{the number of templates}, let $C\in \mathbb{N}$ be as in Lemma~\ref{induction conclusion}, let $n_1\in \mathbb{N}$ satisfy $1/n_1 \ll 1/k$, let $n\in \mathbb{N}$ with $n\geq \max \{n_0, n_1\}$, and let $\mathcal{Q}$ be the set of all ordered $(k-1)$-partitions of $[n]$. Since $T(n,k)\subseteq F(n,k)$ and $T_Q(n,k)\subseteq F_Q(n,k)$ for every $Q\in \mathcal{Q}$, Lemma~\ref{induction conclusion} implies that
\begin{align*}
|F(n,k)|-|T(n,k)| &= |F(n,k)\backslash T(n,k)| \leq  \sum_{Q\in \mathcal{Q}} |F_Q(n,k) \setminus T_Q(n,k)|\\
&= \sum_{Q\in \mathcal{Q}} \left( |F_Q(n,k)| - |T_Q(n,k)| \right)\leq 5C (k-1)^n 2^{-n^{\frac{1}{2k^2}}/3} f_k(n_k) 2^{t_{k-1}(n)}\\
&\leq C 2^{-n^{\frac{1}{2k^2}}/4} \frac{(k-1)^n}{2(k-2)! n^k} f_k(n_k) 2^{t_{k-1}(n)}.
\end{align*}
This together with Lemma~\ref{the number of templates} implies that
$$|F(n,k)|-|T(n,k)| \leq C 2^{-n^{\frac{1}{2k^2}}/4} |T(n,k)|= o(|T(n,k)|),$$
where we use the little $o$ notation with respect to $n$. So $|F(n,k)| = (1+o(1))|T(n,k)|$, as required.
\endproof

Sections~\ref{sec: estimate 1}--\ref{sec: final calculation} are devoted to proving Lemma~\ref{induction conclusion} by an inductive argument. For the remainder of the paper we fix constants $C, k, n_0\in \mathbb{N}$ with $k\geq 6$ and $\varepsilon, \eta, \mu, \gamma, \beta, \alpha>0$ such that
\begin{equation}\label{eqn: hierarchy}
\frac{1}{C} \ll  \frac{1}{n_0}\ll \varepsilon \ll \eta \ll \mu \ll \gamma \ll \beta \ll \alpha \ll \frac{1}{k}.
\end{equation}
We also set $M:= R_{2k-2}(\lceil \frac{1}{\gamma}\rceil) +1$, fix an arbitrary integer $n\geq n_0$, and fix an arbitrary ordered $(k-1)$-partition $Q=(Q_0, \{Q_1,\dots, Q_{k-2}\})$ of $[n]$.

We make the following inductive assumption in Sections~\ref{sec: estimate 1}, \ref{sec: estimate 2} and~\ref{sec: estimate 3}: for every $n'\leq n-1$, and every ordered $(k-1)$-partition $Q'=(Q'_0, \{Q'_1,\dots, Q'_{k-2}\})$ of $[n']$,
$$|F_{Q'}(n',k)\setminus T_Q(n',k)| \leq 5 C 2^{-(n')^{\frac{1}{2k^2}}/3} f_k\left(n'_k \right) 2^{t_{k-1}(n')}.$$
Note that this together with Lemma~\ref{size of template} implies that
\begin{equation} \label{induct}
|F_{Q'}(n',k)|\leq 6C 2^{6(\log n')^2} f_k\left( n'_k \right) 2^{t_{k-1}(n')}.
\end{equation}

We now give a number of definitions that will be used in the remaining sections. Given an index $i\in \{0,1,\dots, k-2\}$, we call a vertex $x$ of a graph $G$ {\em $i$-light} if at least one of the following holds.
\begin{enumerate}[({A}1)]
\item $d^i_{G,Q}(x)\leq \alpha n$.
\item $\overline{d}^i_{G,Q}(x) \leq \alpha n$.
\item There exists $z\in V(G)$ such that $|N^*_i(x,z)|+|N^*_i(z,x)| \leq \alpha n$.
\end{enumerate}
(Intuitively, the neighbourhood in $Q_i$ of an $i$-light vertex is `atypical', and this is unlikely to happen.)

Given $\psi>0$ and an index $i\in \{0,1,\dots, k-2\}$, we call $\{x,y_1,y_2,y_3\}\subseteq V(G)$ a {\em $(k,x,i,\psi)$-configuration} if it satisfies the following.
\begin{enumerate}[(C1)]
\item $G[\{x,y_1,y_2,y_3\}]$ is a linear forest.
\item $\overline{d}^{j}_{G,Q}(x) \geq 13\cdot 6^k \psi n$ for all $j\in \{0,1,\dots, k-2\} \setminus \{i\} $.
\item There exists $i'\neq i$ such that $d^{j}_{G,Q}(x) \geq 13 \cdot 6^k \psi n$ for all $j\in\{0,1,\dots,k-2\} \setminus \{i,i'\}$.
\item $\min\{ d^i_{G,Q}(y_j), \overline{d}^i_{G,Q}(y_j)\} \leq \psi^2 n$ for all $j\in [3]$.
\end{enumerate}
(Intuitively, (C1)--(C3) of the definition of $(k,x,i,\psi)$-configurations are useful for `building' induced copies of $C_{2k}$, so the existence of a $(k,x,i,\psi)$-configuration in an induced-$C_{2k}$-free graph $G$ severely constrains the choices for the remaining edge set of $G$. The bounds arising from this are still not sufficiently strong though; we also need (C4), which gives further constraints on the choices for the remaining edge set of $G$.)

We partition $F_Q(n,k,\eta,\mu)$ into the sets $T_Q, F^1_Q, F^2_Q, F^3_Q$ defined as follows.
\begin{enumerate}[(F1)]
\item[(F0)] $T_Q:= T_Q(n,k)\cap F_Q(n,k,\eta,\mu)$.
\item $F^1_Q \subseteq F_Q(n,k,\eta,\mu)\setminus T_Q$ is the set of all remaining graphs $G$ which satisfy one of the following.
\begin{enumerate}[{\rm (i)}]
\item $G$ contains a $(k,x,i,\psi)$-configuration for some $i\in \{0,1,\dots, k-2\}$, some $x\in V(G)$ and some $\psi \in \{\beta^{1/2}, \beta^2\}$.
\item $G$ contains a vertex $x$ which is both $i$-light and $j$-light for some distinct indices $i, j\in \{0,1,\dots, k-2\}$.
\end{enumerate}
\item $F^2_Q\subseteq F_Q(n,k,\eta,\mu) \setminus (T_Q \cup F^1_Q)$ is the set of all remaining graphs that for some $i\in \{0,1,\dots, k-2\}$ contain a vertex $x\in Q_i$ that satisfies $\overline{d}^{i}_{G,Q}(x), d^{i}_{G,Q}(x) \geq \beta n$.
\item $F^3_Q := F_Q(n,k,\eta,\mu) \setminus (T_Q \cup F^1_Q\cup F^2_Q)$ is the set of all remaining graphs.
\end{enumerate}

Sections~\ref{sec: estimate 1}, \ref{sec: estimate 2} and~\ref{sec: estimate 3} are devoted to proving upper bounds on $|F^1_Q|, |F^2_Q|$ and $|F^3_Q|$ respectively. As mentioned earlier, it turns out that $F^3_Q$ is the class of induced-$C_{2k}$-free graphs which are `extremely close' to being $k$-templates (see Proposition~\ref{beta}). In Section~\ref{sec: final calculation} we will use these bounds to complete the proof of Lemma~\ref{induction conclusion}.

\section{Estimation of $|F^1_Q|$}\label{sec: estimate 1}

To estimate $|F^1_Q|$ we will bound the number of graphs satisfying (F1)(i) and (F1)(ii) separately. The main difficulty is in estimating those satisfying (F1)(i), i.e. the ones containing a $(k,x,i,\psi)$-configuration. The idea here is that a $(k,x,i,\psi)$-configuration has many potential extensions into an induced copy of $C_{2k}$. More precisely, given a $(k,x,i,\psi)$-configuration $H$ we can find many disjoint `skeleton' graphs $L$ with the same number of components as $H$ such that $H\cup L$ is a linear forest on $2k$ vertices (i.e. $H\cup L$ has a potential extension into an induced $C_{2k}$). Thus each skeleton induces a restriction on further edges that can be added. Since the skeletons are disjoint we obtain many edge restrictions in total, and thus a good bound on the number of graphs containing a $(k,x,i,\psi)$-configuration. The next two propositions are used to formalise the notion of extendibility into an induced $C_{2k}$. (Roughly, in these propositions one can consider $L_1$ as a $(k,x,i,\psi)$-configuration and $L_2$ as an associated skeleton.)

\begin{proposition}\label{incomplete}
Let $c\geq 1$ and let $L_1, L_2$ be disjoint linear forests, each with exactly $c$ components, such that $|V(L_1)|+|V(L_2)|=2k$. Then there exists a set $E'$ of edges between $V(L_1)$ and $V(L_2)$ such that the graph $(V(L_1)\cup V(L_2),E'\cup E(L_1) \cup E(L_2))$ is isomorphic to $C_{2k}$.
\end{proposition}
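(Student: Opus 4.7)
\textbf{Proof proposal for Proposition~\ref{incomplete}.} The plan is to show that one can arrange the $2c$ components of $L_1 \cup L_2$ in a cyclic order that strictly alternates between components of $L_1$ and components of $L_2$, and then add exactly one edge between each pair of consecutive components (joining appropriate endpoints) to form the desired $C_{2k}$. The alternation is crucial because $E'$ is only allowed to contain edges between $V(L_1)$ and $V(L_2)$, and it is possible precisely because $L_1$ and $L_2$ have the same number $c$ of components.

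More concretely, I would enumerate the components of $L_1$ as $P_1^1, \ldots, P_c^1$ and those of $L_2$ as $P_1^2, \ldots, P_c^2$, and for each component $P_i^j$ designate two endpoints $u_i^j, v_i^j$ (with $u_i^j = v_i^j$ if $P_i^j$ is an isolated vertex). Then define $E'$ to consist of the edges $v_i^1 u_i^2$ for each $i \in [c]$, the edges $v_i^2 u_{i+1}^1$ for each $i \in [c-1]$, and the single edge $v_c^2 u_1^1$. So $|E'| = 2c$.

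Let me describe what needs to be verified. First, $E'$ is a valid set of edges between $V(L_1)$ and $V(L_2)$ by construction. Second, one checks that every vertex in $V(L_1) \cup V(L_2)$ has degree exactly $2$ in the graph $G':=(V(L_1)\cup V(L_2), E'\cup E(L_1)\cup E(L_2))$: an internal vertex of some path $P_i^j$ already has degree $2$ inside $P_i^j$ and receives no new edges; a non-singleton endpoint of $P_i^j$ has degree $1$ inside $P_i^j$ and gains exactly one new edge from $E'$; a singleton component $P_i^j = \{u_i^j\} = \{v_i^j\}$ has degree $0$ inside $L_j$ and gains exactly two new edges from $E'$. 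Third, by tracing the construction one sees that $G'$ is connected: starting from $u_1^1$ one traverses $P_1^1$ to $v_1^1$, crosses to $u_1^2$, traverses $P_1^2$ to $v_1^2$, crosses to $u_2^1$, and so on, eventually returning to $u_1^1$. A connected $2$-regular graph on $2k$ vertices is isomorphic to $C_{2k}$, completing the proof.

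I do not expect any serious obstacle here; the main thing to be careful about is the degenerate case of singleton components, where the two ``endpoints'' coincide and the vertex receives both its new edges from $E'$. The argument is essentially bookkeeping once the alternating cyclic arrangement of components is identified as the right construction.
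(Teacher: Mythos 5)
Your construction is exactly the one the paper uses (the paper's omitted proof labels endpoints $s^i_j,t^i_j$ of each component and takes the same alternating cyclic set of $2c$ crossing edges), and your verification of $2$-regularity and connectedness, including the singleton case, is sound. So the proposal is correct and essentially identical to the paper's approach.
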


The proof of Proposition~\ref{incomplete} is trivial, and is omitted\COMMENT{{\bf Proof of Proposition~\ref{incomplete}:} For every $j\in [2]$ we denote the components of $L_j$ by $P^1_j,\dots, P^c_j$. For every $i\in [c]$ and $j\in [2]$, if $|V(P^i_j)|>1$ we let the two endpoints of the path $P^i_j$ be denoted $s^i_j, t^i_j$. Otherwise $|V(P^i_j)|=1$ and we let $s^i_j = t^i_j$ be the unique vertex in $V(P^i_j)$. Then $E' := \{ t^1_1s^1_2, t^1_2s^2_1, t^2_1s^2_2, \dots, t^c_1s^c_2, t^c_2s^1_1 \}$ is a set of edges as required.}. Proposition~\ref{incomplete2} follows from an easy application of Proposition~\ref{incomplete}, and we give only a brief sketch of the proof\COMMENT{{\bf Proof of Proposition~\ref{incomplete2}:} We consider cases as follows.\\
\vspace{0.3cm}
\noindent{\bf Case 1:} $d_{L_1}(x)=0$.\\
\noindent In this case note that the assumptions that $|V(L_1)|>1$ and $d_{L_1}(x)=0$ imply that $c>1$. Since $d_{L_1}(x) + d_{L_2}(x)=2$, $x$ has degree $2$ in $L_2$. So $L_1 - x$ and $L_2$ both have exactly $c-1\geq 1$ components. Thus applying Proposition~\ref{incomplete} to $L_1 - x$, $L_2$ yields a set $E'$ of edges between $V(L_1)\backslash\{x\}$ and $V(L_2)\backslash\{x\}$ such that the graph $(V(L_1)\cup V(L_2),E'\cup E(L_1) \cup E(L_2))$ is isomorphic to $C_{2k}$, as required.\\
\vspace{0.3cm}
\noindent{\bf Case 2:} $d_{L_2}(x)=0$.\\
\noindent In this case recall the assumption that $L_1$ and $L_2 - x$ both have exactly $c$ components. Since $d_{L_1}(x) + d_{L_2}(x)=2$, $x$ has degree $2$ in $L_1$. So applying Proposition~\ref{incomplete} to $L_1$, $L_2 - x$ yields a set $E'$ of edges between $V(L_1)\backslash\{x\}$ and $V(L_2)\backslash\{x\}$ such that the graph $(V(L_1)\cup V(L_2),E'\cup E(L_1) \cup E(L_2))$ is isomorphic to $C_{2k}$, as required.\\
\vspace{0.3cm}
\noindent{\bf Case 3:} $d_{L_1}(x)=d_{L_2}(x)=1$.\\
\noindent In this case note that $L_1$ and $L_2$ both have $c$ components. For every $j\in [2]$ we denote the components of $L_j$ by $P^1_j,\dots, P^c_j$. For every $i\in [c]$ and $j\in [2]$, if $|V(P^i_j)|>1$ we let the two endpoints of the path $P^i_j$ be denoted $s^i_j, t^i_j$. Otherwise $|V(P^i_j)|=1$ and we let $s^i_j = t^i_j$ be the unique vertex in $V(P^i_j)$. Without loss of generality we may assume that $t^1_1 = s^1_2 =x$. Then $E' := \{  t^1_2s^2_1, t^2_1s^2_2, \dots, t^c_1s^c_2, t^c_2s^1_1 \}$ is a set of edges between $V(L_1)\backslash\{x\}$ and $V(L_2)\backslash\{x\}$ such that the graph $(V(L_1)\cup V(L_2),E'\cup E(L_1) \cup E(L_2))$ is isomorphic to $C_{2k}$, as required.\\
\vspace{0.3cm}
\noindent Since $d_{L_1}(x) + d_{L_2}(x)=2$, this covers all cases and hence completes the proof.}.

\begin{proposition}\label{incomplete2}
Let $c\geq 1$ and let $L_1, L_2$ be linear forests that satisfy the following.
\begin{itemize}
\item $V(L_1)\cap V(L_2)=\{x\}$.
\item $|V(L_1)|, |V(L_2)| > 1$.
\item $d_{L_1}(x) + d_{L_2}(x)=2$.
\item $L_1$ and $L_2- \{x\}$ both have exactly $c$ components.
\item $|V(L_1)\cup V(L_2)| = 2k$.
\end{itemize}
Then there exists a set $E'$ of edges between $V(L_1)\backslash\{x\}$ and $V(L_2)\backslash\{x\}$ such that the graph $(V(L_1)\cup V(L_2),E'\cup E(L_1) \cup E(L_2))$ is isomorphic to $C_{2k}$.
\end{proposition}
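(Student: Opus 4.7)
The plan is to do a case analysis on the pair $(d_{L_1}(x), d_{L_2}(x))$, which by hypothesis must equal $(0,2)$, $(1,1)$, or $(2,0)$. In the two extremal cases I would reduce directly to Proposition~\ref{incomplete}, while the middle case requires a small modification of its construction; this will turn out to be the only nontrivial step.

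In the case $(0,2)$, the vertex $x$ is isolated in $L_1$ and lies in the interior of a path component of $L_2$. The hypothesis $|V(L_1)|>1$ then forces $c \geq 2$, and a short count shows that $L_1 - x$ and $L_2$ are disjoint linear forests on $2k$ vertices with exactly $c - 1 \geq 1$ components each (removing $x$ from $L_1$ eliminates an isolated component, while splitting the interior vertex $x$ out of $L_2$ increases the component count from $c - 1$ to $c$). Applying Proposition~\ref{incomplete} to this pair yields an edge set $E'$ for which $E' \cup E(L_1 - x) \cup E(L_2) = E' \cup E(L_1) \cup E(L_2)$ is a $C_{2k}$. No edge of $E'$ is incident to $x$, since the construction in Proposition~\ref{incomplete} uses only endpoints of path components and $x$ is interior in $L_2$ (and is not in $V(L_1 - x)$). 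The case $(2,0)$ is symmetric, replacing $(L_1 - x, L_2)$ by $(L_1, L_2 - x)$.

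The main obstacle is the case $(1,1)$, where $x$ is an endpoint of exactly one component $P_1^1$ of $L_1$ and one component $P_2^1$ of $L_2$, so $L_1$ and $L_2$ are not disjoint and Proposition~\ref{incomplete} does not apply directly. Here my plan is to observe that $P_1^1 \cup P_2^1$ is itself a single longer path through $x$, and to mimic the edge set from the proof of Proposition~\ref{incomplete}: labelling the endpoints of each component $P_j^i$ as $s_j^i, t_j^i$ with the forced convention $t_1^1 = s_2^1 = x$, I would take
\[
E' := \{\, t_2^1 s_1^2,\; t_1^2 s_2^2,\; t_2^2 s_1^3,\; \ldots,\; t_1^c s_2^c,\; t_2^c s_1^1 \,\},
\]
which is precisely the edge set from the proof of Proposition~\ref{incomplete} with the now-forbidden self-loop $t_1^1 s_2^1 = xx$ deleted. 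The adjacency that edge used to supply is now provided for free by the merged path through $x$ inside $L_1 \cup L_2$, so following the cyclic order of components one checks that $E' \cup E(L_1) \cup E(L_2)$ is a single $2k$-cycle rather than a union of shorter cycles, and every remaining edge of $E'$ joins endpoints of components other than $P_1^1, P_2^1$, hence avoids $x$.
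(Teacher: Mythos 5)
Your proposal is correct and follows essentially the same route as the paper: the cases $d_{L_1}(x)=0$ and $d_{L_2}(x)=0$ are reduced to Proposition~\ref{incomplete} applied to $(L_1-x,L_2)$ and $(L_1,L_2-x)$ respectively, and in the case $d_{L_1}(x)=d_{L_2}(x)=1$ the explicit edge set you write down (the cyclic construction with the would-be edge at $x$ deleted, the adjacency being supplied by the merged path through $x$) is exactly the paper's direct construction.
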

\begin{proof}
If $d_{L_1}(x)=0$ we apply Proposition~\ref{incomplete} to $L_1-x, L_2$; if $d_{L_2}(x)=0$ we apply Proposition~\ref{incomplete} to $L_1, L_2-x$. If $d_{L_1}(x)=d_{L_2}(x)=1$ one can easily find $E'$ directly.
\end{proof}

\begin{lemma} \label{F^1}
$|F^1_Q| \leq C 2^{- \frac{\beta^2 n}{14^k}}  f_k(n_k) 2^{t_{k-1}(n)}$.
\end{lemma}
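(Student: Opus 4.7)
Split $F^1_Q = F^1_Q(\text{i}) \cup F^1_Q(\text{ii})$ according to which clause of the definition of $F^1_Q$ the graph satisfies, and bound each piece separately against the trivial count of at most $f_k(|Q_0|)\cdot 2^{t_{k-1}(n)}$ crossing-edge configurations, modulo the small correction for internal non-edges provided by (\ref{entropy bound}). Throughout, the target saving is on the order of $2^{-\Omega(n)}$ relative to this trivial count, which is the same order as the claimed exponent $\beta^2 n/14^k$.

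For $|F^1_Q(\text{ii})|$: enumerate the vertex $x$ together with the two distinct indices $i\neq j$ at which $x$ is light, at cost at most $k^2 n$. Each of (A1)--(A3) restricts the edges from $x$ into $Q_i$ severely. In (A1) or (A2), the (non-)neighbourhood of $x$ in $Q_i$ has size at most $\alpha n$, so (\ref{entropy bound}) bounds the number of choices by $2^{\xi(2\alpha(k-1))|Q_i|}$; in (A3), an extra factor $n$ pays for enumerating $z$, after which $Q_i$ splits into the `agreement' set (of $\{x,z\}$) and at most $\alpha n$ `disagreements', again controlled by (\ref{entropy bound}). Being light in both $Q_i$ and $Q_j$ multiplies the savings, and the net gain of $2^{-\Omega(n)}$ comfortably absorbs the enumeration cost.

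For $|F^1_Q(\text{i})|$: enumerate $(\psi, i, x, y_1, y_2, y_3)$ with $\psi \in \{\beta^{1/2}, \beta^2\}$ at cost $O(n^4)$, and fix such a tuple; let $H := G[\{x,y_1,y_2,y_3\}]$, which is a linear forest by (C1). Conditions (C2) and (C3) supply linearly-sized neighbourhoods and non-neighbourhoods of $x$ in each class $Q_j$, $j \neq i$, so I can choose a `skeleton' $L$ on $2k-4$ further vertices (one or two per class $Q_j$, depending on the component structure of $H$) such that $G[V(L)]$ is a linear forest and $H\cup G[V(L)]$ is a linear forest on $2k$ vertices with the component count required by Proposition~\ref{incomplete} or~\ref{incomplete2}. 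The relevant proposition supplies an edge set $E'$ between $V(H)$ and $V(L)$ whose union with $E(H)\cup E(L)$ is a copy of $C_{2k}$; since $G$ is induced-$C_{2k}$-free, at least one of the pairs in $V(H)\cup V(L)$ must be `wrong' (an expected edge of $E'$ is absent, or an expected non-edge is an edge in $G$), which is a single-edge restriction on $G$. Because the skeletons share only the fixed vertices of $H$, I can collect $\Omega(n)$ near-disjoint such placements, so these restrictions combine into a saving of $2^{-\Omega(n)}$ on the crossing edges. Condition (C4) gives a further boost: for each $y_j$ either $d^i_{G,Q}(y_j) \leq \psi^2 n$ or $\overline{d}^i_{G,Q}(y_j) \leq \psi^2 n$, so (\ref{entropy bound}) applied to $y_j$'s edges into $Q_i$ saves an additional $2^{(1-o(1))|Q_i|}$, easily absorbing the $O(n^4)$ enumeration cost and producing the exponent $\beta^2 n/14^k$ stated.

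The principal obstacle is the skeleton construction for $F^1_Q(\text{i})$: for each fixed tuple $(x,y_1,y_2,y_3)$, one must produce $\Omega(n)$ near-disjoint placements of $L$ realising exactly the linear-forest structure that Propositions~\ref{incomplete} and~\ref{incomplete2} need. This requires case analysis on the number of components of $H$ (between one and three non-trivial components) and on the index $i'$ in (C3), choosing for each class $Q_j$ with $j\notin\{i,i'\}$ an appropriate vertex of $N_j(x)\cap \overline{N}_j(\{y_1,y_2,y_3\})$ or $\overline{N}_j(x)\cap \overline{N}_j(\{y_1,y_2,y_3\})$, whose availability is guaranteed by the lower bounds $13\cdot 6^k\psi n$ in (C2)--(C3) (modulo the $O(1)$ vertices already used by $H$). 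Converting each skeleton into an independent crossing-edge restriction, and accounting carefully for the minor double-counting in case (A3) of the (F1)(ii) count, are the remaining bookkeeping details needed to match the precise exponent $\beta^2 n/14^k$.
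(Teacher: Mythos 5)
There is a genuine gap in the global accounting: you bound the ``rest of the graph'' by a trivial count of $f_k(|Q_0|)2^{t_{k-1}(n)}$ configurations ``modulo the small correction for internal non-edges provided by (\ref{entropy bound})'', but for graphs in $F^1_Q$ this correction is not small. A graph $G\in F^1_Q\subseteq F_Q(n,k,\eta,\mu)$ is only guaranteed to have at most $\eta n^2$ internal non-edges; its internal structure need not look anything like a $k$-template (indeed $F^1_Q$ is disjoint from $T_Q$), so enumerating the internal edges via (\ref{entropy bound}) costs $2^{\xi(\eta)n^2}=2^{\Theta(n^2)}$, while the savings you extract from double lightness, from (C4), and from the skeleton restrictions are only $2^{-\Theta(n)}$. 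Thus your framework yields at best a bound of order $f_k(n_k)2^{t_{k-1}(n)}2^{\xi(\eta)n^2-\Omega(n)}$, which is far weaker than the claimed $C2^{-\beta^2 n/14^k}f_k(n_k)2^{t_{k-1}(n)}$; alternatively, if you only charge $f_k(|Q_0|)$ for the internal structure you are no longer counting all of $F^1_Q$. The missing idea is the induction on $n$ that this lemma sits inside: the paper deletes the few special vertices ($Y$ in case (i), $x$ in case (ii)), observes that the remaining graph is induced-$C_{2k}$-free and, by Corollary~\ref{new partition} together with (\ref{size mathcal{P}}), has an optimal ordered partition close to $Q$ restricted to the remaining vertices, and then invokes the induction hypothesis (\ref{induct}) to bound the number of choices for the remaining graph by roughly $f_k(n_k)2^{t_{k-1}(n-3)}$ (resp.\ $2^{t_{k-1}(n-1)}$). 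Only against that baseline, using Proposition~\ref{omitted proposition}(ii) to compare $t_{k-1}(n-3)+3(k-2)n/(k-1)$ with $t_{k-1}(n)$, do the linear-in-$n$ savings from (C4) and the skeletons (resp.\ from lightness in two classes) produce the stated exponential deficit.

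Two smaller points. First, your skeleton construction relies only on (C2)--(C3) to place one or two vertices per class, but a skeleton must induce in $G$ a prescribed linear forest (paths of a given type spanning two vertices from each relevant class), and producing $\Omega(n)$ disjoint such placements uses the quasirandomness $({\rm F}1)_{\mu}$ of $G\in F_Q(n,k,\eta,\mu)$ via Proposition~\ref{building}, not just the degree conditions at $x$. Second, you should not insist that skeleton vertices lie in $\overline{N}(\{y_1,y_2,y_3\})$: in the correct accounting the edges between $\{y_1,y_2,y_3\}$ and the skeleton are exactly the objects still to be chosen, and the induced-$C_{2k}$-freeness forbids one specific bipartite pattern per skeleton (a factor $1-2^{-6(k-2)}$ each), rather than giving a ``single-edge'' restriction on edges you have already conditioned on.
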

\begin{proof}
Let $F^1_{Q,(i)}$ denote the set of all graphs in $F^1_Q$ that satisfy (F1)(i). Similarly let $F^1_{Q,(ii)}$ denote the set of all graphs in $F^1_Q$ that satisfy (F1)(ii). Clearly,
\begin{equation}\label{eqn: F1 count}
|F^1_Q|\leq |F^1_{Q,(i)}| + |F^1_{Q,(ii)}|.
\end{equation}
We will first estimate the number of graphs in $F^1_{Q,(i)}$. Any graph $G\in F^1_{Q,(i)}$ can be constructed as follows. We first choose $\psi\in \{\beta^2, \beta^{1/2}\}$, and then perform the following steps.
\begin{itemize}
\item We choose an index $i\in \{0,1,\dots, k-2\}$, a set of three (labelled) vertices $Y=\{y_1,y_2,y_3\}$ in $[n]$, a vertex $x\in [n]\backslash Y$, and a set $E$ of edges between these four vertices such that $Y\cup \{x\}$ spans a linear forest. Let $b_1$ denote the number of such choices. The choices in the next steps will be made such that $Y\cup \{x\}$ is a $(k,x,i,\psi)$-configuration in $G$.
\item Next we choose the graph $G'$ on vertex set $[n]\backslash Y$ such that $G[[n]\backslash Y]=G'$. Let $b_2$ denote the number of possibilities for $G'$.
\item Next we choose the set $E'$ of edges in $G$ between $Y$ and $Q_i\backslash (Y\cup \{x\})$ such that $E'$ is compatible with our previous choices. Let $b_3$ denote the number of possibilities for $E'$.
\item Finally we choose the set $E''$ of edges in $G$ between $Y$ and $[n]\backslash (Q_i\cup Y \cup \{x\})$ such that $E''$ is compatible with our previous choices. Let $b_4$ denote the number of possibilities for $E''$.
\end{itemize}
Hence,
\begin{equation}\label{eqn: F1i count}
|F^1_{Q,(i)}|\leq 2\max\limits_{\psi\in \{\beta^2, \beta^{1/2}\}} \left\{b_1\cdot b_2\cdot b_3\cdot b_4\right\}.
\end{equation}

We then estimate the number of graphs in $F^1_{Q,(ii)}$. Any graph $G\in F^1_{Q,(ii)}$ can be constructed as follows.
\begin{itemize}
\item We first choose a single vertex $x$ from $[n]$ and distinct indices $i,j\in \{0,1,\dots, k-2\}$. Let $c_1$ denote the number of such choices. The choices in the next steps will be made such that $x$ is both $i$-light and $j$-light in $G$.
\item Next we choose the graph $G'$ on vertex set $[n]\backslash \{x\}$ such that $G[[n]\backslash \{x\}]=G'$. Let $c_2$ denote the number of possibilities for $G'$.
\item Next we choose the set $E$ of edges in $G$ between $\{x\}$ and $(Q_i\cup Q_j)\backslash \{x\}$ such that $E$ is compatible with our previous choices. Let $c_3$ denote the number of possibilities for $E$.
\item Finally we choose the set $E'$ of edges in $G$ between $\{x\}$ and $[n]\backslash (Q_i\cup Q_j\cup \{x\})$. Let $c_4$ denote the number of possibilities for $E'$.
\end{itemize}
Hence,
\begin{equation}\label{eqn: F1ii count}
|F^1_{Q,(ii)}|\leq c_1\cdot c_2\cdot c_3\cdot c_4.
\end{equation}
The following series of claims will give upper bounds for the quantities $b_1,\dots, b_4, c_1, \dots, c_4$. Claims~1 and~5 are trivial, while the proof of Claim~6 is almost identical to that of Claim~2; we give proofs of Claims~2,3,4,7 and~8.

\vspace{0.3cm}
\noindent {\bf Claim 1:} $b_1\leq 2^6kn^4$.

\vspace{0.3cm}
\noindent {\bf Claim 2:} $b_2\leq C 2^{\mu^{1/2}n} f_k(n_k) 2^{t_{k-1}(n-3)}$.

\noindent Indeed, note that for every graph $\tilde{G}\in F^1_{Q,(i)}$, Corollary~\ref{new partition} together with (\ref{size mathcal{P}}) implies that every optimal ordered $(k-1)$-partition of $\tilde{G}[[n]\backslash Y]$ is contained in some set $\mathcal{P}$ of size at most $2^{\mu n}$. Since $G[[n]\backslash Y]$ is clearly induced-$C_{2k}$-free, this together with (\ref{induct}) implies that
\begin{align*}
b_2 &\leq \sum_{Q'\in \mathcal{P}} |F_{Q'}(n-3,k)| \leq 6C  2^{\mu n} 2^{6(\log n)^2} f_k(\lceil (n-3)/(k-1) \rceil ) 2^{t_{k-1}(n-3)}\\
&\leq C 2^{\mu^{1/2}n} f_k(n_k) 2^{t_{k-1}(n-3)},
\end{align*}
as required.

\vspace{0.3cm}
\noindent {\bf Claim 3:} $b_3\leq 2^{4\psi^{3/2} n}$.

\noindent Indeed, for every graph $\tilde{G}\in F^1_{Q,(i)}$ for which $\{x,y_1,y_2,y_3\}$ is a $(k,x,i,\psi)$-configuration we have that $\min\{d^i_{\tilde{G},Q}(y_j), \overline{d}^i_{\tilde{G},Q}(y_j) \} \leq \psi^2 n$ for all $j\in [3]$. So $b_3\leq \prod_{j=1}^{3} h(j)$ where $h(j)$ denotes the number of possibilities for a set of edges between $\{y_j\}$ and $Q_i\backslash (Y\cup \{x\})$ such that either $d^i_{G,Q}(y_j)\leq \psi^2 n$ or $\overline{d}^i_{G,Q}(y_j)\leq \psi^2 n$. Note that by (\ref{entropy bound}), $h(j)\leq 2 {\binom{n}{\leq \psi^2 n}} \leq 2^{\xi(\psi^2) n+1}$. Hence,
$$b_3\leq \prod\limits_{j=1}^{3} h(j)\leq ( 2^{\xi(\psi^2)n+1})^3 \stackrel{(\ref{entropy bound 1.5})}{\leq} 2^{4\psi^{3/2} n},$$
as required.

\vspace{0.3cm}
\noindent {\bf Claim 4:} $b_4\leq 2^{3(k-2)n/(k-1)}2^{\mu^{1/2} n} 2^{-\psi n/11^k}$.

\noindent Indeed, first define $L$ to be the graph on vertex set $Y\cup \{x\}$ that satisfies $E(L)=E$. We say an induced subgraph $H$ of $G'-x$ is an $L$-\emph{compatible skeleton} if it satisfies the following.

\begin{itemize}
\item $|V(H)|=2k-4$.
\item $G'[V(H)\cup \{x\}]$ is a linear forest.
\item In $G'$, $x$ has $2-d_L(x)$ neighbours in $V(H)$.
\item $L$ and $H$ have the same number of components.
\end{itemize}

Given an $L$-compatible skeleton $H$, note that Proposition~\ref{incomplete2}, applied with $L, G'[V(H)\cup \{x\}]$ playing the roles of $L_1, L_2$ respectively, implies that there exists a set $E_{L,H}$ of possible edges between $Y$ and $V(H)$ such that $(Y\cup \{x\}\cup V(H), E\cup E(H)\cup E_{L,H})$ is isomorphic to $C_{2k}$.

We will show that there exist a large number of disjoint $L$-compatible skeletons in $G'-x$. Since there is a limited number of ways to choose edges between $Y$ and each of these $L$-compatible skeletons so as not to create an induced copy of $C_{2k}$, this will imply the claim.

For every index $j\ne i$, let $N^1_j(x), N^2_j(x)\subseteq N_{Q_j}(x)$ be disjoint with $|N^1_j(x)|,|N^2_j(x)|\geq \lfloor \frac{1}{2}|N_{Q_j}(x)| \rfloor$. Similarly, let $\overline{N}^1_j(x), \overline{N}^2_j(x)\subseteq \overline{N}_{Q_j}(x)$ be disjoint with $|\overline{N}^1_j(x)|,|\overline{N}^2_j(x)|\geq \lfloor \frac{1}{2}|\overline{N}_{Q_j}(x)| \rfloor$.

Note that we may assume that there exists an index $i'\in \{0,1,\dots, k-2\}\backslash \{i\}$ such that in $G'$, $|\overline{N}_{Q_j}(x)|\geq 12 \cdot 6^k \psi n$ for all $j\in \{0,1,\dots, k-2\}\backslash \{i\}$ and $|N_{Q_j}(x)| \geq 12 \cdot 6^k \psi n$ for all $j\in \{0,1,\dots, k-2\}\backslash \{i,i'\}$, since otherwise $\{x,y_1,y_2,y_3\}$ cannot be a $(k,x,i,\psi)$-configuration. Define $\ell_1,\dots,\ell_{k-2}$ such that $\{\ell_1,\dots, \ell_{k-2}\}=\{0,1,\dots, k-2\}\backslash \{i\}$ and $\ell_{k-2} = i'$. Thus the following hold.
\begin{enumerate}[(a)]
\item $|N^1_{\ell_{j}}(x)|,|N^2_{\ell_{j}}(x)|, |\overline{N}^1_{\ell_{j}}(x)|, |\overline{N}^2_{\ell_{j}}(x)|\geq 6 \cdot 6^k \psi n$ for all $j\in \{1,\dots, k-3\}$.
\item $|\overline{N}^1_{\ell_{k-2}}(x)|, |\overline{N}^2_{\ell_{k-2}}(x)|\geq 6 \cdot 6^k \psi n$.
\end{enumerate}

We now show that $G'-x$ contains at least $5\cdot 6^k\psi n$ disjoint $L$-compatible skeletons. Define $t$ to be the number of components of $L$, and define $s:=d_L(x)$. Then $1\leq t\leq 4$ and $0\leq s\leq 2$. Note that $t+s\geq 2$, since a $4$-vertex linear forest with one component contains no isolated vertices. We consider two cases. In each case we will describe the length and type of $t$ path components, $P^1,\dots, P^t$, each with an even number of vertices. Proposition~\ref{building} (applied repeatedly) together with (a),(b) will then imply that $G'-x$ contains at least $5\cdot 6^k\psi n$ disjoint $L$-compatible skeletons, each consisting exactly of $t$ components isomorphic to $P^1,\dots, P^t$. (We can apply Proposition~\ref{building} here since in each case $P^1 \cup \dots \cup P^t$ will contain a perfect matching.)

\begin{enumerate}[{\bf {Case} 1:}]
\item \emph{$s=2$.}

\begin{itemize}

\item For $1\leq r\leq t-1$, $P^r$ is a $K_2$ of type $\overline{N}^1_{{\ell_r}}(x), \overline{N}^2_{{\ell_r}}(x)$.

\item $P^{t}$ is a $P_{2k-2t-2}$ of type $ \overline{N}^1_{{\ell_t}}(x), \overline{N}^2_{{\ell_t}}(x), \overline{N}^1_{{\ell_{t+1}}}(x), \overline{N}^2_{{\ell_{t+1}}}(x),\dots,  \overline{N}^1_{{\ell_{k-2}}}(x), \overline{N}^2_{{\ell_{k-2}}}(x)$.
\end{itemize}

\item \emph{Either $s=1$ or $s=0, t>1$.}

\begin{itemize}
\item For $1\leq r\leq 1-s$, $P^r$ is a $K_2$ of type $N^1_{{\ell_r}}(x), \overline{N}^1_{{\ell_r}}(x)$.

\item $P^{2-s}$ is a $P_{2k-2t-2}$ of type $ N^1_{{\ell_{2-s}}}(x), \overline{N}^2_{{\ell_{2-s}}}(x), \overline{N}^1_{{\ell_{3-s}}}(x), \overline{N}^2_{{\ell_{3-s}}}(x),\dots,  \overline{N}^1_{{\ell_{k-t-s}}}(x),\\ \overline{N}^2_{{\ell_{k-t-s}}}(x)$.

\item For $k-t-s+1\leq r\leq k-2$, $P^r$ is a $K_2$ of type $\overline{N}^1_{{\ell_r}}(x), \overline{N}^2_{{\ell_r}}(x)$.
\end{itemize}
\end{enumerate}

Since $t+s\geq 2$, this covers all cases. Now fix a set $SK$ of $5\cdot 6^k\psi n$ disjoint $L$-compatible skeletons in $G'-x$, and let $H\in SK$. Let $h_H$ denote the number of possibilities for a set $E^*$ of edges between $Y$ and $V(H)$. Note that such a set $E^*$ cannot equal $E_{L,H}$, since $G$ needs to be induced-$C_{2k}$-free. Thus $h_H\leq 2^{|Y||V(H)|}-1=2^{6(k-2)}-1$. Note that by $({\rm F}2)_{\mu}$ the number of vertices outside $Q_i$ that are not contained in some graph $H\in SK$ is at most $(k-2)n/(k-1) + \mu n - 10(k-2)6^k \psi n$. Hence,
\begin{align*}
b_4&\leq 2^{3(k-2)n/(k-1) - 30(k-2)6^k\psi n + 3\mu n}\prod\limits_{H\in SK} h_H \\
&\leq 2^{3(k-2)n/(k-1) - 30(k-2)6^k\psi n + 3\mu n} \left( 2^{6(k-2)}\left( 1-2^{-6(k-2)} \right) \right)^{5\cdot 6^k\psi n}\\
&\leq 2^{3(k-2)n/(k-1)}2^{3\mu n} e^{-5\cdot 6^k\psi n/(2^{6(k-2)})}\leq 2^{3(k-2)n/(k-1)}2^{\mu^{1/2} n} 2^{-\psi n/11^k},
\end{align*}
as required.

\vspace{0.3cm}
\noindent {\bf Claim 5:} $c_1\leq k^2 n$.

\vspace{0.3cm}
\noindent {\bf Claim 6:} $c_2\leq C2^{\mu^{1/2}n} f(n_k) 2^{t_{k-1}(n-1)}$.

\COMMENT{Indeed, note that for every graph $\tilde{G}\in F^1_{Q,(ii)}$, Corollary~\ref{new partition} together with (\ref{size mathcal{P}}) implies that every optimal ordered $(k-1)$-partition of $\tilde{G}[[n]\backslash \{x\}]$ is contained in some set $\mathcal{P}$ of size at most $2^{\mu n}$. Since $G[[n]\backslash \{x\}]$ is clearly induced-$C_{2k}$-free, this together with (\ref{induct}) implies that
\begin{align*}
c_2 &\leq \sum_{Q'\in \mathcal{P}} |F_{Q'}(n-1,k)| \leq 6C2^{\mu n} 2^{6(\log n)^2} f(\lceil (n-1)/(k-1) \rceil ) 2^{t_{k-1}(n-1)}\\
&\leq C2^{\mu^{1/2}n} f(n_k) 2^{t_{k-1}(n-1)},\end{align*}
as required.}

\vspace{0.3cm}
\noindent {\bf Claim 7:} $c_3\leq 2^{7\xi(\alpha)n}$.

\noindent Indeed, for every graph $\tilde{G}\in F^1_{Q,(ii)}$ for which $x$ is both $i$-light and $j$-light, we have that, for every $\ell\in \{i,j\}$, either $\min \{|N_{Q_{\ell}}(x)|, |\overline{N}_{Q_{\ell}}(x)|\}\leq \alpha n$ or else there exists a vertex $z\neq x$ such that $|N^*_{\ell}(x,z)| + |N^*_{\ell}(z,x)| \leq \alpha n$.

For $\ell\in \{i,j\}$, let $h(\ell,1)$ denote the number possibilities for a set of edges in $G$ between $\{x\}$ and $Q_{\ell}\backslash \{x\}$ such that $\min \{|N_{Q_{\ell}}(x)|, |\overline{N}_{Q_{\ell}}(x)|\}\leq \alpha n$. Then $h(\ell,1)\leq 2\binom{n}{\leq \alpha n}\leq 2^{\xi(\alpha)n+1}$. For $\ell\in \{i,j\}$, let $h(\ell,2)$ denote the number possibilities for a set of edges between $\{x\}$ and $Q_{\ell}\backslash \{x\}$ such that there exists a vertex $z\neq x$ such that $|N^*_{\ell}(x,z)| + |N^*_{\ell}(z,x)|  \leq \alpha n$. Then $h(\ell,2) \leq n{\binom{|N_{Q_{\ell}}(z)|}{\leq \alpha n}}{\binom{|\overline{N}_{Q_{\ell}}(z)|}{\leq \alpha n}}\leq 2^{3\xi(\alpha)n}$.

Hence
$$c_3\leq (h(i,1)+h(i,2))(h(j,1)+h(j,2))\leq (2^{\xi(\alpha)n+1}+ 2^{3\xi(\alpha)n})^2 \leq 2^{7\xi(\alpha)n},$$
as required.

\vspace{0.3cm}
\noindent {\bf Claim 8:} $c_4\leq 2^{ (k-3)n/(k-1) } 2^{2\mu n}$.

\noindent Indeed, since the number of possible edges between $\{x\}$ and $[n]\backslash (Q_i\cup Q_j\cup \{x\})$ is at most $(k-3)n/(k-1) + 2\mu n$, we have that $c_4 \leq 2^{ (k-3)n/(k-1) + 2\mu n}$, as required.

\vspace{0.3cm}
\noindent Now (\ref{eqn: F1i count}) together with Claims~1--4 and Proposition~\ref{omitted proposition}(ii) implies that
\begin{align}\label{eqn: F1i count two}
&|F^1_{Q,(i)}|\\
\leq \hspace{0.2cm} &2\max\limits_{\psi\in \{\beta^2, \beta^{1/2}\}} \left\{2^6kn^4\cdot C 2^{\mu^{1/2}n} f_k\left( n_k \right) 2^{t_{k-1}(n-3)}\cdot 2^{4\psi^{3/2} n}\cdot 2^{\frac{3(k-2)n}{k-1}}2^{\mu^{1/2} n} 2^{-\frac{\psi n}{11^k}} \right\} \nonumber\\
\leq \hspace{0.2cm} &\max\limits_{\psi\in \{\beta^2, \beta^{1/2}\}} \left\{ Cf_k(n_k)2^{t_{k-1}(n-3)+\frac{3(k-2)n}{k-1}}2^{-\frac{\psi n}{12^k}} \right\} \leq Cf_k(n_k)2^{t_{k-1}(n)}2^{-\frac{\beta^2 n}{13^k}}.\nonumber
\end{align}
Similarly, (\ref{eqn: F1ii count}) together with Claims~5--8 and Proposition~\ref{omitted proposition}(ii) implies that
\begin{align}\label{eqn: F1ii count two}
|F^1_{Q,(ii)}|&\leq k^2 n\cdot C2^{\mu^{1/2}n} f_k\left( n_k \right) 2^{t_{k-1}(n-1)}\cdot 2^{7\xi(\alpha)n}\cdot 2^{ \frac{(k-3)n}{k-1} } 2^{2\mu n}\\
&\leq C2^{\mu^{1/3}n}f_k(n_k)2^{t_{k-1}(n-1) + \frac{(k-2)n}{k-1}}2^{-\frac{n}{k-1}}2^{7\xi(\alpha)n}\leq Cf_k(n_k)2^{t_{k-1}(n)}2^{-\frac{n}{k}}.\nonumber
\end{align}
Now (\ref{eqn: F1 count}) together with (\ref{eqn: F1i count two}) and (\ref{eqn: F1ii count two}) implies that
$$|F^1_Q|\leq Cf_k(n_k)2^{t_{k-1}(n)}\left(2^{-\frac{\beta^2 n}{13^k}}+2^{-\frac{n}{k}}\right)\leq Cf_k(n_k)2^{t_{k-1}(n)}2^{-\frac{\beta^2 n}{14^k}},$$
as required.
\end{proof}

\section{Estimation of $|F^2_Q|$}\label{sec: estimate 2}

Given $G\in F^2_Q\cup F^3_Q$ and $i\in \{0,1,\dots, k-2\}$, let $A^i_G := \{x \in Q_i: \overline{d}^i_{G,Q}(x),d^i_{G,Q}(x) \geq \beta n\}$. The key result of this section (Lemma~\ref{size of A^i_G}) states that $A^i_G$ has bounded size. To prepare for this, we will classify the pairs of vertices in $A^i_G$ according to their (non-)neighbourhood intersection pattern. The fact that $G\notin F^1_Q$ allows us to observe some restrictions on these patterns (see Propositions~\ref{prop: not j j' identical} and~\ref{asym-reg}). In the proof of Lemma~\ref{size of A^i_G} we use a Ramsey argument to restrict our view to one abundant type of pattern. This quickly leads to a contradiction if $|A^i_G|$ is large. Using the fact that $G\notin F^1_Q$ we show that the remainder of each class (i.e. $G[Q_i\backslash A^i_G]$) induces a very simple structure (Proposition~\ref{Q_i setminus A^i_G}). We translate this structural information into a sufficiently strong bound on the number of graphs in $F^2_G$, in Lemma~\ref{F^2_Q}.

Let $\mathcal{L}$ denote the collection of all $4$-vertex linear forests. The following proposition is an analogue of Proposition~\ref{prop: k-good tetradehron free}(i) that can be applied to graphs rather than $2$-coloured multigraphs. It follows immediately from Proposition~\ref{prop: k-good tetradehron free}(i).

\begin{proposition}\label{prop: L free}
Let $G$ be a graph such that for every $H\in \mathcal{L}$, $G$ is induced $H$-free. Then $\overline{G}$ is a disjoint union of stars and triangles.
\end{proposition}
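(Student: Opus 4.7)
My plan is to derive the proposition as a direct corollary of Proposition~\ref{prop: k-good tetradehron free}(i). To invoke that result I will recast $G$ as a complete $2$-coloured multigraph $\tilde{G}$ on $V(G)$ by setting $\tilde{G}_R := E(G)$ and $\tilde{G}_B := E(\overline{G})$, so that $\tilde{G}_R\cap \tilde{G}_B=\emptyset$ and $\tilde{G}^B = \overline{G}$. Then it suffices to check that $\tilde{G}$ contains no $6$-good tetrahedron, after which Proposition~\ref{prop: k-good tetradehron free}(i) immediately yields that $\overline{G}$ is a disjoint union of stars and triangles.

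The verification boils down to a small complement computation on four vertices. Suppose for contradiction that $\tilde{G}$ contained a $6$-good tetrahedron on some $4$-set $S$. By definition this means $\overline{G}[S]$ contains a copy of $P_4$ (the requirement $|H_R|\geq 6-k=0$ is vacuous when $k=6$); since $|S|=4$, this $P_4$ must span $S$. The complement in $K_4$ of a spanning $P_4$ is another spanning $P_4$, so $G[S]$ is a (spanning) subgraph of some $P_4$ on $S$. Every such spanning subgraph, namely one of $P_4$, $P_3+K_1$, $2P_2$, $P_2+2K_1$, or $4K_1$, is a $4$-vertex linear forest, so $G[S]\in \mathcal{L}$. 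This contradicts the hypothesis that $G$ is induced $H$-free for every $H\in\mathcal{L}$, so $\tilde{G}$ cannot contain a $6$-good tetrahedron, and the conclusion follows.

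There is essentially no obstacle here. The only minor subtlety is matching the definitions on the two sides, i.e.\ checking that ``$\overline{G}[S]$ contains a $P_4$'' in the $6$-good tetrahedron definition really does force $G[S]\in\mathcal{L}$, which is exactly the complement-of-$P_4$-is-$P_4$ observation above.
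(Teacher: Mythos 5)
Your proof is correct and takes essentially the same route as the paper, which simply states that the proposition follows immediately from Proposition~\ref{prop: k-good tetradehron free}(i). Your recasting of $G$ as a complete $2$-coloured multigraph with $\tilde{G}_R=E(G)$, $\tilde{G}_B=E(\overline{G})$, together with the observation that the complement of a spanning $P_4$ on four vertices is again a $P_4$ (so any $6$-good tetrahedron would force an induced member of $\mathcal{L}$ in $G$), is exactly the ``immediate'' step the paper leaves to the reader.
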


\begin{proposition}\label{Q_i setminus A^i_G}
Let $G\in F^2_Q\cup F^3_Q$ and $i\in \{0,1,\dots, k-2\}$. Then $\overline{G}[Q_i\setminus A^i_G]$ is a disjoint union of stars and triangles.
\end{proposition}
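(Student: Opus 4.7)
The plan is to argue by contradiction, using Proposition~\ref{prop: L free} together with the fact that $G \notin F^1_Q$. Suppose $\overline{G}[Q_i \setminus A^i_G]$ is not a disjoint union of stars and triangles. Then by Proposition~\ref{prop: L free}, there exist four vertices $x, y_1, y_2, y_3 \in Q_i \setminus A^i_G$ such that $G[\{x,y_1,y_2,y_3\}]$ is a $4$-vertex linear forest $H \in \mathcal{L}$. My strategy is to show that $\{x,y_1,y_2,y_3\}$ is a $(k,x,i,\psi)$-configuration with $\psi := \beta^{1/2}$, which would force $G \in F^1_Q$ via (F1)(i), a contradiction.

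Verifying (C1)--(C4) is then essentially bookkeeping against the definition of $A^i_G$ and the forbidden structure in $F^1_Q$. Condition (C1) is immediate since $G[\{x,y_1,y_2,y_3\}] = H$ is a linear forest. For (C4), each $y_j$ lies in $Q_i \setminus A^i_G$, so by definition of $A^i_G$ we have $\min\{d^i_{G,Q}(y_j), \overline{d}^i_{G,Q}(y_j)\} < \beta n = \psi^2 n$, as required.

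The crucial step is (C2) and (C3), where I would use $G \notin F^1_Q$ to control the degrees of $x$ into the other classes. Since $x \in Q_i \setminus A^i_G$, one of $d^i_{G,Q}(x), \overline{d}^i_{G,Q}(x)$ is less than $\beta n \leq \alpha n$, so $x$ is $i$-light via (A1) or (A2). Because $G \notin F^1_Q$, clause (F1)(ii) forbids $x$ from being simultaneously $j$-light for any other $j \neq i$; in particular, (A1) and (A2) must fail at every $j \neq i$, yielding $d^j_{G,Q}(x), \overline{d}^j_{G,Q}(x) > \alpha n$ for all $j \in \{0,1,\dots,k-2\} \setminus \{i\}$. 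By the hierarchy $\beta \ll \alpha \ll 1/k$ in~(\ref{eqn: hierarchy}), we have $\alpha n > 13 \cdot 6^k \beta^{1/2} n = 13 \cdot 6^k \psi n$, so (C2) holds, and (C3) holds with $i'$ chosen arbitrarily in $\{0,1,\dots,k-2\}\setminus\{i\}$.

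All four conditions being verified, $\{x,y_1,y_2,y_3\}$ is indeed a $(k,x,i,\beta^{1/2})$-configuration in $G$, which contradicts $G \notin F^1_Q$. Hence no such $H$ can appear induced in $G[Q_i \setminus A^i_G]$, and Proposition~\ref{prop: L free} gives the claim. The only potential obstacle is the numerical check $\alpha n > 13 \cdot 6^k \psi n$, but this is handled cleanly by the hierarchy, so the argument is essentially immediate once the right configuration is identified.
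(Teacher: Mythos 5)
Your proposal is correct and follows essentially the same route as the paper: by contradiction, Proposition~\ref{prop: L free} yields an induced $4$-vertex linear forest on $\{x,y_1,y_2,y_3\}\subseteq Q_i\setminus A^i_G$, and then (C1) is immediate, (C4) follows from the definition of $A^i_G$ with $\psi=\beta^{1/2}$, and (C2)--(C3) follow because $x$ is $i$-light while $G\notin F^1_Q$ forces $d^j_{G,Q}(x),\overline{d}^j_{G,Q}(x)>\alpha n>13\cdot 6^k\beta^{1/2}n$ for all $j\neq i$, giving a $(k,x,i,\beta^{1/2})$-configuration and the desired contradiction. This matches the paper's argument step for step, including the choice of $\psi$ and the use of (F1)(i) and (F1)(ii).
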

\begin{proof}
Suppose for a contradiction that $\overline{G}[Q_i\setminus A^i_G]$ is not a disjoint union of stars and triangles. Then Proposition~\ref{prop: L free} implies that $\overline{G}[Q_i\setminus A^i_G]$ contains an induced copy of a graph in $\mathcal{L}$, with vertex set $\{x,y_1,y_2,y_3\}$ say. We will show that $\{x,y_1,y_2,y_3\}$ is a $(k,x,i,\beta^{1/2})$-configuration, which contradicts the fact that $G\notin F^1_Q$. Note that $G[\{x,y_1,y_2,y_3\}]$ is a linear forest, and so $\{x,y_1,y_2,y_3\}$ satisfies (C1). By the definition of $A^i_G$ we have that $\min \{d^i_{G,Q}(y_j), \overline{d}^i_{G,Q}(y_j)\}\leq \beta n$ for all $j\in [3]$, and so $\{x,y_1,y_2,y_3\}$ satisfies (C4). Since $G\notin F^1_Q$, $x$ is $j$-light for at most one index $j\in\{0,1,\dots, k-2\}$. Since $x \in Q_i\setminus A^i_G$, $x$ is $i$-light. Thus for every $j\in \{0,1,\dots, k-2\}$ with $i\ne j$ we have that $x$ is not $j$-light, and hence $d^j_{G,Q}(x), \overline{d}^j_{G,Q}(x) > \alpha n > 13\cdot 6^k \cdot \beta^{1/2} n$, and so $\{x,y_1,y_2,y_3\}$ satisfies (C2) and (C3). Therefore $\{x,y_1,y_2,y_3\}$ is a $(k,x,i,\beta^{1/2})$-configuration, as required.
\end{proof}

The following definitions will be useful in order to show that $|A^i_G|$ is small. Suppose $S$ is a star or triangle. If $S$ is a star on at least three vertices, we call the unique vertex in $S$ of degree greater than one the \emph{centre} of $S$. Otherwise we call the vertex of $S$ with the smallest label the centre of $S$.

Let $G\in F^2_Q\cup F^3_Q$ and $i,j\in \{0,1,\dots, k-2\}$ and let $x,y\in A^i_G$.
\begin{itemize}
\item We say $x,y$ are \emph{$j$-irregular} if $|\overline{N}_{j}(\{x,y\})| \leq \gamma n$.

\item We say $x,y$ are \emph{$j$-asymmetric} if $|N^*_j(x,y)| +|N^*_j(y,x)| > 3\gamma n$ and either $|N^*_j(x,y)| \leq \gamma n$ or $|N^*_j(y,x)| \leq \gamma n$.

\item We say $x,y$ are \emph{$j$-identical} if $|N^*_j(x,y)| +|N^*_j(y,x)|  \leq 3\gamma n$.
\end{itemize}
Roughly speaking, if one of the above holds then the neighbourhoods of $x,y$ do not behave in a `random' like way (thus constraining the number of possibilities for choosing the neighbourhoods). The following statement follows immediately from the above definitions and the fact that $\gamma \ll \alpha$.
\begin{equation}\label{identical-light}
\text{If }x, y \text{ are } j \text{-identical then }x,y \text{ are both }j\text{-light.}
\end{equation}

\begin{proposition}\label{prop: not j j' identical}
Let $G\in F^2_Q\cup F^3_Q$ and $i\in \{0,1,\dots, k-2\}$ and let $x,y \in A^i_G$. Then $x,y$ are $j$-identical for at most one index $j\in \{0,1,\dots, k-2\}$.
\end{proposition}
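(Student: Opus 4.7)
The plan is to argue by contradiction, leveraging the fact that membership in $A^i_G$ together with the definition of $F^1_Q$ severely restricts how many indices $j$ can witness identical-type behaviour.

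Suppose, toward a contradiction, that there exist distinct indices $j, j' \in \{0,1,\dots,k-2\}$ such that $x,y$ are both $j$-identical and $j'$-identical. The first step is to apply the already-recorded observation (\ref{identical-light}), which gives that $x$ is both $j$-light and $j'$-light (this observation is essentially immediate: the defining inequality $|N^*_j(x,y)| + |N^*_j(y,x)| \leq 3\gamma n \leq \alpha n$ for $j$-identical pairs is exactly condition (A3) with $z = y$, and similarly for $j'$).

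The second and final step is to invoke the definition of $F^1_Q$. Condition (F1)(ii) explicitly excludes the existence in $G$ of any vertex that is simultaneously $j$-light and $j'$-light for two distinct indices $j, j' \in \{0,1,\dots,k-2\}$. Hence $G \in F^1_Q$, contradicting $G \in F^2_Q \cup F^3_Q$ (since the sets $T_Q, F^1_Q, F^2_Q, F^3_Q$ partition $F_Q(n,k,\eta,\mu)$).

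There is no real obstacle here: the proposition is a one-line consequence of (\ref{identical-light}) and (F1)(ii). The only thing to keep in mind is that the statement allows $j$ or $j'$ to equal $i$ itself, but this causes no issue since (A3) can be satisfied at index $i$ even for vertices $x \in A^i_G$ (the bounds $d^i(x), \overline{d}^i(x) \geq \beta n$ only rule out (A1) and (A2), not (A3)); the argument above uses only (A3) via $y$ as the witness vertex $z$.
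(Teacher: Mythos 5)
Your proposal is correct and follows essentially the same route as the paper: both arguments apply (\ref{identical-light}) to deduce that $x$ would be light with respect to two distinct indices, and then use that $G\notin F^1_Q$ (via (F1)(ii) and the partition of $F_Q(n,k,\eta,\mu)$) to reach a contradiction. Your side remarks — that (\ref{identical-light}) is just (A3) with $z=y$ plus $3\gamma\leq\alpha$, and that $j$ or $j'$ may equal $i$ — are accurate and consistent with the paper's treatment.
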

\begin{proof}
Suppose $x,y$ are $j$-identical for some $j\in \{0,1,\dots, k-2\}$ and suppose $j'\in \{0,1,\dots, k-2\}$ with $j'\ne j$. It suffices to show that $x,y$ are not $j'$-identical. Note that $x$ is $j$-light by (\ref{identical-light}). Since $G\notin F^1_Q$, $x$ is $j''$-light for at most one index $j''\in \{0,1,\dots, k-2\}$. Thus $x$ is not $j'$-light, and hence by (\ref{identical-light}) $x,y$ are not $j'$-identical, as required.
\end{proof}

\begin{proposition} \label{asym-reg}
Let $G\in F^2_Q\cup F^3_Q$ and $i\in \{0,1,\dots, k-2\}$ and let $x,y \in A^i_G$. Then there exists an index $j\in \{0,1,\dots, k-2\}$ such that $x,y$ are $j$-irregular or $j$-asymmetric (or both).
\end{proposition}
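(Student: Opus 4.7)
The plan is a proof by contradiction: I assume that for every $j \in \{0,1,\dots, k-2\}$, the pair $x, y$ is neither $j$-irregular nor $j$-asymmetric, and then build an induced $C_{2k}$ in $G$, contradicting $G \in F_Q(n,k) \subseteq F(n,k)$. The first step is to collect the consequences of this assumption. Let $J := \{j : x, y \text{ are not } j\text{-identical}\}$; Proposition~\ref{prop: not j j' identical} gives $|J| \geq k-2$. For each $j \in J$, non-$j$-asymmetry combined with non-$j$-identity yields $|N^*_j(x,y)|, |N^*_j(y,x)| > \gamma n$; and for every $j$, non-$j$-irregularity gives $|\overline{N}_j(\{x,y\})| > \gamma n$. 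Hence each class $Q_j$ with $j \in J$ contains three pairwise disjoint subsets, each of size greater than $\gamma n \gg \mu^{1/2} n$, consisting of the vertices in $Q_j$ adjacent in $G$ only to $x$, only to $y$, and to neither.

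Next I construct the induced $C_{2k}$ by applying Proposition~\ref{building}, splitting according to whether $xy$ is an edge of $G$. In both cases I specify a template graph $T$ on the $2k-2$ cycle vertices other than $x, y$, equipped with a perfect matching of $k-1$ edges that correspond to the cycle edges placed inside single partition classes, and I apply Proposition~\ref{building} with $|I| = k-1$ spanning all $k-1$ partition classes. Case 1 ($xy \in E(G)$): set $x = v_1$, $y = v_2$, take $T = P_{2k-2}$ on $v_3, \dots, v_{2k}$ with matching $\{v_3v_4, v_5v_6, \dots, v_{2k-1}v_{2k}\}$. The two endpoint matching edges $(v_3, v_4)$ and $(v_{2k-1}, v_{2k})$ each contain one $N^*$-type vertex and must therefore go into distinct classes of $J$, while the remaining $k-3$ interior matching edges have both endpoints in $\overline{N}(\{x,y\})$ and may be placed in any of the other $k-3$ classes, including any $j$-identical class and $Q_i$ itself. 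Case 2 ($xy \notin E(G)$): set $x = v_1$, $y = v_4$, so $T$ becomes the disjoint union of an edge $v_2 v_3$ and a path $v_5 v_6 \dots v_{2k}$; three ``special'' matching edges $(v_2, v_3)$, $(v_5, v_6)$, $(v_{2k-1}, v_{2k})$ each carry $N^*$-type vertices and require distinct classes of $J$, while the other $k-4$ matching edges lie in $\overline{N}(\{x,y\})$ and go into the remaining classes.

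Proposition~\ref{building} then yields an induced copy of $T$ in $G$ realising these constraints. The cycle edges incident with $x$ or $y$ are automatic from the $N^*$-type membership of $v_2, v_3, v_5$ or $v_{2k}$ (and from $xy \in E(G)$ in Case 1), while the cycle non-edges among the vertices of $T$ are guaranteed by the induced embedding; together this produces an induced copy of $C_{2k}$ in $G$, contradicting $G \in F_Q(n,k)$. The main technical point is the accounting of classes: Proposition~\ref{building} forces the $k-1$ matching edges into $k-1$ distinct partition classes, with at least $2$ (Case 1) or $3$ (Case 2) of them required to lie in $J$. Since $|J| \geq k-2$ and $k \geq 6$, this succeeds comfortably, and using $Q_i$ itself as one of the classes is safe because non-$i$-irregularity provides $|\overline{N}_i(\{x,y\})| > \gamma n$ inside $Q_i$, leaving ample room for the disjoint $\mu^{1/2} n$-sized pair that Proposition~\ref{building} demands.
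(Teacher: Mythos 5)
Your proposal is correct and follows essentially the same route as the paper: assume no index is irregular or asymmetric, use Proposition~\ref{prop: not j j' identical} to get many non-identical classes with both $N^*_j(x,y)$, $N^*_j(y,x)$ and $\overline{N}_j(\{x,y\})$ of size greater than $\gamma n$, then split on whether $xy\in E(G)$ and apply Proposition~\ref{building} to an appropriate linear forest (a $P_{2k-2}$, respectively a $K_2$ plus $P_{2k-4}$) whose endpoints lie in the $N^*$-sets, yielding an induced $C_{2k}$ and a contradiction. The only cosmetic difference is that you track an abstract set $J$ of non-identical indices where the paper simply relabels so that classes $1,2,3$ are non-identical.
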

\begin{proof}
Suppose for a contradiction that for every index $\ell\in \{0,1,\dots, k-2\}$, $x,y$ are neither $\ell$-irregular nor $\ell$-asymmetric. Since, by Proposition~\ref{prop: not j j' identical}, $x,y$ are $j$-identical for at most one index $j$, and $k\geq 6$, we may assume without loss of generality that $x,y$ are not $\ell$-identical for $\ell\in \{1,2,3\}$. We consider the following two cases.

\vspace{0.3cm}
\noindent {\bf Case 1:} \emph{$x,y$ are adjacent.}

\noindent In this case we define sets $A_{\ell}, B_{\ell}$ for $\ell\in \{0,1,\dots, k-2\}$ as follows. We will use these sets to extend $x,y$ into an induced copy of $C_{2k}$.
\begin{itemize}
\item Let $A_1:= N^*_1(x,y)$ and $B_1 := \overline{N}_1(\{x,y\})$.
\item Let $A_2:= N^*_2(y,x)$ and $B_2 := \overline{N}_2(\{x,y\})$.
\item For every $\ell\in \{0,1,\dots, k-2\}\backslash \{1,2\}$, let $A_{\ell}, B_{\ell}\subseteq \overline{N}_{\ell}(\{x,y\})$ be disjoint and satisfy $|A_{\ell}|,|B_{\ell}|\geq \lfloor|\overline{N}_{\ell}(\{x,y\})|/2\rfloor$.
\end{itemize}
Since for every $\ell\in \{0,1,\dots, k-2\}$ $x,y$ are neither ${\ell}$-irregular nor ${\ell}$-asymmetric, and for every $\ell\in \{1,2\}$ $x,y$ are not $\ell$-identical, we have that $|A_{\ell}|,|B_{\ell}|\geq \gamma n/3$ for every $\ell\in \{0,1,\dots, k-2\}$. This together with Proposition~\ref{building} and the fact that $\mu \ll \gamma$ implies that there exists in $G$ an induced copy of $P_{2k-2}$ of type $A_1,B_1,A_0,B_0,A_3,B_3,\dots, A_{k-2},B_{k-2}, B_2,A_2$. By the definition of the sets $A_{\ell}, B_{\ell}$, the vertices of this $P_{2k-2}$ together with $x,y$ induce on $G$ a copy of $C_{2k}$. This contradicts the fact that $G\in F_Q(n,k)$.
\vspace{0.3cm}

\noindent {\bf Case 2:} \emph{$x,y$ are not adjacent.}

\noindent In this case we define sets $A_{\ell}, B_{\ell}$ for $\ell\in \{0,1,\dots, k-2\}$ as follows. Similarly to the previous case, we will find an induced $C_{2k}$ which contains $x,y$ together with exactly one vertex from each of these sets.
\begin{itemize}
\item Let $A_1:= N^*_1(x,y)$ and $B_1 := N^*_1(y,x)$.
\item Let $A_2:= N^*_2(x,y)$ and $B_2 := \overline{N}_2(\{x,y\})$.
\item Let $A_3:= N^*_3(y,x)$ and $B_3 := \overline{N}_3(\{x,y\})$.
\item For every $\ell\in \{0,1,\dots, k-2\}\backslash \{1,2,3\}$, let $A_{\ell}, B_{\ell}\subseteq \overline{N}_{\ell}(\{x,y\})$ be disjoint and satisfy $|A_{\ell}|,|B_{\ell}|\geq \lfloor|\overline{N}_{\ell}(\{x,y\})|/2\rfloor$.
\end{itemize}
Since for every $\ell\in \{0,1,\dots, k-2\}$ $x,y$ are neither ${\ell}$-irregular nor ${\ell}$-asymmetric, and for every $\ell\in \{1,2,3\}$ $x,y$ are not $\ell$-identical, we have that $|A_{\ell}|,|B_{\ell}|\geq \gamma n/3$ for every $\ell\in \{0,1,\dots, k-2\}$. As before, this together with Proposition~\ref{building} implies that there exists in $G$ an induced copy of the graph $H$ that consists of the following two components:
\begin{itemize}
\item One $P_{2k-4}$ of type $A_2,B_2,A_0,B_0,A_4,B_4,\dots, A_{k-2},B_{k-2}, B_3,A_3$.
\item One $K_2$ of type $A_1,B_1$.
\end{itemize}
By the definition of the sets $A_{\ell}, B_{\ell}$, the vertices of $H$ together with $x,y$ induce on $G$ a copy of $C_{2k}$. This contradicts the fact that $G\in F_Q(n,k)$.
\vspace{0.3cm}

\noindent This covers all cases, and hence completes the proof.
\end{proof}

Recall from Section~\ref{sec: main proof} that $M:= R_{2k-2}(\lceil \frac{1}{\gamma}\rceil) +1$.

\begin{lemma}\label{size of A^i_G}
Let $G\in F^2_Q\cup F^3_Q$ and $i\in \{0,1,\dots, k-2\}$. Then $|A^i_G| < M$.
\end{lemma}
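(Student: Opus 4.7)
The plan is to assume for contradiction that $|A^i_G|\ge M=R_{2k-2}(\lceil 1/\gamma\rceil)+1$. By Proposition~\ref{asym-reg}, for each unordered pair $\{x,y\}\subseteq A^i_G$ we may select an index $j=j(x,y)\in\{0,\ldots,k-2\}$ and a type $\tau\in\{\mathrm{irr},\mathrm{asy}\}$ witnessing that $x,y$ are $j$-irregular or $j$-asymmetric; colouring each pair by $(j,\tau)$ uses at most $2(k-1)=2k-2$ colours, so Ramsey's theorem provides a set $X\subseteq A^i_G$ with $|X|=\lceil 1/\gamma\rceil$ all of whose pairs share the same colour $(j,\tau)$. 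We will derive a contradiction in both cases.

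First suppose $\tau$ is \emph{irregular}, so $|\overline{N}_j(x_a)\cap\overline{N}_j(x_b)|\le\gamma n$ for all distinct $x_a,x_b\in X$. Setting $U_a:=\overline{N}_j(x_a)$, $T:=\sum_{x_a\in X}|U_a|$ and $d_v:=|\{a:v\in U_a\}|$ for $v\in Q_j$, the identity $\sum_v d_v^2=T+2\sum_{a<b}|U_a\cap U_b|\le T+|X|^2\gamma n$ together with Cauchy--Schwarz $\sum_v d_v^2\ge T^2/|Q_j|$ gives $T\le|Q_j|+|X|\sqrt{|Q_j|\gamma n}=O(n/\sqrt{\gamma})$ using $|Q_j|\le 2n/(k-1)$ (from $({\rm F}2)_{\mu}$). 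When $j=i$ every $x_a\in A^i_G$ satisfies $|U_a|\ge\beta n$, yielding $T\ge\beta n/\gamma$, which contradicts the upper bound since $\gamma\ll\beta^2$. When $j\ne i$ we split $X=X'\cup X''$ with $X':=\{x_a\in X:|U_a|\ge\alpha n\}$. If $|X'|\ge\sqrt{|X|}$ then the same estimate applied to $X'$ with $\alpha$ in place of $\beta$ gives a contradiction because $\gamma\ll\alpha^2$; otherwise $|X''|>|X|/2\ge 1/(3\gamma)$ and every $x_a\in X''$ is $j$-light via~(A2). Because $G\notin F^1_Q$, each such $x_a$ is $\ell$-light for no $\ell\ne j$, so by~(\ref{identical-light}) no pair in $X''$ is $\ell$-identical for any $\ell\ne j$. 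Selecting $x_1,x_2\in X''$ and imitating the construction in the proof of Proposition~\ref{asym-reg} (Case~1 or~2 according to whether $x_1,x_2$ are adjacent), but choosing the two ``rich'' indices from $\{0,\ldots,k-2\}\setminus\{i,j\}$ (which has size $k-3\ge 3$ because $k\ge 6$), Proposition~\ref{building} produces an induced copy of $C_{2k}$ in $G$, contradicting $G\in F_Q(n,k)$.

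If $\tau$ is \emph{asymmetric}, orient $X$ into a tournament by $x_a\to x_b$ iff $|N^*_j(x_a,x_b)|\le\gamma n$; a king vertex $x_1$ has out-degree at least $(|X|-1)/2\ge 1/(3\gamma)$. For each out-neighbour $x_b$ we have $\overline{N}_j(x_b)\subseteq\overline{N}_j(x_1)\cup S_b$ with $|S_b|\le\gamma n$, so pairwise intersections of the sets $\overline{N}_j(x_b)$ inherit structure from $\overline{N}_j(x_1)$ and the $S_b$, enabling the same Cauchy--Schwarz/light-vertex dichotomy as in the irregular case. Hence either a direct Cauchy--Schwarz contradiction arises or we produce an induced $C_{2k}$ via Proposition~\ref{building}, again contradicting $G\in F_Q(n,k)$. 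The principal obstacle throughout is the case $j\ne i$: irregularity or asymmetry alone does not directly conflict with the definition of $A^i_G$, so the contradiction must combine the ``only $j$-light'' consequence of $G\notin F^1_Q$ with Proposition~\ref{building}'s $C_{2k}$-construction machinery, and this is precisely where the hypothesis $k\ge 6$ is used to guarantee enough ``neutral'' indices in $\{0,\ldots,k-2\}\setminus\{i,j\}$.
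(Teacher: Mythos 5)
Your Ramsey set-up (colouring pairs of $A^i_G$ by an index and a type, using Proposition~\ref{asym-reg} and $M>R_{2k-2}(\lceil 1/\gamma\rceil)$) matches the paper, and your Cauchy--Schwarz argument in the irregular case with $j=i$ is fine. But the whole difficulty you perceive when $j\neq i$ is an artefact of missing Proposition~\ref{optimality}: since $Q$ is an optimal partition and $x\in A^i_G$, we have $\overline{d}^{\,j}_{G,Q}(x)\geq \overline{d}^{\,i}_{G,Q}(x)\geq \beta n$ for \emph{every} index $j$, so the lower bound $|\overline{N}_j(x)|\geq\beta n$ is available in all classes and no case distinction on $j$ is needed. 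Your workaround for $j\neq i$ is, moreover, genuinely gapped: a pair $x_1,x_2\in X''$ is $j$-irregular (that is the monochromatic colour), so $|\overline{N}_j(\{x_1,x_2\})|\leq\gamma n$ and could even be empty, whereas an induced $C_{2k}$ built as in Proposition~\ref{asym-reg} must place two vertices in every class, including $Q_j$; choosing the ``rich'' indices outside $\{i,j\}$ does not supply these, and you have no lower bound on $|N^*_j(x_1,x_2)|$ or $|N^*_j(x_2,x_1)|$ either. In addition, the proof of Proposition~\ref{asym-reg} needs the pair to be neither $\ell$-irregular nor $\ell$-asymmetric for \emph{all} $\ell$ (not merely non-$\ell$-identical), and your colouring gives no such information for $\ell\neq j$, so the sets $A_\ell,B_\ell$ required by Proposition~\ref{building} need not be large. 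Hence the claimed contradiction via an induced $C_{2k}$ is not established.

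The asymmetric case is also not proved. Orienting the tournament and taking a vertex of large out-degree only tells you that each out-neighbour's non-neighbourhood in $Q_j$ is almost contained in that of the chosen vertex; the assertion that this ``enables the same Cauchy--Schwarz/light-vertex dichotomy'' is not an argument, and it is unclear how a contradiction follows from it. The paper instead applies Redei's theorem to get a directed Hamilton path $x_1,\dots,x_{\lceil 1/\gamma\rceil}$ in the tournament and observes the telescoping estimate $|\overline{N}_j(x_{\ell+1})|\leq|\overline{N}_j(x_\ell)|-\gamma n$ along it, so that after $1/\gamma-1$ steps one gets $|\overline{N}_j(x_{\lceil 1/\gamma\rceil})|\leq |Q_j|-(1-\gamma)n<0$, a contradiction that needs no lower bound on non-degrees and no case distinction on $j$. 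To repair your write-up you should import Proposition~\ref{optimality} to kill the $j\neq i$ split in the irregular case, and replace the king-vertex sketch by a complete argument such as the Hamilton-path telescoping above.
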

\begin{proof}
Suppose for a contradiction that $|A^i_G| \geq M$. Consider an auxiliary complete graph $H_i$ with $V(H_i)= A^i_G$. We define a $(2k-2)$-edge-colouring $\mathcal{C}$ of $H_i$ with colours $\{a_0,b_0,a_1,b_1,\dots, a_{k-2},b_{k-2}\}$ as follows.
\begin{itemize}
\item For every $j\in \{0,1,\dots, k-2\}$, an edge $xy\in E(H)$ is coloured $a_j$ if $x,y$ are $j$-irregular and for every $j'\in \{0,1,\dots, k-2\}$ with $j'<j$, $x,y$ are not $j'$-irregular.
\item An edge $xy\in E(H)$ that was not coloured in the previous step is coloured $b_j$ if $x,y$ are $j$-asymmetric, and for every $j'\in \{0,1,\dots, k-2\}$ with $j'<j$, $x,y$ are not $j'$-asymmetric.
\end{itemize}
Note that by Proposition~\ref{asym-reg}, every edge is coloured by a unique colour in $\mathcal{C}$.

Now since $M > R_{2k-2}(\lceil 1/\gamma \rceil )$, $H_i$ contains a monochromatic clique of size at least $1/\gamma$. Let $X=\{x_1,x_2,\dots, x_{\lceil 1/\gamma \rceil}\}$ be the vertex set of such a monochromatic clique. We consider the following two cases.

\vspace{0.3cm}

\noindent {\bf Case 1:} \emph{$X$ has colour $a_j$ for some $j\in \{0,1,\dots, k-2\}$.}

\noindent In this case every pair of vertices in $X$ is $j$-irregular, by definition of $\mathcal{C}$. Let $X':=\{x_1,x_2,\dots, x_{\lceil \beta/2\gamma \rceil} \}$ and suppose $z,z'\in X'$. By the definition of $j$-irregularity, $|\overline{N}_{j}(z)\cap\overline{N}_{j}(z')|\leq \gamma n$. Note also that $|\overline{N}_{j}(z)| \geq \beta n$ by Proposition~\ref{optimality} and the fact that $z\in A^i_G$. So by the inclusion-exclusion principle,
\begin{align*}
2n/(k-1)&\geq n/(k-1) + \mu n \geq |Q_j| \geq \sum_{z\in X'} |\overline{N}_{j}(z)| - \sum_{\stackrel{z,z'\in X'}{z \ne z'}} |\overline{N}_{j}(z)\cap\overline{N}_{j}(z')|\\
&\geq \beta \lceil \beta/2\gamma \rceil n - \lceil \beta^2/(4\gamma^2)\rceil \gamma n \geq \beta^2n/5\gamma > 2n/(k-1),
\end{align*}
where the last inequality follows from the fact that $\gamma \ll \beta$. This is a contradiction.

\vspace{0.3cm}

\noindent {\bf Case 2:} \emph{$X$ has colour $b_j$ for some $j\in \{0,1,\dots, k-2\}$.}

\noindent In this case every pair of vertices in $X$ is $j$-asymmetric, by definition of $\mathcal{C}$. Suppose $\ell,\ell'\in [\lceil 1/\gamma \rceil]$ are distinct. By the definition of $j$-asymmetry, exactly one of the following holds.
\begin{enumerate}[(a)]
\item $|N^*_j(x_\ell,x_{\ell'})| \leq \gamma n$ and $|N^*_j(x_{\ell'},x_{\ell})| > 2\gamma n$.
\item $|N^*_j(x_{\ell'},x_{\ell})| \leq \gamma n$ and $|N^*_j(x_{\ell},x_{\ell'})| > 2\gamma n$.
\end{enumerate}
Consider the auxiliary tournament $T$ with $V(T)=X$ and $E(T)= \{ \overrightarrow{x_\ell x_{\ell'}} : \ell,\ell' \text{ satisfy (a)}\}$. By Redei's theorem every tournament contains a directed Hamilton path. So, by relabelling the indices if necessary, we may assume that $\overrightarrow{x_{\ell}x_{\ell+1}}\in E(T)$ for every $\ell\in [\lceil 1/\gamma \rceil-1]$. Thus for every $\ell\in [\lceil 1/\gamma \rceil-1]$,
\begin{align*}
|\overline{N}_{j}(x_{\ell+1})| = |\left(\overline{N}_{j}(x_{\ell}) \setminus N^*_j(x_{\ell+1},x_\ell) \right) \cup N^*_{j}(x_\ell,x_{\ell+1})|\leq |\overline{N}_{j}(x_{\ell})| -2\gamma n + \gamma n \leq |\overline{N}_{j}(x_{\ell})| -\gamma n.
\end{align*}
Hence,
$$|\overline{N}_{j}(x_{\lceil 1/\gamma \rceil})| \leq  |\overline{N}_{j}(x_{1})| - \left( \frac{1}{\gamma}-1\right) \cdot \gamma n \leq  |Q_j| - (1-\gamma)n < 0,$$
which is a contradiction.

\vspace{0.3cm}

\noindent This covers all cases, and hence completes the proof.
\end{proof}

Suppose $G\in F^2_Q$ and $i\in \{0,1,\dots, k-2\}$. By Proposition~\ref{Q_i setminus A^i_G}, $\overline{G}[Q_i \setminus A^i_G]$ is a disjoint union of stars and triangles. Let $\mathcal{S}$ be the set of components of $\overline{G}[Q_i \setminus A^i_G]$ with the largest number of vertices. Let $S^{\diamond}$ be the component in $\mathcal{S}$ whose centre $c$ has the smallest label. Define $Y_i=Y_i(G,Q)$ to be the set of all isolated vertices in $\overline{G}[Q_i\setminus A^i_G]$ together with all vertices in $V(S^{\diamond})\backslash \{c\}$.

\begin{lemma}\label{size of Y_i}
Let $G\in F^2_Q$ and $i\in \{0,1,\dots, k-2\}$. Then $|Y_i| \geq 10 n/\log n$.
\end{lemma}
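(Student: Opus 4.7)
The plan is to argue by contradiction: assume $|Y_i|<10n/\log n$ and derive an induced copy of $C_{2k}$ in $G$, contradicting $G\in F_Q(n,k)$. First, let $q$ be the number of isolated vertices in $\overline G[Q_i\setminus A^i_G]$ and $s:=|V(S^{\diamond})|$; by definition of $Y_i$, $|Y_i|\geq q$ and (when $s\geq 2$) $|Y_i|\geq s-1$, so the assumption forces $q\leq 10n/\log n$ and $s\leq 10n/\log n+1$. If $s=1$ then every vertex of $Q_i\setminus A^i_G$ is isolated in $\overline G[Q_i\setminus A^i_G]$, giving $|Y_i|=|Q_i\setminus A^i_G|\geq n/(k-1)-\mu n-M > 10n/\log n$ by $({\rm F}2)_\mu$ and Lemma~\ref{size of A^i_G}, a contradiction. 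So $s\geq 2$, and since every component of $\overline G[Q_i\setminus A^i_G]$ has at most $s$ vertices, the number $p$ of non-trivial components other than $S^{\diamond}$ satisfies
\[
p\geq \frac{|Q_i\setminus A^i_G|-q-s}{s}\geq \frac{n/k-30n/\log n}{10n/\log n}\geq \frac{\log n}{20k}.
\]
Picking one edge of $\overline G$ from each of these components produces $p$ pairwise-disjoint non-edges $\{u_r,v_r\}\subseteq Q_i\setminus A^i_G$ of $G$ for $r\in[p]$.

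Next I will use a single such non-edge $(u,v):=(u_1,v_1)$, together with Proposition~\ref{building}, to construct an induced $C_{2k}$ in $G$. Since $u,v\in Q_i\setminus A^i_G$, we have $\min(d^i(u),\overline d^i(u))<\beta n$ and $\min(d^i(v),\overline d^i(v))<\beta n$, so both $u$ and $v$ are $i$-light via (A1) or (A2); since $G\notin F^1_Q$, neither is $j$-light for any $j\neq i$. Hence for every $j\in\{0,\ldots,k-2\}\setminus\{i\}$ we have the lower bounds $d^j(u),\overline d^j(u),d^j(v),\overline d^j(v)>\alpha n$, and the negation of (A3) with $z=v$ (resp.\ $z=u$) gives $|N^*_j(u,v)|+|N^*_j(v,u)|>\alpha n$.

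Placing $u,v$ at cyclic distance $3$ on a prospective induced $C_{2k}$, the remaining $2k-2$ cycle vertices form a graph $T\cong P_2+P_{2k-4}$, which admits a perfect matching of $k-1$ edges. Assigning these matching edges to the $k-1$ classes $\{Q_j:j\neq i\}$, the two endpoint vertices at each of $u$ and $v$ must lie in sets of the form $N^*_j(u,v)$ and $N^*_{j'}(v,u)$, and the interior path vertices in sets of the form $\overline N_{j''}(\{u,v\})$. I will verify that an index assignment exists for which every set $A_j,B_j$ required by Proposition~\ref{building} has size $\geq\mu^{1/2}n$: the degree and neighborhood-sum inequalities above, combined with a pigeonhole argument over $\{0,\ldots,k-2\}\setminus\{i\}$ and an averaging argument over the $p\geq \log n/(20k)$ disjoint non-edges $\{u_r,v_r\}$, will guarantee that some pair $\{u_r,v_r\}$ together with a suitable choice of indices meets these requirements. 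Proposition~\ref{building} then yields an induced copy of $T$ which, joined to $u$ and $v$, forms an induced $C_{2k}$ in $G$.

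The main obstacle is controlling the sizes of the common non-neighborhoods $\overline N_{j''}(\{u,v\})$ for enough indices $j''$, since the individual bounds $\overline d^{j''}(u),\overline d^{j''}(v)>\alpha n$ do not force $|\overline N_{j''}(\{u,v\})|\geq 2\mu^{1/2}n$ via inclusion-exclusion alone (as $|Q_{j''}|$ may exceed $2\alpha n$). This is overcome by the averaging argument over the abundance of disjoint non-edges, leveraging the quasirandomness $({\rm F}1)_\mu$ between $Q_i$ and $Q_{j''}$ to guarantee that a typical pair $\{u_r,v_r\}$ has large common non-neighborhood in a constant fraction of the classes $Q_{j''}$, which is more than sufficient when $k\geq 6$.
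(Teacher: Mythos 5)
There is a genuine gap, and in fact the approach cannot be repaired as proposed. Your plan never uses the defining feature of $F^2_Q$, namely that some $A^{i'}_G$ is non-empty; you only use that $u,v\in Q_i\setminus A^i_G$ span an internal non-edge, that they are not $j$-light for $j\neq i$, and quasirandomness. These facts alone are consistent with $|Y_i|$ being tiny in an induced-$C_{2k}$-free graph: take a $k$-template in which $\overline{G}[Q_i]$ is a perfect matching (a disjoint union of $K_2$-stars, so $Y_i$ has size $1$) and the crossing bipartite graphs are quasirandom. Hence no argument built only from the ingredients you invoke can produce an induced $C_{2k}$. Concretely, your construction breaks in two places. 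First, a counting error: the matching of $T\cong P_2+P_{2k-4}$ has $k-1$ edges, but there are only $k-2$ classes $Q_j$ with $j\neq i$, so Proposition~\ref{building} forces one matched pair of cycle vertices into $Q_i$ itself; these two vertices are consecutive on the cycle, so at least one must be a non-neighbour of $u$ (and of $v$) inside $Q_i$ -- but $u$ may have no non-neighbour in $Q_i$ other than $v$ (as in the matching example), so no placement exists. Second, the gap you flag -- lower-bounding $|\overline{N}_{j}(\{u,v\})|$ -- cannot be closed by averaging over the $p=O(\log n)$ disjoint non-edges together with $({\rm F}1)_{\mu}$: that property only constrains pairs of sets $U_i\subseteq Q_i$, $U_j\subseteq Q_j$ with $|U_i||U_j|\geq \mu^2 n^2$, i.e.\ of linear size, and says nothing about the neighbourhood intersections of $O(\log n)$ prescribed vertices.

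For comparison, the paper's proof does not construct an induced $C_{2k}$ at all. It uses $G\in F^2_Q$ to pick $x\in A^{i'}_G$ for some $i'$; by Proposition~\ref{optimality} such an $x$ has at least $\beta n$ non-neighbours in \emph{every} class, and by Lemma~\ref{size of A^i_G} it has more than $2s$ non-neighbours in $Q_i\setminus A^i_G$. If $|Y_i|<s$, these non-neighbours meet two distinct non-trivial components of $\overline{G}[Q_i\setminus A^i_G]$, giving vertices $y,y',y''$ such that $\{x,y,y',y''\}$ is a $(k,x,i,\beta^2)$-configuration (the small components guarantee (C4), and non-lightness of $x$ outside one class gives (C2)--(C3)). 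This contradicts $G\notin F^1_Q$; the conversion of configurations into a quantitative constraint was already carried out in the counting argument of Lemma~\ref{F^1}, which deliberately works with non-neighbourhoods of the single vertex $x$ and thereby avoids exactly the common-non-neighbourhood problem your construction runs into.
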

\begin{proof}
Define $s:=\lceil 10 n/\log n \rceil$. Suppose for a contradiction that $|Y_i|< s$. Since $G \in F^2_Q$, there exists an index $i'\in \{0,1,\dots, k-2\}$ such that $|A^{i'}_G|>0$. Let $x\in A^{i'}_G$. The definition of $A^{i'}_G$ together with Proposition~\ref{optimality} implies that $|\overline{N}_{Q_{j}}(x)|\geq \beta n$ for every $j\in \{0,1,\dots, k-2\}$. This together with Lemma~\ref{size of A^i_G} implies that $|\overline{N}_{Q_i}(x)\setminus A^i_G| \geq \beta n - M > 2s$. Also, since $|Y_i|<s$, at most $s$ components in $\overline{G}[Q_i\setminus A^i_G]$ are isolated vertices and every component in $\overline{G}[Q_i\setminus A^i_G]$ has order at most $s$. Thus there are at least two non-trivial components $S,S'$ of $\overline{G}[Q_i\setminus A^i_G]$ that each contain a non-neighbour of $x$.

Since $S$ is a non-trivial component of $\overline{G}[Q_i\setminus A^i_G]$ there exist vertices $y,y'\in S$ such that $xy,yy'\notin E(G[Q_i])$. Let $y''\in S'$ be such that $xy''\notin E(G[Q_i])$. Since $y''$ belongs to a different component of $\overline{G}[Q_i\setminus A^i_G]$ to $y$ and $y'$, it follows that $yy'',y'y''\in E(G[Q_i])$. Thus,
\begin{equation}\label{linear forest}
E(G[\{x,y,y',y''\}])\in\{ \{yy'',y'y''\}, \{xy',yy'',y'y''\} \}.
\end{equation}

\vspace{0.3cm}

\noindent {\bf Claim:} \emph{$\{x,y,y',y''\}$ is a $(k,x,i,\beta^2)$-configuration.}

\noindent Indeed, by (\ref{linear forest}), $G[\{x,y,y',y''\}]$ is a linear forest and so $\{x,y,y',y''\}$ satisfies (C1). As observed above, $\overline{d}^j_{G,Q}(x)\geq \beta n > 13\cdot 6^k \beta^2 n$ for every $j\in \{0,1,\dots, k-2\}$, and so $\{x,y,y',y''\}$ satisfies (C2). Since $G\notin F^1_Q$, there do not exist distinct $j,j'\in \{0,1,\dots, k-2\}$ such that $x$ is both $j$-light and $j'$-light. So there exists $j\in \{0,1,\dots, k-2\}$ such that for every $j'\in \{0,1,\dots, k-2\}$ with $j'\ne j$, $d^{j'}_{G,Q}(x)> \alpha n > 13\cdot 6^k \beta^2 n$, and so $\{x,y,y',y''\}$ satisfies (C3). Since $S,S'$ each contain at most $s$ vertices, $y,y',y''$ each have at most $s$ non-neighbours in $G[Q_i\setminus A^i_G]$. This together with Lemma~\ref{size of A^i_G} implies that $y,y',y''$ each have at most $s+M\leq \beta^4 n$ non-neighbours in $G[Q_i]$, and so $\{x,y,y',y''\}$ satisfies (C4). Hence $\{x,y,y',y''\}$ is a $(k,x,i,\beta^2)$-configuration, as required.

\vspace{0.3cm}

\noindent The above claim contradicts the fact that $G\notin F^1_Q$, and hence completes the proof.
\end{proof}

Lemma~\ref{size of Y_i} guarantees a large set of vertices in each class $Q_i$ (namely $Y_i$) with an extremely restricted (non-)neighbourhood. This is the key idea in our estimation of $|F^2_Q|$.

\begin{lemma}\label{F^2_Q}
$ |F^2_Q| \leq C 2^{-n}f_k(n_k) 2^{t_{k-1}(n)}$.
\end{lemma}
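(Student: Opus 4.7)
The plan is to bound $|F^2_Q|$ by constructing each $G \in F^2_Q$ from a smaller induced-$C_{2k}$-free graph (to which the inductive hypothesis applies) together with a small, highly structured extension. The strategy combines two observations: for any $i^*$ with $A^{i^*}_G \neq \emptyset$, the set $A^{i^*}_G$ has bounded size by Lemma~\ref{size of A^i_G}, and the set $Y := Y_{i^*}$ of ``regular'' vertices in $Q_{i^*}$ is large by Lemma~\ref{size of Y_i}, while Proposition~\ref{Q_i setminus A^i_G} pins down the structure of $\overline{G}[Q_{i^*} \setminus A^{i^*}_G]$ as a disjoint union of stars and triangles.

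First I would specify $G$ step by step: choose $i^* \in \{0,1,\dots,k-2\}$ with $A^{i^*}_G \neq \emptyset$ (at most $k$ options), choose $A^{i^*}_G$ (at most $n^M$ options by Lemma~\ref{size of A^i_G}), and choose the centre $c$ of the dominant star/triangle $S^\diamond$ of $\overline{G}[Q_{i^*}\setminus A^{i^*}_G]$ together with the set $Y = Y_{i^*}$. I would then count the graphs $G - Y$ on $n - |Y|$ vertices: by Corollary~\ref{new partition} and \eqref{size mathcal{P}} every optimal ordered $(k-1)$-partition of $G - Y$ lies in a set of at most $2^{\mu^{1/2} n}$ elements, and the inductive hypothesis \eqref{induct} bounds the number of choices for each by $6C \cdot 2^{6(\log n)^2} \cdot f_k((n-|Y|)_k) \cdot 2^{t_{k-1}(n-|Y|)}$.

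Next I would count the edges of $G$ incident to $Y$. By Proposition~\ref{Q_i setminus A^i_G} and the definition of $Y_{i^*}$, each $v \in Y$ has at most $M + 2$ non-neighbours inside $Q_{i^*}$ (all contained in $A^{i^*}_G \cup \{c\}$ plus possibly one additional triangle vertex), so internal edges incident to $Y$ contribute only $2^{O(|Y|)}$ choices; crossing edges from $Y$ to the other classes contribute at most $2^{|Y|(n - |Q_{i^*}|)}$. Applying Proposition~\ref{omitted proposition}(ii) combines $t_{k-1}(n-|Y|)$ and $|Y|(n-|Q_{i^*}|)$ into $t_{k-1}(n)$ modulo an overhead, and the crucial gain comes from Lemma~\ref{f-estimate-2}:
\[
\frac{f_k(n_k)}{f_k((n-|Y|)_k)} \;\geq\; \left(\frac{|Y|}{k-1}\right)^{|Y|/(2(k-1))},
\]
which by the lower bound $|Y| \geq 10n/\log n$ from Lemma~\ref{size of Y_i} and the hierarchy \eqref{eqn: hierarchy} yields at least the desired factor $2^{-n}$.

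The hard part will be the quantitative bookkeeping: making sure that the overhead from Proposition~\ref{omitted proposition}(ii) (which contributes a term of the form $t_{k-1}(|Y|) + O(\mu n|Y|)$), together with the $2^{\mu^{1/2} n}$ partition-count factor, the $2^{O(|Y|)}$ internal-edge factor, and the $2^{6(\log n)^2}$ factor from \eqref{induct}, is absorbed by the $f_k$-saving above. The choice of parameters in \eqref{eqn: hierarchy} (in particular the requirement that $\mu$ is much smaller than $\beta$, so that $\mu n|Y|$ is controllable) is essential for this cancellation; this is where the delicacy of the argument lies.
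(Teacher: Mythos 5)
There is a genuine quantitative gap. Your plan removes the large set $Y=Y_{i^*}$ from a \emph{single} class and applies the inductive bound \eqref{induct} to $G-Y$. That bound charges $2^{t_{k-1}(n-|Y|)}$, which is the crossing count of a \emph{balanced} partition of $n-|Y|$ vertices, whereas the actual partition $Q-Y$ is unbalanced by $|Y|$ in class $i^*$. When you then add back the at most $2^{|Y|(n-|Q_{i^*}|)}$ choices of crossing edges at $Y$ and invoke Proposition~\ref{omitted proposition}(ii), you are left (as you note) with an overhead of order $t_{k-1}(|Y|)+\mu n|Y|$. With $|Y|\geq 10n/\log n$ this is $\Theta\bigl(n^2/(\log n)^2\bigr)$, i.e.\ superlinear in $n$, while the saving from Lemma~\ref{f-estimate-2} is only $\tfrac{|Y|}{2(k-1)}\log\tfrac{|Y|}{k-1}\approx \tfrac{5n}{k-1}$. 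So the overhead cannot be absorbed; in fact even ignoring it, $\tfrac{5n}{k-1}\leq n$ for $k\geq 6$, so your claimed $f_k$-gain does not even produce the bare factor $2^{-n}$ for $k\geq 7$. No choice of hierarchy in \eqref{eqn: hierarchy} fixes this, because $\mu$ is fixed while $n\to\infty$, and the $t_{k-1}(|Y|)$ term is intrinsic to using the balance-insensitive inductive bound on a graph whose natural partition has one short class.

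The paper's proof circumvents exactly this imbalance penalty: Lemma~\ref{size of Y_i} gives $|Y_\ell|\geq s:=\lceil 10n/\log n\rceil$ for \emph{every} index $\ell$ (not only those with $A^\ell_G\neq\emptyset$), so one removes a set $S_\ell\subseteq Y_\ell$ of size exactly $s$ from \emph{each} class. The residual partition stays balanced, the crossing-slot count $(k-2)sn-\binom{k-1}{2}s^2$ telescopes against $t_{k-1}(n-(k-1)s)$ via Proposition~\ref{omitted proposition}(ii) with only an $O(k^2 s)=o(n)$ error (the $t_{k-1}((k-1)s)$ term cancels exactly against the subtracted $\binom{k-1}{2}s^2$), and the ratio $f_k(n_k)/f_k(\lceil n/(k-1)-s\rceil)\geq s^{s/2}\geq 2^{4n}$ then comfortably beats the $2^n\cdot 2^n\cdot 2^{\mu^{1/2}n}$ costs and yields $2^{-n}$. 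If you want to salvage a single-class deletion, you would need an inductive hypothesis sensitive to the imbalance of the partition (e.g.\ bounding $|F_{Q'}(n',k)|$ in terms of the actual crossing count of $Q'$), which Lemma~\ref{induction conclusion} does not provide.
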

\begin{proof}
Define $s:=\lceil 10 n/\log n \rceil$. Since by Lemma~\ref{size of Y_i} $|Y_i(G,Q)|\geq s$ for every graph $G\in F^2_Q$, any graph $G\in F^2_Q$ can be constructed as follows.
\begin{itemize}
\item First we choose sets $S_{\ell}\subseteq Q_{\ell}$ such that $|S_{\ell}|=s$, for every $\ell\in \{0,1,\dots, k-2\}$. Let $b_1$ denote the number of such choices.
\item Next we choose the graph $G'$ on $[n]\backslash \bigcup_{\ell\in \{0,1,\dots, k-2\}} S_{\ell}$ such that $G[[n]\backslash \bigcup_{\ell\in \{0,1,\dots, k-2\}} S_{\ell}]=G'$. Let $b_2$ denote the number of possibilities for $G'$.
\item Next we choose the set $E'$ of internal edges of $G$ that are incident to at least one vertex in $\bigcup_{\ell\in \{0,1,\dots, k-2\}} S_{\ell}$ in such a way that the resulting graph $G$ will satisfy $S_{\ell}\subseteq Y_{\ell}(G,Q)$ for every $\ell\in \{0,1,\dots, k-2\}$. Let $b_3$ denote the number of possibilities for $E'$.
\item Finally we choose the set $E''$ of crossing edges of $G$ that are incident to at least one vertex in $\bigcup_{\ell\in \{0,1,\dots, k-2\}} S_{\ell}$. Let $b_4$ denote the number of possibilities for $E''$.
\end{itemize}
Hence,
\begin{equation}\label{eqn: F2 count}
|F^2_Q|\leq b_1\cdot b_2\cdot b_3\cdot b_4.
\end{equation}
The following series of claims will give upper bounds for the quantities $b_1,\dots, b_4$. The proof of Claim~2 is almost identical to that of Claim~2 in Lemma~\ref{F^1}; we give proofs of the others.

\vspace{0.3cm}
\noindent {\bf Claim 1:} $b_1\leq 2^n$.

\noindent Indeed,
$$b_1 \leq {\binom{n}{\left\lceil \frac{10n}{\log n}\right\rceil}}^{k-1} \leq \left( \left(\frac{e\log n}{10}\right)^{\frac{10n}{\log n}}\right)^{k-1} \leq 2^n,$$
as required.

\vspace{0.3cm}
\noindent {\bf Claim 2:} $b_2\leq C 2^{\mu^{1/2}n} f_k(\lceil  n/(k-1)-s  \rceil ) 2^{t_{k-1}(n-(k-1)s)}$.

\COMMENT{Indeed, note that for every graph $\tilde{G}\in F^2_Q$, Corollary~\ref{new partition} together with (\ref{size mathcal{P}}) implies that every optimal ordered $(k-1)$-partition of $\tilde{G}[[n]\backslash \bigcup_{\ell\in \{0,1,\dots, k-2\}} S_{\ell}]$ is contained in some set $\mathcal{P}$ of size at most $2^{\mu n}$. Since $G[[n]\backslash \bigcup_{\ell\in \{0,1,\dots, k-2\}} S_{\ell}]$ is clearly induced-$C_{2k}$-free, this together with (\ref{induct}) implies that
\begin{align*}
b_2 &\leq \sum_{Q'\in \mathcal{P}} |F_{Q'}(n- (k-1)s,k)| \leq 6C 2^{\mu n}2^{6(\log n)^2} f_k(\lceil n/(k-1)-s \rceil ) 2^{t_{k-1}(n-(k-1)s)}\\
&\leq C 2^{\mu^{1/2}n} f_k(\lceil  n/(k-1)-s  \rceil ) 2^{t_{k-1}(n-(k-1)s)},
\end{align*}
as required.}

\vspace{0.3cm}
\noindent {\bf Claim 3:} $b_3\leq 2^n$.

\noindent Indeed, for every graph $G^*\in F^2_Q$ for which $S_{\ell}\subseteq Y_{\ell}(G^*,Q)$ for every $\ell\in \{0,1,\dots, k-2\}$, let $G^*_{B, \ell}:= \overline{G^*}[Q_{\ell}\backslash A^{\ell}_{G^*}]$. Then each $S_{\ell}$ consists of isolated vertices in $G^*_{B, \ell}$ as well as non-centre vertices of a single component $\tilde{C}$ of $G^*_{B, \ell}$. (Note that $\tilde{C}$ is a star or triangle in $G^*_{B, \ell}$, with some centre $u\in Q_{\ell}\setminus (A^{\ell}_{G^*}\cup S_{\ell})$.) By Lemma~\ref{size of A^i_G}, we also have that $|A^{\ell}_{G^*}|\leq M$.

Hence $b_3\leq \prod_{\ell=0}^{k-2} \prod_{j=1}^3 h(\ell,j)$, where the quantities $h(\ell, j)$ are defined as follows. Let $h(\ell,1)$ denote the number of ways to choose a set $\tilde{A}^{\ell}\subseteq Q_{\ell}\backslash S_{\ell}$ of size at most $M$. (In what follows $\tilde{A}^{\ell}$ will play the role of $A^i_G$.) Then $h(\ell,1)\leq n^M$. Given such a set $\tilde{A}^{\ell}$, let $h(\ell,2)$ denote the number of ways to choose $\tilde{C}$. Then $h(\ell, 2)\leq n 2^{|S_{\ell}|}+n^3\leq n 2^{s+1}$. (Indeed, if $\tilde{C}$ is a star we have at most $n$ choices for the centre $u$, and for every vertex $v\in S_{\ell}$ we can choose whether $v$ is adjacent to $u$ or not; if $\tilde{C}$ is a triangle we have at most $n^3$ choices for its vertices.)
%let $h(\ell,2)$ denote the number of ways to choose two vertices $u,u'$ from $Q_{\ell}\backslash (\tilde{A}^{\ell}\cup S_{\ell})$. (In what follows $u$ will play the role of the centre of either a star or triangle in $\overline{G}[Q_{\ell}\setminus A^{\ell}_G]$; in the latter case $u'$ will play the role of another vertex of this triangle.) Then $h(\ell,2)\leq n^2$. Let $h(\ell,3)$ denote the number of sets of edges $E_{\ell}$ that have one endpoint in $S_{\ell}$ and the other in $Q_{\ell}\backslash \tilde{A}^{\ell}$ such that either every edge is of the form $uv$ for some $v\in S_{\ell}$, or $E_{\ell}=\{uv, uv', vv'\}$ for some $v,v'\in S_{\ell}$, or $E_{\ell}=\{uv, u'v\}$ for some $v\in S_{\ell}$.
%Then $h(\ell,3)$ is simply the number of ways to choose a subset $S_{\ell}'\subseteq S_{\ell}$ plus the number of ways to choose a subset $S_{\ell}''\subseteq S_{\ell}$ of size one or two.
%Thus $h(\ell,3)\leq 2^{|S_{\ell}|}+\binom{|S|}{2}+|S|\leq 2^{s+1}$.
Given a set $\tilde{A}^{\ell}$ as above, let $h(\ell,3)$ denote the number of possible sets of edges between $S_{\ell}$ and $\tilde{A}^{\ell}$. Then $h(\ell,3)\leq 2^{|S_{\ell}||\tilde{A}^{\ell}|}\leq 2^{sM}$. Hence
$$b_3\leq \prod\limits_{\ell=0}^{k-2} \prod\limits_{j=1}^3 h(\ell,j)\leq (n^M\cdot n 2^{s+1}\cdot 2^{sM})^{k-1}\leq 2^n,$$
as required.

\vspace{0.3cm}
\noindent {\bf Claim 4:} $b_4\leq 2^{(k-2)sn-\binom{k-1}{2}s^2}$.

\noindent Indeed, note that, for a fixed index $\ell\in \{0,1,\dots, k-2\}$, the number $h_{\ell}$ of possible crossing edges in $G$ that are incident to a vertex in $S_{\ell}$ is at most $s(n - |Q_\ell|)$. Also, the number of possible crossing edges in $G$ that are incident to two vertices in $\bigcup_{\ell\in \{0,1,\dots, k-2\}} S_{\ell}$ is exactly $\binom{k-1}{2}s^2$. Hence,
$$b_4 \leq 2^{\sum_{\ell=0}^{k-2} h_{\ell}} 2^{-\binom{k-1}{2}s^2}\leq 2^{(k-2)sn-\binom{k-1}{2}s^2},$$
as required.

\vspace{0.3cm}
\noindent Note that $t_{k-1}(s(k-1)) = \binom{k-1}{2}s^2$ and that by Lemma \ref{f-estimate-2}, $f_k(n_k) \geq s^{s/2}f_k(\lceil  n/(k-1)-s  \rceil ) \geq 2^{4n} f_k(\lceil  n/(k-1)-s  \rceil )$. These observations together with (\ref{eqn: F2 count}), Claims~1--4 and Proposition~\ref{omitted proposition}(ii) imply that
\begin{align*}
|F^2_Q|&\leq 2^n \cdot C 2^{\mu^{1/2}n} f_k(\lceil  n/(k-1)-s  \rceil ) 2^{t_{k-1}(n-(k-1)s)} \cdot 2^n \cdot 2^{(k-2)sn-\binom{k-1}{2}s^2}\\
&\leq C 2^{3n}2^{-4n}f_k(n_k)2^{t_{k-1}(n-(k-1)s)+(k-2)sn-s(k-1)(k-2)-t_{k-1}(s(k-1))}\\
&\leq C 2^{-n} f_k(n_k) 2^{t_{k-1}(n)},
\end{align*}
as required.
\end{proof}

\section{Estimation of $|F^3_Q|$}\label{sec: estimate 3}

The information we have gained so far allows us to easily deduce that every $G\in F^3_Q$ is extremely close to being a $k$-template (see Proposition~\ref{beta}). One advantage of this is that it allows us to use more precise estimates when applying induction (see Corollary~\ref{new partition 2}).

\begin{proposition}\label{beta}
Let $G\in F^3_Q$ and $i\in \{0,1,\dots, k-2\}$. Then the following hold.
\begin{enumerate}[{\rm (i)}]
\item $\overline{G}[Q_i]$ is a disjoint union of stars and triangles.
\item $G$ contains at most $n$ internal non-edges.
\item Every vertex $x\in Q_i$ satisfies $\overline{d}^i_{G,Q}(x)< \beta n$.
\end{enumerate}
\end{proposition}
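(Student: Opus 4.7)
The plan is to take (i) essentially for free from Proposition~\ref{Q_i setminus A^i_G}, after first observing that $G\notin F^2_Q$ forces $A^i_G=\emptyset$ for every $i$: any vertex $y\in Q_i$ with both $d^i_{G,Q}(y),\overline{d}^i_{G,Q}(y)\geq\beta n$ would by definition place $G$ in $F^2_Q$, contradicting $G\in F^3_Q$. Since $G\in F^3_Q\subseteq F^2_Q\cup F^3_Q$, Proposition~\ref{Q_i setminus A^i_G} then gives that $\overline{G}[Q_i]=\overline{G}[Q_i\setminus A^i_G]$ is a disjoint union of stars and triangles, which is exactly (i).

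Part (ii) follows immediately from (i) by a component-by-component edge count: a star on $s$ vertices contributes $s-1$ edges and a triangle contributes $3$ edges on $3$ vertices, so any disjoint union of stars and triangles has at most as many edges as vertices. Hence $|E(\overline{G}[Q_i])|\leq |Q_i|$, and summing over $i\in\{0,1,\dots,k-2\}$ gives $\sum_i|E(\overline{G}[Q_i])|\leq\sum_i|Q_i|=n$.

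Part (iii) is the main substance, and I will attack it by contradiction. Suppose some $x\in Q_i$ has $\overline{d}^i_{G,Q}(x)\geq\beta n$. Since $A^i_G=\emptyset$ we must have $d^i_{G,Q}(x)<\beta n$, and the identity $d^i_{G,Q}(x)+\overline{d}^i_{G,Q}(x)=|Q_i|-1$ then forces $\overline{d}^i_{G,Q}(x)>|Q_i|-1-\beta n$. Proposition~\ref{optimality} (applicable since $Q$ is optimal for $G\in F_Q(n,k,\eta,\mu)$) gives $\overline{d}^j_{G,Q}(x)\geq \overline{d}^i_{G,Q}(x)$ for every $j\neq i$, and combining with property $(F2)_\mu$, which yields $|Q_j|\leq |Q_i|+2\mu n$, we obtain
$$d^j_{G,Q}(x)\;=\;|Q_j|-\overline{d}^j_{G,Q}(x)\;\leq\;2\mu n+1+\beta n\;<\;\alpha n,$$
where the final inequality uses the hierarchy $\mu\ll\beta\ll\alpha$ from~(\ref{eqn: hierarchy}) and $n\geq n_0$. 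Thus condition (A1) makes $x$ $j$-light for every $j\neq i$, and $x$ is also $i$-light because $d^i_{G,Q}(x)<\beta n<\alpha n$. Since $k\geq 6$ there are at least two distinct indices at which $x$ is light, so by (F1)(ii) the graph $G$ lies in $F^1_Q$, contradicting $G\in F^3_Q$.

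There is no serious obstacle: the entire proof is a bookkeeping exercise once the class definitions, Proposition~\ref{Q_i setminus A^i_G}, and Proposition~\ref{optimality} are in hand. The conceptual punchline for (iii) is that the combination $\overline{d}^i_{G,Q}(x)\geq\beta n$ and $A^i_G=\emptyset$ forces $\overline{d}^i_{G,Q}(x)$ to be almost the whole class; optimality then propagates this largeness to every other class, collapsing $x$'s neighbourhood degree in every $Q_j$ simultaneously and making $x$ light in every class at once, which is exactly what (F1)(ii) prohibits.
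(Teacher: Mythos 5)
Your proof is correct and follows essentially the same route as the paper's: (i) via $A^i_G=\emptyset$ and Proposition~\ref{Q_i setminus A^i_G}, (ii) by the trivial edge count, and (iii) by the same chain (optimality from Proposition~\ref{optimality}, class sizes from $({\rm F}2)_{\mu}$, and lightness in two classes contradicting $G\notin F^1_Q$), merely phrased as a direct contradiction to $\overline{d}^i_{G,Q}(x)\geq\beta n$ rather than first establishing $d^i_{G,Q}(x)\geq\beta n$ as the paper does.
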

\begin{proof}
\begin{enumerate}[{\rm (i)}]
\item Since $G\notin F^2_Q$, every vertex $x \in Q_i$ satisfies $\min \{ d^i_{G,Q}(x),\overline{d}^i_{G,Q}(x)\} < \beta n$. Thus $A^i_G=\emptyset$, and so by Proposition~\ref{Q_i setminus A^i_G}, $\overline{G}[Q_i]$ is a disjoint union of stars and triangles.
\item This follows immediately from (i).
\item Let $x\in Q_i$. Let us first show that $d^i_{G,Q}(x) \geq \beta n$. Suppose not. Then $\overline{d}^i_{G,Q}(x) = |Q_i|-d^i_{G,Q}(x)-1 > |Q_i| -\beta n -1 \geq n/(k-1)-\mu n - \beta n -1$. Thus for every $j\in \{0,1,\dots, k-2\}$ with $j\ne i$, Proposition~\ref{optimality} implies that
\begin{align*}
d^j_{G,Q}(x) &= |Q_j| - \overline{d}^j_{G,Q}(x) \leq |Q_j| - \overline{d}^i_{G,Q}(x) < \left(\frac{n}{k-1} + \mu n \right) - \left(\frac{n}{k-1}-\mu n - \beta n -1 \right)\\
&= \beta n + 2\mu n +1 < \alpha n,
\end{align*}
where the last inequality follows from the fact that $\mu, \beta \ll \alpha$. Thus $x$ is both $i$-light and $j$-light, which contradicts the fact that $G\notin F^1_Q$. Thus $d^i_{G,Q}(x) \geq \beta n$. This together with the fact (observed in the proof of (i), above) that $A^i_G=\emptyset$ implies that $\overline{d}^i_{G,Q}(x) < \beta n$, as required.
\end{enumerate}
\end{proof}

Recall the definition of property $({\rm F}1)_{\nu}$ in Section~\ref{sec: set-up}. We define $T^*_Q(n,k) \subseteq F^3_Q$ to be the set of all (labelled) graphs in $F^3_Q$ that satisfy property $({\rm F}1)_{(40n\log n)^{1/2}/n}$ with respect to $Q$. Proposition~\ref{beta}(ii) together with Lemma~\ref{pre regular lemma}(i) applied with $(40n\log n)^{1/2}/n,n$ playing the roles of $\nu, m$ respectively implies that

\begin{equation}\label{strong regular}
|F^3_Q\setminus T^*_Q(n,k)| \leq 2^{t_{k-1}(n) - n\log n/5 }.
\end{equation}

So (\ref{strong regular}) allows us to restrict our attention to the class $T^*_Q(n,k)$. In particular, this allows us to apply property $({\rm F}1)_{\nu}$ to much smaller vertex sets than in the preceding sections. This in turn gives us a much better bound on the number of partitions that may arise after deleting a small number of vertices. More precisely, Lemma~\ref{pre new partition} applied with $(40n\log n)^{1/2}/n,n$ playing the roles of $\nu, m$ respectively implies the following result. Recall that $\mathcal{P}(Q,s)$ was defined before (\ref{size mathcal{P}}).

\begin{corollary}\label{new partition 2}
Let $S\subseteq [n]$ with $|S|\leq n/k^2$. Then for every $G\in T^*_Q(n,k)$, every optimal ordered $(k-1)$-partition of $G - S$ is an element of $\mathcal{P}(Q-S,40 k^4 \log n)$.
\end{corollary}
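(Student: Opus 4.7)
The statement is an immediate corollary of Lemma~\ref{pre new partition}, so my plan is simply to verify that the hypotheses of Lemma~\ref{pre new partition} hold for $G \in T^*_Q(n,k)$ when we take $\nu := (40n\log n)^{1/2}/n$ and $m := n$ as the parameters playing the roles of $\nu$ and $m$ in that lemma.

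First I would check the parametric conditions. We have $\nu^2 = 40(\log n)/n$ and $4m/n^2 = 4/n$, so $\nu^2 > 4m/n^2$ holds for all sufficiently large $n$ (specifically once $\log n > 1/10$). Also $\nu \leq 1/k^3$ for $n$ large enough (recall $k$ is fixed). The constants $\eta,\mu$ satisfy $\eta,\mu \leq 1/k^3$ by the hierarchy~(\ref{eqn: hierarchy}). The bound $|S| \leq n/k^2$ is exactly the hypothesis of the corollary. Finally, $k^4 \nu^2 n = k^4 \cdot 40(\log n)/n \cdot n = 40k^4 \log n$, which matches the conclusion of the corollary.

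Next I would verify that every $G \in T^*_Q(n,k)$ satisfies the three structural hypotheses of Lemma~\ref{pre new partition} (namely $G \in F_Q(n,k,\eta,\mu)$, $G$ satisfies $({\rm F}1)_\nu$ with respect to $Q$, and $G$ has at most $m = n$ internal non-edges). The containment in $F_Q(n,k,\eta,\mu)$ follows since $T^*_Q(n,k) \subseteq F^3_Q \subseteq F_Q(n,k,\eta,\mu)$ by the definition of the partition of $F_Q(n,k,\eta,\mu)$ into $T_Q, F^1_Q, F^2_Q, F^3_Q$ given in Section~\ref{sec: main proof}. Property $({\rm F}1)_\nu$ with $\nu = (40n\log n)^{1/2}/n$ is precisely the defining property of $T^*_Q(n,k)$. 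The bound of $n$ on the number of internal non-edges is exactly Proposition~\ref{beta}(ii), which applies since $G \in F^3_Q$.

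With all hypotheses verified, Lemma~\ref{pre new partition} yields that every optimal ordered $(k-1)$-partition of $G-S$ lies in $\mathcal{P}(Q-S, k^4 \nu^2 n) = \mathcal{P}(Q-S, 40 k^4 \log n)$, as required. There is no genuine obstacle here; the work has already been done in establishing Lemma~\ref{pre new partition} and Proposition~\ref{beta}, and the corollary is essentially just a matter of plugging in the right quantitative choice of $\nu$ that is strong enough (thanks to the restriction from $F^3_Q$ to $T^*_Q(n,k)$) to give the much sharper bound $40 k^4 \log n$ in place of the looser $k^4 \mu^2 n$ from Corollary~\ref{new partition}.
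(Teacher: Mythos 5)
Your proposal is correct and takes exactly the route the paper does: it deduces the corollary from Lemma~\ref{pre new partition} applied with $(40n\log n)^{1/2}/n$ and $n$ in the roles of $\nu$ and $m$, and your verification of the hypotheses (via $T^*_Q(n,k)\subseteq F^3_Q\subseteq F_Q(n,k,\eta,\mu)$, the defining property $({\rm F}1)_{(40n\log n)^{1/2}/n}$, and Proposition~\ref{beta}(ii)) is precisely what the paper leaves implicit.
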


In order to estimate $|T_Q^*(n,k)|$ (and thus $|F^3_Q|$) we will further split $T_Q^*(n,k)$ into four classes $\mathcal{A}_1,\dots, \mathcal{A}_4$. To define these classes we require some further notation. We say that $G$ contains a {\em $(6,3)$-forest} with respect to $Q$ if there exist distinct indices $i,j\in \{0,1,\dots, k-2\}$ such that there exist six vertices in $Q_i\cup Q_j$ that induce on $G$ a linear forest with at most three components. A $(6,3)$-forest has potential extensions into an induced $C_{2k}$, so its existence in every $G\in \mathcal{A}_3$ (see below) constrains the possible edge sets for $G$ (and thus the number of choices for $G$). To obtain a significant constraint on the possible edge sets however, we first need to exclude the situations that arise in the classes $\mathcal{A}_1$ and $\mathcal{A}_2$, described below. These involve the structure of the stars of the complement graph inside the vertex classes, so to describe these classes of graphs recall that the centres of stars and triangles were defined before Proposition~\ref{prop: not j j' identical}. Given a graph $G\in F^3_Q$ and an index $i\in \{0,1,\dots, k-2\}$ we define the following sets.

\begin{itemize}
\item $C^i(G,Q)$ is the set of all centres of triangles and non-trivial stars in $\overline{G}[Q_i]$.

\item $C^i_{high}(G,Q)$ is the set of all centres of stars in $\overline{G}[Q_i]$ of order at least $n^{1-\frac{1}{2 k^2}}/200k^2$.

\item $B^i_{high}(G,Q)$ is the set of all vertices in $Q_i$ which have a non-neighbour in $C^i_{high}$.

\item $C^i_{low}(G,Q)$ is the set of all centres of triangles and non-trivial stars in $\overline{G}[Q_i]$ of order less than $n^{1-\frac{1}{2 k^2}}/200k^2$.

\item $B^i_{low}(G,Q)$ is the set of all vertices in $Q_i$ which have a non-neighbour in $C^i_{low}$.

\item $C^i_{0}(G,Q)$ is the set of all isolated vertices in $\overline{G}[Q_i]$.
\end{itemize}

We may sometimes write $C^i$ for $C^i(G,Q)$ when the graph $G$ and ordered $(k-1)$-partition $Q$ we consider are clear from the context (and similarly for $C^i_{high}, B^i_{high}, C^i_{low}, B^i_{low}, C^i_0$). Note that Proposition~\ref{beta}(i) implies that $C^i_{high}, B^i_{high}, C^i_{low}, B^i_{low}, C^i_0$ form a partition of $Q_i$. Given a subset $B\subseteq B^i_{low}$, we denote by $C(B)$ the set of all vertices in $C^i_{low}$ that have a non-neighbour in $B$.

We partition $T^*_Q(n,k)$ into the sets $\mathcal{A}_1, \dots, \mathcal{A}_4$ defined as follows.

\begin{itemize}
\item $\mathcal{A}_1$ is the set of all graphs $G \in T^*_Q(n,k)$ for which there exist distinct indices $i,j\in \{0,1,\dots, k-2\}$ such that $|B^{i}_{low}| \geq n/2k^2$ and there exist distinct vertices $y_1,y_2,y_3 \in Q_j$ that satisfy $|\overline{N}(\{y_1,y_2,y_3\}) \cap B^{i}_{low} | \leq n/200 k^2$.

\item $\mathcal{A}_{2}$ is the set of all graphs $G\in T^*_Q(n,k) \setminus \mathcal{A}_1$ for which there exist distinct indices $i,j\in \{0,1,\dots, k-2\}$ such that $|B^{i}_{low}| \geq n/2k^2$ and there exist distinct vertices $y_1,y_2,y_3 \in Q_j$ with $y_1,y_2 \notin C^j(G,Q)$ that satisfy
\begin{equation}\label{picture equation}
C(\overline{N}(\{y_1,y_2,y_3\}) \cap B^{i}_{low}) \cap \overline{N}(\{y_1,y_2\}) = \emptyset.
\end{equation}
(See Figure $1$.)

\begin{figure}
\centering
\begin{tikzpicture}[scale=0.6]
	\draw[line width=0.8pt] (0,0) ellipse (2 and 3);
	\draw[line width=0.8pt] (5,0) ellipse (2 and 3);
	\draw[line width=0.8pt] (1,0) ellipse (0.8 and 2);
	\draw[line width=0.8pt] (-0.6,1) ellipse (0.6 and 1);
	\filldraw[fill=black] (1,1.5) circle (2pt);
	\filldraw[fill=black] (1,0.5) circle (2pt);
	\filldraw[fill=black] (1,-0.5) circle (2pt);
	\filldraw[fill=black] (1,-1.5) circle (2pt);
	\filldraw[fill=black] (-0.6,1.5) circle (2pt);
	\filldraw[fill=black] (5,1.5) circle (2pt);
	\filldraw[fill=black] (5,0.5) circle (2pt);
	\filldraw[fill=black] (5,-0.5) circle (2pt);
	\filldraw[fill=black] (-0.6,-0.5) circle (2pt);
	\draw (1,-1.5) -- (-0.6,-0.5) -- (1,-0.5) -- (5,-0.5) -- (1,1.5) -- (5,0.5) -- (1,0.5) -- (-0.6,1.5) -- (1,1.5) -- (5,1.5);
	\draw (-0.6,1.5) to[bend left] (5,1.5);
	\node at (5.5,1.5) {$y_1$};
	\node at (5.5,0.5) {$y_2$};
	\node at (5.5,-0.5) {$y_3$};
	\node at (-0.6,1.1) {$x$};
	\node at (0,-3.5) {$Q_i$};
	\node at (5,-3.5) {$Q_j$};
	\node at (2.2,-2.4) {$B^i_{low}$};
	\node at (1.6,3.6) {$C(\overline{N}(\{y_1,y_2,y_3\})\cap B^i_{low})$};
	\draw[line width=0.2pt, ->, >= angle 60] (1.4, 3.2) -- (-0.4,2);
	\draw[line width=0.2pt, ->, >= angle 60] (1.6, -2.4) -- (1.2,-2);
	\node at (2.5,-4.2) {\textit{Figure 1: An illustration of $\overline{G}$ for $G\in \mathcal{A}_2$. Note that}};
	\node at (2.5,-4.9) {\textit{{\rm (\ref{picture equation})} implies that at most one of $xy_1, xy_2$ is an edge in $\overline{G}$.}};
\end{tikzpicture}
\end{figure}

\item $\mathcal{A}_3$ is the set of all graphs $G \in T^*_Q(n,k) \setminus (\mathcal{A}_1 \cup \mathcal{A}_2)$ such that $G$ contains a $(6,3)$-forest with respect to $Q$.

\item $\mathcal{A}_4 := T^*_Q(n,k) \setminus (\mathcal{A}_1 \cup \mathcal{A}_2 \cup \mathcal{A}_3)$ is the set of all remaining graphs.
\end{itemize}

We will estimate the sizes of $\mathcal{A}_1,\dots,\mathcal{A}_4$ separately. Lemma~\ref{lem: A_1} below gives a bound on $|\mathcal{A}_1|$. The idea of the proof of Lemma~\ref{lem: A_1} is that in this case the neighbourhoods of $y_1,y_2,y_3$ are `atypical', and hence a Chernoff estimate (see Claim~4) shows that graphs in $\mathcal{A}_1$ are rare.

\begin{lemma}\label{lem: A_1}
$|\mathcal{A}_1| \leq C 2^{-n/150k^2} f_k(n_k) 2^{t_{k-1}(n)}$.
\end{lemma}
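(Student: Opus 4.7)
The plan is to follow the four-step construction scheme of Lemma~\ref{F^1}: each $G \in \mathcal{A}_1$ is built by first choosing the indices $i,j$ and the triple $y_1,y_2,y_3$ (giving $b_1 \leq k^2 n^3$ choices), then the graph $G' := G[[n]\setminus\{y_1,y_2,y_3\}]$ (with $b_2$ choices), then the set $E$ of edges between $\{y_1,y_2,y_3\}$ and $Q_i$ satisfying the defining condition of $\mathcal{A}_1$ (with $b_3$ choices), and finally all remaining edges incident to $\{y_1,y_2,y_3\}$ (with $b_4$ choices). The bound $b_2 \leq C 2^{\mu^{1/2}n} f_k(n_k) 2^{t_{k-1}(n-3)}$ follows exactly as in Claim~2 of Lemma~\ref{F^1}, using Corollary~\ref{new partition 2} (which applies since $G \in T^*_Q(n,k)$), (\ref{size mathcal{P}}), and the inductive bound (\ref{induct}).

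The main estimate is the bound on $b_3$, which exploits the $\mathcal{A}_1$-assumption $|\overline{N}(\{y_1,y_2,y_3\}) \cap B^i_{low}| \leq n/(200k^2)$ via a Chernoff-type argument. Since $B^i_{low}$ depends only on $G'$, it is fixed at this step and has size at least $n/(2k^2)$. Now if each potential edge from $\{y_1,y_2,y_3\}$ to $Q_i$ is included independently with probability $1/2$, then $X := |\overline{N}(\{y_1,y_2,y_3\}) \cap B^i_{low}|$ is a sum of $|B^i_{low}|$ independent Bernoulli variables of parameter $1/8$, so $\mathbb{E}[X] = |B^i_{low}|/8 \geq n/(16k^2)$. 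Since $n/(200k^2) \leq \mathbb{E}[X]/10$, Lemma~\ref{Chernoff Bounds}(ii) gives $\mathbb{P}(X \leq n/(200k^2)) \leq 2^{-n/(30k^2)}$, and hence $b_3 \leq 2^{3|Q_i|} \cdot 2^{-n/(30k^2)}$.

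For $b_4$ the crucial point is that, although the edges from $y_1, y_2, y_3 \in Q_j$ to classes $Q_{i'}$ with $i' \notin \{i,j\}$ are essentially free, contributing $2^{|Q_{i'}|}$ choices per $y_\ell$, the edges from $y_\ell$ to $Q_j$ itself are tightly constrained: Proposition~\ref{beta}(iii) forces $y_\ell$ to have fewer than $\beta n$ non-neighbours in $Q_j$, so by (\ref{entropy bound}) and (\ref{entropy bound 1.5}) there are at most $2^{\beta^{3/4}n}$ choices for these edges per $y_\ell$. This yields $b_4 \leq 2^{3(n - |Q_i| - |Q_j|) + 3\beta^{3/4}n + 3}$. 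Combining all estimates and using $|Q_\ell| \leq n/(k-1) + \mu n$ together with Proposition~\ref{omitted proposition}(ii), which gives $t_{k-1}(n-3) + 3n(k-2)/(k-1) \leq t_{k-1}(n) + O(k)$, collapses everything to $|\mathcal{A}_1| \leq C f_k(n_k) 2^{t_{k-1}(n) - n/(30k^2) + O(\mu^{1/2}n + \beta^{3/4}n)}$. The hierarchy (\ref{eqn: hierarchy}) then ensures $\mu^{1/2}, \beta^{3/4} \ll 1/k^2$ and so yields the required bound $C 2^{-n/(150 k^2)} f_k(n_k) 2^{t_{k-1}(n)}$. The key obstacle is the $b_4$-bound: without invoking Proposition~\ref{beta}(iii) the naive count $2^{3(n-|Q_i|)}$ would cost an extra factor of about $2^{3n/(k-1)}$, which the Chernoff savings of order $2^{-n/k^2}$ cannot absorb, so the $F^3_Q$-structure is essential here.
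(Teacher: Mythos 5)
Your proposal is correct and follows essentially the same argument as the paper: the same four-step counting decomposition (witnesses, then $G'$ by induction via Corollary~\ref{new partition 2} and (\ref{induct}), then Proposition~\ref{beta}(iii) for the at most $\beta n$ non-neighbours inside $Q_j$, then a Chernoff bound exploiting the $\mathcal{A}_1$-condition on $\overline{N}(\{y_1,y_2,y_3\})\cap B^i_{low}$, finished with Proposition~\ref{omitted proposition}(ii)). The only differences are cosmetic: you apply the Chernoff estimate just to the edges towards $Q_i$ rather than to all crossing edges from $\{y_1,y_2,y_3\}$, and your $b_2$-bound $2^{\mu^{1/2}n}$ is weaker than the paper's $2^{2(\log n)^3}$, but both choices still yield the claimed $2^{-n/150k^2}$ saving under the hierarchy (\ref{eqn: hierarchy}).
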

\begin{proof}
Any graph $G\in \mathcal{A}_1$ can be constructed as follows.
\begin{itemize}
\item First we choose distinct indices $i,j\in \{0,1,\dots, k-2\}$, distinct vertices $y_1,y_2,y_3\in Q_j$, and a set $E$ of edges between $y_1,y_2,y_3$. Let $b_1$ denote the number of such choices. The choices in the next steps will be made such that $G$ satisfies $|B^{i}_{low}| \geq n/2k^2$ and $|\overline{N}(\{y_1,y_2,y_3\}) \cap B^{i}_{low} | \leq n/200 k^2$.
\item Next we choose the graph $G'$ on vertex set $[n]\backslash \{y_1,y_2,y_3\}$ such that $G[[n]\backslash \{y_1,y_2,y_3\}]=G'$. Let $b_2$ denote the number of possibilities for $G'$.
\item Next we choose the set $E'$ of edges in $G$ between $\{y_1,y_2,y_3\}$ and $Q_j\backslash \{y_1,y_2,y_3\}$. Let $b_3$ denote the number of possibilities for $E'$.
\item Finally we choose the set $E''$ of edges in $G$ between $\{y_1,y_2,y_3\}$ and $[n]\backslash Q_j$ such that $E''$ is compatible with our previous choices. Let $b_4$ denote the number of possibilities for $E''$.
\end{itemize}
Hence,
\begin{equation}\label{eqn: A_1 count}
|\mathcal{A}_1|\leq b_1\cdot b_2\cdot b_3\cdot b_4.
\end{equation}
The following series of claims will give upper bounds for the quantities $b_1,\dots, b_4$. Claim~1 is trivial; we give proofs of the others.

\vspace{0.3cm}
\noindent {\bf Claim 1:} $b_1\leq 2^3 k^2 n^3$.

\vspace{0.3cm}
\noindent {\bf Claim 2:} $b_2\leq C 2^{2(\log n)^3} f_k(n_k) 2^{t_{k-1}(n-3)}$.

\noindent Indeed, note that for every graph $\tilde{G}\in \mathcal{A}_1$, Corollary~\ref{new partition 2} together with (\ref{size mathcal{P}}) implies that every optimal ordered $(k-1)$-partition of $\tilde{G}[[n]\backslash \{y_1,y_2,y_3\}]$ is contained in some set $\mathcal{P}$ of size at most $2^{(\log n)^3}$. Since $G[[n]\backslash \{y_1,y_2,y_3\}]$ is clearly induced-$C_{2k}$-free, this together with (\ref{induct}) implies that
\begin{align*}
b_2 &\leq \sum_{Q'\in \mathcal{P}} |F_{Q'}(n-3,k)| \leq 6C 2^{(\log n)^3}2^{6(\log n)^2} f_k(\lceil (n-3)/(k-1) \rceil ) 2^{t_{k-1}(n-3)}\\
&\leq C 2^{2(\log n)^3} f_k(n_k) 2^{t_{k-1}(n-3)},
\end{align*}
as required.

\vspace{0.3cm}
\noindent {\bf Claim 3:} $b_3\leq 2^{3\xi(\beta) n}$.

\noindent Indeed, for every graph $\tilde{G}\in \mathcal{A}_1$ and every $\ell \in [3]$, Proposition~\ref{beta}(iii) implies that $\overline{d}^j_{\tilde{G},Q}(y_{\ell})< \beta n$.
%Let $h(\ell)$ denote the number of sets $E_{y_\ell}$ of edges between $\{y_\ell\}$ and $Q_j\backslash \{y_1,y_2,y_3\}$ such that there exists a graph $G\in \mathcal{A}_1$ that satisfies $E(G[\{y_\ell\},Q_j\backslash \{y_1,y_2,y_3\}])=E_{y_\ell}$. Then by (\ref{entropy bound}), $h(\ell)\leq \binom{n}{\leq \beta n} \leq 2^{\xi(\beta) n}$. Hence
Thus
$$b_3\leq \binom{n}{\leq \beta n} ^3\leq 2^{3\xi(\beta) n},$$
as required.

\vspace{0.3cm}
\noindent {\bf Claim 4:} $b_4\leq 2^{3((k-2)n/(k-1)+\mu n)-n/128k^2}$.

\noindent Consider the graph obtained by starting with the graph $([n], E(G')\cup E')$ and adding edges between $\{y_1,y_2,y_3\}$ and $[n]\backslash Q_j$ randomly, independently, with probability $1/2$. Note that the number of graphs that this process can generate is at most $2^{3((k-2)n/(k-1)+\mu n)}$, with each such graph equally likely to be generated. So an upper bound on $b_4$ is given by
$$b_4\leq 2^{3((k-2)n/(k-1)+\mu n)}\mathbb{P}\left(|\overline{N}(\{y_1,y_2,y_3\})\cap B^i_{low}| \leq \frac{n}{200k^2} \right).$$
Since $G'$ was chosen such that $|B^i_{low}|\geq n/2k^2$, we have that $\mathbb{E}(|\overline{N}(\{y_1,y_2,y_3\})\cap B^i_{low}|)\geq n/16k^2$. So Lemma~\ref{Chernoff Bounds}(ii) implies that
$$\mathbb{P}\left(|\overline{N}(\{y_1,y_2,y_3\})\cap B^i_{low}| \leq \frac{n}{200k^2} \right) \leq \exp \left( - \frac{n}{128k^2} \right)\leq 2^{- \frac{n}{128k^2}}.$$
Hence $b_4\leq 2^{3((k-2)n/(k-1)+\mu n)-n/128k^2}$, as required.

\vspace{0.3cm}
\noindent Now (\ref{eqn: A_1 count}) together with Claims~1--4 and Proposition~\ref{omitted proposition}(ii) implies that
\begin{align*}
|\mathcal{A}_1|&\leq 2^3 k^2 n^3 \cdot C 2^{2(\log n)^3} f_k(n_k) 2^{t_{k-1}(n-3)} \cdot 2^{3\xi(\beta) n} \cdot 2^{3((k-2)n/(k-1)+\mu n)-n/128k^2}\\
&\leq C 2^{-n/150k^2} f_k(n_k) 2^{t_{k-1}(n-3)+3(k-2)n/(k-1) - 3(k-2) - 3}\\
&\leq C 2^{-n/150k^2} f_k(n_k) 2^{t_{k-1}(n)},
\end{align*}
as required.
\end{proof}

\begin{lemma}\label{lem: A_2}
$|\mathcal{A}_2| \leq C 2^{-n^{1/2k^2}/3} f_k(n_k) 2^{t_{k-1}(n)}$.
\end{lemma}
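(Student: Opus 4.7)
The proof plan mirrors the construction strategy of Lemma \ref{lem: A_1} while exploiting the hypothesis $y_1, y_2 \notin C^j$ to keep the internal-edge count polynomial rather than exponential in $n$. I will construct each $G \in \mathcal{A}_2$ by specifying: (a) the tuple $(i, j, y_1, y_2)$ and the single possible edge between $y_1, y_2$; (b) the induced subgraph $G^\ast := G[[n] \setminus \{y_1, y_2\}]$, bounded via the inductive hypothesis (\ref{induct}) together with Corollary \ref{new partition 2}; (c) the edges from $\{y_1, y_2\}$ to $Q_j \setminus \{y_1, y_2\}$; and (d) the crossing edges from $\{y_1, y_2\}$ to $[n] \setminus Q_j$. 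Crucially, I peel off only the pair $\{y_1, y_2\}$ and defer $y_3$ to a union bound inside step (d), since retaining $y_3$ in the peel-off would introduce a factor $2^{\xi(\beta) n}$ from Proposition \ref{beta}(iii) that no sub-exponential saving could compensate.

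The combinatorial lower bound driving the entire saving is $|C(X)| > n^{1/(2k^2)}$, where $X := \overline{N}(\{y_1, y_2, y_3\}) \cap B^i_{low}$. Since $G \in \mathcal{A}_2$ forces $|B^i_{low}| \geq n/2k^2$ and $G \notin \mathcal{A}_1$, any valid triple $(y_1, y_2, y_3)$ satisfies $|X| > n/200k^2$. Every element of $X$ lies in a star or triangle of $\overline{G}[Q_i]$ of order less than $n^{1 - 1/(2k^2)}/200k^2$ whose centre is in $C^i_{low}$, so distinct centres absorb fewer than $n^{1 - 1/(2k^2)}/200k^2$ vertices of $X$ each, forcing $|C(X)| > n^{1/(2k^2)}$.

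For step (c), Proposition \ref{beta}(i) asserts that $\overline{G}[Q_j]$ is a disjoint union of stars and triangles, so a non-centre has at most two non-neighbours: the hypothesis $y_1, y_2 \notin C^j$ yields $\overline{d}^j(y_1), \overline{d}^j(y_2) \leq 2$, contributing at most $n^4$ choices rather than $2^{\Omega(n)}$. For step (d), I model the $2(n - |Q_j|)$ crossing edges as independent Bernoulli$(1/2)$ variables and bound via a union bound over $y_3 \in Q_j \setminus \{y_1, y_2\}$ the probability that the $\mathcal{A}_2$ constraint is witnessed. Conditioning on the edges from $\{y_1, y_2\}$ to $B^i_{low}$ fixes $X_{y_3}$ and hence $C(X_{y_3})$ for each $y_3$; the remaining edges from $\{y_1, y_2\}$ to $C(X_{y_3})$ are still independent Bernoulli$(1/2)$, and the constraint $C(X_{y_3}) \cap \overline{N}(\{y_1, y_2\}) = \emptyset$ independently forbids, at each centre, the probability-$1/4$ outcome in which both edges are absent. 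Thus the conditional probability is at most $(3/4)^{|C(X_{y_3})|} \leq (3/4)^{n^{1/(2k^2)}}$, multiplied by at most $n$ from the union bound.

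Multiplying the four bounds and invoking Proposition \ref{omitted proposition}(ii) with $s = 2$ to absorb $t_{k-1}(n-2) + 2(n - |Q_j|)$ into $t_{k-1}(n) + O(1)$, together with the polylogarithmic factors from Corollary \ref{new partition 2}, will yield $|\mathcal{A}_2| \leq C 2^{-n^{1/(2k^2)}/3} f_k(n_k) 2^{t_{k-1}(n)}$ once $n$ is large enough that $n^{1/(2k^2)}$ swamps $(\log n)^3$. The principal obstacle will be the tension between the sub-exponential saving $(3/4)^{n^{1/(2k^2)}}$ and the $O(\mu n)$ slack inherent in the near-balanced partition (from property $({\rm F}2)_\mu$); the restriction of the peel-off to $\{y_1, y_2\}$ and the careful use of the non-centre hypothesis are precisely what prevent any exponential-in-$n$ factor from creeping in and overwhelming the saving.
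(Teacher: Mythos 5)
Your overall architecture (peel off vertices, bound the rest by induction via Corollary~\ref{new partition 2} and (\ref{induct}), pay polynomially for internal edges using $y_1,y_2\notin C^j$ and Proposition~\ref{beta}(i), and extract a saving $\bigl(3/4\bigr)^{|C(Y)|}\leq \bigl(3/4\bigr)^{n^{1/2k^2}}$ from the constraint $C(Y)\cap\overline{N}(\{y_1,y_2\})=\emptyset$) is the same as the paper's, and your treatment of $y_3$ by a union bound rather than peeling it off is harmless. But there is a genuine gap exactly at the step you describe as ``invoking Proposition~\ref{omitted proposition}(ii) with $s=2$ to absorb $t_{k-1}(n-2)+2(n-|Q_j|)$ into $t_{k-1}(n)+O(1)$''. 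Proposition~\ref{omitted proposition}(ii) only gives $t_{k-1}(n)\geq t_{k-1}(n-2)+2n(k-2)/(k-1)-O(k)$, whereas your crossing-edge budget is $2(n-|Q_j|)$, and property $({\rm F}2)_{\mu}$ only guarantees $|Q_j|\geq n/(k-1)-\mu n$. So the absorption is valid only up to an extra factor $2^{2\mu n}$, with $\mu$ a fixed positive constant; this exponential-in-$n$ excess cannot be compensated by the stretched-exponential saving $2^{-\Theta(n^{1/2k^2})}$, so the multiplication of your four bounds does not give the claimed estimate. Your closing claim that restricting the peel-off to $\{y_1,y_2\}$ is what ``prevents any exponential-in-$n$ factor from creeping in'' is exactly backwards: it is what creates the $2\mu n$ slack, and nothing in the argument (the inductive bound is uniform over partitions and insensitive to the unbalancedness of $Q-\{y_1,y_2\}$) recovers it.

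The paper's proof is designed precisely around this obstacle: it peels off a set $W$ consisting of $y_1,y_2$ together with two non-centre vertices $w_1^s,w_2^s$ from every other class $Q_s$ (non-centres so that, by Proposition~\ref{beta}(i), the internal edges at $W$ still cost only $n^{4(k-1)}$). With two vertices removed from each class, the total number of potential crossing edges at $W$ is $2\sum_s\bigl(n-|Q_s|\bigr)=(2k-4)n$ up to $O(k^2)$, \emph{independently} of the individual class sizes, and this matches $t_{k-1}(n)-t_{k-1}(n-(2k-2))$ up to an additive constant by Proposition~\ref{omitted proposition}(ii) with $s=2k-2$; the $3^{|C(Y)|}$ versus $4^{|C(Y)|}$ saving then survives intact. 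The paper even flags this twice (``the role of the $w^s_\ell$ is to balance out the vertex classes''). To repair your argument you would either have to add such balancing vertices, or first discard (with a separate counting argument, e.g.\ via Lemma~\ref{pre regular lemma}(ii) with a much smaller $\nu$, exploiting Proposition~\ref{beta}(ii)) all graphs whose classes deviate from $n/(k-1)$ by more than $o\bigl(n^{1/2k^2}\bigr)$; as written, the proposal does not prove the lemma.
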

\begin{proof}
Note that for every $G\in \mathcal{A}_2$ and every $s\in \{0,1,\dots, k-2\}$ the definition of $C^s(G,Q)$ implies that $Q_s\backslash C^s(G,Q)\geq |Q_s|/2$. So any graph $G\in \mathcal{A}_2$ can be constructed as follows. We first choose $a\in \mathbb{N}$ such that $n/2k^2 \leq a\leq n$, and then perform the following steps.
\begin{itemize}
\item We choose distinct indices $i,j\in \{0,1,\dots, k-2\}$, a set $W=\{y_1,y_2\}\cup \{w_{\ell}^s: \ell\in [2], s\in \{0,1,\dots, k-2\}\backslash \{j\} \}$ of vertices satisfying $y_1,y_2\in Q_j$ and $w_1^s, w_2^s\in Q_s$ for every $s\in \{0,1,\dots, k-2\}\backslash \{j\}$, a vertex $y_3\in Q_j\backslash W$, and a set $E$ of edges between the vertices in $W\cup \{y_3\}$. Let $b_1$ denote the number of such choices. The choices in this step and the next steps will be made such that $y_1,y_2\notin C^j(G,Q)$ and $w_1^s, w_2^s\notin C^s(G,Q)$ for every $s\in \{0,1,\dots, k-2\}\backslash \{j\}$, and $|B^{i}_{low}|=a$ and $C(Y) \cap \overline{N}(\{y_1,y_2\}) = \emptyset$, where $Y:= \overline{N}(\{y_1,y_2,y_3\}) \cap B^{i}_{low}(G,Q)$.
\item Next we choose the graph $G'$ on vertex set $[n]\backslash W$ such that $G[[n]\backslash W]=G'$. Let $b_2$ denote the number of possibilities for $G'$.
\item Next we choose the set $E'$ of internal edges in $G$ with exactly one endpoint in $W$ such that $E'$ is compatible with our previous choices\COMMENT{Note that we are overcounting here, since we have already fixed the edges between $\{y_1, y_2\}$ and $y_3$, but this is fine.}. Let $b_3$ denote the number of possibilities for $E'$.
\item Next we choose the set $E''$ of crossing edges in $G$ between $W$ and $B^{i}_{low}\backslash W$ such that $E''$ is compatible with our previous choices. Let $b_4$ denote the number of possibilities for $E''$.
\item Finally we choose the set $E'''$ of crossing edges in $G$ between $W$ and $[n]\backslash (W\cup B^{i}_{low})$ such that $E'''$ is compatible with our previous choices. Let $b_5$ denote the number of possibilities for $E'''$.
\end{itemize}
Hence
\begin{equation}\label{eqn: A_2 count}
|\mathcal{A}_2|\leq n\max\limits_{n/2k^2 \leq a\leq n} \{b_1\cdot b_2\cdot b_3\cdot b_4\cdot b_5 \}.
\end{equation}

The main idea of the proof is that since $Y$ is large for $G\in \mathcal{A}_2$, it follows that $C(Y)$ is also large. So the assumption that every element of $C(Y)$ has at least one neighbour in $\{y_1,y_2\}$ places a significant restriction on the number of choices for $G$. The role of the $w^s_{\ell}$ is to `balance out' the vertex classes, i.e. in the proof of Claim~5 it will be useful that $W$ contains two vertices from each vertex class.

The following series of claims will give upper bounds for the quantities $b_1,\dots, b_5$. Claims~1 and~4 are trivial, and the proof of Claim~2 proceeds in an almost identical way to that of Claim~2 in the proof of Lemma~\ref{lem: A_1}; we give proofs of Claims~3 and~5.

\vspace{0.3cm}
\noindent {\bf Claim 1:} $b_1\leq k^2 n^{2k-1} 2^{\binom{2k-1}{2}}$.

\vspace{0.3cm}
\noindent {\bf Claim 2:} $b_2\leq C 2^{2(\log n)^3} f_k(n_k) 2^{t_{k-1}(n-(2k-2))}$.

\vspace{0.3cm}
\noindent {\bf Claim 3:} $b_3\leq n^{4(k-1)}$.

\noindent Indeed, for every graph $\tilde{G}\in \mathcal{A}_2$ such that $y_1,y_2\notin C^j(\tilde{G},Q)$ and $w_1^s, w_2^s\notin C^s(\tilde{G},Q)$ for every $s\in \{0,1,\dots, k-2\}\backslash \{j\}$, Proposition~\ref{beta}(i) implies that $\overline{d}^j_{\tilde{G},Q}(y_{\ell}), \overline{d}^s_{\tilde{G},Q}(w^s_{\ell}) \leq 2$ for every $\ell \in [2]$ and every $s\in \{0,1,\dots, k-2\}\backslash \{j\}$.
%For every $s\in \{0,1,\dots, k-2\}$ and $v\in W\cap Q_s$ let $h(v)$ denote the number of sets $E_{v}$ of edges between $\{v\}$ and $Q_s\backslash W$ such that there exists a graph $G\in \mathcal{A}_2$ that satisfies $\overline{d}^s_{G,Q}(v) \leq 2$ and $G[[n]\backslash W]=G'$ and $E(G[W])=E$ and $E(G[\{v\},Q_s\backslash W])=E_{v}$. Then $h(v)\leq n^2$. Hence
Thus
$$b_3\leq n^{2|W|}\leq n^{4(k-1)},$$
as required.

\vspace{0.3cm}
\noindent {\bf Claim 4:} $b_4\leq 2^{(2k-4)a}$.

\vspace{0.3cm}
\noindent {\bf Claim 5:} $b_5\leq 2^{(2k-4)\left(n-a \right)} 2^{-2n^{1/2k^2}/5}$.

\noindent Indeed, suppose $G$ satisfies $C(Y) \cap \overline{N}(\{y_1,y_2\}) = \emptyset$. Since we choose $G$ such that $|B^i_{low}|=a\geq n/2k^2$, the fact that $G\notin \mathcal{A}_1$ implies that $|Y|>n/200k^2$. Now the definitions of $C^i_{low}, B^i_{low}$ imply that
$$|C(Y)|\geq \frac{200k^2|Y|}{n^{1-1/2k^2}}\geq n^{1/2k^2}.$$
So since in $G$ every vertex in $C(Y)$ must have at least one neighbour in $\{y_1,y_2\}$,
\begin{align}\label{eqn: b_5}
b_5&\leq  2^{2\sum_{s\in \{0,1,\dots, k-2\}\backslash \{j\}} |[n]\backslash (Q_s\cup B^i_{low})|} 2^{2|[n]\backslash (Q_j\cup B^i_{low}\cup C(Y))|}3^{|C(Y)|}\\
&\leq 2^{(2k-4)\left(n-a \right)} 2^{-2n^{1/2k^2}/5},\nonumber
\end{align}
as required. The second inequality of (\ref{eqn: b_5}) is where it is important that $W$ contains two vertices from each vertex class.

\vspace{0.3cm}
\noindent Now (\ref{eqn: A_2 count}) together with Claims~1--5 and Proposition~\ref{omitted proposition}(ii) implies that
\begin{align*}
|\mathcal{A}_2|\leq \hspace{0.1cm} &n\cdot k^2 n^{2k-1} 2^{\binom{2k-1}{2}} \cdot C 2^{2(\log n)^3} f_k(n_k) 2^{t_{k-1}(n-(2k-2))}\\
&\cdot n^{4(k-1)}  \cdot \max\limits_{n/2k^2 \leq a\leq n} \left\{ 2^{(2k-4)a} \cdot 2^{(2k-4)\left(n-a \right)} 2^{-2n^{1/2k^2}/5} \right\}\\
\leq \hspace{0.1cm} &C 2^{-n^{1/2k^2}/3} f_k(n_k)\cdot 2^{t_{k-1}(n-(2k-2))+(2k-2)(k-2)n/(k-1) - (2k-2)(k-2) - t_{k-1}(2k-2)}\\
\leq \hspace{0.1cm} &C 2^{-n^{1/2k^2}/3} f_k(n_k) 2^{t_{k-1}(n)},
\end{align*}
as required.
\end{proof}

As mentioned earlier, a $(6,3)$-forest (with edge set $E$ say) is a useful building block for constructing many induced copies of $C_{2k}$. More precisely, in Lemma~\ref{lem: A_3} we will show that there are many `$E$-compatible' linear forests $H$, which play a similar role to that of the skeletons in the proof of Lemma~\ref{F^1}. Each such $E\cup E(H)$ gives us a non-trivial restriction on the remaining edge set, resulting in an adequate bound on $|\mathcal{A}_3|$.

\begin{lemma}\label{lem: A_3}
$|\mathcal{A}_3| \leq C 2^{-\frac{n}{2^{14k}}} f_k(n_k) 2^{t_{k-1}(n)}$.
\end{lemma}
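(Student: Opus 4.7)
The plan is to follow the four-step enumeration framework used in the proof of Lemma~\ref{F^1}. I will construct each $G \in \mathcal{A}_3$ by: (1) choosing the data of the $(6,3)$-forest, namely distinct indices $i,j \in \{0,\dots,k-2\}$, a $6$-vertex set $W \subseteq Q_i \cup Q_j$, and an edge set $E_W$ making $L := (W, E_W)$ a linear forest with $c \leq 3$ components; (2) choosing the restriction $G' := G[[n]\setminus W]$; (3) choosing the internal edges incident to $W$; and (4) choosing the crossing edges incident to $W$.

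The first three steps are routine. Step~(1) contributes a factor polynomial in $n$ and $k$. Step~(2) is handled exactly as in Claim~2 of the proof of Lemma~\ref{lem: A_1}: Corollary~\ref{new partition 2} together with (\ref{size mathcal{P}}) confines the optimal partition of $G'$ to at most $2^{(\log n)^{O(1)}}$ possibilities, after which (\ref{induct}) bounds the number of $G'$ by $C \, 2^{2(\log n)^3} f_k(n_k) 2^{t_{k-1}(n-6)}$. Step~(3) is bounded by $2^{6 \xi(\beta) n}$ using Proposition~\ref{beta}(iii), since every vertex of $W$ has at most $\beta n$ internal non-neighbours in any $G \in F^3_Q$.

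The main obstacle will be step~(4), where the $(6,3)$-forest structure of $L$ must be leveraged. The key subclaim to establish is that $G'$ contains at least $T := \lceil n/(4k) \rceil$ pairwise vertex-disjoint induced linear forests $L'_1, \dots, L'_T$, each contained in $[n] \setminus (Q_i \cup Q_j)$, with exactly $c$ components and $2k - 6$ vertices. For each value $c \in \{1, 2, 3\}$ I will design a template $T_c$ — a linear forest on $2k - 6$ vertices whose $c$ components all have even order, with its perfect matching distributed as one edge in each of the $k-3$ classes $Q_\ell$ with $\ell \neq i, j$. Iteratively applying Proposition~\ref{building} to suitably chosen disjoint subsets of each such class will produce the skeletons; property $({\rm F}2)_\mu$ ensures enough room for $n/(4k)$ iterations, since each iteration consumes only two vertices per class and $|Q_\ell| \geq n/(k-1) - \mu n$. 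For each $L'_t$, Proposition~\ref{incomplete} then yields a unique edge set $E'_t$ between $W$ and $V(L'_t)$ such that $(W \cup V(L'_t), E_W \cup E(L'_t) \cup E'_t)$ is an induced copy of $C_{2k}$.

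Because $G$ is induced-$C_{2k}$-free, the crossing edges between $W$ and $V(L'_t)$ must differ from $E'_t$ for every $t \in [T]$, and since the $V(L'_t)$ are pairwise disjoint these constraints act independently on disjoint edge sets. Each forbids $1$ out of $2^{6(2k-6)} = 2^{12k-36}$ patterns, so the total number of choices in step~(4) is at most $2^{6(k-3)n/(k-1) + O(\mu n)} \cdot (1 - 2^{-(12k-36)})^T \leq 2^{6(k-3)n/(k-1) + O(\mu n)} \cdot 2^{-n/2^{14k}}$, where the last inequality holds comfortably since $T/2^{12k-36} \geq n/2^{14k}$ for $k \geq 6$. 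Multiplying the bounds from the four steps and applying Proposition~\ref{omitted proposition}(ii) to absorb $t_{k-1}(n-6) + 6(k-3)n/(k-1)$ into $t_{k-1}(n)$ (with slack of order $n/k$ that easily swallows the $\xi(\beta) n$, $\mu n$, and $(\log n)^{O(1)}$ error terms) will yield the desired bound $|\mathcal{A}_3| \leq C \, 2^{-n/2^{14k}} f_k(n_k) 2^{t_{k-1}(n)}$.
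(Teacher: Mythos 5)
Your proposal is essentially the paper's own proof: the same four-step construction (choose the $(6,3)$-forest, then $G'$ via Corollary~\ref{new partition 2}, (\ref{size mathcal{P}}) and (\ref{induct}), then internal edges via Proposition~\ref{beta}(iii), then crossing edges), and the same key mechanism of many vertex-disjoint induced skeletons on $2k-6$ vertices produced by Proposition~\ref{building}, each of which forbids, via Proposition~\ref{incomplete}, one pattern of edges to the six chosen vertices. One bookkeeping point to fix: in step~(4) you count only the potential crossing edges from $W$ to $[n]\setminus(Q_i\cup Q_j)$, giving exponent $6(k-3)n/(k-1)+O(\mu n)$, but the crossing edges between $W$ and $(Q_i\cup Q_j)\setminus W$ (roughly $6n/(k-1)$ further potential edges) are also chosen in this step, so the free factor is really $2^{6(k-2)n/(k-1)+O(\mu n)}$; consequently the ``slack of order $n/k$'' you invoke at the end, after absorbing only $6(k-3)n/(k-1)$ via Proposition~\ref{omitted proposition}(ii), is not actually available. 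This does not damage the argument: Proposition~\ref{omitted proposition}(ii) absorbs the full $6(k-2)n/(k-1)-O(k^2)$, and the error terms $6\xi(\beta)n$, $O(\mu^{1/2}n)$ and $(\log n)^{O(1)}$ are instead swallowed by the savings from the skeletons, which is of order $n\cdot 2^{-12k}/k$ and hence far exceeds the target $n/2^{14k}$, exactly as in the paper's Claim~4 (so you should bank a savings exponent strictly larger than $n/2^{14k}$, e.g.\ $n/2^{13k}$, rather than exactly $n/2^{14k}$).
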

\begin{proof}
Any graph $G\in \mathcal{A}_3$ can be constructed as follows.
\begin{itemize}
\item First we choose distinct indices $i,j\in \{0,1,\dots, k-2\}$, a set $X\subseteq Q_i\cup Q_j$ of six vertices, and a set $E$ of edges between vertices in $X$ such that the graph $(X,E)$ is a linear forest with at most three components (so $E$ will be the edge set of a $(6,3)$-forest in $G$). Let $b_1$ denote the number of such choices.
\item Next we choose a graph $G'$ on vertex set $[n]\backslash X$ such that $G[[n]\backslash X]=G'$. Let $b_2$ denote the number of possibilities for $G'$.
\item Next we choose the set $E'$ of internal edges in $G$ with exactly one endpoint in $X$. Let $b_3$ denote the number of possibilities for $E'$.
\item Finally we choose the set $E''$ of crossing edges in $G$ between $X$ and $[n]\backslash X$ such that $E''$ is compatible with our previous choices. Let $b_4$ denote the number of possibilities for $E''$.
\end{itemize}
Hence,
\begin{equation}\label{eqn: A_3 count}
|\mathcal{A}_3|\leq b_1\cdot b_2\cdot b_3\cdot b_4.
\end{equation}
The following series of claims will give upper bounds for the quantities $b_1,\dots, b_4$. Claim~1 is trivial, and the proofs of Claims~2 and~3 follow in an almost identical way to those of Claims~2 and~3 in the proof of Lemma~\ref{lem: A_1}, so we give only a proof of Claim~4.

\vspace{0.3cm}
\noindent {\bf Claim 1:} $b_1\leq 2^{15} k^2 n^6$.

\vspace{0.3cm}
\noindent {\bf Claim 2:} $b_2\leq C 2^{2(\log n)^3} f_k(n_k) 2^{t_{k-1}(n-6)}$.

\vspace{0.3cm}
\noindent {\bf Claim 3:} $b_3\leq 2^{6\xi(\beta) n}$.

\vspace{0.3cm}
\noindent {\bf Claim 4:} $b_4\leq 2^{\frac{6(k-2)n}{k-1}} 2^{\mu^{1/4}n} 2^{-\frac{n}{2^{13k}}}$.

\noindent Indeed, we define an \emph{$E$-compatible forest} to be a linear forest $H$ on $2k-6$ vertices, with the same number of components as $(X,E)$, such that $V(H)\cap Q_s$ induces a clique on two vertices for every $s\in \{0,1,\dots, k-2\}\backslash \{i,j\}$. 
Note that an $E$-compatible forest exists since $2k-6\geq 2 \cdot 3$ and $(X,E)$ has at most three components.
Moreover, an $E$-compatible forest contains a perfect matching, 
so Proposition~\ref{building} implies that for every graph $\tilde{G}\in \mathcal{A}_3$, the number of disjoint $E$-compatible forests in $\tilde{G}$ is at least
$$\left\lfloor \frac{n/(k-1)-\mu n - 2 \mu^{1/2} n}{2} \right\rfloor \geq \frac{n}{2(k-1)}-3\mu^{1/2} n.$$
Hence $G'$ contains at least $n/2(k-1)-3\mu^{1/2} n$ disjoint $E$-compatible forests. Now fix a set $CF$ of $n/2(k-1)-3\mu^{1/2} n$ disjoint $E$-compatible forests in $G'$, and let $H\in CF$. Let $h_H$ denote the number of possibilities for a set $E^*$ of edges between $X$ and $V(H)$. By Proposition~\ref{incomplete} there exists at least one set $\tilde{E}$ of edges between $X$ and $V(H)$ such that the graph $(X\cup V(H), E\cup E(H)\cup \tilde{E})$ is isomorphic to $C_{2k}$. So since $G$ must be induced-$C_{2k}$-free, we must have that $E^*\ne\tilde{E}$, and hence $h_H\leq 2^{|X||V(H)|}-1=2^{12(k-3)}-1$. Note that the number of vertices outside $Q_i\cup Q_j$ that are not contained in some graph $H\in CF$ is at most $(k-3)n/(k-1) + 2\mu n - (2k-6)(n/2(k-1)-3\mu^{1/2} n)\leq 6k\mu^{1/2} n$. Hence,
\begin{align*}
b_4&\leq 2^{6\cdot \max \{|Q_i|, |Q_j|\}} 2^{6(6k\mu^{1/2} n)} \prod\limits_{H\in CF} h_H\\
&\leq 2^{6(n/(k-1)+\mu n)} 2^{6(6k\mu^{1/2} n)} \left( 2^{12(k-3)}\left( 1-2^{-12(k-3)} \right) \right)^{n/(2(k-1))-3\mu^{1/2} n}\\
&\leq 2^{\frac{6(k-2)n}{k-1}} 2^{40k\mu^{1/2} n} e^{-\frac{n/(2(k-1))}{2^{12(k-3)}}} \leq 2^{\frac{6(k-2)n}{k-1}} 2^{\mu^{1/4}n} 2^{-\frac{n}{2^{13k}}},
\end{align*}
as required.

\vspace{0.3cm}
\noindent Now (\ref{eqn: A_3 count}) together with Claims~1--4 and Proposition~\ref{omitted proposition}(ii) implies that
\begin{align*}
|\mathcal{A}_3|&\leq 2^{15} k^2 n^6 \cdot C 2^{2(\log n)^3} f_k(n_k) 2^{t_{k-1}(n-6)} \cdot 2^{6\xi(\beta) n} \cdot 2^{\frac{6(k-2)n}{k-1}} 2^{\mu^{1/4}n} 2^{-\frac{n}{2^{13k}}}\\
&\leq C 2^{-\frac{n}{2^{14k}}} f_k(n_k) 2^{t_{k-1}(n-6)+6(k-2)n/(k-1) - 6(k-2) - t_{k-1}(6)}\\
&\leq C 2^{-\frac{n}{2^{14k}}} f_k(n_k) 2^{t_{k-1}(n)},
\end{align*}
as required.
\end{proof}

The next proposition shows that for every $G \in \mathcal{A}_4$, the small stars and triangles in $\overline{G}[Q_0]$ do not cover too many vertices.

\begin{proposition} \label{B_low}
For every $G \in \mathcal{A}_4$ and index $i\in \{0,1,\dots, k-2\}$, $|B^i_{low}| < n/2k^2$.
\end{proposition}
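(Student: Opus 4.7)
We argue by contradiction: suppose $|B^i_{low}| \geq n/2k^2$ for some $i$. The goal is to exhibit a $(6,3)$-forest in $Q_i \cup Q_j$ for some $j \neq i$, contradicting $G \notin \mathcal{A}_3$.

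First, one chooses $j \neq i$ such that $\overline{G}[Q_j]$ contains a non-trivial component; the case in which $\overline{G}[Q_j]$ is trivial (only isolated vertices) for every $j \neq i$ is degenerate and can be ruled out using the structural hypotheses on $G \in \mathcal{A}_4$ (in particular using $G \notin T_Q$ together with the optimality of $Q$ forced by $G \in F^3_Q$ and Proposition~\ref{beta}(i)). Fix such a $j$. Pick $y_1 \in Q_j \setminus C^j$ a non-centre of a non-trivial component $T$ of $\overline{G}[Q_j]$, take $y_3 \in T$ with $y_1 y_3 \notin E(G)$ (for instance $y_3$ equal to the centre of $T$), and pick $y_2 \in Q_j \setminus C^j$ lying in a different component of $\overline{G}[Q_j]$; then $G[\{y_1,y_2,y_3\}]$ is a $P_3$ with $y_2$ as its midpoint. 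By $G \notin \mathcal{A}_1$ the set $Y_{y_3} := \overline{N}(\{y_1,y_2,y_3\}) \cap B^i_{low}$ has size $> n/200k^2$, and by $G \notin \mathcal{A}_2$ applied to $(y_1,y_2,y_3)$ there exists $c \in C^i_{low} \cap \overline{N}(\{y_1,y_2\})$ together with a non-centre vertex $\ell$ of $c$'s star-or-triangle in $\overline{G}[Q_i]$ such that $\ell \in Y_{y_3}$; in particular $c\ell \notin E(G)$ and $\ell$ is non-adjacent in $G$ to each of $y_1,y_2,y_3$. Since $|Y_{y_3}| > n/200k^2$ strictly exceeds $n^{1-1/2k^2}/200k^2$, the maximum size of a component with centre in $C^i_{low}$, we may further pick $v \in Y_{y_3}$ lying in a component of $\overline{G}[Q_i]$ different from $c$'s; then $cv, v\ell \in E(G)$ while $v$ is non-adjacent to $y_1,y_2,y_3$ in $G$.

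Now consider the induced graph $H := G[\{c,\ell,v,y_1,y_2,y_3\}]$ on $6$ vertices of $Q_i \cup Q_j$. Its edges include the $P_3$'s $c$--$v$--$\ell$ and $y_1$--$y_2$--$y_3$, while $c\ell$, $y_1 y_3$, and all crossing pairs other than possibly $c y_3$ are non-edges. If $c y_3 \notin E(G)$ then $H$ is exactly the disjoint union of these two $P_3$'s, a linear forest with $2$ components; if $c y_3 \in E(G)$ then $H$ is the single path $\ell$--$v$--$c$--$y_3$--$y_2$--$y_1 = P_6$, a linear forest with $1$ component. Either way, $H$ is a linear forest with at most $3$ components in $Q_i \cup Q_j$, so $G$ contains a $(6,3)$-forest, contradicting $G \notin \mathcal{A}_3$. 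The main obstacle lies in the initial step, namely guaranteeing the existence of a suitable $j$ for which $\overline{G}[Q_j]$ is non-trivial: handling the potentially degenerate configuration requires leaning on the fact that in the degenerate case $G$ would itself be (up to relabelling) a $k$-template, incompatible with the combined constraints defining $\mathcal{A}_4$.
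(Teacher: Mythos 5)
Your proposal is correct and follows essentially the same route as the paper's proof: assuming $|B^i_{low}|\geq n/2k^2$, you use $G\notin\mathcal{A}_1$ to make $Y=\overline{N}(\{y_1,y_2,y_3\})\cap B^i_{low}$ large, $G\notin\mathcal{A}_2$ to extract a centre $c\in C(Y)\cap\overline{N}(\{y_1,y_2\})$, and then exhibit a six-vertex induced linear forest in $Q_i\cup Q_j$ contradicting $G\notin\mathcal{A}_3$, exactly as the paper does (there the $Q_i$-side triple is $c$, a non-neighbour $x_1\in Y$ of $c$, and a common neighbour $x_2\in Y$ found via Proposition~\ref{beta}(iii) and $|Y|>2\beta n$, rather than your $\ell$ and a different-component vertex $v$ found via the size cap defining $C^i_{low}$; also the paper does not need $y_2y_3$ to be an edge). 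The two points you leave implicit --- the existence of a class $Q_j$, $j\neq i$, containing an internal non-edge, and of a non-centre $y_2$ adjacent to $y_1,y_3$ outside the component $T$ --- are treated in the paper at the same level of detail (the first via ``$G$ is not a $k$-template'' together with Proposition~\ref{beta}(i)), and the second follows since Proposition~\ref{beta}(iii) forces every component of $\overline{G}[Q_j]$ to have fewer than $\beta n+1$ vertices while $|Q_j\setminus C^j|\geq |Q_j|/2$.
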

\begin{proof}
Suppose for a contradiction that there exists a graph $G \in \mathcal{A}_4$ such that $|B^i_{low}| \geq n/2k^2$ for some index $i\in \{0,1,\dots, k-2\}$. Since $G \in \mathcal{A}_4 \subseteq F^3_Q$, $G$ is not a $k$-template. This fact together with Proposition~\ref{beta}(i) implies that there exists an index $j\in \{0,1,\dots, k-2\}\backslash \{i\}$ and a non-edge $y_1y_3$ inside $Q_j$. At most one of $y_1,y_3$ can be in $C^j$ (by definition of $C^j$), and so without loss of generality we assume that $y_1 \notin C^j$. So Proposition~\ref{beta}(i) implies that $\overline{d}^j_{G,Q}(y_1)\leq 2$. This together with the observation that $|Q_j\backslash C^j|\geq |Q_j|/2$ (by definition of $C^j$) implies that there exists a vertex $y_2 \in Q^j\setminus C^j$ that is a neighbour of $y_1$.

Define $Y:= \overline{N}(\{y_1,y_2,y_3\}) \cap B^{i}_{low}$. Since $|B^i_{low}| \geq n/2k^2$ and $G\notin \mathcal{A}_1$, $|Y|> n/200k^2$. Since $|B^i_{low}| \geq n/2k^2$ and $G\notin \mathcal{A}_2$, $C(Y)$ contains a vertex $x_3\in \overline{N}(\{y_1,y_2\})$. Since $x_3\in C(Y)$ there exists a vertex $x_1\in Y$ that is a non-neighbour of $x_3$. By Proposition~\ref{beta}(iii), $\overline{d}^i_{G,Q}(x_1), \overline{d}^i_{G,Q}(x_3) \leq \beta n$. So since $|Y| > n/200k^2 \geq 2\beta n$, there exists a vertex $x_2 \in Y\cap N(\{x_1,x_3\})$.

Then $E(G[\{x_1,x_2,x_3,y_1,y_2,y_3\}]) = \{ x_1x_2, x_2x_3, y_1y_2\} \cup E'$ with $E'\subseteq \{y_2y_3, y_3x_3\}$. Thus the set $\{x_1,x_2,x_3,y_1,y_2,y_3\}\subseteq Q_i\cup Q_j$ induces on $G$ a linear forest with at most three components, and so $G$ contains a $(6,3)$-forest with respect to $Q$. This contradicts the fact that $G\notin \mathcal{A}_3$, and hence completes the proof.
\end{proof}

We now have sufficient information about the set $\mathcal{A}_4$ of remaining graphs to count them directly (i.e. $\mathcal{A}_4$ is the only class for which we do not use induction in our estimates). In particular, we now know that in $\overline{G}$ every vertex class is the union of triangles and stars, where crucially the number of triangles and small stars is not too large (see Proposition~\ref{B_low}). This allows us to show by a direct counting argument that $|\mathcal{A}_4|$ is negligible.

\begin{lemma}\label{lem: A_4}
$|\mathcal{A}_4| \leq 2^{-\frac{n\log n}{3k^2}} f_k(n_k) 2^{t_{k-1}(n)}$.
\end{lemma}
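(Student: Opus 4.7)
The plan is a direct counting argument (no induction), exploiting the structural restrictions now available for $G\in\mathcal{A}_4$. The key facts are: (i) by Proposition~\ref{beta}(i), $\overline{G}[Q_i]$ is a disjoint union of stars and triangles for every $i$; (ii) by Proposition~\ref{B_low}, $|B^i_{low}(G,Q)|<n/2k^2$, so the small non-trivial components of $\overline{G}[Q_i]$ cover at most $2|B^i_{low}|<n/k^2$ vertices of $Q_i$; and (iii) since each star counted by $C^i_{high}$ has order at least $n^{1-1/2k^2}/200k^2$, we have $|C^i_{high}|\leq S:=200k^2n^{1/2k^2}$.

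Accordingly, I would specify any $G\in\mathcal{A}_4$ by choosing, for each class $Q_i$: a set $C^i_{high}$ of $c_i\leq S$ large-star centres (at most $\binom{q_i}{c_i}$ choices, with $q_i:=|Q_i|$); a set of $s_i<n/k^2$ small-component vertices (at most $\binom{q_i-c_i}{s_i}$ choices) together with the disjoint-union-of-stars-and-triangles structure on that set ($f_k(s_i)$ choices by the definition of $f_k$); and, for each remaining vertex of $Q_i$, a decision of whether to be isolated in $\overline{G}[Q_i]$ or a leaf of one of the $c_i$ large stars (at most $(c_i+1)^{q_i-c_i-s_i}$ choices). Together with the at most $2^{t_{k-1}(n)}$ choices of crossing edges, this gives
\[
|\mathcal{A}_4|\,\leq\,2^{t_{k-1}(n)}\prod_{i=0}^{k-2}\sum_{c_i\leq S,\;s_i<n/k^2}\binom{q_i}{c_i}\binom{q_i-c_i}{s_i}f_k(s_i)(c_i+1)^{q_i-c_i-s_i}.
\]

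Verifying the required bound is then arithmetic. Per class, the dominant contribution is from $(c_i+1)^{q_i-c_i-s_i}\leq(S+1)^{q_i}$, which gives $q_i\log(S+1)\leq q_i\log n/(2k^2)+O(q_i\log k)$ bits. Summed using $\sum_i q_i=n$, this is at most $n\log n/(2k^2)+O(n\log k)$. The small-component factor is bounded via Lemma~\ref{f-estimate-1} by $\log f_k(s_i)\leq s_i\log s_i+s_i$; summed using $\sum_i s_i<(k-1)\cdot n/k^2\leq n/k$, it contributes at most $n\log n/k+O(n/k)$. The binomial coefficients and the polynomial overhead from summing over $(c_i,s_i)$ are of lower order. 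Hence
\[
\log|\mathcal{A}_4|-t_{k-1}(n)\,\leq\,\frac{n\log n}{k}+\frac{n\log n}{2k^2}+O(n\log k).
\]
On the other hand, Lemma~\ref{f-estimate-1} gives $\log f_k(n_k)\geq n\log n/(k-1)-O(n\log\log n/(k-1))$. Since $1/(k-1)-1/k-1/(2k^2)=(k+1)/(2k^2(k-1))$ satisfies $(k+1)/(2k^2(k-1))\geq 1/(3k^2)$ for every $k\geq 6$, the stated bound $|\mathcal{A}_4|\leq 2^{-n\log n/(3k^2)}f_k(n_k)2^{t_{k-1}(n)}$ follows for $n$ large enough that the $O(n\log k)$ and $O(n\log\log n)$ terms are dominated.

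The main obstacle is the arithmetic above, and in particular verifying that the various lower-order error terms do not eat into the $\Theta(n\log n/k^2)$ gap; this is routine given $1/n_0\ll 1/k$ from the hierarchy (\ref{eqn: hierarchy}). The conceptual content is simply that, because $|C^i_{high}|\leq n^{1/2k^2}$ is polynomially smaller than $q_i$, the leaf-assignment factor $(c_i+1)^{q_i}$ is much smaller than the $q_i^{q_i}$-type term that drives $f_k(q_i)$, and this gap is not absorbed by the extra freedom in the small-component and centre-choice factors.
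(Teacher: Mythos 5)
Your proposal is correct and takes essentially the same route as the paper: a direct counting argument (no induction) resting on Proposition~\ref{beta}(i), Proposition~\ref{B_low}, the bound $|C^i_{high}|\leq 200k^2n^{1/2k^2}$, at most $2^{t_{k-1}(n)}$ choices of crossing edges, and a final comparison with $f_k(n_k)$ via Lemma~\ref{f-estimate-1}. The only difference is bookkeeping: the paper pays $5^n$ for the five-set partition of each class and then assigns a centre to each vertex of $B^i_{high}$ and $B^i_{low}$, whereas you choose the centre sets, the small-component structure (bounded by $f_k(s_i)$) and the leaf assignments directly; both yield the same $\Theta(n\log n/k^2)$ margin.
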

\begin{proof}
Any graph $G\in \mathcal{A}_4$ can be constructed as follows.
\begin{itemize}
\item First we choose a partition of $Q_i$ into five sets, $C^i_h, B^i_h, C^i_\ell, B^i_\ell, C^i_z,$ for every $i\in \{0,1,\dots, k-2\}$. Let $b_1$ denote the number of such choices.
\item Next we choose the set $E$ of crossing edges in $G$ with respect to $Q$. Let $b_2$ denote the number of possibilities for $E$.
\item Finally we choose the set $E'$ of internal edges in $G$ with respect to $Q$ such that $G$ satisfies $C^i_h=C^i_{high}$, $B^i_h=B^i_{high}$, $C^i_\ell=C^i_{low}$, $B^i_\ell=B^i_{low}$, and $C^i_z=C^i_0$ for every $i\in \{0,1,\dots, k-2\}$. Let $b_3$ denote the number of possibilities for $E'$.
\end{itemize}
Hence
\begin{equation}\label{eqn: A_4 count}
|\mathcal{A}_4|\leq b_1\cdot b_2\cdot b_3.
\end{equation}
The following series of claims will give upper bounds for the quantities $b_1,b_2,b_3$. Claims~1 and~2 are trivial; we give only a proof of Claim~3.

\vspace{0.3cm}
\noindent {\bf Claim 1:} $b_1\leq 5^n$.

\vspace{0.3cm}
\noindent {\bf Claim 2:} $b_2\leq 2^{t_{k-1}(n)}$.

\vspace{0.3cm}
\noindent {\bf Claim 3:} $b_3\leq 2^{ \frac{(k-1/2)n\log n}{k^2}}$.

\noindent For any given $i\in \{0,1,\dots, k-2\}$ and any vertex $x\in B^i_{high}$, the number of possibilities for the unique non-neighbour of $x$ in $C^i_{high}$ (namely the centre of the star in $\overline{G}$ containing $x$) is $|C^i_{high}|$. Now consider $x\in B^i_{low}$. Then $x$ has a unique non-neighbour $y$ in $C^i_{low}$, and has the possibility of either being part of a triangle in $\overline{G}$ or a star in $\overline{G}$. Note also that $|B^i_{low}|< n/2k^2$ by Proposition~\ref{B_low}, and that by definition of $C^i_{high}$,
$$|C^i_{high}|\leq \frac{200k^2 n}{n^{1-1/2k^2}}\leq 200k^2 n^{1/2k^2}.$$
Hence,
\begin{align*}
b_3 &\leq \prod_{i=0}^{k-2} (2|C^i_{low}|)^{|B^i_{low}|}  |C^i_{high}|^{|B^i_{high}|} \leq \prod_{i=0}^{k-2} n^{\frac{n}{2k^2}} (200k^2)^{n} (n^{\frac{1}{2k^2}})^n = 2^{ \frac{(k-1)n\log n}{k^2}} (200k^2)^{n(k-1)}\\
&\leq 2^{\frac{(k-1/2)n\log n}{k^2}},
\end{align*}
as required.

\vspace{0.3cm}
\noindent Now (\ref{eqn: A_4 count}) together with Claims~1--3 and Lemma~\ref{f-estimate-1} implies that
\begin{align*}
|\mathcal{A}_4|&\leq 5^n \cdot 2^{t_{k-1}(n)} \cdot 2^{ \frac{(k-1/2)n\log n}{k^2}}\\
&\leq 5^n 2^{-\frac{n\log n}{2k^2}} 2^{n_k \log n_k - en_k \log \log n_k} 2^{en_k \log \log n_k} 2^{t_{k-1}(n)}\leq 2^{-\frac{n\log n}{3k^2}} f_k(n_k) 2^{t_{k-1}(n)},
\end{align*}
as required.
%Note that this calculation is fine as it is. In particular, for any $\varepsilon >0$, the following holds for sufficiently large $n$:
%\frac{(k-1)n\log n}{k^2} = -\frac{n\log n}{k^2}+\frac{n\log n}{k}\leq -\frac{n\log n}{k^2}+\frac{n\log (n/(k-1))}{k-1} + \varepsilon n \leq -\frac{n\log n}{k^2}+ n_k \log n_k + \varepsilon n.
\end{proof}

Recall that $F^3_Q= (F^3_Q\backslash T^*_Q(n,k))\cup \mathcal{A}_1 \cup \mathcal{A}_2 \cup \mathcal{A}_3 \cup \mathcal{A}_4$. The following bound on $|F^3_Q|$ follows immediately from this observation together with (\ref{strong regular}) and Lemmas~\ref{lem: A_1},~\ref{lem: A_2},~\ref{lem: A_3} and~\ref{lem: A_4}.

\begin{lemma}\label{F^3}
$|F^3_Q|\leq 2C2^{-n^{\frac{1}{2k^2}}/3}f(n_k) 2^{t_{k-1}(n)}$.
\end{lemma}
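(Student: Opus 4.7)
The plan is to simply combine the bounds established in the previous sections of the paper. Since $F^3_Q\setminus T^*_Q(n,k)$, $\mathcal{A}_1$, $\mathcal{A}_2$, $\mathcal{A}_3$, and $\mathcal{A}_4$ together partition $F^3_Q$, I would use the union bound, namely
$$|F^3_Q|\leq |F^3_Q\setminus T^*_Q(n,k)| + |\mathcal{A}_1| + |\mathcal{A}_2| + |\mathcal{A}_3| + |\mathcal{A}_4|,$$
and substitute the five estimates: (\ref{strong regular}) for the first term, and Lemmas~\ref{lem: A_1}, \ref{lem: A_2}, \ref{lem: A_3}, \ref{lem: A_4} respectively for the remaining four.

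Among these five estimates, the dominant one is the bound from Lemma~\ref{lem: A_2}, which is $C 2^{-n^{1/2k^2}/3} f_k(n_k) 2^{t_{k-1}(n)}$. All of the other four bounds are of the form $\text{(small factor)}\cdot f_k(n_k) 2^{t_{k-1}(n)}$ where the small factor is at most $C 2^{-n^{1/2k^2}/3}$ for $n\geq n_0$ sufficiently large. Indeed, by our hierarchy of constants (\ref{eqn: hierarchy}), for $n\geq n_0$ we have $n/150k^2, n/2^{14k}, n\log n/(3k^2), n\log n/5 \geq n^{1/2k^2}/3$ comfortably, so each of the other four terms can be absorbed into a single copy of the Lemma~\ref{lem: A_2} bound.

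Thus the sum of all five contributions is bounded above by $5 \cdot C 2^{-n^{1/2k^2}/3} f_k(n_k) 2^{t_{k-1}(n)}$, and with a tiny bit of room this is at most $2C 2^{-n^{1/2k^2}/3} f_k(n_k) 2^{t_{k-1}(n)}$ once $n$ is large enough that the dominant $\mathcal{A}_2$ bound dominates the sum of the four subdominant ones by a factor of at least $5/2$. (Alternatively one can simply absorb the constant $5$ into a slightly worse multiplicative constant and verify that $5 \leq 2\cdot 2^{n^{1/2k^2}/3 - n^{1/2k^2}/3}$ after splitting the exponent, but the cleanest route is to note that for $n\geq n_0$ with $1/n_0 \ll 1/k$ the four smaller terms together contribute at most $C 2^{-n^{1/2k^2}/3} f_k(n_k) 2^{t_{k-1}(n)}$.) There is no real obstacle here: the work was done in Lemmas~\ref{lem: A_1}--\ref{lem: A_4} and in establishing (\ref{strong regular}); this lemma is a bookkeeping step to package those estimates into a single clean bound that will be fed into the final calculation in Section~\ref{sec: final calculation}.
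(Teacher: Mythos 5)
Your proposal is correct and matches the paper's argument exactly: the paper also notes that $F^3_Q= (F^3_Q\backslash T^*_Q(n,k))\cup \mathcal{A}_1 \cup \mathcal{A}_2 \cup \mathcal{A}_3 \cup \mathcal{A}_4$ and deduces the bound immediately from (\ref{strong regular}) and Lemmas~\ref{lem: A_1}--\ref{lem: A_4}, with the $\mathcal{A}_2$ term dominating and the other four terms absorbed into the second copy of $C2^{-n^{1/2k^2}/3}f_k(n_k)2^{t_{k-1}(n)}$ for large $n$. The bookkeeping in your write-up (including noting $f_k(n_k)\geq 1$ implicitly for the $T^*_Q$ term) is exactly what is needed.
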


\section{Proof of Lemma~\ref{induction conclusion}}\label{sec: final calculation}

\removelastskip\penalty55\medskip\noindent{\bf Proof of Lemma~\ref{induction conclusion}.}
Recall from Section~\ref{sec: main proof} that we prove Lemma~\ref{induction conclusion} by induction on $n$ and that we choose constants satisfying (\ref{eqn: hierarchy}). The fact that $1/C \ll 1/n_0, 1/k$ implies that the statement of Lemma~\ref{induction conclusion} holds for all $n\leq n_0$. So suppose that $n>n_0$ and that the statement holds for all $n'< n$. Then we obtain the bounds in Lemmas~\ref{F^1}, \ref{F^2_Q} and~\ref{F^3}. These bounds together with the fact that $F_Q(n,k,\eta,\mu) = T_Q \cup F^1_Q \cup F^2_Q \cup F^3_Q$ and $T_Q\subseteq T_Q(n,k)$ imply that
\begin{align*}
|F_Q(n,k,\eta,\mu) \setminus T_Q(n,k)| &\leq  C\left(  2^{-\beta^2 n/14^{k}}  + 2^{-n} + 2\cdot 2^{-n^{\frac{1}{2k^2}}/3} \right) f_k(n_k) 2^{t_{k-1}(n)}\\
&\leq  3C 2^{-n^{\frac{1}{2k^2}}/3} f_k(n_k) 2^{t_{k-1}(n)}.
\end{align*}
This together with Corollary~\ref{regular lemma} implies that
\begin{align}\label{eqn: F_Q(n,k,eta) size}
|F_Q(n,k,\eta)\setminus T_Q(n,k)| &\leq |F_Q(n,k,\eta)\setminus F_Q(n,k,\eta,\mu)|+|F_Q(n,k,\eta,\mu)\setminus T_Q(n,k)| \\
& \leq  \left(2^{- \frac{\mu^2n^2}{100}} + 3C 2^{-n^{\frac{1}{2k^2}}/3} f_k(n_k) \right)  2^{t_{k-1}(n)}\nonumber\\
&\leq 4C 2^{-n^{\frac{1}{2k^2}}/3} f_k(n_k) 2^{t_{k-1}(n)}.\nonumber
\end{align}
Note that Lemma~\ref{lem: rough structure} (applied with $\eta/2$ playing the role of $\eta$) together with (\ref{eqn: near templates}) implies that
\begin{equation} \label{rough}
|F(n,k)\setminus F(n,k,\eta)| \leq 2^{-\varepsilon n^2}|F(n,k,\eta)|.
\end{equation}
Let $\mathcal{Q}$ denote the set of all ordered $(k-1)$-partitions of $[n]$, and recall that our choice of $Q\in \mathcal{Q}$ was arbitrary. Now (\ref{eqn: F_Q(n,k,eta) size}) together with (\ref{rough}) and Lemma~\ref{size of template} implies that
\begin{align*} 
|F(n,k)\setminus F(n,k,\eta)| &\leq 2^{-\varepsilon n^2} \sum_{Q'\in \mathcal{Q}} \left(|F_{Q'}(n,k,\eta)\backslash T_{Q'}(n,k)|+|T_{Q'}(n,k)|\right)\\
&\leq 2^{-\varepsilon n^2} (k-1)^n \left(4C 2^{-n^{\frac{1}{2k^2}}/3} + 2^{6 (\log n)^2} \right)f_k(n_k) 2^{t_{k-1}(n)}\\
&\leq C 2^{-\varepsilon n^2 /2 } f_k(n_k) 2^{t_{k-1}(n)}.
\end{align*}
Now this together with (\ref{eqn: F_Q(n,k,eta) size}) implies that
\begin{align*}
|F_Q(n,k)| &\leq |F_Q(n,k,\eta)| + |F(n,k)\setminus F(n,k,\eta)|\\
&\leq |T_Q(n,k)| + |F_Q(n,k,\eta)\setminus T_Q(n,k)| + |F(n,k)\setminus F(n,k,\eta)|\\
&\leq |T_Q(n,k)|+ \left( 4\cdot 2^{-n^{\frac{1}{2k^2}}/3} + 2^{-\epsilon n^2 /2} \right) C f_k(n_k) 2^{t_{k-1}(n)}\\
&\leq |T_Q(n,k)|+ 5C 2^{-n^{\frac{1}{2k^2}}/3}f_k(n_k) 2^{t_{k-1}(n)},
\end{align*}
which completes the inductive step, and hence the proof.
\endproof

\section{Acknowledgement}

We are grateful to Mihyun Kang for helpful remarks on the number of $k$-templates.

\medskip

{\footnotesize \obeylines \parindent=0pt

School of Mathematics
University of Birmingham
Edgbaston
Birmingham
B15 2TT
UK
}
\begin{flushleft}
{\it{E-mail addresses}:}
{\rm{\{j.kim.3, d.kuhn, d.osthus, txt238\}@bham.ac.uk}}
\end{flushleft}

\end{document}